\documentclass{amsart}

\usepackage[usenames,dvipsnames]{xcolor} 

\usepackage[T1]{fontenc}
\usepackage{amsmath,amsthm,indentfirst}
\usepackage{amssymb}
\usepackage{amsfonts,ifpdf}

\ifpdf
\usepackage[pdftex,pdfstartview=FitH,pdfborderstyle={/S/B/W 1}]{hyperref}
\else
\usepackage[hypertex]{hyperref}
 \fi
\setcounter{tocdepth}{1}
\usepackage{amscd}
\usepackage[latin1]{inputenc}
\DeclareMathAlphabet{\mathpzc}{OT1}{pzc}{m}{it}
\DeclareFontFamily{OT1}{rsfs}{}
\DeclareFontShape{OT1}{rsfs}{n}{it}{<-> rsfs10}{}
\DeclareMathAlphabet{\mathscr}{T1}{rsfs}{n}{it}
\numberwithin{equation}{section}

\theoremstyle{plain}
\newtheorem*{maintheorem*}{Main Theorem}
\newtheorem*{thm*}{Theorem}
\newtheorem*{remark*}{Remark}
\newtheorem*{conjecture*}{Conjecture}
\newtheorem*{prop*}{Proposition}
\newtheorem*{lem*}{Basic Lemma}
\newtheorem{thm}{Theorem}[section]
\newtheorem{cor}[thm]{Corollary}
\newtheorem{lem}[thm]{Lemma}
\newtheorem{prop}[thm]{Proposition}

\newtheorem*{thma1*}{Theorem A.1}
\newtheorem*{thma2*}{Theorem A.2}
\newtheorem*{thmb*}{Theorem B}
\newtheorem*{thmd*}{Theorem D}
\newtheorem*{propc*}{Proposition C}

\theoremstyle{definition}

\newtheorem*{proofc*}{Proof of Theorem C}

\newtheorem{definition}[thm]{Definition}

\newtheorem*{def*}{Definition}

\def\bbz{\mathbb{Z}}

\def\bbf{\mathbb{F}}

\def\Mbf{\mathbb{A}}

\def\bbn{\mathbb{N}}
\def\Gbf{\mathbb{G}}

\def\bbp{\mathbf{P}}

\def\bbw{\mathbf{W}}


\def\Qcal{\mathcal{Q}}

\def\Ocal{{\mathcal O}}

\def\rcal{\mathcal{R}}

\def\Hcal{\mathcal{H}}


\def\gfrak{\mathfrak{g}}
\def\Sfrak{\mathfrak{S}}

\def\Mbf{{\mathbf M}}

\def\Gbf{{\mathbf G}}

\def\G{\Gbf}
\def\Hbf{{\mathbf H}}
\def\H{\Hbf}

\def\fele{{\mathpzc{c}}}


\def\SL{{\rm{SL}}}
\def\PGL{{\rm{PGL}}}

\def\GL{{\rm{GL}}}
\def\Lie{{\rm Lie}}

\def\h{\hspace{1mm}}

\def\la{\lambda}

\def\vare{\varepsilon}
\def\cpct{X_{{\rm{cpct}}}}

\def\zg0{Z_{G_{w}}(s)}

\def\zg{Z_G(s)}

\def\be{\begin{equation}}
\def\ee{\end{equation}}

\def\gfield{{K}}
\def\field{{k}}
\def\sfield{{l}}
\def\order{{\mathfrak o}}
\def\vorder{{\mathfrak o_v}}
\def\vintegers{{\mathcal O}}

\def\pins{{\mathcal{P}}}

\def\pinsker{\pins}
\def\rootsys{{\Phi}}
\newcommand\supp{\operatorname{supp}}
\def\lsupp{\mathcal{S}}
\def\linv{\mathcal{I}}
\def\ppor{{\;\propto\;}}

\def\unif{{\theta}}
\def\resor{{q}}
\def\pchar{{p}}
\def\entr{\mathsf{h}}
\def\entropy{\entr}

\def\Uef{{W^+_G(a_\alpha)}}

\def\Cef{{Z_{\alpha}}}
\def\nbhd{{\mathsf B}}
\def\fele{{r}}

\def\identity{{\rm id}}
\def\id{\identity}
\def\cpct{{\mathsf Q}}
\def\salpha{{s_\alpha}}
\def\Calpha{Z_\alpha}
\def\Xinv{X_{\rm inv}}
\def\NM{{\bf F}}
\def\NMM{{\bf F}}
\def\RNM{{\bf H}'}
\def\APhi{\overline{\Phi}}

\begin{document}

\title{Diagonal actions in positive characteristic}

\author{M. Einsiedler}
\address{M.E.:  Departement Mathematik, ETH Z\"urich, R\"amistrasse 101, CH-8092 Z\"urich, Switzerland}
\email{manfred.einsiedler@math.ethz.ch}
\thanks{M.E.\ acknowledges support by the SNF (Grant 152819 and 178958).}

\author{E. Lindenstrauss}
\address{E.L.: The Einstein Institute of Mathematics, Edmond J.\ Safra Campus, 
Givat Ram, The Hebrew University of Jerusalem, Jerusalem, 91904, Israel}
\email{elon@math.huji.ac.il}
\thanks{E.L.\ acknowledges support by the ERC (AdG Grant 267259).}

\author{A. Mohammadi}
\address{A.M.: Department of Mathematics, University of California, San Diego, CA 92093}
\email{ammohammadi@ucsd.edu}
\thanks{A.M.\ acknowledges support by the NSF (DMS 1724316, 1764246, and 1128155) and Alfred P.~Sloan Research Fellowship.}

\begin{abstract}
We prove positive characteristic analogues of certain measure rigidity
theorems in characteristic zero. More specifically we give a classification
result for positive entropy measures on quotients of $\operatorname{SL}_d$
and a classification of joinings for higher rank actions on simply connected
absolutely almost simple groups. 
\end{abstract}

\maketitle

\section{Introduction}\label{sec:intro}
Let $G$ be a locally compact, second countable group
and let $\Gamma$ be a lattice in $G.$ Put $X=G/\Gamma.$ 
A subset $S\subset X$ is called {\em homogeneous} if  
there exists a closed subgroup $\Sigma<G$ and some $x\in X$
such that $\Sigma x$ is closed and supports a $\Sigma$-invariant
probability measure. 
A probability measure $\mu$ on $X$ is called {\em homogeneous}
if $\supp\mu$ is homogeneous and $\mu$ is the $\Sigma$-invariant
probability measure on $\supp\mu$.

Let $A$ be a closed abelian subgroup of $G$.
An $A$-invariant probability measure $\mu$ on $G/\Gamma$ 
will be called {\em almost homogeneous} if 
\begin{equation}\label{eq;almosthomogeneous}
\mu=\int_{A/A\cap\Sigma}a_*\nu \operatorname{d}\!a
\end{equation}
where
\begin{enumerate}
\item $\Sigma\subset G$ is a closed subgroup such that $A/A\cap\Sigma$ is compact,
\item $\nu$ is a homogeneous measure stabilized by $\Sigma$,
\item $\operatorname{d}\!a$ is the Haar probability measure on the group $A/A\cap\Sigma.$
\end{enumerate}

Let $\gfield$ be a global function field, i.e.\ a finite extension of the field of rational functions in one variable over a finite field $\bbf_\pchar.$ For any place $w$ of $\gfield$ we let $\gfield_w$ denote the completion of $\gfield$ at $w,$ and let $\order_w$ be the ring of integers in 
$\gfield_w.$ As in the case of number fields, the field $\gfield$ embeds diagonally in the restricted product $\prod'_w\gfield_w.$ Given a place $v$ we put
\[
\Ocal_v=\gfield\cap\prod_{w\neq v}\order_w
\]
to be the ring of $v$-integers in $\gfield.$

For the rest of this paper we will assume that a place $v$ of $\gfield$ is fixed
and put 
\[
\field:=\gfield_v,\;\order:=\vorder, \text{ and }\;\vintegers:=\Ocal_v.
\]
Recall that we may and will identify $\field$ with
$\bbf_\resor((\unif^{-1})),$ the field of Laurent series over the finite field $\bbf_\resor,$
after this identification we have $\order=\bbf_\resor[[\unif^{-1}]],$~\cite[Ch.~1]{Weil-BasicNum}.

The most familiar case is when $K=\bbf_\resor(\theta)$, the field rational functions 
in one variable with coefficients in $\bbf_\resor.$ 
Then if we choose the valuation $v$ coming from $\theta^{-1}$
we have $\Ocal_v=\bbf_\resor[\theta]$ is the polynomial ring.

\subsection{Positive entropy classification for measures on quotients of \texorpdfstring{$\SL_d$}{SL\_d}}

Let $G=\SL(d,\field)$ and let $\Gamma<G$ be an {\em inner type} lattice in $G$. See \S\ref{sec:innerforms} for the definition and discussion of inner type lattices; as an explicit example the reader may let $\Gamma=\SL(d, \Ocal)$. Let $X:=G/\Gamma.$ 
Furthermore, we let~$A$ be the full diagonal 
subgroup of $\SL(d,\field)$.
{\it Throughout the paper we always assume $d>2.$}

Given an $A$-invariant probability measure $\mu$ we let $\entropy_\mu(a)$ denote the measure theoretic entropy of $a\in A.$  

\begin{thm}\label{thm:measure-class}
Suppose $\mu$ is an $A$-invariant ergodic probability measure on $X,$ further
assume that $\entropy_\mu(a)>0$ for some $a\in A.$ Then $\mu$ is almost homogeneous. 
\end{thm}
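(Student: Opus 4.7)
The plan is to mirror the Einsiedler--Katok--Lindenstrauss measure rigidity strategy from characteristic zero, with each ingredient recast for the positive-characteristic local field $k$. Since $A$ is the full diagonal of $G = \SL(d,k)$ with $d\geq 3$, each Lyapunov subgroup for the $A$-action on $X = G/\Gamma$ is a root subgroup $U_\alpha$ for a root $\alpha = e_i - e_j$ of the $A_{d-1}$ root system, isomorphic to the additive group $(k,+)$. The end goal is to leverage positive entropy to produce invariance of $\mu$ under some nontrivial closed unipotent $k$-subgroup $V<G$, and then invoke a positive-characteristic Ratner-type classification of $V$-invariant measures on $G/\Gamma$ to conclude that $\mu$ is almost homogeneous in the sense of \eqref{eq;almosthomogeneous}.

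The first half is entropy bookkeeping plus the high-entropy method. Using a positive-characteristic Ledrappier--Young theory, introduce leaf-wise (conditional) measures $\mu^{U_\alpha}_x$ along each root subgroup and obtain the entropy decomposition
\[
h_\mu(a) = \sum_{|a^\alpha|>1} h_\mu(a, U_\alpha),
\]
with $h_\mu(a, U_\alpha)>0$ iff $\mu^{U_\alpha}_x$ is a.s.\ non-atomic; in that case $\mu^{U_\alpha}_x$ is supported on a closed $\bbf_p$-linear additive subgroup of $(k,+)$. The positive entropy hypothesis forces nontriviality of at least one such conditional measure. The high-entropy method then exploits the $\SL_d$ commutator relations (for which one needs $d\geq 3$): for each admissible pair $\alpha = e_i-e_j$, $\beta = e_j-e_k$ one has $[U_\alpha, U_\beta] = U_{\alpha+\beta}$, and there exist singular $a\in A$ contracting both $U_\alpha$ and $U_{\alpha+\beta}$ while centralizing $U_\beta$. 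This propagates nontriviality of $\mu^{U_\alpha}_x$ and $\mu^{U_\beta}_x$ to Haar-ness of $\mu$ along a closed subgroup of $U_{\alpha+\beta}$. Iterating over root triples produces a nontrivial closed unipotent subgroup $V<G$ along which the conditional measures of $\mu$ are Haar.

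The second half upgrades Haar-ness of conditional measures to honest $V$-invariance of $\mu$ via a low-entropy/shearing argument, and then applies a positive-characteristic Ratner classification of unipotent-invariant measures on $G/\Gamma$ (applicable since $\Gamma$ is of inner type, hence arithmetic). Once $\mu$ is $V$-invariant, Ratner-type classification decomposes $\mu$ into homogeneous measures supported on closed orbits of some algebraic $\Sigma\supset V$, and $A$-ergodicity combined with the compatibility of $A$ and $\Sigma$ forces $A/(A\cap\Sigma)$ to be compact, giving \eqref{eq;almosthomogeneous}. The main obstacle is the low-entropy shearing. In characteristic zero this step uses polynomial divergence of conjugates of one-parameter unipotents under $A$, an essentially analytic fact rooted in the Lie exponential. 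In characteristic $p$ there is no such exponential, and divergence along a $k$-parameter unipotent is governed by $\bbf_p$-linear (Frobenius-type) operations on $(k,+)$ rather than continuous polynomial time. Recasting the shearing as a $k$-linear difference argument, while keeping entropy bookkeeping tight enough to rule out conditional measures that are merely $\bbf_p$-linear and not $k$-linear (which would yield partial invariance without producing an honest subgroup $V$), is where I expect the technical heart of the proof to lie.
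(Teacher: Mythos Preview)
Your high-entropy outline is in the right spirit, but the second half of your plan contains two genuine gaps that the paper's proof is specifically engineered to avoid.

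\textbf{Gap 1: you cannot manufacture a one-parameter unipotent subgroup.} You propose to upgrade from nontrivial leafwise measures to honest invariance of $\mu$ under a closed unipotent $k$-subgroup $V$, via shearing. In characteristic zero this works because a closed subgroup of a one-dimensional unipotent group that contains arbitrarily small and arbitrarily large elements must be the whole group. In positive characteristic this is simply false: $(k,+)$ has uncountably many closed unbounded nondiscrete subgroups, and the invariance group $\linv_x^\alpha$ you obtain from high/low entropy may well be one of these rather than all of $U_\alpha$. You flag this yourself (``merely $\bbf_p$-linear and not $k$-linear''), but you do not propose a mechanism to resolve it; the paper emphasizes (see \S1.3) that this is precisely the structural obstacle, not a technical recasting issue.

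\textbf{Gap 2: there is no general Ratner theorem in positive characteristic to invoke.} Even if you had unipotent invariance, the ``positive-characteristic Ratner classification of unipotent-invariant measures'' you plan to apply does not exist in the required generality; only special cases are known. So the endgame of your plan is not available.

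\textbf{What the paper does instead.} The paper never tries to make $\mu$ itself unipotent-invariant. It works with the Pinsker conditional measures $\mu_x^{\pins}$ (for $\pins=\pins_{a_\alpha}$), shows that their stabilizer $\Hcal_x$ in $H_\alpha\cong\SL_2(k)$ contains a compact Zariski-dense subgroup $\Qcal_x$ generated by unipotents (Corollary~\ref{cor:pinsker-cond-inv1} and \eqref{eq:def-Qx}), and then invokes Pink's classification of compact Zariski-dense subgroups (Theorem~A.1) to produce a closed subfield $k_x\subset k$ and an embedding $\varphi_x$ with $\Qcal_x$ open in $E_x=\varphi_x(\SL_2(k_x))$. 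A Poincar\'e recurrence argument (Proposition~\ref{prop:inv-sl2}) upgrades this to $E_x\subset\Hcal_x$: the Pinsker components are invariant under a \emph{semisimple} group over a subfield. At this point the paper applies not Ratner for unipotents but the measure classification Theorem~B of \cite{MS-SL2} for measures invariant under such semisimple subgroups, yielding an $\sfield$-algebraic group $\mathbf M$ and a homogeneous description of the $E_x$-ergodic components (Lemma~\ref{applyingAA}). The inner-type hypothesis on $\Gamma$ is used earlier (Proposition~\ref{prop:exceptional-return}) to rule out the locally $Z_\alpha$-aligned alternative in the low-entropy dichotomy, and later to pin down the $K$-structure of $\mathbf F$ and $\mathbf H=[\mathbf F,\mathbf F]$ (Lemma~\ref{lem:reductive}). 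The final assembly (Lemmas~\ref{applyingAA2}--\ref{lem:center-gone}, Proposition~\ref{prop:L}, Corollary~\ref{cor:norm-L}) is a substantial algebraic argument showing the resulting homogeneous pieces fit into the almost-homogeneous form \eqref{eq;almosthomogeneous}.

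In short: replace ``get unipotent invariance, apply Ratner'' by ``get $\SL_2(k')$-invariance of Pinsker components via Pink, apply the semisimple measure classification of \cite{MS-SL2}''. That is the new idea your proposal is missing.
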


We note that this is a positive characteristic analogue of the result of \cite{EKL}
by A.~Katok with the two first named authors.

The conclusion of Theorem~\ref{thm:measure-class}
cannot be strenghtened to saying that~$\mu$ is homogeneous. In fact,~$K=\bbf_\resor(\unif)$
has many subfields~$K'$ (without a bound on~$[K:K']$), 
defining~$\field'$ to be the closure of~$K'$ in~$\field$,
one could take the measure~$\nu$ to be the Haar measure
on the closed orbit~$\Sigma\Gamma$ for~$\Sigma=\SL(d,\field')$,
and~$\mu$ could be as in~\eqref{eq;almosthomogeneous}
since~$A/(A\cap\Sigma)$ is compact.

\subsection{Joining classification}\label{sec:joining-statement}
Furstenberg \cite{Furstenberg-disjointness}
introduced the following notion in 1967 that has since become 
a central tool in ergodic theory. 
Suppose we are given two measure preserving systems for a group $S$ acting on Borel
probability spaces $(X_i,m_i)$ for $i=1,2$. A {\em joining} is a Borel
probability measure $\mu$ on $X_1\times X_2$ such that the push-forwards
satisfy $(\pi_i)_*\mu=m_i$ for $i=1,2$ and is invariant under 
the diagonal action on $X_1\times X_2$,
i.e.\ $s.(x_1,x_2)=(s.x_1,s.x_2)$ for all $s\in S$ and $(x_1,x_2)\in X_1\times X_2$.

We give a classification of ergodic joinings in the following setting.
Let $\Gbf_i$ be connected, simply connected, absolutely almost simple groups defined 
over $\field$ for $i=1,2$. 
Put $G_i=\Gbf_i(\field)$ and let $\Gamma_i$ be a lattice in $G_i$
and define $X_i=G_i/\Gamma_i$ for $i=1,2.$ 
Denote by $m_i$ the Haar measure on $X_i.$

Let $\la_i:{\bf G}_m^2\to\Gbf_i$ be two algebraic homomorphisms with finite kernel defined over $\field,$
and put $\mathbf A_i=\la_i(\Gbf_m^2).$ 
We define the notion of joining as above using these monomorphisms.

Let $\mathbf{A}=\{(\la_1(t),\la_2(t)):t\in\Gbf_m^2\}$ and let $A=\mathbf{A}(\field)$.

\begin{thm}\label{thm:joining}
Assume ${\rm char}(\field)\neq2,3.$
Suppose that $\Gbf_i,$ $\mathbf{A}_i,$ and $X_i$ are as above for $i=1,2$. 
Let $\mu$ be an ergodic joining of the action
of $A_i$ on $(X_i,m_i)$ for $i=1,2$.  Then $\mu$
is an algebraic joining. That is, one of the following holds
\begin{enumerate}
\item $\mu=m_1\times m_2$ is the trivial joining, or
\item $\mu$ is almost homogeneous, moreover, the group $\Sigma$ appearing in the definition
of an almost homogeneous measure satisfies the following 
\begin{itemize}  
\item $\pi_{i}(\Sigma)=G_i$ for $i=1,2,$ and
\item $\ker(\pi_i|_{\Sigma})$ is contained in the finite group $Z(G_1\times G_2)$ for $i=1,2.$
\end{itemize}  
\end{enumerate}   
\end{thm}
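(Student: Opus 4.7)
The plan is to view $\mu$ as an $A$-invariant ergodic probability measure on the product $X := X_1 \times X_2 \cong (G_1 \times G_2)/(\Gamma_1 \times \Gamma_2)$, apply a product-version of the measure rigidity underlying Theorem~\ref{thm:measure-class}, and then translate the marginal constraint of a joining into algebraic conditions on the resulting stabilizer group.

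First, positive entropy hypotheses are always available. For $a = (\la_1(t), \la_2(t)) \in A$, the entropy-inequality for the factor map $X \to X_i$ gives $h_\mu(a) \geq h_{m_i}(\la_i(t))$. Since $\la_i$ has finite kernel and the Haar measure on $X_i$ has positive entropy under any non-central semisimple element of $G_i$ (its entropy being governed by the modulus on the expanding horospherical), one has $h_\mu(a) > 0$ for every $a$ such that at least one coordinate $\la_i(t)$ is $G_i$-regular. Thus entropy is not an obstacle to invoking rigidity anywhere on $A$.

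Next, I would feed $\mu$ into the measure rigidity machinery of the paper. Although Theorem~\ref{thm:measure-class} itself concerns $\SL_d$ with inner-type lattices, its underlying scheme (leaf-wise measures, the high-entropy method producing unipotent invariance inside each coarse Lyapunov subgroup, and a low-entropy argument promoting this to invariance under a full algebraic subgroup) extends to the product of simply connected absolutely almost simple groups $G_1 \times G_2$. The outcome should be that $\mu$ is almost homogeneous: there exist a closed subgroup $\Sigma \subseteq G_1 \times G_2$ with $A/(A \cap \Sigma)$ compact, and a $\Sigma$-invariant homogeneous probability measure $\nu$ supported on a closed $\Sigma$-orbit, such that $\mu = \int_{A/(A\cap\Sigma)} a_* \nu \, \mathrm{d}a$.

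The final step is the algebraic rigidification of $\Sigma$. Projecting the almost-homogeneity equation through $\pi_i$ and using $\pi_{i*}\mu = m_i$, we see that $m_i$ is an average of $\la_i(A)$-translates of the homogeneous $\pi_i(\Sigma)$-invariant measure on a closed $\pi_i(\Sigma)$-orbit in $X_i$; since $\Gbf_i$ is almost simple and $m_i$ is Haar, this forces $\pi_i(\Sigma) = G_i$. The kernel $\ker(\pi_i|_\Sigma)$ sits inside $\{e\} \times G_j$ (with $\{i,j\} = \{1,2\}$) and is normalized via $\pi_j$ by all of $G_j$, so it is closed normal in $G_j$. Absolute almost simplicity and simple connectedness of $\Gbf_j$ then leave only two possibilities. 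Either the kernel equals $G_j$, in which case $\Sigma = G_1 \times G_2$ and $\mu = m_1 \times m_2$ is the trivial joining (case~(1)); or the kernel is contained in the finite center $Z(G_j) \subseteq Z(G_1 \times G_2)$, giving case~(2).

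The principal obstacle is the middle step: establishing measure rigidity for the product action in the generality above. One has to rule out the ``subfield-type'' configurations characteristic of positive characteristic, as flagged in the remark after Theorem~\ref{thm:measure-class}, and argue that although such invariance can survive on a single factor, the marginal constraint of a joining plus the coupling of root groups across the two factors forces any intermediate invariance to grow to a full algebraic $\Sigma$. The careful interplay of the high- and low-entropy methods across both factors, together with bookkeeping of the possibly inseparable parameterizations $\la_i : \Gbf_m^2 \to \Gbf_i$ and the interaction of the associated coarse Lyapunov decompositions, is where the bulk of the technical work lies and where the restriction ${\rm char}(\field)\neq 2,3$ is most likely to enter.
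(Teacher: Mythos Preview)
Your Goursat-type analysis of $\Sigma$ at the end is fine, but the middle step—invoking ``the measure rigidity machinery of the paper'' as a black box to produce almost-homogeneity, and only afterwards reading off the dichotomy from $\Sigma$—hides a real gap and is not how the argument goes.

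The low-entropy argument from Theorem~\ref{thm:measure-class} does \emph{not} extend here. That argument (Proposition~\ref{prop:exceptional-return}) rules out the $Z_\alpha$-aligned alternative of Theorem~\ref{thm:gen-low} by exploiting the inner-type hypothesis on $\Gamma$: one manufactures a lattice element with exactly one repeated eigenvalue and reaches a contradiction via Galois conjugacy of that eigenvalue. No analogue is available for a general lattice in $G_1\times G_2$, and the paper does not attempt one. So your proposed ``high- and low-entropy methods across both factors'' is not a viable route.

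Instead the paper uses the joining hypothesis \emph{throughout} the rigidity step, not only at the end. An entropy inequality specific to joinings (Proposition~C in \S\ref{sec:entropy-inequal}) forces $\pi_i\bigl(\supp\mu_x^{[\alpha]}\cap\mathfrak G\bigr)$ to be Zariski dense in $\pi_i(V_{[\alpha]})$ for every coarse Lyapunov $[\alpha]$; combined with the high-entropy commutator Lemma~\ref{lem:commutator} and the purely algebraic Proposition~\ref{prop:commutators} (this, together with Theorem~A.2, is where $\operatorname{char}\field\neq 2,3$ enters), the compact group $\mathcal Q_x$ generated by unipotents in the stabilizer of the Pinsker conditional is already Zariski dense in each $\Gbf_i$. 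Pink's Theorem~A.2 (the two-factor version, not~A.1) then delivers the dichotomy \emph{before} any homogeneity is established: either $\mathcal Q_x$ contains a product $\mathcal Q_{1,x}\times\mathcal Q_{2,x}$, in which case a short Mautner-type argument (Lemma~\ref{lem:F-unbounded} together with $\pi_{i*}\mu_x^{\mathcal P}=m_i$ from Lemma~\ref{lem:pinsker-comp-joinning}) gives $\mu=m_1\times m_2$ directly; or $\mathcal Q_x$ is open in a diagonally embedded $\varphi_x(\mathbf H_x(\field_x))$, and one then finishes as in the proof of Theorem~\ref{thm:measure-class} via Theorem~B. The low-entropy method is never invoked, and the trivial-joining outcome is not the case $\Sigma=G_1\times G_2$ of a prior almost-homogeneity statement—it is obtained earlier, from the product branch of Pink's classification.
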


These results are a positive characteristic analogue of the work \cite{EL-Joining} (see also \cite{EL-Joining2} for stronger results in the zero characteristic setting) of the two first named authors.

It is also worth mentioning that even for joinings,  in general, virtual homogeneity can not be improved to homogeneity.
Indeed, let $\field/\field'$ be a Galois extension of degree 2 with the nontrivial
Galois automorphism $\tau.$ Let $\mathbf G_1=\mathbf G_2=\SL_3$ and let  
$\Gamma_1=\Gamma$ and $\Gamma_2=\tau(\Gamma)$ for a lattice $\Gamma\subset \SL(3,\field).$
Let $\lambda_1=\lambda_2$ be the monomorphism
$(t,s)\mapsto{\rm diag}\Bigl(t,s,(ts)^{-1}\Bigr).$ 
The measure~$\nu$ could be the Haar measure
on the closed orbit $\Sigma(\Gamma_1\times\Gamma_2)$
of $\Sigma=\{(g,\tau(g)): g\in \SL(3,\field)\}$ and $\mu$ could be as in~\eqref{eq;almosthomogeneous}.    

\subsection{Main difference to the zero characteristic setting}

We apply in this paper the high entropy method that was developped 
in the zero characteristic setting in a series of papers, see \cite{EK-PureApplied,EKL,EK-2,EL-Pisa},
and for Theorem~\ref{thm:measure-class} also the low entropy method, see \cite{Elon-Quantum, EKL, EL-GenLow}.
These arguments use crucially leaf-wise measures for the root subgroups (or
more generally the coarse Lyapunov subgroups), which are locally finite measures
on unipotent subgroups.

Suppose we are able using the above tools to show the leafwise measures on the coarse Lyapunov subgroups have some invariance. Then using Poincare recurrence along $A$ one can show the invariance group has arbitrarily large and arbitrarily small elements.
The key difference lies in the next step of the argument.
In the zero characteristic setting
a closed subgroup of a unipotent group containing arbitrarily small
and arbitrarily large elements has to contain a one-parameter subgroup
-- and hence the leafwise measures for the one-parameter subgroup have to be Haar which gives unipotent invariance for the measure under consideration.

In the positive characteristic world this is very far from being true.
In fact using a fairly direct adaptation of the methods used in \cite{EKL,EL-Joining} etc.\ one can find almost surely an unbounded subgroup of a unipotent group
that has positive Hausdorff dimension which again preserves the leafwise measure. 
However, as there are uncountably many such subgroups
and since these may vary from one point to another it is not clear how to continue
from this by purely dynamical methods. 

Decomposing the measure~$\mu$
according to the Pinsker~$\sigma$-algebra~$\pins_a$ (for some~$a\in A$) 
we find a subgroup of~$G$ that preserves
the conditional measure on an atom for~$\pins_a$ 
 and has a semisimple Zariski closure. To classify such subgroups we use a result of Pink \cite{Pink-Compact} (see also \cite{LarPink} for related results by Larsen and Pink).
This allows us to deduce invariance under the group of points of a semisimple subgroup
for some local subfield.  After this we use
a measure classification result \cite{MS-SL2} by Alireza Salehi-Golsefidy
and the third named author (as a replacement of Ratner's measure classification theorem
\cite{Ratner:1990p609,Ratner-measure} extended to the $S$-arithmetic setting by Ratner \cite{Ratner-measure} resp. Margulis and Tomanov \cite{MarTom}). 

We note that analogues of the measure rigidity theorems of Ratner for general unipotent flows in 
positive characteristic setting are not yet known. Some special cases 
have been investigated, specifically the above mentioned paper~\cite{MS-SL2} which is used in our proof and the earlier paper~\cite{EG}.  

{{} Finally we note that ideally one would like to have a result similar to~\cite{EL-SplitCase} in the setting at hand.
A general treatment as in~\cite{EL-SplitCase} will likely require more subtle algebraic considerations.  
}

\subsection*{Acknowledgements}
The authors would like to thank Alireza Salehi Golsefidy, Michael Larsen, {Shahar Mozes,} Gopal Prasad, and Richard Pink for helpful conversations.
The results of \cite{MS-SL2} are used in our work in an essential way, and we thank Alireza Salehi Golsefidy for agreeing to present the results in that paper in a way that would be convenient for our purposes.
{}{We would also like to thank the anonymous referees for their helpful comments.} 

{}
 

\section{Notation}\label{sec:notation}

\subsection{}\label{sec:field}
Throughout, $\gfield$ denotes a global function field. 
We let $v$ be a place in $\gfield,$ fixed once and for all.
Denote by $\vintegers$ the ring of $v$-integers in $\gfield.$
Put $\field:=\gfield_v$ the completion of $\gfield$ at $v.$ 
Then $\field$ is identified 
with $\bbf_\resor((\unif^{-1})),$ the field of Laurent series 
over the finite field $\bbf_\resor$ where $\resor$ is a power of the prime number $\pchar={\rm char}(\gfield).$
We denote by $\order$ the ring of integers in $\field.$
Then $\order=\bbf_\resor[[\unif^{-1}]]$ and the maximal ideal $\mathfrak m$ in $\order$ equals $\unif^{-1}\order.$ The norm on $k$ will be denoted by $|\cdot|_v,$ or simply by $|\cdot|;$
note that with our notation we have $|\theta|_v>1.$ 
With our normalizations $\log_q(|r|)$ is the $v$-valuation of $r\in\field.$   

Unless explicitly mentioned otherwise, a subfield $\field'\subset\field$ is 
always an infinite and closed subfield of $\field;$ hence, $\field/\field'$ is a finite 
extension.

\subsection{}\label{sec:group} 
Let $\Gbf$ be a connected, simply connected, semisimple $\field$-algebraic group.
Put $G=\Gbf(\field).$ We always assume $\Gbf$ is $\field$-isotropic.

Fix a maximal $\field$-split $\field$-torus ${\bf S}$ of $\Gbf.$
{}{We will always assume that ${\bf A}={\bf S}$ in the case of Theorem~\ref{thm:measure-class},
and assume that ${\bf A}_i$ is contained in ${\bf S}_i,$ for $i=1,2,$ in the case of Theorem~\ref{thm:joining}.}

Let ${}_k\rootsys$ denote the set of relative roots ${}_\field\rootsys({\bf S},\Gbf)$; 
this is a (possibly not reduced) root system, see~\cite[Thm.~21.6]{Borel-AlgGrBook}. 
Let ${}_k\rootsys^\pm$ denote positive and negative roots with respect to a fixed ordering on ${}_k\rootsys$.

Recall from~\cite[Remark~2.17, Prop.\ 21.9, and Thm.\ 21.20]{Borel-AlgGrBook} that
for any $\alpha\in{}_k\rootsys$ there exists a unique affine $\field$-split unipotent 
$\field$-subgroup ${\bf U}_{(\alpha)}$ which is normalized by ${\bf Z}_{\Gbf}({\bf S}),$ 
the centralizer of ${\bf S},$
and its Lie algebra is $\gfrak_{(\alpha)}:=\gfrak_\alpha+\gfrak_{2\alpha}.$
Here, as usual, for a root $\beta\in{}_k\rootsys$ we let $\gfrak_{\beta}$ be the subspace
in the Lie algebra on which ${\bf S}$ acts by the root $\beta.$

A subset $\Psi\subset{}_k\rootsys$ is said to be {\em closed} if
$\alpha\in\Psi$ and~$\frac12\alpha\in\Phi$ implies $\frac12\alpha\in\Psi$, and
$\alpha,\beta\in\Psi$
with $\alpha+\beta\in{}_k\rootsys$ implies $\alpha+\beta\in\Psi$.  
A subset $\Psi \subset {}_k\rootsys$ is said to be \emph{positively closed} if it is closed and is contained in ${}_k\rootsys^+$ for some ordering of the root system.
For any positively closed subset $\Psi\subset{}_k\rootsys$
there exists a unique affine $\field$-split unipotent 
$\field$-subgroup ${\bf U}_{\Psi}$ which is normalized by ${\bf Z}_{\Gbf}({\bf S})$ 
and its Lie algebra is the sum of $\{\gfrak_{(\alpha)}:\alpha\in\Psi\}.$
Moreover, ${\bf U}_\Psi$ is generated by $\{{\bf U}_{(\alpha)}:\alpha\in\Psi\setminus 2\Psi\},$ i.e.\ ${\bf U}_\Psi$ is $\field$-isomorphic as a $\field$-variety to $\prod_{\alpha\in\Psi\setminus 2\Psi}{\bf U}_{(\alpha)}$ where the product can be taken 
in any order,~\cite[Prop.\ 21.9 and Thm.\ 21.20]{Borel-AlgGrBook}.

If $\Psi=\{\alpha\}$ and no multiple of $\alpha$ is a root, 
we simply write ${\bf U}_\alpha$ for ${\bf U}_\Psi.$
{}{We also write $U_\Psi={\bf U}_\Psi(\field)$ for a positively closed subset $\Psi\subset{}_k\rootsys$.}

Given a subset $E\subset G$ we let $\langle E\rangle$ denote the closed (in the Hausdorff topology) group generated by $E.$

For each $\alpha\in{}_k\rootsys$ we fix a collection of one parameter subgroups, $\{u_{\alpha,i}: 1\leq i\leq d_\alpha\}$ 
generating $U_{(\alpha)}$
and define $U_{(\alpha)}[R]$ to be the compact group generated by 
$\{u_{\alpha,i}( \fele ):|\fele|_v<R, 1\leq i\leq d_\alpha\}.$
For any positively closed $\Psi\subset\Phi$ we put 
\[
U_\Psi[R]=\langle \{U_{(\alpha)}[R]:(\alpha)\subset \Psi\}\rangle
\]

Given $a\in A$ we put 
\be\label{eq:W-pm}
W_G^\pm( a)=\{g\in G:\textstyle\lim_{k\to\pm\infty}a^{-k}ga^{k}=\id\}
\ee
to be the expanding (resp.\ contracting) horospherical subgroup corresponding to $a.$ 

\subsection{}\label{sec:maximaltorusTandabsroots}
Let ${}_\field\Phi({\bf A},\Gbf)$ denote the set of roots of ${\bf A},$ i.e., the characters 
for the adjoint action of ${\bf A}$ on the Lie algebra of $\Gbf.$
We will say $\Psi\subset{}_\field\rootsys({\bf A},\Gbf)$ is positively closed if 
\begin{equation}\label{eq:vartheta to be}
 \{\alpha \in \Phi({\bf S},\Gbf):\alpha|A \in \Psi\}
\end{equation}
is positively closed in the sense of \S\ref{sec:group},
and set
\[
V_\Psi:=\prod_{\alpha|_A \in \Psi}U_{(\alpha)}
\] 
for any positively closed subset $\Psi\subset{}_\field\Phi({\bf A},\Gbf)$.
We also let ${\bf V}_{\Psi}$ denote the underlying algebraic group.
An important special case is when $\Psi=[\alpha]=\{r\alpha\in{}_\field\rootsys({\bf A},\Gbf):r>0\}$
for some $\alpha\in{}_\field\Phi({\bf A},\Gbf)$.
In this case $V_{[\alpha]}$ is called a coarse Lyapunov subgroup.

\subsection{Inner type lattices in $\SL(d,\field)$}\label{sec:innerforms}
Recall that in Theorem~\ref{thm:measure-class} we assumed $\Gamma$ is an inner type lattice in  $\SL(d,\field)$;
we recall the definition here.  
Let $D$ be a division algebra of dimension $s^2$ over $\gfield$ and let $B={\rm Mat}_r(D)$ be a central simple algebra over $\gfield$; we assume $d=rs$.

Let $\Omega$ be any field extension of $\gfield$ so that $B\otimes_{\gfield}\Omega\simeq{\rm Mat}_d(\Omega)$ -- one can always find a finite separable extension of $\gfield$ with this property. 
Define the {\em reduced norm} ${\rm Nrd}_B:B\to\Omega$ of $B$ by ${\rm Nrd}_B(g):=\det(g\otimes 1)$.
Then ${\rm Nrd}_B(g)\in\gfield$ for all $g\in B$ and ${\rm Nrd}_B(g)$ is independent of the choice of the splitting field 
$\Omega$ and the implicit isomorphism which we fixed. 
More generally
\be\label{eq:char-poly-inner}
\det(g\otimes 1-\xi\id)\in\gfield[\xi]\quad\text{for every $g\in B$},
\ee
see e.g.~\cite[\S22]{draxl_1983}. 


We now use $B$ to define a $\gfield$-group which is isomorphic to $\SL_d$ over the algebraic closure $\bar\gfield$ of $\gfield$.
Fix a $\gfield$-basis $\mathcal C$ for $D$ and consider the (left) regular representation $\rho$ of $D$ into ${\rm Mat}_{s^2}(\gfield)$, i.e. $g\in D$
is sent to the matrix corresponding to $y\mapsto gy$. 
If we express $\rho$ in the basis $\mathcal C$, we get a system $\{f_\ell(g_{ij})=0\}$ of 
{\em linear} equations in entries $g_{ij}$ with coefficients in $\gfield$ that together define the image of $\rho$.
We identify ${\rm Mat}_{rs^2}(\gfield)$ with ${\rm Mat}_{r}({\rm Mat}_{s^2}(\gfield))$ and let $B'$ 
be the subset of ${\rm Mat}_{rs^2}(\gfield)$ consisting of elements $g_{ij}^{cd}$ for $1\leq i,j\leq s^2$ and $1\leq c,d\leq r$
satisfying $\{f_\ell(g_{ij}^{cd})=0\}$ for all $1\leq c,d\leq r$. Then $\rho$ identifies $B$ and $B'$.
Moreover, in view of the above discussion on ${\rm Nrd}_B$ there exists a polynomial $h$
with coefficients in $\gfield$ so that ${\rm Nrd}_B(g)=h(\rho(g^{cd}))$ for all $g\in B$, see~\cite{Tomanov2002} and~\cite[Ch.~2]{platonov1993algebraic} for a similar discussion and construction. 

For any $\gfield$-algebra $\Upsilon$ define 
\[
\SL_{1,B}(\Upsilon):=\{g\in{\rm Mat}_{rs^2}(\Upsilon):f_\ell(g_{ij}^{cd})=0, h(g_{ij}^{cd})=1\}.
\] 
If $\Omega$ is any field extension of $\gfield$ so that $B\otimes_{\gfield}\Omega\simeq{\rm Mat}_d(\Omega)$,
then $\SL_{1,B}(\Omega)$ is isomorphic 
to $\SL(d,\Omega)$. In particular, $\SL_{1,B}(\bar\gfield)$ is isomorphic 
to $\SL(d,\bar\gfield)$. A group so defined is called an {\em inner $\gfield$-form} of $\SL_d$.

Assume now that $B$ is a a central simple algebra over 
$\gfield$ as above; further, assume that it satisfies $B\otimes_{\gfield}\field\simeq{\rm Mat}_d(\field)$.
For every place $w$ of $\gfield$ define 
\[
\SL_{1,B}(\order_w):=\SL_{1,B}(\gfield_w)\cap {\rm GL}_{rs^2}(\order_w).
\]
Recall that $\SL_{1,B}(\gfield)$ diagonally embeds in the restricted (with respect to 
$\SL_{1,B}(\order_w)$) product $\prod'_{w}\SL_{1,B}(\gfield_w)$. 
Put 
\be\label{eq:Gamma-B}
\Lambda_B=\{\gamma\in\SL_{1,B}(\gfield): \gamma\in\SL_{1,B}(\order_w)\text{ for all $w\neq v$}\}
\ee
Then $\Lambda_B$ is a lattice in $\SL(d,\field)$, see e.g.~\cite[Ch.~I, \S3]{Margulis-Book}.

We will call a subgroup $\Gamma<\SL(d,\field)$ a lattice of {\em inner type} if there exists a central simple algebra $B$ over $\gfield$ 
so that $\Gamma$ is commensurable to $\Lambda_B$.

\section{Preliminary results}\label{sec:prelim-lem}
\subsection{Algebraic structure of compact subgroups of semisimple groups}\label{sec:hilbert-LarPin1}

Given a variety $\Mbf$ which is defined over
$k$ there are two topologies on $\Mbf(k),$ the set of $k$-points of $\Mbf.$ Namely, the Zariski topology and the topology arising from the local field $k.$ We will refer to the latter as the Hausdorff topology.

The following theorems are very special cases of the work of Pink, \cite{Pink-Compact},
and will play an important role in our study.
Roughly speaking, they assert that compact and Zariski dense subgroups of semsimple groups 
have an algebraic description.

\begin{thma1*}[Cf.~\cite{Pink-Compact}, Theorems 0.2 and 7.2]
Suppose $\Qcal\subset\SL(2,\field)$ is a compact and Zariski dense subgroup.
Further, assume that 
\be\label{eq:gen-unip}
\Qcal=\langle \{g\in\Qcal:\text{$g$ is a unipotent element}\}\rangle,
\ee
Let
$
\field''$ be the closed field of quotients generated by $\{{\rm tr}(\rho(g)):g\in\mathcal Q\}$,
where $\rho$ is the unique irreducible subquotient of the adjoint representation of $\PGL_2,$
and set 
\be\label{eq:def-field-def}
{\field':=\begin{cases}\field''&\text{ if ${\rm char}(\field)\neq 2$,}\\
\{c:c^2\in\field''\}&\text{ if ${\rm char}(\field)= 2$.}\end{cases} }.
\ee
Then, there is a $\field$-isomorphism (unique up to unique isomorphism) 
\[
\varphi:{\SL_2}\times_{\field'}\field\to\SL_2
\] 
so that $\Qcal$ is an open subgroup of $\varphi(\SL(2,\field')).$ 
\end{thma1*}

\begin{proof}
Denote by $\bar{\Qcal}$ the image of $\Qcal$ under the natural map from $\SL_2$ to $\PGL_2.$
Then $\bar\Qcal$ is Zariski dense in $\PGL_2.$

By~\cite[Thm.~0.2]{Pink-Compact}, there exist  
\begin{itemize}
\item a subfield $\field'\subset\field,$
\item an absolutely simple adjoint group ${\bf L}$ defined over $\field',$ and
\item a $\field$-isogeny $\phi: {\bf L}\times_{\field'}\field\to\PGL_2$  whose derivative vanishes nowhere, \end{itemize}
where $\field'$ is unique, and ${\bf L}$ and $\phi$ are unique up to unique isomorphism, so that the following hold.
\begin{itemize}
\item $\bar\Qcal\subset\varphi({\bf L}(\field')),$ see~\cite[Thm.~3.6]{Pink-Compact},
\item let $\widetilde{{\bf L}}$ denote the simply connected cover of ${\bf L}$
and let $\widetilde{\phi}$ be the induced isogeny from $\widetilde{{\bf L}}\times_{\field'}\field$ to $\SL_2$.  
Then any compact subgroup 
$
\Qcal'\subset\widetilde{\phi}\Bigl(\widetilde{{\bf L}}(\field')\Bigr)
$ 
which is Zariski dense and normalized by $[\bar\Qcal,\bar\Qcal]$ is an open subgroup of $\widetilde{\phi}\Bigl(\widetilde{{\bf L}}(\field')\Bigr),$ see~\cite[Thm.~7.2]{Pink-Compact}.
\end{itemize}

The fact that $\field'$ can be taken as in~\eqref{eq:def-field-def}
follows from the proof of~\cite[Prop.~0.6(a)]{Pink-Compact}, see in particular~\cite[Prop.\ 3.14]{Pink-Compact} -- {}{in particular, since we are dealing with groups of type $A_1$, we only need the exceptional definition of $\field'$ in characteristic 2.}

Moreover,~\cite[Prop.\ 1.6]{Pink-Compact} implies that there are no non-standard isogenies for groups of type $A_1.$
Hence, by~\cite[Thm.~1.7(b)]{Pink-Compact}, the isogeny $\phi$ above is an isomorphism.

We now prove the other claims. First let us 
recall from~\cite[Thm.~2]{Gille} 
that since $\SL_2$ is simply connected, 
for every unipotent element $u\in\SL(2,\field)$ there exists a 
parabolic $\field$-subgroup, ${\bf P},$ of $\SL_2$ so that $u\in R_u({\bf P}(\field)).$
Hence,~\eqref{eq:gen-unip} implies that
\be\label{eq:gen-unip1}
\mathcal Q=\langle\mathcal Q\cap R_u(P):\text{$P$ is a parabolic subgroup of $\SL(2,\field)$}\rangle.
\ee

Let $P$ be a parabolic subgroup so that $\mathcal Q\cap R_u(P)\neq\{1\}.$
Let $a$ be a diagonalizable matrix in $\operatorname{PSL}(2,\field)\subset\PGL(2,\field)$ 
whose conjugation action contracts $R_u(P).$ Then $a$ contracts ${\phi}(h)$ for any $h\in{{\bf L}}(\field')$ 
where ${\phi}(h)\in R_u(P).$ 
Put $a'={\phi}^{-1}(a)$, the above implies that $h$ can be contracted to identity using conjugation by $a'.$ In particular, $h$ is a unipotent element.

In view of~\eqref{eq:gen-unip} and the above discussion $\widetilde{{\bf L}}(\field')$ contains nontrivial unipotent elements. 
Thus we get from~\cite[Cor.~3.8]{BoTi-Unip}, see also~\cite{Gille}, that $\widetilde{{\bf L}}$ is $\field'$-isotropic.
Since $\widetilde{{\bf L}}$ is simply connected, 
and $\phi$ is an isomorphism we get $\widetilde{{\bf L}}=\SL_2.$

Finally, using~\cite[Ch.\ I, Thm.\ 2.3.1]{Margulis-Book}, we have 
\[
\Qcal\cap R_u(P)\subset\widetilde\phi\Bigl(\widetilde{\bf L}(\field')\Bigr)
\] 
for any parabolic subgroup $P$ of $\SL(2,\field).$ 
Hence $\Qcal\subset\widetilde\phi\Bigl(\widetilde{\bf L}(\field')\Bigr)$ by~\eqref{eq:gen-unip1}.
This finishes the proof in case (a).
\end{proof}

For the second theorem we need some more terminology.
By a linear algebraic group ${\bf G}$ over $\field\oplus\field$
we mean ${\bf G}_1\coprod {\bf G}_2$ where each 
${\bf G}_i$ is a linear algebraic group over $\field.$
The adjoint representation of $\Gbf$ on ${\rm Lie}({\bf G})={\rm Lie}(\Gbf_1)\oplus{\rm Lie}(\Gbf_2)$
is the direct sum of the adjoint representations of ${\bf G}_i$ on ${\rm Lie}(\Gbf_i),$
and the group of $\field\oplus\field$-points of $\Gbf$
is ${\bf G}(\field\oplus\field)=\Gbf_1(\field)\times\Gbf_2(\field).$

Suppose $\G={\bf G}_1\coprod {\bf G}_2$ is a fiberwise absolutely almost simple, connected, simply connected $\field\oplus\field$-group. 
Let $\rho=(\rho_1,\rho_2)$ where $\rho_i$ is the unique irreducible subquotient of the adjoint representation of $\G_i^{\rm ad},$ see~\cite[\S1]{Pink-Compact}. 
The trace ${\rm tr}(\rho(g))$
for an element $g=(g_1,g_2)$ in $\G(\field\oplus\field)$ is defined by
\[
{\rm tr}(\rho(g))=({\rm tr}(\rho_1(g_1)),{\rm tr}(\rho_2(g_2)))\in \field\oplus\field.
\] 

Given a subfield $\field'\subset\field$ and a continuous embedding $\tau:\field'\to\field$
of fields, we put 
\be\label{eq:delta-tau}
\Delta_\tau(\field'):=\{(c,\tau(c)):c\in\field'\}.
\ee
As in~\cite[pp.~16--17]{Pink-Compact}, by a semisimple subring $\field''\subset\field\oplus\field$ we mean one of the following 
\begin{enumerate}
\item[($\field''$-1)] $\field''=\field_1 \oplus\field_2$ where $\field_i\subset\field$ is a closed subfield for $i=1,2,$ or
\item[($\field''$-2)]  $\field''=\Delta_\tau(\field')$ for a subfield $\field'\subset\field$ and a continuous embedding $\tau:\field'\to\field$.
\end{enumerate}

If $\field''=\Delta_\tau(\field')$
and $\Hbf$ is a $\field'$-group, 
we write, by abuse of notation, also $\Hbf$ for the corresponding $\tau(\field')$-group
as well as the $\Delta_\tau(\field')$-group obtained from $\Hbf$.
The base change of $\Hbf$ from $\Delta_\tau(\field)$ to $\field\oplus\field$ is then defined by
\[
\Hbf\times_{\Delta_\tau(\field')}(\field\oplus\field)=\bigg(\Hbf\times_{\field'}\field\bigg)\coprod\bigg(\Hbf\times_{\tau(\field')}\field\bigg).
\]

\begin{thma2*}[Cf.~\cite{Pink-Compact}, Theorems 0.2 and 7.2]
Assume that ${\rm char}(\field)\neq2,3,$ and let $\G_i$, $i=1,2$ 
be absolutely almost simple, connected, simply connected $\field$-groups. 
Let $\Qcal\subset\G_1(\field)\times\G_2(\field)$ be a compact subgroup so that $\pi_i(\Qcal)$ is Zariski dense in $\G_i$ for $i=1,2.$ 
Further, assume that 
\be\label{eq:gen-unip-2}
\Qcal=\langle \{g\in\Qcal:\text{$g$ is a unipotent element}\}\rangle,
\ee
Let $\field''\subset\field\oplus\field$ be defined as follows.
\be\label{eq:def-kro}
\field'':=\text{the closed ring of quotients generated by $\{{\rm tr}(\rho(g)):g\in\mathcal Q\}$}.
\ee
Then one of the following holds.
\begin{enumerate}
\item There are
\begin{enumerate}
\item[(i)] closed subfields $\field_i\subset\field$ so that $\field''=\field_1\oplus\field_2$, 
\item[(ii)] $\field_{i}$-groups ${\bf H}_{i},$ and 
\item[(iii)] $\field$-isomorphism $\varphi_{i}:{\bf H}_{i}\times_{\field_{i}}\field\to\G_i,$ 
\end{enumerate}
so that $\Qcal$ contains an open subgroup of the form 
\[
\Qcal_{1}\times\Qcal_{2}\subset \varphi_{1}\Bigl({\bf H}_{1}(\field_{1})\Bigr)\times \varphi_{2}\Bigl({\bf H}_{2}(\field_{2})\Bigr).
\] 
\item There are 
\begin{enumerate}
\item[(i$'$)]  a closed subfield $\field'\subset \field$ and a continuous embedding $\tau:\field'\to\field$ 
so that $\field''=\Delta_\tau(\field'),$  
\item[(ii$'$)] a $\field'$-group ${\bf H},$ and 
\item[(iii$'$)] a $\field\oplus\field$-isomorphism $\varphi:{\bf H}\times_{\field''}(\field\oplus\field)\to\G_1\coprod\G_2,$ 
\end{enumerate}
so that $\Qcal$ is an open subgroup of $\varphi\bigl({\bf H}(\field'')\bigr).$
\end{enumerate}
Moreover, $\field''$ is unique, and ${\bf H}$ and $\varphi$ are unique up to unique isomorphisms.
\end{thma2*}

\begin{proof}
Similar to Theorem~A.1 these assertions are special cases of results in~\cite{Pink-Compact} as we now explicate. 
Let $\G_i^{\rm ad}$ denote the adjoint form of $\G_i$ for $i=1,2$.
Denote by $\bar{\Qcal}$ the image of $\Qcal$ under the natural map
{from $\G_1\coprod\G_2$ to $\G_1^{\rm ad}\coprod\G_2^{\rm ad}$}. 
Then $\pi_i(\bar\Qcal)$ is Zariski dense in $\G_i^{\rm ad}$ for $i=1,2$.
 
By~\cite[Thm.~0.2]{Pink-Compact} we have the following. There exist  
\begin{itemize}
\item a semisimple subring $\field''\subset\field\oplus\field,$
\item a fiberwise absolutely simple adjoint group ${\bf L}$ defined over $\field'',$ and
\item a $\field\oplus\field$-isogeny $\phi: {\bf L}\times_{\field''}(\field\oplus\field)\to\G_1^{\rm ad}\sqcup\G_2^{\rm ad}$  whose derivative vanishes nowhere, 
\end{itemize}
where $\field''$ is unique, and ${\bf L}$ and $\phi$ are unique up to unique isomorphism, so that the following hold.
\begin{itemize}
\item $\bar\Qcal\subset\phi({\bf L}(\field'')),$ see~\cite[Thm.~3.6]{Pink-Compact},
\item let $\widetilde{{\bf L}}$ denote the simply connected cover of ${\bf L}$
and let $\widetilde{\phi}$ be the induced isogeny from 
$\widetilde{{\bf L}}\times_{\field''}(\field\oplus\field)$ to $\G_1\sqcup\G_2$.  
Then any compact subgroup 
$
\Qcal'\subset\widetilde{\phi}\Bigl(\widetilde{{\bf L}}(\field'')\Bigr)
$ 
which is fiberwise Zarsiki dense and normalized by $[\bar\Qcal,\bar\Qcal]$ is an open subgroup of $\widetilde{\phi}\Bigl(\widetilde{{\bf L}}(\field'')\Bigr),$ see~\cite[Thm.~7.2]{Pink-Compact}.
\end{itemize}

Recall our assumption that ${\rm char}(\field)\neq2,3$.
Therefore, $\G_1$ and $\G_2$ have no non-standard isogenies, see~\cite[Prop.\ 1.6]{Pink-Compact}. 
This also implies that $\field''$ can be taken as in~\eqref{eq:def-kro}, see~\cite[Prop.\ 3.13 and Prop.\ 3.14]{Pink-Compact}.
Moreover, by~\cite[Thm.~1.7(b)]{Pink-Compact}, the isogeny $\phi$ above is an isomorphism.

The above discussion thus implies that if $\field''=\Delta_\tau(\field')$, see ($\field''$-2), then (i$'$), (ii$'$), and (iii$'$) hold. 
Similarly, if $\field''=\field_1\oplus\field_2$, see ($\field''$-1), then (i), (ii), and (iii) hold, in view of the above discussion, and the 
description of algebraic groups and their isogenies over $\field_1\oplus\field_2$ and $\field\oplus\field$.

Finally recall from~\eqref{eq:gen-unip-2} that $\Qcal$ is generated by unipotent elements, therefore,
$\Qcal\subset\widetilde{\phi}\Bigl(\widetilde{{\bf L}}(\field'')\Bigr),$ see~\cite[Ch.\ I, Thm.\ 2.3.1]{Margulis-Book}.
This finishes the proof of case (b).
\end{proof}

We will also need the following lemma.
Let $\mathbf U^+$ (resp.\ $\mathbf U^-$) denote the group of upper (resp.\ lower) triangular unipotent 
matrices in $\SL_2$. 
Also let $\mathbf T$ denote the group of diagonal matrices in $\SL_2.$ 
Put $U^\pm:=\mathbf U^\pm(\field)$ and $T:=\mathbf T(\field)$.

\begin{lem}\label{lem:E-generated}
Let the notation be as in Theorem~A.1. 
Put $E=\varphi(\SL(2,\field'))$, then
\begin{enumerate}
\item $E=\langle E\cap U^+,E\cap U^-\rangle.$
\item $E\cap T$ is unbounded.
\end{enumerate}
\end{lem}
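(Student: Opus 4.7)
The plan is to transfer the well-known structural statements about $\SL(2,\field')$ across the $\field$-isomorphism $\varphi$ to $E$. For (1) the key classical input is Gaussian elimination: $\SL(2,F)$ is generated by elementary matrices for any field $F$, hence $\SL(2,\field') = \langle \mathbf{U}^+(\field'), \mathbf{U}^-(\field')\rangle$. Applying $\varphi$ immediately yields $E = \langle \varphi(\mathbf{U}^+(\field')), \varphi(\mathbf{U}^-(\field'))\rangle$ as an abstract group. For (2) the analogous input is that the split $\field'$-torus $\mathbf{T}(\field') \cong (\field')^{\times}$ is unbounded: since $\field'$ is an infinite closed subfield of $\field$, it carries a nontrivial absolute value inherited from $|\cdot|_v$, and powers of any uniformizer of $\field'$ have unbounded absolute value in $\field$.

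To connect these generators with the intersections $E \cap U^{\pm}$ and $E \cap T$ appearing in the lemma, the next step is to align $\varphi$ with the standard Borel structure of $\SL_2$. Since $\SL_2$ has no outer automorphisms, $\varphi$ is given by conjugation by some $h \in \PGL_2(\field)$. The uniqueness-up-to-isomorphism clause of Theorem~A.1 permits me to replace $\varphi$ by $\varphi \circ \iota$ for any inner $\field'$-automorphism $\iota$ of $\SL_2/\field'$ without changing $E$; this replaces $h$ by $h g'$ for $g' \in \PGL_2(\field')$. Using this freedom, one should arrange $h \in \mathbf{T}(\field)$, i.e.\ $h$ normalizes the pair of standard opposite Borels $\mathbf{B}^{\pm}$. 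Under this normalization $\varphi(\mathbf{U}^{\pm}_{\field'}) = \mathbf{U}^{\pm}_{\field}$ and $\varphi(\mathbf{T}_{\field'}) = \mathbf{T}_{\field}$ as $\field$-algebraic subgroups of $\SL_2$, whence $\varphi(\mathbf{U}^{\pm}(\field')) \subseteq U^{\pm}$ and $\varphi(\mathbf{T}(\field')) \subseteq T$, and one checks from $h \in \mathbf{T}(\field)$ that in fact $\varphi(\mathbf{U}^{\pm}(\field')) = E \cap U^{\pm}$.

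Granting the alignment, (1) is immediate:
\[
E \;=\; \langle \varphi(\mathbf{U}^+(\field')),\,\varphi(\mathbf{U}^-(\field'))\rangle \;\subseteq\; \langle E \cap U^+,\,E \cap U^-\rangle \;\subseteq\; E,
\]
forcing equality. For (2), $\mathbf{T}(\field')$ contains $\operatorname{diag}(c, c^{-1})$ with $|c|_v$ arbitrarily large, and these map into $E \cap T$ under $\varphi$; since $\varphi|_{\mathbf{T}}$ is an $\field$-isomorphism of $\bbf_q$-tori, it distorts absolute values only by a fixed power, so unboundedness is preserved. The main obstacle is justifying the alignment step: the $h$ produced by Theorem~A.1 need not a priori lie in $\mathbf{T}(\field)\cdot \PGL_2(\field')$, and arranging this seems to require using extra structure of $\Qcal$ beyond Theorem~A.1's hypotheses---typically in the context where this lemma is invoked one has nontrivial unipotents of $\Qcal$ in both $U^+$ and $U^-$, which forces the fixed points $[1{:}0]$ and $[0{:}1]$ of $\mathbf{U}^{\pm}$ on $\mathbb{P}^1(\field)$ to lie in $\varphi(\mathbb{P}^1(\field'))$, and in turn pins down a representative of $h$ in $\mathbf{T}(\field)$.
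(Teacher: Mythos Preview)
Your approach is correct and essentially equivalent to the paper's, though organized differently. You normalize $\varphi$ (replacing it by $\varphi\circ\iota$ for an inner $\field'$-automorphism $\iota$) so that the conjugating element $h$ lies in $\mathbf{T}(\field)$, after which the standard subgroups $\mathbf{U}^\pm,\mathbf{T}$ are preserved. The paper instead leaves $\varphi$ untouched and works with whichever $\field'$-parabolics $\mathbf{P}^\pm$ contain the pullbacks $h^\pm=\varphi^{-1}(u^\pm)$ of given unipotents $u^\pm\in\Qcal\cap U^\pm$ (found via Gille's theorem that unipotents in simply connected groups lie in $\field'$-parabolics); a Zariski-closure argument then shows $\varphi$ maps $R_u(\mathbf{P}^\pm)$ to $\mathbf{U}^\pm$ as algebraic groups, and $\mathbf{S}=\mathbf{P}^+\cap\mathbf{P}^-$ to $\mathbf{T}$. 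Your $\mathbb{P}^1$-fixed-point argument and the paper's parabolic argument are two ways of saying the same thing for $\SL_2$, and your normalization step is exactly the choice of $g'\in\PGL_2(\field')$ carrying the paper's $\mathbf{P}^\pm$ to the standard Borels.

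You are right that the alignment step requires knowing $\Qcal\cap U^+$ and $\Qcal\cap U^-$ are both nontrivial; this is not a consequence of the hypotheses of Theorem~A.1 alone (for generic $g\in\SL(2,\field)$ the group $g\SL(2,\field')g^{-1}$ meets $U^+$ only trivially). The paper's proof also relies on this extra input---it opens by asserting the existence of such $h^\pm$ ``from the proof of Theorem~A.1,'' but really it is supplied by the application context in~\S6.2, where $\Qcal_x$ is by construction generated by $\Hcal_x\cap U_{\pm\alpha}(\order_v)$ with both intersections infinite. So your identification of this as the genuine content is accurate, and your sketch of how to resolve it is sound.
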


\begin{proof}
We showed in the course of the proof of Theorem~A.1 that there are 
nontrivial unipotent elements $h^\pm\in \SL(2,\field')$ so that ${\varphi}(h^\pm)\in U^\pm,$ respectively.
Since $\SL_2$ is simply connected, it follows from~\cite[Thm.~2]{Gille} 
that there are $\field'$-parabolic subgroups ${\bf P}^\pm$
of $\SL_2$ so that $h^\pm\in R_u({\bf P}^\pm).$
The groups $R_u(\mathbf P^\pm)$ are one dimensional  $k'$-split unipotent subgroups,
 hence $\varphi(R_u(\mathbf P^\pm)(\field'))\subset \varphi(\SL_2)$
is an infinite group. Note that $\varphi(\SL_2)=\SL_2$ in Theorem~A.1. 
Let ${\bf U}'_\pm$ denote the Zariski closure of
$\varphi(R_u(\mathbf P^\pm)(\field'))$. Then ${\bf U}'_\pm$ is a nontrivial connected unipotent subgroup of $\varphi(\SL_2)$ which intersects ${\bf U}^\pm\cap \varphi(\SL_2)$ nontrivially. Therefore,
${\bf U}'_\pm={\bf U}^\pm\cap \varphi(\SL_2)$ which implies
\be\label{eq:Palpha-Ualpha}
{\varphi}(R_u({\bf P}^\pm)(\field'))\subset U^\pm\cap E.
\ee

Using the fact that $\SL_2$ is simply connected one more time,
we note that $\SL(2,\field')$ is generated by $R_u({\bf P}^\pm)( \field'),$~\cite[Ch.~1, Thm.~2.3.1]{Margulis-Book}.
This and~\eqref{eq:Palpha-Ualpha} imply (1) in the lemma.

We now show (2) in the lemma. Let $\mathbf S=\mathbf P^+\cap\mathbf P^-.$
Then ${\bf S}$ is a one dimensional $\field'$-split $\field'$-torus; put $S=\mathbf S(\field').$
Now 
\[
T':=\varphi(S)\subset TU^+\cap TU^-=T
\] 
satisfies the claim in (2).  
\end{proof}

\subsection{Measures invariant under semisimple groups}\label{sec:ratner-sl2}
We will state in this subsection the measure classification 
result by Salehi-Golsefidy and the third named author~\cite{MS-SL2}
for probability measures that are invariant under non-compact semisimple groups
in the positive characteristic setting.

For this
we need some notation and definitions, these generalize the notions defined in~\eqref{eq:W-pm} to a general 
connected group.
Let $\field$ be a local field.
Suppose $\mathbf{M}$ is a connected $k$-algebraic group, 
and let $\lambda:{\bf G}_m\rightarrow{{\bf M}}$ be a noncentral homomorphism defined over $k.$ 
Define $-\lambda(\cdot)=\lambda(\cdot)^{-1}.$ 

{}{Recall that a morphism from ${\bf G}_m$ to $\bf M$ is said to have 
a limit at $0$ when it can be extended to a morphism from ${\bf A}^1$ to $\bf M$. 
As in~\cite[\S13.4]{Sp-AlgGrBook} and~\cite[Ch.~2 and App.~C]{CGP-PseudoRed}, we let $\bbp_{{\bf M}}(\lambda)$ 
denote the smooth closed subgroup of ${{\bf M}}$ defined over $\field$ so that 
\[
\bbp_{{\bf M}}(\lambda)(R)=\{r\in{\bf M}(R): \lambda r\lambda^{-1}\text{from ${\bf G}_m$ to ${\bf M}$ has a limit at 0}\}
\]
for any algebra $R/\field$.


Let $\bbw_{\bf M}^+(\lambda)$ be the closed normal subgroup of $\bbp_{{\bf M}}(\lambda)$ so that
\[
\bbw_{\bf M}^+(\lambda)(R)=\{r\in{\bf M}(R): \lambda r\lambda^{-1}\text{from ${\bf G}_m$ to ${\bf M}$ has a limit at 0}\}
\]
for any algebra $R/\field$.  Similarly, define $\bbw_{{\bf M}}^+(-\lambda)$ which we will denote by $\bbw_{{\bf M}}^-(\lambda).$

The centralizer of the image of $\lambda$ is denoted by ${\bf Z}_{{\bf M}}(\lambda).$ 
 
The subgroups $\bbw_{\bf M}^+(\lambda)$, ${\bf Z}_{{\bf M}}(\lambda)$, $\bbw_{{\bf M}}^-(\lambda)$ are {\em smooth} closed subgroups, see ~\cite[Ch.~2 and App.~C]{CGP-PseudoRed}. 
}

The multiplicative group ${\bf G}_m$ acts on $\Lie({{\bf M}})$ via $\lambda,$ and the weights are integers. 
The Lie algebras of ${\bf Z}_{{\bf M}}(\lambda)$ and $\bbw_{{\bf M}}^\pm(\lambda)$ 
may be identified with the weight subspaces of this action corresponding to the zero, positive and negative weights. 
It is shown in~\cite[Ch.~2 and App.~C]{CGP-PseudoRed} 
that $\bbp_{{\bf M}}(\lambda),$ ${\bf Z}_{{\bf M}}(\lambda)$ and $\bbw_{{\bf M}}^{\pm}(\lambda)$ 
are $k$-subgroups of ${{\bf M}}.$ Moreover, $\bbw_{{\bf M}}^+(\lambda)$ is a normal subgroup of $\bbp_{{\bf M}}(\lambda)$ and the product map 
\[
\mbox{${\bf Z}_{{\bf M}}(\lambda)\times \bbw_{{\bf M}}^+(\lambda)\rightarrow\bbp_{{\bf M}}(\lambda)$ 
is a $k$-isomorphism of varieties.} 
\]

A pseudo-parabolic $\field$-subgroup of ${\bf M}$ is a group of the form
$\bbp_{\bf M}(\la)R_{u,\field}({\bf M})$ for some $\la$ as above 
where $R_{u,\field}({\bf M})$ denotes the maximal
connected normal unipotent $\field$-subgroup of ${\bf M},$~\cite[Def.~2.2.1]{CGP-PseudoRed}.

We also recall from~\cite[Prop.~2.1.8(3)]{CGP-PseudoRed} that the product map
\begin{equation}\label{opposite-horo}
\bbw^-_{{\bf M}}(\lambda)\times{\bf Z}_{{\bf M}}(\lambda)\times\bbw_{{\bf M}}^+(\lambda)\rightarrow{{\bf M}}\;\text{ is an open immersion of $k$-schemes.}
\end{equation} 
It is worth mentioning that these results are generalization to arbitrary groups of analogous and 
well known statements for reductive groups. 

Let $M={{\bf M}}(\field),$ and put 
\[
\mbox{$W_M^\pm(\la)=\bbw^\pm_{{\bf M}}(\lambda)(k),$ and $Z_M(\la)={\bf Z}_{{\bf M}}(\lambda)(k).$ }
\]
From~\eqref{opposite-horo} we conclude that $W_M^-(\la)Z_M(\la)W_M^+(\la)$ 
is a Zariski open dense subset of $M,$ which contains a neighborhood of 
identity with respect to the Hausdorff topology.

For any $\la$ as above define 
\be\label{eq:M-+-la}
M^+(\la):=\langle W_M^+(\la),W_M^-(\la)\rangle.
\ee

\begin{lem}\label{lem:w-pm-normal}\leavevmode
\begin{enumerate}
\item For any $\la$ as above, $M^+(\la)$ is a normal and unimodular subgroup of $M$.
\item There are only countably many subgroups {of the form} $M^+(\la)$ in $M.$
\end{enumerate}
\end{lem}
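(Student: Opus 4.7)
The plan for part~(1) hinges on passing to the algebraic closure. Let $\Hbf(\la):=\langle\bbw^+_{\Mbf}(\la),\bbw^-_{\Mbf}(\la)\rangle$ denote the $\field$-algebraic subgroup of $\Mbf$ generated by the horospherical subgroups; I first verify that $\Hbf(\la)$ is normal in $\Mbf$. The centralizer ${\bf Z}_{\Mbf}(\la)$ normalizes each $\bbw^\pm_{\Mbf}(\la)$ (immediate from the weight-space description of the horospherical subgroups), and $\bbw^\pm_{\Mbf}(\la)$ trivially normalize $\Hbf(\la)$. Hence the product $\bbw^-_{\Mbf}(\la)\cdot{\bf Z}_{\Mbf}(\la)\cdot\bbw^+_{\Mbf}(\la)$, which is Zariski open in $\Mbf$ by the open immersion~\eqref{opposite-horo}, sits inside the normalizer of $\Hbf(\la)$. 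Since this normalizer is Zariski closed, it must equal $\Mbf$; taking $\field$-points gives $\Hbf(\la)(\field)\triangleleft M$.

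To descend this to normality of $M^+(\la)$ itself, I would identify $M^+(\la)$ with the canonical subgroup $\Hbf(\la)(\field)^+\subseteq\Hbf(\la)(\field)$ generated by all unipotent $\field$-points --- equivalently, by $R_{u,\field}({\bf P})(\field)$ for all pseudo-parabolic $\field$-subgroups ${\bf P}$ of $\Hbf(\la)$. This uses two ingredients: every unipotent element of $\Hbf(\la)(\field)$ is contained in some such $R_{u,\field}({\bf P})(\field)$ (cf.~\cite{Gille,CGP-PseudoRed}), and by Bruhat-type generation $\Hbf(\la)(\field)^+$ is already generated by the $\field$-points of any opposite pair of pseudo-parabolic unipotent radicals, in particular by $W^+_M(\la)$ and $W^-_M(\la)$. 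Being characteristic in $\Hbf(\la)(\field)$, the subgroup $\Hbf(\la)(\field)^+$ is preserved under conjugation by any $m\in M$ (which induces an automorphism of the normal subgroup $\Hbf(\la)(\field)\subseteq M$), so $M^+(\la)\triangleleft M$. For unimodularity, the modular function $\Delta$ of $M^+(\la)$ is trivial on every unipotent element $u$ --- the adjoint action of $u$ on $\Lie(M^+(\la))$ is unipotent, hence has determinant of absolute value $1$ --- and $M^+(\la)$ is topologically generated by the unipotent elements of $W^\pm_M(\la)$, so $\Delta\equiv1$.

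For~(2), combining normality with the trivial identity $M^+(-\la)=M^+(\la)$ (since negation swaps $W^+_M(\la)$ and $W^-_M(\la)$), the subgroup $M^+(\la)$ depends only on the $M$-conjugacy class of the $1$-dimensional $\field$-split subtorus $\la({\bf G}_m)\subseteq\Mbf$. Fixing a maximal $\field$-split torus ${\bf S}\subseteq\Mbf$, every such $1$-dimensional subtorus is $M$-conjugate into ${\bf S}$ by conjugacy of maximal $\field$-split tori (reducing through $\Mbf/R_{u,\field}(\Mbf)$ if needed). The $1$-dimensional subtori of ${\bf S}$ are parameterized by primitive elements of the finitely generated lattice $X_*({\bf S})$ modulo sign, a countable set; hence only countably many subgroups $M^+(\la)$ arise. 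The main technical obstacle is the identification $M^+(\la)=\Hbf(\la)(\field)^+$ in the second paragraph, which in the generality of an arbitrary connected $\field$-algebraic group $\Mbf$ requires invoking the positive-characteristic, pseudo-reductive generalization of the Borel--Tits theorem on groups generated by unipotents, available in~\cite{CGP-PseudoRed}.
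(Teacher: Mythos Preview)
Your argument for part~(2) is essentially identical to the paper's: use normality from~(1) to obtain $M^+(g\la g^{-1})=gM^+(\la)g^{-1}=M^+(\la)$, conjugate an arbitrary cocharacter into a fixed maximal $\field$-split torus ${\bf S}$ (the paper cites \cite[Thm.~C.2.3]{CGP-PseudoRed} rather than reducing through $\Mbf/R_{u,\field}(\Mbf)$), and invoke countability of $X_*({\bf S})$.

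For part~(1) the paper gives no argument at all, deferring entirely to \cite[Lemma~2.1]{MS-SL2}, so your outline is genuinely more informative. The normality of $\Hbf(\la)$ in $\Mbf$ via the open cell is correct, as is the unimodularity argument. However, the step you flag as the ``main technical obstacle'' does have a real gap beyond what you indicate. You claim $\bbw^\pm_{\Mbf}(\la)$ are unipotent radicals of an opposite pair of pseudo-parabolic $\field$-subgroups of $\Hbf(\la)$, but this presupposes a cocharacter \emph{into $\Hbf(\la)$} with these horosphericals, and $\la$ itself need not land in $\Hbf(\la)$: for instance, if $\Mbf={\bf G}_m\times\SL_2$ and $\la(t)=(t,\diag(t,t^{-1}))$ then $\Hbf(\la)=\{1\}\times\SL_2$ does not contain $\la({\bf G}_m)$. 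One can repair this by observing that the image of $\la$ in the quotient $\Mbf/\Hbf(\la)$ is central (since $\la({\bf G}_m)$ acts trivially on ${\bf Z}_{\Mbf}(\la)$, which surjects onto the quotient), so a suitable cocharacter into ${\bf Z}_{\Mbf}(\la)\cap\Hbf(\la)$ recovers the same horospherical subgroups; but this needs to be said explicitly. Similarly, the appeal to Gille's theorem for $\Hbf(\la)$ is not justified as stated, since that result is for simply connected semisimple groups and $\Hbf(\la)$ need not be of this type.
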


Combining results in~\cite[App.\ C]{CGP-PseudoRed} together with part~(1) in the lemma one can actually conclude that there are only finitely many such subgroups. We shall only make use of the weaker statement above.

\begin{proof}
Part (1) is proved in~\cite[Lemma 2.1]{MS-SL2}.
We now prove (2). 
First note if $\la_1,\la_2:\mathbf G_m\to\mathbf M$
are two homomorphisms so that $\la_1=g\la_2g^{-1}$ for some $g\in M,$
then $M^+(\la_1)=gM^+(\la_2)g^{-1}.$ Therefore, by part (1) we have  
\be\label{eq:la-la-conj}
M^+(\la_1)=M^+(\la_2)\text{ whenever $\la_1=g\la_2g^{-1}$ for some $g\in M.$}
\ee

Let now $\mathbf S$ be a maximal $\field$-split $\field$-torus
in $\mathbf M.$ By~\cite[Thm.~C.2.3]{CGP-PseudoRed} there is some
$g\in M$ so that $g\lambda g^{-1}:\mathbf G_m\to\mathbf S.$
The claim now follows from this,~\eqref{eq:la-la-conj}, and the fact that the
finitely generated abelian group $X_{*}(\mathbf S)={\rm Hom}({\bf G}_m,\mathbf S)$ is countable.
\end{proof}

Given any subfield $\sfield\subset\field$ so that $\field/\sfield$ is a finite extension
we let $\mathcal R_{\field/\sfield}$ denote the Weil's restriction of scalars, see~\cite[\S{A.5}]{CGP-PseudoRed}. 

In the following let~$\G$ be a connected k-group and let $\Gamma\subset G$
	be a discrete subgroup in $G=\G(k)$.
Furthermore, let $\field'\subset\field$ be a closed subfield and let ${\bf H}$ be 
an absolutely almost simple, $\field'$-isotropic, $\field'$-group. Assume that
$\varphi:{\bf H}\times_{\field'}\field\to{\bf G}$ is a nontrivial $\field$-homomorphism, and
put $E=\varphi\Bigl(\mathbf H(\field')\Bigr).$  We use in an essential way
the following measure classification result by Alireza Salehi-Golsefidy
and the third named author. 

\begin{thmb*}[\cite{MS-SL2}, Theorem 6.9 and Corollary 6.10]\label{thm:ratner-sl2}
Let $\nu$ be a probability measure on $G/\Gamma$ which is $E$-invariant
and ergodic. Then, there exist
\begin{enumerate}
\item some $\sfield=(\field')^q\subset\field $ where $q=p^n,$ $p={\rm char}(\field)$ and $n$ is a nonnegative integer, 
\item a connected $\sfield$-subgroup ${\bf M}$ of $\rcal_{\field/\sfield}({\bf G})$ so that ${\bf M}(\sfield)\cap \Gamma$ is Zariski dense in ${\bf M},$ and
\item an element $g_0\in G,$ 
\end{enumerate} 
such that $\nu$ is the $g_0Lg_0^{-1}$-invariant probability Haar measure on the closed orbit $g_0L\Gamma/\Gamma$ 
with 
\[
L=\overline{M^+(\la)({\bf M}(\sfield)\cap\Gamma)},
\]
where  
\begin{itemize}
\item the closure is with respect to the Hausdorff topology, and 
\item $\la:{\bf G}_m\to{\bf M}$ is a noncentral $\sfield$-homomorphism, $M^+(\la)$ is defined in~\eqref{eq:M-+-la}, and $E\subset g_0M^+(\la)g_0^{-1}.$
\end{itemize} 
\end{thmb*}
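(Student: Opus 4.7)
My plan is to construct an algebraic hull for $\nu$ inside the Weil restriction $\mathcal{R}_{k/\ell}(\mathbf{G})$ and then to propagate the $E$-invariance to a larger subgroup of the form $M^+(\lambda)$ via a shearing argument. The passage from $k'$ to $\ell = (k')^q$ in the statement is dictated by Frobenius: in positive characteristic a minimal subfield of definition is only pinned down up to Frobenius twists, so the algebraic hull of $\nu$ will only close up after replacing $k'$ with some $(k')^q$. The key structural input making the whole approach possible is that $\mathbf{H}$ is absolutely almost simple, simply connected, and $k'$-isotropic, which by Borel--Tits forces $E = \varphi(\mathbf{H}(k'))$ to be generated by unipotent elements.

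First I would set up the algebraic hull: consider all connected $\ell$-subgroups $\mathbf{M} \subset \mathcal{R}_{k/\ell}(\mathbf{G})$ for which $\mathbf{M}(\ell) \cap \Gamma$ is Zariski dense in $\mathbf{M}$ and $\supp(\nu)$ lies in a single closed orbit of $\mathbf{M}(\ell)$. By Noetherian descent a minimal such $\mathbf{M}$ exists; after translating by some $g_0 \in G$ we may place $\nu$ on the closed orbit $g_0\mathbf{M}(\ell)\Gamma/\Gamma$. The remaining task is to identify $\nu$ with Haar measure on the $L$-orbit appearing in the statement. Pick a noncentral cocharacter $\lambda : \mathbf{G}_m \to \mathbf{M}$ so that $E \subset g_0 M^+(\lambda) g_0^{-1}$; this is possible because any $k'$-split unipotent subgroup of $E$ must lie in a pseudo-parabolic subgroup of $\mathbf{M}$, and pseudo-parabolics are built from such $M^+(\lambda)$'s. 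The crucial dynamical step is then to upgrade $E$-invariance to $g_0 M^+(\lambda) g_0^{-1}$-invariance by the standard shearing mechanism: for $\nu$-generic pairs of nearby points, applying long elements of a $k'$-split unipotent subgroup of $E$ produces polynomial divergence in a prescribed transverse direction, yielding additional invariance; iterating inside $M^+(\lambda)$, which by~\eqref{opposite-horo} is generated by opposite horospherical subgroups, should fill out the full claimed invariance. Combining with the automatic invariance under $\mathbf{M}(\ell) \cap \Gamma$ and invoking ergodicity then forces $\nu$ to be the unique Haar measure on the $L$-orbit.

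The main obstacle, and what makes the result substantially harder than its characteristic-zero analogue, is that in positive characteristic closed subgroups of $\mathbf{G}_a(k)$ can be wild: the shearing argument may a priori only produce invariance under some non-algebraic closed subgroup of a $k$-unipotent, so one cannot directly conclude invariance under a full one-parameter unipotent subgroup as in the Ratner framework. The work [MS-SL2] deals with this by specializing to essentially $SL_2$-type dynamics, where the list of candidate transverse invariance groups is tractable enough to enumerate, and by combining the output with Pink-style structural results (in the spirit of Theorems~A.1 and~A.2) to identify the subfield $\ell = (k')^q$ over which the maximal invariance group is naturally defined. The subgroup $L = \overline{M^+(\lambda)(\mathbf{M}(\ell) \cap \Gamma)}$ then emerges as the combination of the dynamical invariance coming from $M^+(\lambda)$ and the arithmetic invariance coming from the discrete subgroup $\mathbf{M}(\ell) \cap \Gamma$.
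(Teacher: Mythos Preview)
The paper does not prove Theorem~B; it is quoted from \cite{MS-SL2} (Theorem~6.9 and Corollary~6.10) and used as a black box. So there is no ``paper's own proof'' to compare your proposal against.

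That said, what you wrote is not a proof but an outline of a strategy, and you essentially acknowledge this yourself in the final paragraph when you defer the hard step to \cite{MS-SL2}. A few specific points where the sketch would need substantial work to become a proof: (i) the existence of a minimal algebraic hull $\mathbf{M}$ with the stated properties is not just Noetherian descent---one needs to show that the class of such $\mathbf{M}$ is closed under intersection (or otherwise well-ordered), and the passage to the subfield $\ell=(k')^q$ has to be tied precisely to this minimization; (ii) the claim that one can choose a single noncentral cocharacter $\lambda$ with $E\subset g_0 M^+(\lambda) g_0^{-1}$ is not obvious, since $E$ is generated by unipotents lying in possibly several different pseudo-parabolics; (iii) most seriously, the phrase ``iterating inside $M^+(\lambda)$\dots should fill out the full claimed invariance'' hides exactly the positive-characteristic difficulty you name in the next paragraph---the shearing argument does not automatically produce full one-parameter invariance, and overcoming this is the main content of \cite{MS-SL2}. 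Your outline captures the overall architecture correctly, but the substance lies in the steps you leave to the reference.
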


\subsection{A version of Borel density theorem}\label{sec:measures-varieties}
Let $\field'\subset\field$ be an infinite closed subfield.  
We recall from~\cite[Prop.~1.4]{Shalom-BorelDensity} that the {\em discompact radical} of a $\field'$-group
is the maximal $\field'$-subgroup which does not have any nontrivial compact
$\field'$-algebraic quotients.
It is shown in~\cite[Prop.~1.4]{Shalom-BorelDensity} that 
this subgroup exists and the quotient of the $k'$-points
of the original group by the~$\field'$-points of the discompact radical is compact. 
{Let $\bf A$ be a $\field$-split torus.}
Let $A^{{\rm sp}}_{\field'}\subset \mathcal R_{\field/\field'}({\bf A})(\field')=A$ 
denote the $k'$-points of the maximal $k'$-split subtorus of $\mathcal R_{\field/\field'}({\bf A}).$

Suppose ${\bf V}$ is a variety defined over $\field'$ 
and assume that ${\mathcal R}_{\field/\field'}({\bf A})$ acts on ${\bf V}$ via $\field'$-morphisms.
In particular, $A={\mathcal R}_{\field/\field'}({\bf A})(k')$ acts on $V={\bf V}(\field')$ via $k'$-morphisms.

\begin{lem}[Cf.~\cite{Shalom-BorelDensity}, Theorem 1.1]\label{lem:zd-measure1}
Let $(X,\eta)$ be an $A$-invariant ergodic probability space.  
Let $f:X\to V$ be an $A$-equivariant Borel map. 
Then there exists some $v_0\in{\rm Fix}_{A^{{\rm sp}}_{\field'}}(V),$
so that $f_*\eta$ is the $A$-invariant measure on the compact orbit $Av_0.$
In particular, $f(x)\in Av_0$ for $\eta$-a.e.\ $x.$    
\end{lem}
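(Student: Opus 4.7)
The plan is to push forward $\eta$ along $f$ to obtain $\mu := f_*\eta$, an $A$-invariant probability measure on $V = {\bf V}(\field')$, and then apply the Borel density theorem of Shalom~\cite[Thm.~1.1]{Shalom-BorelDensity}. First I would check that $\mu$ is $A$-ergodic: since $f$ is $A$-equivariant, the preimage of any $A$-invariant Borel set in $V$ is $A$-invariant in $X$, hence has $\eta$-measure $0$ or $1$ by ergodicity of $\eta$.

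Next I would apply Shalom's theorem to the $\field'$-algebraic group $\mathcal{R}_{\field/\field'}({\bf A})$ acting on ${\bf V}$. Since $\mathcal{R}_{\field/\field'}({\bf A})$ is a $\field'$-torus, its discompact radical (the maximal $\field'$-subgroup with no nontrivial compact $\field'$-algebraic quotients) coincides with its maximal $\field'$-split subtorus, whose group of $\field'$-points is by definition $A^{{\rm sp}}_{\field'}$. Shalom's theorem then forces $\mu$ to be supported on the Zariski-closed subvariety $F := {\rm Fix}_{A^{{\rm sp}}_{\field'}}(V)$.

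To conclude, $A$ preserves $F$ (since $A$ is abelian, $A^{{\rm sp}}_{\field'}$ is normal in $A$) and the $A$-action on $F$ factors through the quotient $A/A^{{\rm sp}}_{\field'}$, which is compact by~\cite[Prop.~1.4]{Shalom-BorelDensity}. Thus $\mu$ is an $A$-ergodic probability measure on $F$ with $A$ acting through a compact group; such a measure must be the Haar measure on a single compact orbit $Av_0$ for some $v_0\in F$, which gives the lemma. The main conceptual step is invoking Shalom; the remaining issue is just the bookkeeping for the identification of the discompact radical of the Weil restriction $\mathcal{R}_{\field/\field'}({\bf A})$ with (the underlying torus of) $A^{{\rm sp}}_{\field'}$, which follows from standard torus structure theory.
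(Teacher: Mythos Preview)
Your proposal is correct and follows essentially the same route as the paper: both invoke Shalom's Borel density theorem~\cite[Thm.~1.1]{Shalom-BorelDensity} after identifying the discompact radical of $\mathcal{R}_{\field/\field'}({\bf A})$ with the maximal $\field'$-split subtorus, and then use compactness of $A/A^{\rm sp}_{\field'}$ to pin down a single orbit. The paper's proof is a one-line citation (also pointing to~\cite[Thm.~3.6]{Shalom-BorelDensity}); you have simply spelled out the steps.
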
 

\begin{proof}
This follows from~\cite[Thm.\ 1.1]{Shalom-BorelDensity} in view of the fact that 
$A^{{\rm sp}}_{\field'}$ is the {\em discompact} radical of ${\mathcal R}_{\field/\field'}({\bf A})$
as defined in~\cite{Shalom-BorelDensity}, see also~\cite[Thm.~3.6]{Shalom-BorelDensity}. 
\end{proof}


\subsection{Pinsker \texorpdfstring{$\sigma$}{sigma}-algebra and unstable leaves}\label{sec:pinsker} 
Throughout this section we assume $\G$ is a $\field$-isotropic semisimple $\field$-group
and let ${\bf A}$ be a $\field$-split $\field$-torus in $\Gbf.$ 
Put $G=\Gbf(\field)$ and $A=\mathbf A(\field).$ Let $\Gamma$ be a discrete subgroup of $G$
and put $X=G/\Gamma.$ 

Let $a\in A$ be a nontrivial element. 
Recall that for an $a$-invariant measure $\mu$ we define the Pinsker $\sigma$-algebra as 
\[
\pins_a:=\bigl\{B\in\mathcal B: \entropy_\mu(a,\{B,X\setminus B\})=0\bigr\}.
\] 
It is the largest $\sigma$-algebra with respect to which $\mu$ has zero entropy, see~\cite{Walters} for further discussion.

Let us recall the following important and well known proposition; we outline 
the proof for the sake of completeness.

\begin{prop}\label{prop:pins-meas-hull}
The Pinsker $\sigma$-algebra, $\pins_a,$ is equivalent to the $\sigma$-algebra of Borel 
sets foliated by $W_G^+( a)$ leaves. 
\end{prop}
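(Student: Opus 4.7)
The plan is to verify both inclusions $\mathcal F^+\subseteq\pins_a$ and $\pins_a\subseteq\mathcal F^+$ modulo $\mu$-null sets, where I write $\mathcal F^+$ for the sub-$\sigma$-algebra of Borel sets that are unions of $W_G^+(a)$-orbits. Both $\mathcal F^+$ and $\pins_a$ are $a$-invariant since $a$ normalizes $W_G^+(a)$, so it suffices to establish the two inclusions at the level of $a$-invariant sub-$\sigma$-algebras.

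For $\mathcal F^+\subseteq\pins_a$ I would exhibit $\mathcal F^+$ as a zero-entropy factor of $(X,\mu,a)$. Locally $X$ decomposes as a product of a $W_G^+(a)$-piece with an open transversal inside $Z_G(a)\cdot W_G^-(a)$ (a Bruhat-type decomposition, as in~\eqref{opposite-horo}), so the factor $X/\mathcal F^+$ is locally modelled on $Z_G(a)\cdot W_G^-(a)$; on this model $a$ acts by isometry along $Z_G(a)$ and by contraction along $W_G^-(a)$. A non-expanding transformation has zero topological entropy because the cardinality of the $n$-fold join of a small ball partition grows at most polynomially in $n$ under contraction, and the variational principle then yields zero measure-theoretic entropy for the factor. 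By the maximality of $\pins_a$ among $a$-invariant sub-$\sigma$-algebras defining zero-entropy factors, this gives $\mathcal F^+\subseteq\pins_a$.

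The reverse inclusion $\pins_a\subseteq\mathcal F^+$ is the deeper part. I would construct a countable Borel partition $\eta$ subordinate to $W_G^+(a)$, with atoms bounded open subsets of single $W_G^+(a)$-leaves, satisfying $\eta$ refines $a\eta$ and $\mathcal F^+\vee\bigvee_{n\leq 0}a^n\sigma(\eta)$ generating $\mathcal B$ modulo $\mu$; in positive characteristic such $\eta$ is produced cleanly from $a$-adapted ultrametric balls in $W_G^+(a)$. A Rokhlin--Sinai argument, as adapted in Margulis--Tomanov and in~\cite{EKL}, identifies $\pins_a$ with the tail $\sigma$-algebra $\bigcap_{n\geq 0}\bigl(\mathcal F^+\vee\bigvee_{k\geq n}a^{-k}\sigma(\eta)\bigr)$; since the atoms of $a^{-k}\eta$ shrink within each $W_G^+(a)$-leaf as $k\to\infty$, the intersection collapses onto $\mathcal F^+$, giving $\pins_a\subseteq\mathcal F^+$.

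The main obstacle I anticipate is producing the subordinate partition $\eta$ and verifying the Rokhlin--Sinai type formula in the current totally disconnected setting, since standard references assume at least $C^2$-smoothness. The ultrametric structure of $\field$ is in fact helpful here, in that one may partition genuinely into balls without boundary issues, but the entropy identities relating the tail $\sigma$-algebra to $\pins_a$ must be redone carefully in this framework.
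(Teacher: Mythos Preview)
Your overall two-step strategy matches the paper's: show that the $W_G^+(a)$-saturated $\sigma$-algebra $\mathcal F^+$ defines a zero-entropy factor, then identify $\pins_a$ with a tail $\sigma$-algebra that is visibly contained in $\mathcal F^+$. Two points, however, are not correct as written.

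For $\mathcal F^+\subseteq\pins_a$, the variational-principle route does not go through: the orbit space $X/\mathcal F^+$ is not a reasonable topological system (if the $W_G^+(a)$-action is ergodic the measurable quotient is a point), so there is no topological entropy to bound against. The paper avoids this by using the Abramov--Rokhlin conditional entropy formula together with the identification of entropy with growth of the $W_G^+(a)$-leafwise measures (as in \cite{EL-Pisa}): since all of $h_\mu(a)$ is accounted for by the unstable leafwise measures, collapsing $W_G^+(a)$-leaves kills all entropy. Your heuristic ``non-expanding hence zero entropy'' is morally this statement, but it must be phrased via conditional entropy rather than via a quotient topological system.

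For $\pins_a\subseteq\mathcal F^+$, the description of $\eta$ is impossible and the direction of the tail is reversed. A \emph{countable} Borel partition whose atoms each lie in a single $W_G^+(a)$-leaf would force $\mu$ to be carried by countably many leaves. What one actually builds (and what the paper invokes from \cite{MarTom} and \cite{EL-Pisa}) is a countable finite-entropy \emph{generator} $\xi$ such that the past $\bigvee_{n\leq 0}a^{-n}\xi$ is subordinate to $W_G^+(a)$; it is this uncountable measurable partition, not $\xi$ itself, whose atoms are bounded open pieces of leaves. Rokhlin--Sinai then gives $\pins_a=\bigcap_{k}\bigvee_{n\leq -k}a^{-n}\xi$, and the atoms of these $\sigma$-algebras \emph{expand} within each leaf as $k\to\infty$ (since $a$ expands $W_G^+(a)$), forcing any tail-measurable set to be a union of full leaves. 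By contrast, in your formula $\bigcap_{n}(\mathcal F^+\vee\bigvee_{k\geq n}a^{-k}\sigma(\eta))$ the atoms of $a^{-k}\eta$ shrink to points within each leaf, so every inner join is already the full Borel $\sigma$-algebra and the intersection is $\mathcal B$, not $\mathcal F^+$.
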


Note that the Pinsker~$\sigma$-algebra for~$a$ equals the Pinsker~$\sigma$-algebra for~$a^{-1}$,
which shows that the proposition also applies similarly for $W_G^-( a)$.

\begin{proof}
Suppose $\mathcal C$ is any $\sigma$-algebra whose elements are foliated by
$W^+_G( a)$ leaves. Let ${\rm p}:(X,\mu)\to (Y,{\rm p}_*\mu)$ be the corresponding factor map.
Using the Abramov-Rohklin conditional entropy formula 
and the relationship between entropy and leafwise measures, see~\cite{EL-Pisa}, we get the following
\[
\entropy(a,(Y,{\rm p}_*\mu))=0.
\] 
The definition of the Pinsker $\sigma$-algebra 
then implies that $\mathcal C\subset\pinsker_a.$ 

For the converse we recall from~\cite[Sec.~9]{MarTom}, see also~\cite{EL-Pisa}, that there is a finite entropy generator, i.e. a countable partition $\xi$ of finite entropy such that~$\bigvee_{n=-\infty}^{\infty}a^{-n}\xi$
is equivalent to the full Borel $\sigma$-algebra, and  
so that in addition the past is subordinate with respect to $W^+_G( a).$ 
That is to say that on the complement of a null set every atom of~$\bigvee_{n=-\infty}^0 a^{-n}\xi$
is an open subset of a~$W^+_G(a)$-orbit.
Hence, after removing a null set,
any set measurable with respect to the tail 
$\bigcap_{k\in\mathbb{N}}\bigvee_{n=-\infty}^{-k}a^{-n}\xi$ is a union of~$W_G^+(a)$-orbits. 
Since $\pinsker_a$  is equivalent to the tail of $\xi$ modulo $\mu$, the claim follows.
\end{proof}   

The following will be used in the course of the proof of Theorem~\ref{thm:joining}.
\begin{lem}\label{lem:pinsker-comp-joinning}
Let $X_i=G_i/\Gamma$ be as in Theorem~\ref{thm:joining}. 
In particular, $G_i=\Gbf_i(\field)$ where $\Gbf_i$ is a connected, simply connected,
absolutely almost simple group defined over $\field$ for $i=1,2.$ 
Let $a=(a_1,a_2)\in A$ such that~$a$ generates an unbounded group, 
and suppose $\mu$ is an ergodic joining 
of the $A_i$-action on $(X_i,m_i),$ for $i=1,2.$ Let $\mu=\int_{X_1\times X_2}\mu_x^{\pins_a}\operatorname{d}\!\mu(x)$ where
$\mu_x^{\pins_a}$ denotes the conditional measure for $\mu$-a.e.\ $x$
with respect to the Pinsker $\sigma$-algebra $\pins_a$.    
Then there exists a subset $X'\subset X_1\times X_2$ with $\mu(X')=1$
so that 
\[
\text{$\pi_{i*}(\mu_x^{\pins_a})=m_i$ for all $x\in X'$ and $i=1,2.$}
\]
\end{lem}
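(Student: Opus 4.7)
The plan is to apply the classical Furstenberg disjointness principle---that a Kolmogorov system is disjoint from any zero-entropy system---to two factors of the $a$-action on $(X_1 \times X_2, \mu)$: the projection $\pi_i$ onto $(X_i, m_i, a_i)$, and the Pinsker factor of $(X_1 \times X_2, \mu, a)$.

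First I would verify that $(X_i, m_i, a_i)$ is Kolmogorov. Since $a$ generates an unbounded subgroup of $A$ and $\lambda_i \colon \mathbf{G}_m^2 \to \Gbf_i$ has finite kernel, $a_i$ generates an unbounded subgroup of the $\field$-split torus $\mathbf{A}_i(\field) \subset \mathbf{S}_i(\field)$; in particular, the horospherical subgroup $W^+_{G_i}(a_i)$ is noncompact. By Howe-Moore (valid for simply connected absolutely almost simple groups over local fields of positive characteristic), $W^+_{G_i}(a_i)$ acts ergodically on $(X_i, m_i)$. Proposition~\ref{prop:pins-meas-hull} applied to $(X_i, m_i, a_i)$ then identifies $\pins_{a_i}(m_i)$ with the $\sigma$-algebra of $W^+_{G_i}(a_i)$-invariant sets, which is therefore trivial modulo $m_i$.

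Next I would address the fact that the $A$-ergodicity of $\mu$ does not by itself imply $a$-ergodicity. Decomposing $\mu = \int \mu_\alpha \, d\tau(\alpha)$ into $a$-ergodic components, the identity $m_i = \int \pi_{i*}\mu_\alpha \, d\tau(\alpha)$ combined with extremality of $m_i$ in the convex set of $a_i$-invariant probabilities (which follows from the Kolmogorov property just proved) forces $\pi_{i*}\mu_\alpha = m_i$ for $\tau$-a.e.\ $\alpha$; thus each $\mu_\alpha$ is itself a joining. For a single $a$-ergodic joining $\mu_\alpha$, the ergodic Kolmogorov factor $(X_i, m_i, a_i)$ and the ergodic zero-entropy Pinsker factor of $(X_1 \times X_2, \mu_\alpha, a)$ are disjoint, so the joint factor map $x \mapsto (\pi_i(x), [x]_{\pins_a^{\mu_\alpha}})$ pushes $\mu_\alpha$ forward to the product measure. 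Unwinding, $\pi_i^{-1}\mathcal{B}(X_i)$ and $\pins_a^{\mu_\alpha}$ are $\mu_\alpha$-independent, which says exactly that $\pi_{i*}\mu_{\alpha, x}^{\pins_a^{\mu_\alpha}} = m_i$ for $\mu_\alpha$-a.e.\ $x$. Since the $a$-ergodic decomposition $\sigma$-algebra is contained in $\pins_a$, these componentwise conditional measures agree with $\mu_x^{\pins_a}$ almost everywhere, yielding the lemma.

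The main subtlety I expect lies in cleanly invoking Howe-Moore in positive characteristic to secure the Kolmogorov property of $(X_i, m_i, a_i)$; after that, the rest is a routine application of disjointness of K-systems from zero-entropy systems together with the standard dictionary between independence of $\sigma$-algebras and the shape of conditional measures.
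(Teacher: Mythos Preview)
Your proof is correct and follows the same core idea as the paper: establish that $(X_i,m_i,a_i)$ is a K-system via Proposition~\ref{prop:pins-meas-hull} and ergodicity of the horospherical subgroup (the paper cites \cite[Ch.~1, Thm.~2.3.1, Ch.~2, Thm.~2.7]{Margulis-Book} for this Mautner/Howe--Moore input), then invoke Furstenberg's disjointness of K-systems from zero-entropy systems.

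The only packaging difference is how the lack of $a$-ergodicity of $\mu$ is handled. You pass to the $a$-ergodic decomposition, check each component is still a joining, apply disjointness componentwise, and then reassemble using that the invariant $\sigma$-algebra sits inside $\pins_a$. The paper instead builds a single auxiliary measure $\nu=\int \mu_x^{\pins_a}\times\delta_{\Upsilon(x)}\,\operatorname{d}\!\Upsilon_*\mu$ on $X_1\times X_2\times\mathrm{P}$ (with $\Upsilon$ the Pinsker factor map) and applies Furstenberg disjointness directly to the joining $(\pi_i\times\mathrm{id})_*\nu$ of $(X_i,m_i)$ with $(\mathrm{P},\Upsilon_*\mu)$; no ergodicity hypothesis is needed for that form of the disjointness theorem. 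The paper's route is a bit more streamlined, but your argument is equally valid.
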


\begin{proof}
Let ${\rm P}$ denote the pinsker factor of $X$ and let $\Upsilon:X\to{\rm P}$ 
be the corresponding factor map. This is a zero entropy factor of $X.$

Put $Z=X_1\times X_2\times {\rm P},$ and let 
\[
\nu=\int \mu_x^{\pins_a}\times\delta_{\Upsilon(x)} \operatorname{d}\!\Upsilon_*\mu(x).
\]
Let ${\rm p}_i:Z\to X_i\times{\rm P}$ be the natural projection.
Then ${\rm p}_{i*}\nu$ is a measure on $X_i\times{\rm P}$ which projects to $m_i$ and $\Upsilon_*\mu$
for $i=1,2.$ Now $(X_i,m_i)$ is a system with completely positive entropy. This follows, e.g., from Proposition~\ref{prop:pins-meas-hull} and the ergodicity of the action of $W^\pm(a_i)$; note that the latter holds since $\G_i$ is connected, simply connected, and absolutely almost simple,~\cite[Ch.~1, Thm. 2.3.1, Ch.~2, Thm.\ 2.7]{Margulis-Book}. However, $({\rm P},\Upsilon_*\mu)$ is a zero entropy system, therefore, by disjointness theorem of Furstenberg~\cite{Furstenberg-disjointness}, see also~\cite[Thm.~18.16]{Glas-Joining} we obtain 
\be\label{eq:K-Zero}
{\rm p}_{i*}\nu=m_i\times\Upsilon_*\mu.
\ee

Let us now decompose ${\rm p}_{i*}\nu$ as
\[
{\rm p}_{i*}\nu=\int \bigl({\rm p}_{i*}\nu\bigr)_{(x_i,p)}^{X_i\times\mathcal B_{{\rm P}}} \operatorname{d}\!{\rm p}_{i*}\nu.
\]
Then~\eqref{eq:K-Zero} implies that for ${\rm p}_{i*}\nu$-almost every $(x_i,p)$ we have
\[
\bigl({\rm p}_{i*}\nu\bigr)_{(x_i,p)}^{X_i\times\mathcal B_{{\rm P}}}=m_i\times \delta_{p}.
\]
This in view of the definition of $\nu$ implies the claim.
\end{proof}


\subsection{Leafwise measures}\label{sec:leafwise}
We refer to~\cite{EL-Pisa} for a comprehensive treatment of leafwise measures.

Recall that $\G$ is a $\field$-isotropic semisimple $\field$-group
and let ${\bf A}$ be a $\field$-split $\field$-torus in $\Gbf.$
Let ${\bf S}$ be a maximal $\field$-split $\field$-torus of $\Gbf$
which contains ${\bf A}$.
Let 
${}_\field\rootsys({\bf S},\Gbf)$ be the relative root system of $\Gbf$,
and let ${}_\field\Phi({\bf A},\Gbf)$ denote the set of roots of ${\bf A}$ as in~\S\ref{sec:notation}.

{}{
\begin{def*}
Let $U$ be an $A$-normalized unipotent $k$-subgroup of $G$ contained in some $W_G^-(a)$.
The leafwise measure $\mu_x^U$ 
along $U$ is defined for $\mu$-a.e.\ $x\in X$. For all such $x$ we put
\[
\lsupp_x^U=\supp(\mu_x^U)\quad\text{and}\quad 
\linv_x^U=\{v\in U: v\mu_x^U=\mu_x^U\}.
\]
\end{def*}
}

The leafwise measures are canonically defined up to proportionality, 
and we write $\propto$ to denote proportionality. 
The main case we shall be interested in is when $V_\Psi:=U_{\vartheta(\Psi)}$ is the associated unipotent subgroup of a 
{positively} closed set $\Psi\subset{}_\field\rootsys({\bf A},\Gbf)$, 
in which case we will use $\mu_x^\Psi,\lsupp_x^\Psi,\linv_x^\Psi$ to denote $\mu_x^{V_\Psi},\lsupp_x^{V_\Psi},\linv_x^{V_\Psi}$ respectively.

\begin{lem}\label{lem:prop constant}
Under the above assumtions, a.s.\ $\linv_x^U=\{v\in U: v\mu_x^U\propto \mu_x^U\}$.
\end{lem}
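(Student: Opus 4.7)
The plan is to observe that the inclusion $\linv_x^U\subseteq\{v\in U:v\mu_x^U\propto\mu_x^U\}$ is tautological, so only the reverse direction requires work: if $v\mu_x^U=c\cdot\mu_x^U$ for some $c\in\bbr_{>0}$, then $c=1$. To that end, write $H_x:=\{v\in U:v\mu_x^U\propto\mu_x^U\}$ and define a function $c_x:H_x\to\bbr_{>0}$ by $v\mu_x^U=c_x(v)\mu_x^U$. The scalar $c_x(v)$ is independent of the representative of $\mu_x^U$ in its proportionality class, since any two representatives differ by a global positive scalar that cancels on both sides of the equation. The computation $(v_1v_2)\mu_x^U=v_1\bigl(c_x(v_2)\mu_x^U\bigr)=c_x(v_1)c_x(v_2)\mu_x^U$ then shows simultaneously that $H_x$ is a subgroup of $U$ and that $c_x$ is a group homomorphism.

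The decisive step is the positive characteristic assumption. The group $U$ consists of the $\field$-points of a unipotent $\field$-group, and $\field$ has characteristic $\pchar>0$; consequently every element of $U$ has order a power of $\pchar$. This follows by induction on dimension using a composition series of the ambient unipotent group whose successive quotients are isomorphic to $\mathbb{G}_a$, together with the observation that $\mathbb{G}_a(\field)=(\field,+)$ is annihilated by $\pchar$. Since $\bbr_{>0}$ is torsion-free, any homomorphism from a $\pchar$-torsion group into $\bbr_{>0}$ is forced to be trivial. Hence $c_x\equiv 1$ on $H_x$, so every $v\in H_x$ genuinely satisfies $v\mu_x^U=\mu_x^U$, giving $H_x\subseteq \linv_x^U$ as desired.

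The main obstacle is largely bookkeeping: one must verify that $c_x$ is a well-defined homomorphism within the measurable framework of leafwise measures recalled from~\cite{EL-Pisa}, and in particular that the $U$-action on locally finite Radon measures on $U$ is compatible with the proportionality relation in the way used above. Once this is in place, the argument is immediate and is noticeably simpler than its zero-characteristic analogue, where one must invoke Poincar\'e recurrence along $A$ (or some analogous dynamical input) to rule out a nontrivial modular character $c_x$; here the algebraic structure of $U(\field)$ in characteristic $\pchar$ does all of the work.
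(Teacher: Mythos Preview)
Your proof is correct and takes essentially the same approach as the paper: the key observation in both is that in positive characteristic every element of the unipotent group $U$ is $p$-power torsion, so the proportionality constant $\kappa$ (your $c_x(v)$) satisfies $\kappa^{p^n}=1$ in $\bbr_{>0}$ and hence equals $1$. The paper's version is terser, omitting the explicit homomorphism packaging, but the substance is identical.
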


\begin{proof}
This is true in general, but is particularly easy in the positive characteristic case:
Suppose $u \in U$ is such that $u\mu_x^U\propto\mu_x^U$.
Then $u\mu_x^U=\kappa\mu_x^U$ for some $\kappa>0$.
Since $U$ is unipotent,
$u$ is torsion of exponent $p^n$ for some $n$, hence $\kappa^{p^n}=1$,  which implies (since $\kappa>0$) that $\kappa=1$.
\end{proof}

We recall some properties of leafwise measures which will be used throughout. Our formulation is taken from~\cite{EL-GenLow}, see~\cite{Elon-Quantum} as well as~\cite{EL-Pisa} and references there.

\begin{lem}\label{lem:properties}
Let $U$ be an $A$-normalized unipotent $k$-subgroup of $G$ contained in some $W_G^-(a)$.
There is a conull subset $X'\subset X$ so that
\begin{enumerate}
\item For all $x\in X'$ the map $x\mapsto\mu_x^U$ {}{from $X$ to the space of Radon measures on $U$ normalized so that $\mu_x^U([1])=1$} is a measurable map. In particular,
$\mu_x^U$ is defined for all $x\in X'.$
\item For every $x\in X'$ and every $u\in U$ so that $ux\in X',$ we have $\mu_{x}^U\propto(\mu_{ux}^U)u$ where $(\mu_{ux}^U)u$ denotes the push forward of $\mu_{ux}^U$ under the map $v\mapsto vu.$
\item  For every $x\in X'$ we have $\mu_x^U(U[1])=1$ and 
$\mu_x^U(U[\epsilon])>0$ for all $\epsilon>0.$
\item Suppose $\mu$ is $a$-invariant under some $a\in A.$ Then for $\mu$-a-e.\ $x\in X,$ 
we have $\mu_{a x}^U\propto (a\mu_x^Ua^{-1}).$
\end{enumerate}
\end{lem}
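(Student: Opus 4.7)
The plan is to follow the construction of leafwise measures from \cite{EL-Pisa} and check that it adapts to the positive characteristic setting. The only structural facts actually used in that construction are that $U$ is a $\sigma$-compact totally disconnected locally compact group exhausted by the increasing family of compact subgroups $U[R]$, and that conjugation by $a$ is contracting on $U$ (since $U\subset W_G^-(a)$). Both hold here, so I expect the construction to go through with only cosmetic changes.

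First I would fix, for each $R>0$, a countably generated Borel $\sigma$-algebra $\mathcal A_R$ on $X$ whose atoms are, modulo a null set, of the form $U[R]x$ (i.e.\ subordinate to $U$ on scale $R$). Such $\sigma$-algebras exist by the standard construction using Lusin sets and the compactness of $U[R]$, as in \cite[\S6]{EL-Pisa}. Let $\mu_x^{\mathcal A_R}$ be the conditional measure; using the bijection $u\mapsto ux$ on a neighborhood of the identity, push $\mu_x^{\mathcal A_R}$ to a finite Borel measure $\tilde\mu_x^R$ on $U[R]$. A Fubini/Poincar\'e argument guarantees that $\tilde\mu_x^R(U[1])\in(0,\infty)$ for $\mu$-a.e.\ $x$, so one may normalize by this mass. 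Compatibility of conditional measures under refinement gives $\tilde\mu_x^{R_2}|_{U[R_1]} \propto \tilde\mu_x^{R_1}$ for $R_1<R_2$, allowing the $\tilde\mu_x^R$ to be glued into a single locally finite Radon measure $\mu_x^U$ normalized so that $\mu_x^U(U[1])=1$.

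With the construction in hand the four assertions are essentially formal. Property (1), measurability, is built into the definition of conditional measures. Property (3) is immediate from the normalization, and positivity of $\mu_x^U(U[\epsilon])$ follows from the maximality of the $\mathcal A_R$: if $\mu_x^U$ were supported at the identity on some positive measure set, this set would be $U[\epsilon]$-invariant, contradicting the choice of $\mathcal A_R$ as subordinate. Property (2) is the standard observation that if $u\in U[R]$, then $x$ and $ux$ belong to the same $\mathcal A_R$-atom, so the construction yields $\mu_x^U\propto(\mu_{ux}^U)u$ after accounting for the change of base point $x\leadsto ux$. Property (4) comes from the fact that the family of $\sigma$-algebras $\{a\mathcal A_R a^{-1}\}$ is again subordinate to $U$ (at scales contracted by $a$); since $a_*\mu=\mu$ and the construction is canonical up to proportionality, one gets $\mu_{ax}^U\propto a\mu_x^U a^{-1}$.

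The main technical point, and the only place where positive characteristic enters nontrivially, is in verifying that all of the proportionality statements combine consistently, since a priori one could lose control of the scaling factors. This is exactly the content of Lemma \ref{lem:prop constant}: because $U$ is a torsion group of exponent $p^n$ on compact sets, every positive scalar that preserves a nonzero locally finite measure under the push-forward action of a torsion element must equal $1$. This rigidifies the $\propto$ relations, so the construction closes up without any ambiguity. Everything else is the bookkeeping carried out in detail in \cite{EL-Pisa, Elon-Quantum}.
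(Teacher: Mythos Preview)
The paper does not give a proof of this lemma at all; it is stated as a recollection of standard properties, with the sentence ``Our formulation is taken from~\cite{EL-GenLow}, see~\cite{Elon-Quantum} as well as~\cite{EL-Pisa} and references there.'' Your sketch of the construction from~\cite{EL-Pisa} is therefore more than the paper provides, and the outline of the subordinate $\sigma$-algebras, normalization, and gluing is accurate in spirit.

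Two points in your write-up are off, however. First, your argument for the second half of~(3) is garbled: you write that ``if $\mu_x^U$ were supported at the identity on some positive measure set, this set would be $U[\epsilon]$-invariant,'' but $\mu_x^U$ being a Dirac mass at the identity is perfectly consistent with $\mu_x^U(U[\epsilon])>0$ (indeed it gives mass~$1$). The actual reason the identity lies in $\supp\mu_x^U$ is the elementary fact that for any countably generated $\sigma$-algebra $\mathcal A$ and any open set $B$, one has $\mu_x^{\mathcal A}(B)>0$ for $\mu$-a.e.\ $x\in B$; applying this with $B$ a small $U$-plaque around $x$ gives the claim.

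Second, your final paragraph overstates the role of positive characteristic. Nothing in the construction of leafwise measures or in properties (1)--(4) requires the torsion argument of Lemma~\ref{lem:prop constant}; the statements are only up to proportionality, and the normalization $\mu_x^U(U[1])=1$ already pins down the representative uniquely. Lemma~\ref{lem:prop constant} is used elsewhere in the paper (e.g.\ in Lemma~\ref{lem:commutator} and Lemma~\ref{lem:Inv-prod-struc}) to upgrade $\propto$ to $=$ when analyzing the invariance group $\linv_x^U$, not in the basic existence statement here. The lemma as stated holds verbatim in characteristic zero with the identical proof.
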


\begin{lem}[Cf.~\cite{EL-GenLow}, \S6]\label{lem:ZC-1Ball}
Let $a\in A$ be so that the Zariski closure of $\langle a\rangle$, ${\bf A'}$ say, is $k$-isomorphic to $\G_m$
and that ${\bf A}'(\field)/\langle a\rangle$ is compact.  
Suppose $\mu$ is $a$-invariant and let $U$ be an $A$-normalized unipotent $k$-subgroup of $G$ contained in  $W_G^-(a)$.
Let $Q$ be any compact open subgroup of $U$. Then for $\mu$-a.e.\ $x$, the Zariski closure of 
$\linv_x^U\cap Q$ is normalized by $a$ and contains $\linv_x^U$.
\end{lem}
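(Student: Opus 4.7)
The plan is to proceed in three steps: first derive an equivariance for the Zariski closure $\widetilde L_x := \overline{\linv_x^U}^{\,\mathrm{Zar}}\subseteq{\bf U}$, then use Poincar\'e recurrence combined with the contracting action of~$a$ on~$U$ to upgrade this equivariance to $a$-normalization of $\widetilde L_x$, and finally show that $L_x := \overline{\linv_x^U\cap Q}^{\,\mathrm{Zar}}$ equals $\widetilde L_x$ for $\mu$-a.e.\ $x$, which yields both assertions of the lemma at once.

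For the equivariance, Lemma~\ref{lem:properties}(4) gives $\mu_{ax}^U \propto a\mu_x^U a^{-1}$, and Lemma~\ref{lem:prop constant} then yields $\linv_{ax}^U = a\linv_x^U a^{-1}$ for $\mu$-a.e.\ $x$; taking Zariski closures gives $\widetilde L_{ax} = a\widetilde L_x a^{-1}$. To upgrade this to $a$-normalization, I plan to view $x\mapsto\widetilde L_x$ as a measurable map to the Polish space of $k$-subgroups of ${\bf U}$ (stratified by dimension). Lusin's theorem provides a set $K\subseteq X$ of nearly full measure on which this map is continuous; Poincar\'e recurrence for the $a$-invariant measure~$\mu$ then produces, for $\mu$-a.e.\ $x\in K$, a sequence $n_j\to\infty$ with $a^{n_j}x\to x$ in $K$, and continuity yields $a^{n_j}\widetilde L_x a^{-n_j} = \widetilde L_{a^{n_j}x}\to\widetilde L_x$. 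Since $U\subseteq W_G^-(a)$, conjugation by $a^n$ contracts ${\bf U}$, and on the parameter space of $k$-subgroups of ${\bf U}$ this contracting action has only unbounded orbits off its $a$-fixed locus; hence $\widetilde L_x$ must be $a$-normalized. The cocompactness of $\langle a\rangle$ in ${\bf A}'(\field)\cong\field^\times$ lifts normalization by some $a^N$ to normalization by $a$ itself, since the normalizer of $\widetilde L_x$ in ${\bf A}'\cong{\bf G}_m$ is Zariski closed and contains a cocompact subgroup of ${\bf A}'(\field)$.

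For the last step, I will show $L_x = \widetilde L_x$ by combining the $a$-normalization of $\widetilde L_x$ with the fact that over the local field $\field$, any non-empty $\field$-analytic open subset of a smooth connected $k$-variety is Zariski dense. Applying this to $\widetilde L_x^\circ$ and using that $\linv_x^U \cap Q$ is an open subgroup of $\linv_x^U$ (which is Zariski dense in $\widetilde L_x$), together with a parallel recurrence argument for the components, gives $L_x = \widetilde L_x$, and hence both the claimed normalization of $L_x$ and $\linv_x^U\subseteq L_x$. The main obstacle is rigorously executing the contracting-dynamics argument in the second step: one must analyze the $a$-action on a Hilbert-scheme-style parameter space of $k$-subgroups of ${\bf U}$ and rule out nontrivial bounded orbits, which is especially delicate in positive characteristic because such parameter spaces can be non-Hausdorff from the $\field$-analytic viewpoint and algebraic subgroups of ${\bf U}$ may fail to be smooth.
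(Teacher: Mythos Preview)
Your top-down strategy (first showing $\widetilde L_x := \overline{\linv_x^U}^{\,\mathrm{Zar}}$ is $a$-normalized, then deducing $L_x=\widetilde L_x$) has a genuine gap in the third step, and the second step is left at the level of a heuristic.

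In Step~3 the inference fails: that $\linv_x^U\cap Q$ is open in $\linv_x^U$ does \emph{not} make it an analytic open subset of $\widetilde L_x(\field)$, so the principle ``non-empty $\field$-analytic open subsets of a smooth connected $\field$-variety are Zariski dense'' is inapplicable. There is no reason for $\linv_x^U$ to be open in $\widetilde L_x(\field)$: one could have $\widetilde L_x={\bf G}_a$ with $\linv_x^U$ a discrete infinite subgroup of $\field$, in which case $\linv_x^U\cap Q$ is finite and its Zariski closure is strictly smaller than $\widetilde L_x$. Nothing you have proved about $\widetilde L_x$ rules this out, and the ``parallel recurrence argument for the components'' does not address it.

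The paper proceeds in the opposite direction. It takes ${\bf F}_{x,n}$ to be the Zariski closure of $\linv_x^U\cap\mathfrak U_n$ for the \emph{shrinking} family $\mathfrak U_n=a^n\mathfrak U_0a^{-n}$, so that the stabilized group ${\bf F}_x$ satisfies ${\bf F}_{ax}=a{\bf F}_xa^{-1}$; it then encodes ${\bf F}_x$ by the low-degree part of its defining ideal, obtaining an $a$-equivariant Borel map into an honest Grassmannian, and applies Borel density for the ${\bf A}'(\field)$-action (cf.\ \cite[Thm.~3.6]{Shalom-BorelDensity}) to conclude that ${\bf F}_x={\bf F}$ is a.e.\ constant and $a$-normalized. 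The inclusion $\linv_x^U\subset{\bf F}(\field)$ then comes from the pointwise ergodic theorem plus contraction: for $u\in\linv_x^U$ and suitable $m_i\to\infty$ one has $a^{m_i}ua^{-m_i}\in\mathfrak U_{N_1}\cap\linv_{a^{m_i}x}^U\subset{\bf F}(\field)$, and $a$-invariance of ${\bf F}$ gives $u\in{\bf F}(\field)$. This last step is exactly what your Step~3 needs, but it hinges on already knowing $a$-normalization of the Zariski closure of the \emph{small} intersection, not of $\widetilde L_x$.

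Your Step~2 is morally correct (for a cocompact $a$ in ${\bf A}'(\field)\cong\field^\times$, an algebraic ${\bf G}_m$-action on a projective variety has Bialynicki--Birula type dynamics, so recurrence forces fixedness), but turning this into a proof for ``the space of $\field$-subgroups of ${\bf U}$'' requires embedding into a projective parameter space, which is again the Grassmannian-plus-Borel-density argument the paper uses.
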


\begin{proof}
Let $\mathcal E$ denote a countably generated $\sigma$-algebra that is equivalent 
to the $\sigma$-algebra of $a$-invariant sets. 
Then~$(\mu_x^{\mathcal E})_y^{U}=\mu_{y}^U$
for~$\mu_x^{\mathcal E}$-a.e.~$y$ and~$\mu$-a.e.~$x$, see e.g.~\cite{EL-Pisa}.
Therefore, we may assume that $\mu$ is $a$-ergodic. 

Let $\mathfrak U_0$ denote a fixed compact open subgroup of $U$. 
For any $n\in\bbz$, define
\[
\mathfrak U_n=a^n\mathfrak U_0a^{-n}.
\]
Then $\mathfrak U_n\subset Q$ for large enough $n$, hence, it suffices to prove the lemma for $Q=\mathfrak U_n$.

Let $X'\subset X$ be a conull set where Lemma~\ref{lem:properties} holds.
For any $x\in X'$ and any $n\in\bbz$ define
\[
{\bf F}_{x,n}=\text{The Zariski closure of $\mathfrak U_n\cap\linv_x^U$}.
\]
Then ${\bf F}_{x,n}$ is a $\field$-group, see e.g.~\cite[Lemma 11.2.4(ii)]{Sp-AlgGrBook}.

Note also that ${\bf F}_{x,n}\subset{\bf F}_{x,m}$ whenever $n\geq m$.
Therefore, there exists some $n_0=n_0(x)$ so that $\dim {\bf F}_{x,n}=\dim{\bf F}_{x,n_0}$
for all $n\geq n_0$, where $\dim$ is the dimension as a $\field$-group.
Since the number of connected components of ${\bf F}_{x,n_0}$ is finite, 
there exists $n_1=n_1(x)$ so that ${\bf F}_{x,n}={\bf F}_{x,n_1}$ for all $n\geq n_1.$
Put $\mathbf F_x:={\bf F}_{x,n_1}.$ 

The definition of ${\bf F}_{x,n}$, in view of Lemma~\ref{lem:properties}(4), implies that
\[
{\bf F}_{ax,n+1}=a{\bf F}_{x,n}a^{-1}.
\]
Therefore, we have 
\be\label{eq:a-equi-F-2}
\mathbf F_{ax}=a\mathbf F_xa^{-1}.
\ee 

Let $k[\G]$ denote the ring of regular functions of $\G$.
For every $x\in X'$, let $J_x\subset k[\G]$ be the ideal of regular functions 
vanishing on ${\bf F}_x.$ Let $m(x)$ be the minimum integer so that
$J_x$ is generated by polynomials of degree at most $m(x)$. 
In view of~\eqref{eq:a-equi-F-2}, we have $m(x)=m(ax)$. Since $\mu$ is $a$-ergodic, we have
$x\mapsto m(x)$ is essentially constant. Replacing $X'$ by a conull subset, if necessary, we assume that $m(x)=m$ for all $x\in X'$.  

Let $\Upsilon=\{h\in k[\G]: \deg(h)\leq m\}$.
Using a similar argument as above, we may assume that $\dim(J_x\cap\Upsilon)=\ell$ for all $x\in X'$.

Let $f:X\to {\rm Grass}(\ell)$, the Grassmannian of $\ell$-dimensional subspaces of $\Upsilon$, be the map
defined by $f(x)=J_x\cap\Upsilon$ for all $x\in X'$. Then, $f$ is an $a$-equivariant, Borel map. Therefore, $\nu=f_*\mu$
is a probability measure on ${\rm Grass}(\ell)$ which is invariant and ergodic for a $\field$-algebraic action of $a$ on ${\rm Grass}(\ell)$. 
Hence,
\[
\bar\nu=\int_{{\bf A}'(k)/\langle a\rangle}b_*\nu\operatorname{d}\!b
\]   
is an ${\bf A}'(k)$-invariant, ergodic probability measure on ${\rm Grass}(\ell)$ equipped with an algebraic action of ${\bf A}'(k)$.
By~\cite[Thm.~3.6]{Shalom-BorelDensity}, $\bar\nu$ is the delta mass at an ${\bf A}'(k)$-fixed point 
which implies $\nu=\bar\nu$ is the delta mass at an ${\bf A}'(k)$-fixed. 

Therefore, $f$ is essentially constant. Using the definition of $f$, we get that 
$a{\bf F}_xa^{-1}={\bf F}_x$ for $\mu$-a.e.\ $x$.
This,~\eqref{eq:a-equi-F-2}, and the ergodicity of $\mu$ imply that ${\bf F}_x={\bf F}$ for $\mu$-a.e.\ $x$.

Let now $C\subset X'$ be a compact subset with $\mu(C)>1-\epsilon$ so that 
\begin{itemize}
\item $n_1(x)\leq N_1$ for all $x\in C$,
\item ${\bf F}_x={\bf F}$ for all $x\in C$.
\end{itemize}  

By pointwise ergodic theorem for almost every $x\in X$ there is a sequence $m_i\to\infty$
so that $a^{m_i}x\in C$ for all $i$. Let now $x$ be such a point, and let $u\in\linv_x^U$.
By Lemma~\ref{lem:properties}(4) we have 
\[
a^{m_i}ua^{-m_i}\in\mathfrak U_{N_1}\cap\linv_{a^{m_i}x}^U\subset{\bf F}(\field).
\]
for all large enough $i$.
Since ${\bf F}(\field)$ is normalized by $a$, we get that $u\in{\bf F}(\field)$. 
\end{proof}

From this point on we assume that $\mu$ is $A$-invariant.
We recall the product structure for leafwise measures, see~\cite{EK-2}. Our formulation is taken from~\cite[Prop.~8.5 and Cor.~8.8]{EL-Pisa}. 

\begin{lem}\label{lem:prod-struc}
Fix some $a\in A.$ Let $H=T\ltimes U$ where $U<W_G^-(a)$ and $T<Z_G(a).$ 
Then there exists a conull subset $X'\subset X$ with the following properties.
\begin{enumerate}
\item For every $x\in X'$ and $h\in H$ such that $hx\in X'$
we have $\mu_{x}^T\propto (\mu_{hx}^T)t$ where $h=ut=tu'$ for $t\in T$
and $u,u'\in U.$
\item For every $x\in X'$ we have $\mu_x^H\propto\iota_*(\mu_x^T\times \mu_x^U)$
where $\iota(t,u)=tu$ is the product map.
\item Assume further that $T$ centralizes $U.$ Then for all $x\in X'$ and $t\in T$ so that $tx\in X'$ we have $\mu_x^U\propto\mu_{tx}^{U}.$
\end{enumerate}
\end{lem}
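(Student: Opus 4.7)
My plan follows the approach of~\cite[\S8]{EL-Pisa}, exploiting the characterization of leafwise measures via subordinate $\sigma$-algebras. Recall that a countably generated $\sigma$-algebra $\mathcal{A}$ is called $H$-subordinate (for a closed subgroup $H\subset G$) if, on a conull set, every atom has the form $V_x\cdot x$ for some bounded open $V_x\subset H$ containing a fixed neighborhood of the identity. For such $\mathcal{A}$, the conditional measure $\mu_x^{\mathcal{A}}$, transported along the orbit map $v\mapsto vx$, is proportional to $\mu_x^H|_{V_x}$, and this pins $\mu_x^H$ down up to proportionality.

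The heart of the argument is part (2). I fix small compact open subgroups $T_0\subset T$ and $U_0\subset U$; since $T$ normalizes $U$, the product $H_0:=T_0U_0$ is a compact open subgroup of $H$. I then select $\sigma$-algebras $\mathcal{A}_T$ and $\mathcal{A}_U$ that are $T$- and $U$-subordinate respectively, refined so that the atoms of $\mathcal{A}_T\vee\mathcal{A}_U$ are genuinely of the form $T_0^{(x)}U_0^{(x)}x$ with $T_0^{(x)}\subset T_0$ and $U_0^{(x)}\subset U_0$ depending measurably on $x$; such a compatible refinement is standard (cf.~\cite[Prop.~8.5]{EL-Pisa}) and exploits the fact that $T$ normalizes $U$. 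By iterated Rokhlin disintegration---first conditioning on $\mathcal{A}_U$ and using the $U$-leafwise structure, then conditioning further on $\mathcal{A}_T$ within each $\mathcal{A}_U$-atom and using the $T$-leafwise structure---the conditional measure on a joint atom factors as $\iota_*(\mu_x^T|_{T_0^{(x)}}\times\mu_x^U|_{U_0^{(x)}})$, which by the characterization above yields $\mu_x^H\propto\iota_*(\mu_x^T\times\mu_x^U)$. Exhausting $H$ by such compact open neighborhoods establishes (2).

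Parts (1) and (3) then follow from (2) combined with Lemma~\ref{lem:properties}(2) applied to $H$. For part (1), $H$-equivariance of the leafwise measure gives $\mu_x^H\propto(\mu_{hx}^H)h$ for $h\in H$ with $x,hx\in X'$; substituting the product formula from (2) on each side and projecting via the canonical quotient $H\to H/U\cong T$ (which sends $h=ut=tu'$ to $t$) extracts the desired relation $\mu_x^T\propto(\mu_{hx}^T)t$. For part (3), the hypothesis that $T$ centralizes $U$ makes $H\cong T\times U$, so the roles of $T$ and $U$ become symmetric; the analogous projection $H\to H/T\cong U$ applied to $h=t\in T$ gives $\mu_x^U\propto\mu_{tx}^U$. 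The main technical point, and the step I expect to require the most care, is arranging the joint refinement $\mathcal{A}_T\vee\mathcal{A}_U$ in part (2) so that its atoms really do carry the semidirect product structure of $H$ in a measurable way---once this is in hand, parts (1) and (3) reduce to bookkeeping via the characterizing property of leafwise measures.
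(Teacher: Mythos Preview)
The paper does not give its own proof of this lemma; it is stated as a known result with references to \cite{EK-2} and \cite[Prop.~8.5 and Cor.~8.8]{EL-Pisa}. Your overall strategy---subordinate $\sigma$-algebras and iterated disintegration---is indeed the method of those references, so in spirit you are on the right track.

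There is, however, a genuine slip in your construction for part~(2). If $\mathcal{A}_T$ is $T$-subordinate and $\mathcal{A}_U$ is $U$-subordinate, then the atom of $\mathcal{A}_T\vee\mathcal{A}_U$ through $x$ is $[x]_{\mathcal{A}_T}\cap[x]_{\mathcal{A}_U}$, the intersection of a piece of the $T$-orbit of $x$ with a piece of its $U$-orbit. Since $T\cap U=\{e\}$ and the $H$-action is locally free, this intersection is the single point $\{x\}$; hence $\mathcal{A}_T\vee\mathcal{A}_U$ separates points and cannot describe $\mu_x^H$. The construction in \cite[\S8]{EL-Pisa} runs the other way: one begins with an $H$-subordinate $\sigma$-algebra $\mathcal{A}$ (so that atoms are open pieces of $H$-orbits), then \emph{refines} $\mathcal{A}$ inside each atom by $U$-plaques---this is possible precisely because $U\lhd H$---to obtain a $U$-subordinate $\sigma$-algebra $\mathcal{C}\supset\mathcal{A}$. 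Double conditioning $\mu_x^{\mathcal{A}}=\int\mu_y^{\mathcal{C}}\,d\mu_x^{\mathcal{A}}(y)$ expresses the restriction of $\mu_x^H$ as an integral of $U$-leafwise measures against a transverse measure on the space of $U$-plaques, and one identifies the latter with $\mu_x^T$ by showing the quotient is governed by a compatible $T$-subordinate algebra. Your phrase ``conditioning further on $\mathcal{A}_T$ within each $\mathcal{A}_U$-atom'' has the containments reversed.

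One minor additional point: in deriving (1) from (2), ``projecting via $H\to H/U$'' is not literally correct, since the pushforward of the Radon measure $\mu_x^H$ to $T$ is typically infinite. What actually works is to compare the two product decompositions: $\mu_x^H\propto(\mu_{hx}^H)h$ together with~(2) gives $\iota_*(\mu_x^T\times\mu_x^U)\propto\iota_*\bigl((\mu_{hx}^T)t\times\sigma\bigr)$ for a suitable $\sigma$, and since $\iota$ is a bijection the $T$-marginal factors must be proportional. Your treatment of~(3) is fine.
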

By induction, as in \cite[\S8]{EL-Pisa}, this lemma implies a product structure for the conditional measures $\mu ^ \Psi _ x$.

\begin{prop}[Cf.~{\cite[Thm. 8.4]{EK-2}}] \label{prop:general product structure} Let $\Psi \subset {}_\field\Phi({\bf A},\Gbf)$ be a positively closed subset of Lyapunov exponents. Let $[\alpha _ 1], [\alpha _ 2], \dots, [\alpha _ k]$ be any ordering of the course Lyapunov weights contained in $\Psi$. Then for $\mu$-a.e. $x \in X$,
\begin{equation*}
\mu _ x ^ \Psi \propto \iota _ {*}\bigl (\mu _ x ^{[\alpha _ 1]} \times \dots \times \mu _ x ^ {[\alpha _ k]}\bigr)
.\end{equation*}
\end{prop}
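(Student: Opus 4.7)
I would prove the proposition by induction on the number $k$ of coarse Lyapunov weights in $\Psi$. For $k=1$ there is nothing to show, as $V_\Psi=V_{[\alpha_1]}$ and both sides of the asserted proportionality reduce to $\mu_x^{[\alpha_1]}$. For the inductive step the idea is to peel off a single coarse Lyapunov factor by applying Lemma~\ref{lem:prod-struc} and then invoke the inductive hypothesis on the remaining $k-1$ factors.

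To carry out a peel-off step via Lemma~\ref{lem:prod-struc}, we need a coarse Lyapunov weight $[\alpha_j]\in\Psi$ together with an element $a\in A$ such that $a$ centralizes $V_{[\alpha_j]}$ while strictly contracting every other coarse Lyapunov subgroup contained in $\Psi$. Such a pair always exists: the finitely generated cone in the real vector space generated by $\alpha_1,\ldots,\alpha_k$ has an extremal ray, corresponding to some $[\alpha_j]$, and the supporting hyperplane at this extremal ray produces an $a\in A$ with $|\alpha_j(a)|_v=1$ and $|\alpha_i(a)|_v<1$ for all $i\neq j$. Extremality of $[\alpha_j]$ also guarantees that $\Psi\setminus[\alpha_j]$ is itself positively closed, so $V_{\Psi\setminus[\alpha_j]}$ is an $A$-normalized $k$-subgroup by \S\ref{sec:maximaltorusTandabsroots}, and the commutation relations for root subgroups (cf.\ \cite[Thm.~21.20]{Borel-AlgGrBook}) furnish a semidirect-product decomposition
\[
V_\Psi=V_{[\alpha_j]}\ltimes V_{\Psi\setminus[\alpha_j]}
\]
with $V_{[\alpha_j]}\subset Z_G(a)$ and $V_{\Psi\setminus[\alpha_j]}\subset W_G^-(a)$. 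Lemma~\ref{lem:prod-struc}(2) then applies and gives
\[
\mu_x^\Psi\propto\iota_*\bigl(\mu_x^{[\alpha_j]}\times\mu_x^{\Psi\setminus[\alpha_j]}\bigr),
\]
and combining this with the inductive hypothesis applied to $\Psi\setminus[\alpha_j]$ produces a product decomposition of $\mu_x^\Psi$ into single-weight factors.

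The principal obstacle is that the procedure above produces a decomposition for the particular ordering in which extremal weights are iteratively peeled off, whereas the proposition demands the identity for the arbitrary prescribed ordering $[\alpha_1],\ldots,[\alpha_k]$. To convert between orderings I would use that any two orderings differ by a sequence of adjacent transpositions, and that swapping two adjacent factors $V_{[\alpha_i]}\cdot V_{[\alpha_{i+1}]}$ in a product introduces a commutator lying in coarse Lyapunov subgroups $V_{[r\alpha_i+s\alpha_{i+1}]}$ with $r,s\geq1$. Such correction terms lie in already-appearing factors of the product and can be absorbed into those factors via the quasi-invariance of the leafwise measures along $V_{[\alpha_\ell]}$-translates recorded in Lemma~\ref{lem:properties}(2). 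Iterating these adjacent-transposition adjustments (essentially repeating the two-factor reasoning underlying Lemma~\ref{lem:prod-struc} in a more combinatorial guise) yields the product decomposition of $\mu_x^\Psi$ in the prescribed ordering, completing the induction.
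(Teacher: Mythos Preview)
Your proposal is correct and follows the same inductive approach the paper indicates (the paper simply refers to \cite{EK-2} and \cite[\S8]{EL-Pisa}): peel off an exposed coarse Lyapunov weight via Lemma~\ref{lem:prod-struc}(2) and recurse on the positively closed remainder. The only place warranting a bit more care is the passage to an arbitrary ordering---your adjacent-transposition idea is the right one, but the absorption of commutator terms is handled in the cited references by a direct Fubini-type comparison of two applications of Lemma~\ref{lem:prod-struc} rather than through Lemma~\ref{lem:properties}(2), which by itself does not give the translation needed.
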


For the proof cf.\ e.g.\ \cite{EK-2} or \cite[\S8]{EL-Pisa}. 

\begin{lem}\label{lem:commutator}
Suppose $\mu$ is an $A$-invariant ergodic probability measure.
Let $\Psi \subset {}_\field\Phi({\bf A},\Gbf)$ be a positively closed subset, and assume that $\alpha,\beta\in\Psi$ 
are linearly independent roots. Let $\Psi' \subset \Psi$ be those elements of $\Psi$ that can be expressed as a linear combination of $\alpha$ and $\beta$ with strictly positive coefficients. Then $\Psi'$ is also closed and 
for $\mu$-a.e.~$x$  we have 
\begin{align*}
\biggl[\lsupp_x^{[\alpha]},\lsupp_x^{[\beta]}\biggr]\subset\linv_x^{\Psi}\qquad\text{and}\qquad
\biggl[\lsupp_x^{[\alpha]},\lsupp_x^{[\beta]}\biggr]\subset\linv_x^{\Psi'}
\end{align*}
\end{lem}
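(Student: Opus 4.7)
The closedness of $\Psi'$ is a direct combinatorial check: if $\gamma_1,\gamma_2\in\Psi'$ admit strictly positive decompositions $\gamma_i=c_1^{(i)}\alpha+c_2^{(i)}\beta$, then $\gamma_1+\gamma_2$ again has strictly positive coefficients and lies in $\Psi$ (as $\Psi$ is closed), hence in $\Psi'$ whenever it lies in ${}_\field\rootsys({\bf A},\Gbf)$; halving is treated identically. For the main claim, the algebraic ingredient is the generalized Chevalley commutator formula for relative root subgroups (see \cite[Prop.~14.5]{Borel-AlgGrBook}), which yields $[V_{[\alpha]},V_{[\beta]}]\subset V_{\Psi'}\subset V_\Psi$. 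The set-theoretic inclusion $[\lsupp_x^{[\alpha]},\lsupp_x^{[\beta]}]\subset V_{\Psi'}$ is therefore immediate, and the same formula shows that $V_{\Psi'}$ is normalized by $V_{[\alpha]}$ and $V_{[\beta]}$ (any positive multiple of $\alpha$ or $\beta$ added to a strictly positive combination of $\alpha$ and $\beta$ is again a strictly positive combination).

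\textbf{The dynamical comparison.} Set $\Psi_0=[\alpha]\cup[\beta]\cup\Psi'$, closed by the same reasoning. Apply Proposition~\ref{prop:general product structure} to $\Psi_0$ with two orderings that differ only in the order of the initial classes $[\alpha]$ and $[\beta]$, and with a common ordering of $\Psi'$ occupying the terminal block; for $\mu$-a.e.\ $x$ this yields
\begin{equation*}
\iota_*\bigl(\mu_x^{[\alpha]}\times\mu_x^{[\beta]}\times\mu_x^{\Psi'}\bigr)\;=\;\iota_*\bigl(\mu_x^{[\beta]}\times\mu_x^{[\alpha]}\times\mu_x^{\Psi'}\bigr).
\end{equation*}
Using the identity $uv=[u,v]\cdot vu$, the left-hand side is the pushforward along $(u,v,w)\mapsto\bigl(v,u,(vu)^{-1}[u,v](vu)\,w\bigr)$, in which the third coordinate remains in $V_{\Psi'}$ thanks to the normalization noted above. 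Disintegrating over the $(u,v)$-coordinates and invoking Lemma~\ref{lem:prop constant} (which in positive characteristic eliminates the usual proportionality constants) then forces
\[
(vu)^{-1}[u,v](vu)\in\linv_x^{\Psi'}\quad\text{for }\mu_x^{[\alpha]}\times\mu_x^{[\beta]}\text{-a.e.\ }(u,v).
\]
Running the same two-ordering argument with $\Psi$ in place of $\Psi_0$, taking $\Psi\setminus\{[\alpha],[\beta]\}$ as the common terminal block, yields the analogous statement with $\linv_x^\Psi$. The passage from ``invariance under $(vu)^{-1}[u,v](vu)$'' to ``invariance under $[u,v]$ itself'' is then accomplished by combining the closed-subgroup structure of $\linv_x^{\Psi'}$ (resp.\ $\linv_x^\Psi$) with a recurrence argument along $A$: for $\mu$-a.e.\ $x$ one can find pairs $(u,v)$ for which $vu$ lies in an arbitrarily small neighborhood of the identity, so that the conjugate approximates $[u,v]$ and the closedness of $\linv_x^{\Psi'}$ delivers the claim.

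\textbf{Main obstacle.} The technical heart of the argument is this final promotion step, converting a.e.\ invariance under the conjugate $(vu)^{-1}[u,v](vu)$ into invariance under $[u,v]$ itself. It requires combining the closed-subgroup structure of the leafwise-invariance group, the a.e.\ continuity statements from Lemma~\ref{lem:properties}, and Poincar\'e recurrence of the $A$-action on $X$. All earlier steps are routine bookkeeping; the positive-characteristic rigidity of Lemma~\ref{lem:prop constant} is what makes the argument notably cleaner than its characteristic-zero counterpart.
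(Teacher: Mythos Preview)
Your argument is essentially the paper's, but the ``main obstacle'' you identify is a phantom of your own creation, and your proposed resolution of it does not work.

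Compute the conjugate you obtain: with your convention $[u,v]=uvu^{-1}v^{-1}$,
\[
(vu)^{-1}[u,v](vu)=u^{-1}v^{-1}\cdot uvu^{-1}v^{-1}\cdot vu=u^{-1}v^{-1}uv,
\]
which is already the commutator $[u,v]$ in the convention $[a,b]=a^{-1}b^{-1}ab$ (the one the paper uses). So once you have shown $(vu)^{-1}[u,v](vu)\in\linv_x^{\Psi'}$ for $\mu_x^{[\alpha]}\times\mu_x^{[\beta]}$-a.e.\ $(u,v)$, you are done: closedness of $\linv_x^{\Psi'}$ and continuity of the commutator map extend this to all $(u,v)\in\lsupp_x^{[\alpha]}\times\lsupp_x^{[\beta]}$. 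No promotion step is needed. Equivalently, and this is what the paper does, use the identity $vu=uv\cdot v^{-1}u^{-1}vu$ placing the commutator on the \emph{right}, adjacent to the $V_{\Psi'}$-factor; then the comparison of the two orderings yields $[v,u]\mu_x^{\Psi'}\propto\mu_x^{\Psi'}$ directly, with no conjugation ever appearing.

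Your recurrence sketch, by contrast, does not work as written: if $vu$ lies in a small neighborhood of the identity then (by uniqueness of the factorization $V_{[\beta]}V_{[\alpha]}$) both $u$ and $v$ are themselves small, hence so is $[u,v]$; you would at best produce a sequence of elements of $\linv_x^{\Psi'}$ converging to the identity, not to a prescribed commutator $[u_0,v_0]$. There is no mechanism in your sketch for approximating a fixed large commutator by conjugates you control.

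One minor further remark: rather than running the two-ordering comparison a second time inside $\Psi$, the paper simply observes that $\mu_x^\Psi$ is (by Proposition~\ref{prop:general product structure}) a product in which $\mu_x^{\Psi'}$ appears as one factor and the element $[v,u]\in V_{\Psi'}$ acts only on that factor; invariance of $\mu_x^{\Psi'}$ therefore immediately gives invariance of $\mu_x^\Psi$.
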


\begin{proof}
By e.g.~\cite[\S2.5]{BoTi-RedGr} both $\Psi '$ and $\Psi ' \cup \left\{ \alpha, \beta \right\}$ are 
{positively} closed subset of ${}_\field\Phi({\bf A},\Gbf)$. Let $[\gamma _ 1]$, \dots, $[\gamma _ \ell]$ be an enumeration of all course Lyapunovs in $\Psi \setminus (\Psi ' \cup \left\{ \alpha, \beta \right\})$.

Then by Proposition~\ref{prop:general product structure}
\begin{align}\label{eq:prod-psi}
\mu_x^\Psi&\ppor\iota_*(\mu_x^{[\alpha]}\times\mu_x^{[\beta]}\times\mu_x^{\Psi'}\times\mu_x^{[\gamma_1]}\times\dots\times\mu_x^{[\gamma_\ell]})\\
\notag&\ppor\iota_*(\mu_x^{[\beta]}\times\mu_x^{[\alpha]}\times\mu_x^{\Psi'}\times\mu_x^{[\gamma_1]}\times\dots\times\mu_x^{[\gamma_\ell]})
\end{align}
where $\iota$ is the product map.

Let now $f \in C_c(V^\Psi)$, then~\eqref{eq:prod-psi} and Fubini's theorem implies that
\begin{align*}
\int f(g)d \mu_x^{W}&= \kappa \int f(v_\alpha v_\beta v_{\Psi'}v_{\gamma_1}\dots v_{\gamma_\ell})\operatorname{d}\!\mu_x^{V_{[\alpha]}}\operatorname{d}\!\mu_x^{V_{[\beta]}}\operatorname{d}\!\mu_x^{\Psi'}
\operatorname{d}\!\mu_x^{V_{[\gamma_1]}}\dots\operatorname{d}\!\mu_x^{V_{[\gamma_\ell]}}\\
&=\kappa' 
\int f(v_\beta v_\alpha v_{\Psi'}v_{\gamma_1}\dots v_{\gamma_\ell})\operatorname{d}\!\mu_x^{V_{[\alpha]}}\operatorname{d}\!\mu_x^{V_{[\beta]}}\operatorname{d}\!\mu_x^{\Psi'}
\operatorname{d}\!\mu_x^{V_{[\gamma_1]}}\dots\operatorname{d}\!\mu_x^{V_{[\gamma_\ell]}}
\\
&=\kappa'\int f(v_\alpha v_\beta [v_\beta,v_\alpha] v_{\Psi'}v_{\gamma_1}\dots v_{\gamma_\ell})\operatorname{d}\!\mu_x^{V_{[\alpha]}}\operatorname{d}\!\mu_x^{V_{[\beta]}}\operatorname{d}\!\mu_x^{\Psi'}
\operatorname{d}\!\mu_x^{V_{[\gamma_1]}}\dots\operatorname{d}\!\mu_x^{V_{[\gamma_\ell]}}
\end{align*}
for $\kappa,\kappa'$ independent of $f$.

From this we get for $\mu_x^{[\alpha]}$-a.e.\ $v_\alpha \in V_{[\alpha]}$ and $\mu_x^{[\beta]}$-a.e.\ $v_\beta \in V_{[\beta]}$,
\[
\mu_x^{\Psi'}\propto[v_\beta,v_\alpha]\mu_x^{\Psi'}
\]
hence applying Lemma~\ref{lem:prop constant} we deduce $[v_\beta,v_\alpha]\mu_x^{\Psi'}=\mu_x^{\Psi'}$.
Applying Proposition~\ref{prop:general product structure} again we concluded that 
also $[v_\beta,v_\alpha]\mu_x^{\Psi}=\mu_x^{\Psi}$.

Since $\linv_x^{\Psi}$ is a (Hausdorff) closed subgroup of $V^\Psi$ it follows that
\be\label{eq:high-main}
\Bigl[\lsupp_x^{[\alpha]},\lsupp_x^{[\beta]}\Bigr]\subset\linv_x^{\Psi}\text{ almost surely.}
\ee
\end{proof}

\begin{lem}[Cf.~\cite{EK-2}, \S8]\label{lem:Inv-prod-struc}
Let $\mu$ be an $A$-invariant probability measure on $X$.
There is a conull subset $X'\subset X$ with the following property.
Let $\Psi\subset{}_\field\rootsys({\bf A},\Gbf)$ be a {positively} 
closed subset such that $V_\Psi \subset W_G^-(a)$ for some $a$. Then for all $x\in X'$,
if $v=\prod v_\alpha\in\mathcal I_x^{\Psi},$ with $v_\alpha \in V_{[\alpha] }$ for all $[\alpha]\subset\Psi$, then $v_\alpha\in\mathcal I_x^{[\alpha]}$  for all $[\alpha]$.
\end{lem}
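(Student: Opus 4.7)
The plan is to proceed by induction on $|\Psi|$, the number of distinct coarse Lyapunov classes in $\Psi$. The base case $|\Psi|=1$ is immediate. For the inductive step, I would choose $[\alpha_*]\in\Psi$ of \emph{minimal height}; since any root of the form $i\beta+j\gamma$ with $i,j\ge 1$ and $h(\beta),h(\gamma)\ge h(\alpha_*)>0$ has height strictly greater than $h(\alpha_*)$, the class $[\alpha_*]$ cannot be written as a sum of two other classes in $\Psi$. Consequently $\Psi':=\Psi\setminus\{[\alpha_*]\}$ is still positively closed. Moreover, for any $[\beta]\in\Psi'$, no positive integer combination $i\beta+j\alpha_*$ (with $i,j\ge 1$) can be a positive multiple of $\alpha_*$ alone (since $\beta$ is not proportional to $\alpha_*$); hence $[V_{[\beta]},V_{[\alpha_*]}]\subset V_{\Psi'}$, which shows $V_{\Psi'}\triangleleft V_\Psi$ and yields the set-theoretic factorisation $V_\Psi=V_{[\alpha_*]}\cdot V_{\Psi'}$ with trivial intersection.

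Applying Proposition~\ref{prop:general product structure} both to $\Psi$ (ordering $[\alpha_*]$ first) and to $\Psi'$ gives
\[
\mu_x^\Psi\propto\iota_*\bigl(\mu_x^{[\alpha_*]}\times\mu_x^{\Psi'}\bigr).
\]
Write $v=v_*v'$ uniquely with $v_*\in V_{[\alpha_*]}$ and $v'\in V_{\Psi'}$. Using $v'u_*=u_*\cdot(u_*^{-1}v'u_*)$ together with normality (so that $w(u_*):=u_*^{-1}v'u_*\in V_{\Psi'}$), one computes
\[
v\cdot(u_*u')=(v_*u_*)\bigl(w(u_*)\,u'\bigr)
\qquad (u_*\in V_{[\alpha_*]},\ u'\in V_{\Psi'}).
\]
Invoking $v\mu_x^\Psi=\mu_x^\Psi$ (the scalar is $1$ by Lemma~\ref{lem:prop constant}) and integrating via Fubini, the marginal on $V_{[\alpha_*]}$ gives $v_*\mu_x^{[\alpha_*]}=\mu_x^{[\alpha_*]}$, so $v_*\in\mathcal I_x^{[\alpha_*]}$; the conditional statement on the $V_{\Psi'}$-factor reads $w(u_*)\mu_x^{\Psi'}=\mu_x^{\Psi'}$ for $\mu_x^{[\alpha_*]}$-a.e.\ $u_*$, i.e.\ $w(u_*)\in\mathcal I_x^{\Psi'}$ for a.e.\ $u_*$.

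Since $\mathcal I_x^{\Psi'}$ is closed, since the map $u_*\mapsto w(u_*)=u_*^{-1}v'u_*$ is continuous with $w(e)=v'$, and since $\supp\mu_x^{[\alpha_*]}$ accumulates at the identity by Lemma~\ref{lem:properties}(3), one can pick a sequence $u_{*,n}\to e$ in the full-measure set and deduce by closedness that $v'\in\mathcal I_x^{\Psi'}$. The inductive hypothesis applied to $\Psi'$ and $v'$ then gives $v_\alpha\in\mathcal I_x^{[\alpha]}$ for every $[\alpha]\in\Psi'$, which combined with $v_{\alpha_*}=v_*\in\mathcal I_x^{[\alpha_*]}$ completes the induction. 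The main technical obstacles are verifying the normality and factorisation in the first paragraph (which I would check by the explicit commutator computation using closure of $\Psi$) and promoting the a.e.\ invariance $w(u_*)\in\mathcal I_x^{\Psi'}$ to the pointwise statement $v'\in\mathcal I_x^{\Psi'}$ via the continuity argument at the identity.
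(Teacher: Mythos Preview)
Your argument follows the same inductive strategy as the paper: peel off one coarse Lyapunov class $[\alpha_*]$ so that $\Psi'=\Psi\setminus[\alpha_*]$ remains positively closed with $V_{\Psi'}\triangleleft V_\Psi$, invoke the product structure $\mu_x^\Psi\propto\iota_*(\mu_x^{[\alpha_*]}\times\mu_x^{\Psi'})$, use the commutation relation to separate the two factors, and then pass to the identity via Lemma~\ref{lem:properties}(3) and closedness of $\mathcal I_x^{\Psi'}$ to conclude $v'\in\mathcal I_x^{\Psi'}$. The paper's computation writes $v_\alpha v' g_\alpha g'=v_\alpha g_\alpha\, v'[v',g_\alpha]\,g'$ and reads off $v'[v',g_\alpha]\in\mathcal I_x^{\Psi'}$ for a.e.\ $g_\alpha$; your $w(u_*)=u_*^{-1}v'u_*$ is the same object up to commutator conventions.

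The one real difference is the selection rule for $[\alpha_*]$. The paper takes an \emph{exposed} class --- one for which some $b\in A$ satisfies $\alpha_*(b)=1$ and $|\beta(b)|<1$ for all $\beta\in\Psi'$ --- equivalently, an extreme ray of the cone spanned by $\Psi$; extremality immediately forces $\Psi'$ to be closed, since $\beta+\gamma$ proportional to $\alpha_*$ with $\beta,\gamma$ in the cone would force $\beta,\gamma$ onto that ray. Your minimal-height criterion only shows that the \emph{shortest} representative $\alpha_*$ cannot equal $\beta+\gamma$ with $\beta,\gamma\in\Psi'$; it does not by itself exclude $\beta+\gamma=2\alpha_*$ (or a higher multiple lying in $[\alpha_*]$), and this is a live possibility when the relative root system is non-reduced or when one restricts from $\mathbf S$ to a proper sub-torus $\mathbf A$. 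Replacing ``minimal height'' by ``exposed'' closes this gap with no change to the remainder of your argument.
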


\begin{proof}
We say a root
$\alpha \in \Psi$ is {\em exposed} (cf. \cite{EL-Pisa}) if there exists an element $b \in A$ so that
$\alpha(b)=1$ and $|\beta(b)|<1$ for all $\beta \in \Psi \setminus[\alpha]$. If $\Psi$ is as above then clearly it has at least one exposed Lyapunov weight $\alpha$ and that $\Psi ' = \Psi \setminus [\alpha]$ is also positively closed. 
Moreover, for any $v _ \alpha \in V _ {[\alpha]}$ and $v ' \in V _ {\Psi '}$ it holds that $[v_\alpha,v'] \in V _ {\Psi '}$.

Suppose $v _ \alpha v' \in \mathcal I_x^{\Psi}$ with $v _ \alpha \in V _ {[\alpha]}$ and $v ' \in V _ {\Psi '}$. Then
\begin{align*}
\int f(g)d \mu_x^{\Psi}&=\kappa \int f(g_\alpha g')\operatorname{d}\!\mu_x^{V_{[\alpha]}}\operatorname{d}\!\mu_x^{\Psi'}\\
&=\int f(v_\alpha v' g)d \mu_x^{\Psi}\\
&=\kappa\int f(v_\alpha v' g_\alpha g')\operatorname{d}\!\mu_x^{V_{[\alpha]}}\operatorname{d}\!\mu_x^{\Psi'}\\
&=\kappa\int f(v_\alpha g_\alpha v' [v', g_\alpha]  g')\operatorname{d}\!\mu_x^{V_{[\alpha]}}\operatorname{d}\!\mu_x^{\Psi'}
\end{align*}
for some $\kappa$ independent of $f$.

It follows by uniqueness of decomposition that for $\mu_x^{V_{[\alpha]}}$-a.e.\ $g_\alpha$, 
\[
v' [v',g_\alpha]\mu_x^{\Psi'}\propto \mu_x^{\Psi'}
\]
hence by Lemma~\ref{lem:prop constant} we have that $v' [v',g_\alpha] \in \linv_x^{\Psi'}$. It follows that $v_\alpha \mu_x^{V_{[\alpha]}} = \mu_x^{V_{[\alpha]}}$ and $v_\alpha \in \linv_x^{[\alpha]}$. Moreover, as for a.e.~$x$ the identity is in the support of $\mu_x^{V_{[\alpha]}}$ 
by Lemma~\ref{lem:properties}.(3) we have that $v' \in \linv_x^{\Psi'}$. The lemma now easily follows by induction on the cardinality of $\Psi$.
\end{proof}

For any~$W_G^\pm(a)$ we fix some increasing sequence of compact open subgroups~$K_n$
with $W_G^\pm(a)=\bigcup_n K_n$ and some decreasing sequence of compact open
subgroups $O_n\subset K_1$ 
with~$\{e\}=\bigcap_n O_n$. Then any closed subgroup~$\linv<W_G^\pm(a)$
is determined by the finite subgroups~$\linv\cap K_n/O_n<K_n/O_n$,
which allows us to speak of measurability of a subgroup depending
on~$x\in X$.

\begin{lem}\label{lem:pinsker-measurable-stable}
Let $a\in A$. Then $\linv_x^{W_G^\pm(a)}$ is $\pins_a$-measurable.
\end{lem}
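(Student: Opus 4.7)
The plan is to combine Proposition~\ref{prop:pins-meas-hull} with the translation formula of Lemma~\ref{lem:properties}(2). By Proposition~\ref{prop:pins-meas-hull} applied to $a$, the Pinsker $\sigma$-algebra $\pins_a$ is equivalent to the $\sigma$-algebra of Borel sets foliated by $W_G^+(a)$-leaves; applying the same proposition to $a^{-1}$ and using $\pins_a=\pins_{a^{-1}}$ together with $W_G^+(a^{-1})=W_G^-(a)$ (as already noted after Proposition~\ref{prop:pins-meas-hull}), $\pins_a$ is also equivalent to the $\sigma$-algebra of Borel sets foliated by $W_G^-(a)$-leaves. In either case, a Borel function is $\pins_a$-measurable if and only if some version of it is constant on $W_G^\pm(a)$-orbits outside a $\mu$-null set. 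It therefore suffices to show: for $x$ in a conull set and every $u\in W_G^\pm(a)$ with $ux$ also in this set, one has $\linv_x^{W_G^\pm(a)}=\linv_{ux}^{W_G^\pm(a)}$.

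This last identity is a direct consequence of Lemma~\ref{lem:properties}(2), applied with $U=W_G^\pm(a)$; note that $W_G^+(a)=W_G^-(a^{-1})$, so the leafwise measure is defined in the sense of the preamble to Lemma~\ref{lem:properties} in both cases. Concretely, the lemma yields $\mu_x^U\propto(\mu_{ux}^U)u$, where $(\cdot)u$ denotes pushforward under right multiplication by $u$. For any $w\in U$ this gives
\[
w\mu_x^U\propto(w\mu_{ux}^U)u,
\]
so $w\mu_x^U\propto\mu_x^U$ if and only if $w\mu_{ux}^U\propto\mu_{ux}^U$. By Lemma~\ref{lem:prop constant}, proportionality can be upgraded to equality in positive characteristic, which yields $\linv_x^U=\linv_{ux}^U$ as desired.

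It remains to verify that $x\mapsto\linv_x^{W_G^\pm(a)}$ is itself a Borel map into a suitable space of closed subgroups. Following the discussion immediately preceding the lemma, $\linv_x^{W_G^\pm(a)}$ is determined by the sequence of finite subgroups $\linv_x^{W_G^\pm(a)}\cap K_n/O_n<K_n/O_n$, so it suffices to check that each of these finite-group-valued maps is measurable in $x$. This reduces in turn to the Borel dependence of the leafwise measure $\mu_x^{W_G^\pm(a)}$ on $x$, which is Lemma~\ref{lem:properties}(1).

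I do not foresee a serious obstacle: the lemma is a clean corollary of Proposition~\ref{prop:pins-meas-hull} and Lemma~\ref{lem:properties}(2). The only points requiring care are the symmetric use of the Pinsker identification for both $a$ and $a^{-1}$ to cover the $W_G^+(a)$ and $W_G^-(a)$ cases, and the observation that Lemma~\ref{lem:prop constant} is what lets us pass from proportionality to genuine invariance of the leafwise measure under elements of $\linv_x^U$; both are straightforward.
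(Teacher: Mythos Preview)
Your proof is correct and follows essentially the same approach as the paper's: use Lemma~\ref{lem:properties}(2) to show that $\linv_x^{W_G^\pm(a)}$ is constant along $W_G^\pm(a)$-orbits, then invoke Proposition~\ref{prop:pins-meas-hull}. Your write-up is in fact more careful than the paper's, making explicit the appeal to Lemma~\ref{lem:prop constant} and the Borel-measurability verification that the paper leaves implicit.
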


\begin{proof}
We prove this for $W^-_G(a)$, the proof in the other case is similar.
There is a full measure set $X'\subset X$ so that whenever $x,wx\in X'${}{, for some $w\in W^-_G(a)$, then} we have
\[
\mu_x^{W^-_G(a)}\propto\mu_{wx}^{W^-_G(a)}w.
\] 
This implies $\linv_x^{W^-_G(a)}=\linv_{wx}^{W^-_G(a)}.$ The lemma
now follows from Proposition~\ref{prop:pins-meas-hull}. 
\end{proof}

\begin{lem}\label{lem:pinsker-measurable} 
Let $\alpha \in {}_\field\rootsys({\bf A},\Gbf)$ be such that $V_{[\alpha]} < W^-_G(a)$. 
Then the subgroup $\linv_x^{[\alpha]}$ is $\pins_{a}$-measurable.
\end{lem}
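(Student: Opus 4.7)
The plan is to reduce this to Lemma~\ref{lem:pinsker-measurable-stable} via the product structure for invariance groups.

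First, let $\Psi\subset{}_\field\Phi({\bf A},\Gbf)$ be the positively closed subset (for an ordering making $W_G^-(a)$ the corresponding negative horospherical) such that $V_\Psi=W_G^-(a)$. By hypothesis $[\alpha]\subset\Psi$, and we may write every $v\in V_\Psi$ uniquely as a product $v=\prod v_\beta$ with $v_\beta\in V_{[\beta]}$ in some fixed order of the coarse Lyapunov weights in $\Psi$.

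Next, I would invoke Lemma~\ref{lem:Inv-prod-struc} to show that almost surely
\[
\linv_x^{[\alpha]}=\linv_x^{W_G^-(a)}\cap V_{[\alpha]}.
\]
The inclusion $\linv_x^{[\alpha]}\subset\linv_x^{W_G^-(a)}$ is immediate from Proposition~\ref{prop:general product structure}, which shows that an element $v_\alpha\in V_{[\alpha]}$ fixing $\mu_x^{[\alpha]}$ also fixes the product $\mu_x^{\Psi}$. For the reverse inclusion, take $v_\alpha\in\linv_x^{W_G^-(a)}\cap V_{[\alpha]}$; then writing $v_\alpha$ as an element of $V_\Psi$ with all other components trivial, Lemma~\ref{lem:Inv-prod-struc} gives $v_\alpha\in\linv_x^{[\alpha]}$.

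Finally, by Lemma~\ref{lem:pinsker-measurable-stable} the map $x\mapsto\linv_x^{W_G^-(a)}$ is $\pins_a$-measurable, meaning that for every $n$ the coset-image $\linv_x^{W_G^-(a)}\cap K_n/O_n$ in $K_n/O_n$ is a $\pins_a$-measurable function of $x$. Since $V_{[\alpha]}$ is a fixed closed subgroup of $W_G^-(a)$, intersecting with $V_{[\alpha]}$ commutes with the filtration in the obvious way: $\linv_x^{[\alpha]}\cap K_n$ is determined by $(\linv_x^{W_G^-(a)}\cap K_n)\cap V_{[\alpha]}$, and hence its image in $K_n/O_n$ is a Borel function of the image of $\linv_x^{W_G^-(a)}$. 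This yields $\pins_a$-measurability of $x\mapsto\linv_x^{[\alpha]}$.

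The only step that requires any real argument is the identification $\linv_x^{[\alpha]}=\linv_x^{W_G^-(a)}\cap V_{[\alpha]}$; everything else is either a direct appeal to an earlier lemma or a routine measurability verification. I expect no substantial obstacle since the product structure results have already been established.
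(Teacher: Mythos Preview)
Your proposal is correct and essentially identical to the paper's own proof. The paper's proof is a one-liner saying that by Proposition~\ref{prop:pins-meas-hull} it suffices to show $x\mapsto\linv_x^{[\alpha]}$ is constant along $W_G^-(a)$-leaves, which it declares an immediate corollary of Lemma~\ref{lem:pinsker-measurable-stable} and Lemma~\ref{lem:Inv-prod-struc}; your identity $\linv_x^{[\alpha]}=\linv_x^{W_G^-(a)}\cap V_{[\alpha]}$ is exactly the content behind that phrase, just made explicit.
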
 

\begin{proof}
In view of Proposition~\ref{prop:pins-meas-hull}, it suffices to show that 
$x\mapsto\linv_x^{[\alpha]}$ is constant along $W_G^-(a)$-leaves almost surely, which is an immediate corollary of Lemma~\ref{lem:pinsker-measurable-stable} and Lemma~\ref{lem:Inv-prod-struc}.
\end{proof}


\section{High entropy part of Theorem~\ref{thm:measure-class}}\label{sec:high}
We now start the proof of Theorem~\ref{thm:measure-class}.
{}{Recall that $A$ is the full diagonal subgroup of $G=\SL(d,\field)$.}
Throughout \S\ref{sec:high}--\S\ref{sec:mc-proof}, $\mu$ denotes an ergodic $A$-invariant measure
on $G/\Gamma.$ 

For any $\alpha\in\Phi$ there exists a $\field$-embedding  
$\varphi_\alpha:\SL_2\to\SL_d$ so that $U_\alpha=\varphi_\alpha({}{U^+})$ and $U_{-\alpha}=\varphi_\alpha(U^-)$
where $U^\pm$ denote the upper and lower triangular unipotent subgroups of $\SL_2.$
We let $H_\alpha{}{:}={\rm Im}(\varphi_\alpha).$ 

Let $T$ denote the diagonal subgroup of $\SL_2.$ 
Let $t_\alpha=\begin{pmatrix}\theta&0\\0&\theta^{-1}\end{pmatrix}\in T$ 
be an element 
so that $\alpha(\varphi_\alpha(t_\alpha))=\theta^2$ 
and $\beta(\varphi_\alpha(t_\alpha))=\theta^{\vare}$ with $\vare\in\{-1,0,1\}$ 
for all $\beta\in\rootsys\setminus\{\pm\alpha\}$ {}{where $\theta$ is as in \S\ref{sec:field}.}
Put 
\[
a_\alpha:=\varphi_\alpha(t_\alpha).
\]
Then $U_\alpha\subset W^+_G(a_\alpha).$

Given a root $\alpha\in\rootsys$ we define 
\[
\rootsys^+_\alpha:=\{\beta\in\rootsys:U_\beta\subset W^+(a_\alpha)\},
\]  
and put $\rootsys^-_\alpha=-\rootsys^+_\alpha.$

\begin{lem}\label{lem:root-system-A}
Let $\alpha\in\rootsys$ and let $\beta\in\rootsys^-_\alpha\setminus\{-\alpha\}.$ The following hold.
\begin{enumerate}
\item $\beta+\alpha\in\Phi_\alpha^{+}.$
\item if $\beta+n\alpha\in\Phi$ for some integer $n\geq 1$, then $n=1.$
\item $\alpha\in\rootsys^-_\beta.$ 
\end{enumerate}  
\end{lem}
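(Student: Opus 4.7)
The plan is to argue by direct computation in the type $A_{d-1}$ root system. Identify characters of $A$ with $\bbz^d/\bbz(1,\dots,1)$, write the roots as $\alpha_{ij}=e_i-e_j$ for $i\neq j$, and recall that $U_{\alpha_{ij}}$ is the one-parameter subgroup of elementary matrices supported in entry $(i,j)$. For $\alpha=\alpha_{ij}$ the element $a_\alpha$ is the diagonal matrix with $\theta$ in position $i$, $\theta^{-1}$ in position $j$, and $1$ elsewhere, so a direct evaluation of $\alpha_{kl}(a_\alpha)$ yields
\[
\Phi_\alpha^{+}=\{\alpha_{ij}\}\cup\{\alpha_{il}:l\neq i,j\}\cup\{\alpha_{kj}:k\neq i,j\},
\]
and $\Phi_\alpha^{-}=-\Phi_\alpha^{+}=\{\alpha_{ji}\}\cup\{\alpha_{jl}:l\neq i,j\}\cup\{\alpha_{ki}:k\neq i,j\}$.

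Fix $\alpha=\alpha_{ij}$ and $\beta\in\Phi_\alpha^{-}\setminus\{-\alpha\}$. The description above forces $\beta=\alpha_{jl}$ with $l\neq i,j$ or $\beta=\alpha_{ki}$ with $k\neq i,j$; in addition, the exclusion $\beta\neq -\alpha=\alpha_{ji}$ gives $l\neq i$ in the first subcase and $k\neq j$ in the second. Part (1) is then immediate: $\beta+\alpha$ equals $\alpha_{il}$ or $\alpha_{kj}$ respectively, both of which lie in $\Phi_\alpha^{+}$. For part (2), one computes $\beta+n\alpha=ne_i+(1-n)e_j-e_l$ or $e_k+(n-1)e_i-ne_j$ respectively; for $n\geq 2$ either expression has three distinct nonzero coordinates and thus cannot be a root of $A_{d-1}$, so only $n=1$ gives a root. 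For part (3), apply the same formula with $\beta$ in place of $\alpha$ to describe $\Phi_\beta^{-}$ and then check inclusion case by case: in the first subcase $\alpha=\alpha_{ij}$ belongs to $\{\alpha_{mj}:m\neq j,l\}\subset\Phi_\beta^{-}$ with $m=i$ (using $i\neq j,l$), and in the second subcase $\alpha=\alpha_{ij}$ belongs to $\{\alpha_{im}:m\neq i,k\}\subset\Phi_\beta^{-}$ with $m=j$ (using $j\neq i,k$).

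This is a routine case-check with no real obstacle. The only delicate point is keeping careful track of indices and using the hypothesis $\beta\neq-\alpha$ precisely in each subcase to exclude the degenerate coincidences ($l=i$ or $k=j$) that would otherwise place $\beta+\alpha$ outside $\Phi$ or break the argument in (3).
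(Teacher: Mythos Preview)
Your proof is correct. The paper gives essentially no argument for this lemma, merely asserting that (1) and (3) ``are general facts, which follow from the definitions and hold for any root system'' and that (2) is a special feature of type $A$. Your explicit coordinate computation in the $A_{d-1}$ root system verifies all three parts directly; this is more concrete and self-contained, and it sidesteps the question of what the general statement would even mean --- with the natural generalization of $a_\alpha$ via the coroot, part (1) can in fact fail in non-simply-laced types (e.g.\ in $G_2$, with $\alpha$ a short simple root and $\beta$ the long simple root, one has $\beta\in\Phi_\alpha^-\setminus\{-\alpha\}$ but $\beta+\alpha\in\Phi_\alpha^-$). The only stylistic quibble is that your sentence about the exclusion $\beta\neq-\alpha$ giving $l\neq i$ and $k\neq j$ is redundant, since those constraints are already contained in your description of $\Phi_\alpha^-\setminus\{-\alpha\}$; this is harmless.
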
 

\begin{proof}
Assertions (1) and (3) are general facts, which follow from the definitions and hold for any roots system.
Part (2) is a special feature of root systems of type $A$ which is the case we are concerned with here.
\end{proof}

A well-known theorem by Ledrappier and Young~\cite{LedYoung-Ent} relates the entropy,
the dimension of conditional measures along invariant foliations, and Lyapunov
exponents, for a general $C^2$ map on a compact manifold, and in~\cite[\S9]{MarTom}
an adaptation of the general results to flows on locally homogeneous
spaces is provided.
The following is taken from~\cite[Lemma~6.2]{EK-PureApplied}, see also~\cite[Prop.\ 3.1]{EKL}  and \cite{EL-Pisa}. 
For any root $\alpha\in\Phi$ there exists $s_{\alpha}(\mu)\in[0,1]$ 
so that for any $a\in A$ with $|\alpha(a)|\geq1$ we have 
\[
\entropy_\mu(a,U_{\alpha})=s_{\alpha}(\mu)\log|\alpha(a)|
\] 
where $\entropy_\mu(a,U_{\alpha})$
denotes the entropy contribution of $U_\alpha$.
Indeed $s_\alpha(\mu)$ are defined as the local dimension of the leafwise measure along $\alpha$
as we now recall. Define
\[
D_\mu(a_\alpha,U_\alpha)(x)=\lim_{|n|\to\infty}\frac{\log\bigl(\mu_x^{U_\alpha}(a_\alpha^n U_\alpha[1] a_\alpha^{-n})\bigr)}{n},
\]
the limit exists by~\cite[Lemma 9.1]{EK-2}, 
and define $\entropy_\mu(a_\alpha,U_\alpha)=\int D_\mu(a_\alpha,U_\alpha)\operatorname{d}\!\mu,$ 
the entropy contribution of $U_\alpha.$
Since $D_\mu(a_\alpha,U_\alpha)(x)$ is $A$-invariant and $\mu$ is $A$-ergodic, we have 
\[
\mbox{$\entropy_\mu(a_\alpha,U_\alpha)=D_\mu(a,U)(x)$ for $\mu$-a.e.\ $x.$}
\]
Therefore, $s_\alpha(\mu)=\frac{1}{2}D_\mu(a,U)(x)$ for $\mu$-a.e.\ $x.$

Moreover, the following properties hold.  
\begin{enumerate}
\item[($\salpha$-1)] $s_{\alpha}(\mu)=0$ if and only if $\mu_x^{\alpha}$ is delta mass at the identity,
\item[($\salpha$-2)] $s_{\alpha}(\mu)=1$ if and only $\mu_x^{\alpha}$ is the Haar measure on $U_{\alpha},$
\item[($\salpha$-3)] for any $a\in A$ we have 
\[
\entropy_\mu(a)=\sum s_{\alpha}(\mu)\log^+|\alpha(a)|
\]
where $\log^+(\ell)=\max\{0,\log\ell\}.$ 
\end{enumerate}

\medskip

The following is the main result of this section.

\begin{prop}[Cf.~\cite{EL-SplitCase}, Theorem 5.1]\label{prop:high}
Let $\alpha\in\Phi$ be so that $\mu_x^{\alpha}$ is nontrivial 
for $\mu$-a.e.\ $x.$ Then at least one of the following holds. 
\begin{enumerate}
\item $\mu_x^{\beta}=\delta_\id$ for all $\beta\in\rootsys^-_\alpha\setminus\{-\alpha\}$ and $\mu$-a.e.\ $x.$ 
\item $\linv_x^{\pm\alpha}$ are nondiscrete subgroups of~$U_{\pm\alpha}$ for $\mu$-a.e.\ $x.$
\end{enumerate}
\end{prop}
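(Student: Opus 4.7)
The plan is to exploit the symmetry $s_\gamma = s_{-\gamma}$ (coming from $\entropy_\mu(a) = \entropy_\mu(a^{-1})$) to promote the hypothesis to non-triviality of $\mu_x^{\pm\alpha}$ and of $\mu_x^{\pm\beta}$, then to run two successive applications of the commutator Lemma~\ref{lem:commutator}: the first to produce invariance in the $\alpha+\beta$ direction, the second to transfer it back to the $\alpha$ direction. Lemma~\ref{lem:ZC-1Ball} will then upgrade ``non-trivial'' to ``non-discrete''.

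Assume (1) fails, so there exists $\beta \in \rootsys_\alpha^- \setminus \{-\alpha\}$ with $\mu_x^\beta$ non-trivial $\mu$-a.e. The identity $\entropy_\mu(a) = \entropy_\mu(a^{-1})$ combined with property~($\salpha$-3) gives, for every $a \in A$,
\[
\sum_\gamma s_\gamma \log^+|\gamma(a)| = \sum_\gamma s_{-\gamma} \log^+|\gamma(a)|,
\]
and varying $a$ through Weyl chambers forces $s_\gamma = s_{-\gamma}$ for all $\gamma \in \rootsys$; hence $\mu_x^{-\alpha}$ and $\mu_x^{-\beta}$ are also non-trivial $\mu$-a.e. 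Since $\alpha$ and $\beta$ are linearly independent, Lemma~\ref{lem:root-system-A}(1,2) gives $\alpha+\beta \in \rootsys_\alpha^+$ and ensures that no $\beta+n\alpha$ with $n\ge 2$ is a root, so the commutator relation in $\SL_d$ has the clean form $[u_\alpha(s), u_\beta(t)] = u_{\alpha+\beta}(c\,st)$ for some $c \in \field^\times$. Choosing $a \in A$ that simultaneously contracts $\alpha, \beta$ and $\alpha+\beta$, the set $\Psi_1 = \{\alpha, \beta, \alpha+\beta\}$ is positively closed with $V_{\Psi_1} \subset W_G^-(a)$, and Lemma~\ref{lem:commutator} yields
\[
\bigl[\lsupp_x^\alpha, \lsupp_x^\beta\bigr] \subset \linv_x^{\alpha+\beta}.
\]
Picking non-trivial $s \in \lsupp_x^\alpha$ and $t \in \lsupp_x^\beta$ produces a non-trivial element of $\linv_x^{\alpha+\beta}$, hence also of $\lsupp_x^{\alpha+\beta}$, $\mu$-a.e.

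Now apply Lemma~\ref{lem:commutator} a second time to the linearly independent pair $(\alpha+\beta, -\beta)$, whose sum is $\alpha$. The set $\Psi_2 = \{\alpha+\beta, -\beta, \alpha\}$ is positively closed (there is an ordering of the standard basis making all three roots positive) and satisfies $V_{\Psi_2} \subset W_G^-(a')$ for a suitable $a' \in A$; the corresponding subset $\Psi'$ of strictly positive combinations of $\alpha+\beta$ and $-\beta$ that lie in $\Psi_2$ consists of $\alpha$ alone. Therefore
\[
\bigl[\lsupp_x^{\alpha+\beta}, \lsupp_x^{-\beta}\bigr] \subset \linv_x^\alpha,
\]
and by the previous step both supports contain non-trivial elements, so the commutator relation again produces a non-trivial element of $\linv_x^\alpha$. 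Applying Lemma~\ref{lem:ZC-1Ball} with $a_{-\alpha}$, whose cyclic group is cocompact in the $\field$-points of its one-dimensional Zariski closure and which contracts $U_\alpha \cong \mathbb{A}^1$ by a non-trivial scalar: choose any compact open $Q \subset U_\alpha$ containing the non-trivial element, and the Zariski closure of $\linv_x^\alpha \cap Q$ is $a_{-\alpha}$-normalized and contains $\linv_x^\alpha$, so it equals either $\{e\}$ or all of $U_\alpha$. The non-trivial element rules out the first case, so $\linv_x^\alpha \cap Q$ is Zariski dense in $U_\alpha$, and therefore infinite. As a closed infinite subset of the compact set $Q$ it is non-discrete, so $\linv_x^\alpha$ is a non-discrete subgroup of $U_\alpha$. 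The statement for $\linv_x^{-\alpha}$ follows by the symmetric argument with $\alpha, \beta$ replaced by $-\alpha, -\beta$.

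The main obstacle specific to positive characteristic is exactly this final step: unlike over $\bbr$ or $\bbc$, a closed subgroup of $(\field, +)$ may be infinite yet discrete (for instance $\bbf_q[\theta] \subset \bbf_q((\theta^{-1}))$), and can even be Zariski dense in $\mathbb{A}^1$ without being non-discrete in the Hausdorff topology. One therefore cannot simply invoke the characteristic-zero argument that a closed subgroup of a one-parameter unipotent containing arbitrarily small elements is the full group; it is the combination of the algebraic normalization supplied by Lemma~\ref{lem:ZC-1Ball} with the topological compactness of $Q$ that makes the passage from ``non-trivial'' to ``non-discrete'' possible, and this same limitation is why the positive-characteristic conclusion of Proposition~\ref{prop:high} is necessarily weaker than the zero-characteristic statement that $\mu_x^{\pm\alpha}$ is Haar.
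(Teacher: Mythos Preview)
Your argument has a genuine gap at the very first step: the identity $h_\mu(a)=h_\mu(a^{-1})$ does \emph{not} force $s_\gamma=s_{-\gamma}$ for every root. Writing $d_{ij}=s_{(i,j)}-s_{(j,i)}$, what one actually obtains by letting $a$ run through all Weyl chambers is only the family of linear relations $\sum_{i\in S,\,j\notin S}d_{ij}=0$ for every nonempty proper $S\subset\{1,\dots,d\}$, and these reduce to the $d-1$ independent row--sum conditions $\sum_{j\neq i}d_{ij}=0$. Already for $d=3$ this leaves a one--parameter family of solutions (e.g.\ $d_{12}=d_{23}=-d_{13}$ is unconstrained); in general the solution space has dimension $\binom{d}{2}-(d-1)$. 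So you cannot conclude $s_{-\beta}>0$ from $s_\beta>0$ this way, and without $s_{-\beta}>0$ your second commutator $[\lsupp_x^{\alpha+\beta},\lsupp_x^{-\beta}]\subset\linv_x^{\alpha}$ is vacuous.

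The paper fills exactly this hole, but by a different mechanism. It sets up the relation $i\precsim j\Leftrightarrow s_{(i,j)}>0$ on indices, uses Lemma~\ref{lem:commutator} to prove transitivity, and then shows $i\precsim j\Rightarrow j\precsim i$ via an entropy contradiction: if the induced partial order on $\sim$--classes were nontrivial, one chooses $a\in A$ with only two distinct eigenvalues separating a down--set from its complement, so that $h_\mu(a)>0$ while $h_\mu(a^{-1})=0$. That is the real content, and once it is in place the paper's endgame is shorter than yours: with $\alpha=(i,j)$ and any third index $\ell\in[i]\setminus\{i,j\}$ one has $s_{(i,\ell)},s_{(\ell,j)}>0$, and a \emph{single} application of Lemma~\ref{lem:commutator} to the pair $(i,\ell),(\ell,j)$ already lands a nondiscrete set inside $\linv_x^{(i,j)}$. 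Your two--commutator route followed by Lemma~\ref{lem:ZC-1Ball} would also work once $s_{-\beta}>0$ is established, but you cannot get there without essentially the paper's partial--order argument (or something of equivalent strength).
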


\begin{proof}
	Recall that for~$\SL(d)$ the roots~$\alpha$ can be identified with 
	ordered tuples	of indices~$(i,j)\in\{1,\ldots,d\}$ satisfying~$i\neq j$. 
  We use the local dimensions~$s_\alpha=s_{(i,j)}$ to define a relation
  on~$\{1,\ldots,d\}$. In fact we write~$i\precsim j$ if $i=j$ or $s_{(i,j)}>0$,
  and~$i\sim j$ if $i\precsim j\precsim i$. Lemma~\ref{lem:commutator}
  implies that $\precsim$ is transitiv, i.e.~if~$i\precsim j\precsim k$ then
  also $i\precsim k$ for~$i,j,k\in\{1,\ldots, d\}$. 
  
  It follows that~$\sim$ is an equivalence relation on~$\{1,\ldots,d\}$
  and that~$\precsim$ descends to a partial order on the quotient by~$\sim$.
  Let us write $[i]$ for the equivalence classes with respect to~$\sim$. 
  To simplify matters we may assume (by applying a suitable element 
  of the Weyl group) that for every~$i$
  the equivalence class~$[i]=\{m,m+1,m+2,\ldots,n\}$ consist of consecutive
  indices for some~$m\leq i$ and~$n\geq i$. 
  Moreover, we may assume that~$i\precsim j$ for two indices
  implies either~$i \sim j$ or $i\leq j$. 

  We now prove that~$i\precsim j$ implies~$i\sim j$. Otherwise, we claim
  that we can choose
  a diagonal matrix~$a$ with two different eigenvalues 
  (equal to powers of~$\theta$, {}{see~\S\ref{sec:field}}) such that the leafwise measures of 
  the stable horospherical subgroup $W_G^-(a)$ are nontrivial
  and the leafwise measures of the unstable horospherical subgroup $W_G^+(a)$
  are trivial almost surely. 
  More precisely assuming~$[i]=\{m,m+1,m+2,\ldots,n\}$ (so that
  by the indirect assumption~$j>n$)
  we define~$a$ to be the diagonal matrix with 
  the first~$m$ eigenvalues equal to~$\theta^{(d-m)}$ and the last~$d-m$
  eigenvalues equal to~$\theta^{-m}$. 
  By assumption $s_{(i,j)}>0$, which implies $h_\mu(a)>0$
  by ($s_\alpha$-3), the choice of $a$, and since $i\leq n<j$.
  However, for all $k\leq n<\ell$ we have $s_{\ell,k}=0$ (by
  our ordering of the indices) and hence $h_\mu(a^{-1})=0$
  also by ($s_\alpha$-3). This contradiction
  proves the claim that~$i\precsim j$ implies~$i\sim j$.

  Given a root~$\alpha=(i,j)$ with~$s_\alpha>0$ there are now two options.
  Either~$[i]=\{i,j\}$ or the cardinality of~$[i]$ is at least three. 
  In the first case we have~$s_{(i,\ell)}=s_{(j,\ell)}=s_{(\ell,i)}=s_{(\ell,j)}=0$ 
  for all~$\ell\notin\{i,j\}$ and translating this to the language of 
  roots we obtain (1).
  In the second case let~$\ell\in[i]\setminus\{i,j\}$ and apply 
  Lemma~\ref{lem:commutator} for the roots $(i,\ell),(\ell,j)$ to see 
  that~$\linv^{(i,j)}_x$ (and similarly also~$\linv^{(j,i)}_x$) is a nondiscrete group almost surely.  
\end{proof}

\section{Low entropy part of Theorem~\ref{thm:measure-class}}\label{sec:low}
We use the notation introduced in \S\ref{sec:high}.
In view of Proposition~\ref{prop:high} the following is the standing assumption for the rest of this section. 
There is a root $\alpha\in\rootsys$ so that 
\be\label{eq:assumption-low}
\mbox{$s_\alpha=s_{-\alpha}>0$ and $s_\beta=0$}
\ee
for any~$\beta\in\Phi_\alpha^\pm\setminus\{\alpha,-\alpha\}$.

Let us put 
\[
\Calpha:={}{Z_G(U_\alpha)\cap Z_G(U_{-\alpha})=Z_G(H_\alpha)}.
\]

We have the following.

\begin{lem}[Cf.~~\cite{EKL}, Lemma 4.4(1)]\label{lem:Uef-leaf}
There is a null set $N$ so that for all $x\in X\setminus N$ we have 
\[
\Uef x\cap (X\setminus N)\subset U_{\alpha}x.
\]
In particular, for all $x\in X\setminus N$ if $u\in\Uef$ is so that $ux\in \Uef x\cap (X\setminus N)$ and 
$\mu_x^{\alpha}=\mu_{ux}^{\alpha},$ then $u\in\linv_x^{\alpha}.$ 
\end{lem}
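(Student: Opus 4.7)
The plan is to use the product structure of leafwise measures together with the standing assumption~\eqref{eq:assumption-low} to show that $\mu_x^{\Uef}$ is supported on $U_\alpha$ almost surely, and then extract the leafwise statement using the standard transformation rule for leafwise measures.

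First I would unpack what~\eqref{eq:assumption-low} gives us. By property $(\salpha$-$1)$, the vanishing $s_\beta=0$ for each $\beta\in\Phi_\alpha^+\setminus\{\alpha\}$ implies $\mu_x^\beta=\delta_{\rm id}$ for $\mu$-a.e.\ $x$. Applying Proposition~\ref{prop:general product structure} to the positively closed set $\Psi=\Phi_\alpha^+$ — which is valid since, for $\SL_d$, the coarse Lyapunov weights in $\Psi$ are just the individual roots — gives
\[
\mu_x^{\Uef}\;\propto\;\iota_*\Bigl(\mu_x^{\alpha}\times\prod_{\beta\in\Phi_\alpha^+\setminus\{\alpha\}}\delta_{\rm id}\Bigr),
\]
so $\supp(\mu_x^{\Uef})\subset U_\alpha$ off a null set.

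Next I would invoke Lemma~\ref{lem:properties}(2) applied to $U=\Uef$: on a conull set $X'$, whenever $x,ux\in X'$ with $u\in\Uef$ we have $\mu_x^{\Uef}\propto(\mu_{ux}^{\Uef})u$, where the right-hand side is the pushforward under $v\mapsto vu$. Taking supports and using that $e\in\supp(\mu_{ux}^{\Uef})$ by Lemma~\ref{lem:properties}(3), we conclude $u\in\supp(\mu_x^{\Uef})\subset U_\alpha$. Setting $N:=X\setminus X'$ (augmented by the null set outside which the first paragraph holds) this yields $\Uef x\cap(X\setminus N)\subset U_\alpha x$, which is the first claim.

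For the "in particular" statement, suppose $u\in\Uef$ with $x,ux\in X\setminus N$ and $\mu_x^\alpha=\mu_{ux}^\alpha$. The first part forces $u\in U_\alpha$, so Lemma~\ref{lem:properties}(2) with $U=U_\alpha$ gives $\mu_x^\alpha\propto(\mu_{ux}^\alpha)u=(\mu_x^\alpha)u$. Since $U_\alpha$ is abelian, right and left translation coincide, so $u\mu_x^\alpha\propto\mu_x^\alpha$, and Lemma~\ref{lem:prop constant} delivers $u\in\linv_x^\alpha$.

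I do not expect a real obstacle here: the whole argument is a formal consequence of the product structure of leafwise measures once~\eqref{eq:assumption-low} has killed the transverse factors. The only mildly delicate point is being careful about left vs.\ right convention for the translation $v\mapsto vu$ in the leafwise-measure transformation rule, which is harmless because $U_\alpha$ is abelian.
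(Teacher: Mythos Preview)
Your proposal is correct and follows essentially the same route as the paper: use the product structure (the paper cites Lemma~\ref{lem:prod-struc}, you cite the derived Proposition~\ref{prop:general product structure}) together with~\eqref{eq:assumption-low} to force $\supp(\mu_x^{\Uef})\subset U_\alpha$, then the transformation rule to conclude $u\in U_\alpha$, and finally commutativity plus Lemma~\ref{lem:prop constant} for the invariance. The only cosmetic point is that in defining $N$ you should also absorb the null set for the $U_\alpha$-leafwise transformation rule (the paper calls this $N_3$), since your second paragraph's $X'$ is built for $\Uef$, not for $U_\alpha$.
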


\begin{proof}
In view of Lemma~\ref{lem:prod-struc} there is a null set $N_1$ so that 
for all $x\in X\setminus N_1$ we have that $\mu_x^{\Uef}$
is a product of the leafwise measures $\mu_x^{\beta}$ for all $U_\beta\subset \Uef.$
By~\eqref{eq:assumption-low} 
it follows that 
\be\label{eq:U-alpha-1}
\supp\Bigl(\mu_x^{\Uef}\Bigr)=\supp(\mu_x^{\alpha})\text{ for all $x\in X\setminus N_1$}
\ee

Recall also that there is a null set $N_2$ so that if $x,ux\in X\setminus N_2$ for some $u\in \Uef,$ then 
\be\label{eq:U-alpha-2}
\mu_x^{\Uef}\ppor\mu_{ux}^{\Uef}u.
\ee
Let $x\in X\setminus (N_1\cup N_2),$ then by~\eqref{eq:U-alpha-1} we have 
$\supp\Bigl(\mu_x^{\Uef}\Bigr)\subset U_\alpha.$ Therefore by~\eqref{eq:U-alpha-2}
we get $u\in U_\alpha,$ this finishes the proof of the first claim if we require that $N\supsetneq N_1\cup N_2.$

To see the last assertion, let $N_3\subset X$ be a null subset 
so that $\mu_{ux}^{\alpha}u\ppor\mu_x^{\alpha}$ for all $x\not\in N_3$.
Set $N=N_1\cup N_2\cup N_3$. Let $x\in X\setminus N$ and let $u$ be as in the statement. 
In view of the first part in the lemma, we have $u\in U_{\alpha}.$
Our assumption and the fact that $U_\alpha$ is a commutative group
give 
\[
u\mu_x^{\alpha}=\mu_{ux}^{\alpha}u\ppor\mu_x^{\alpha}.
\]

Now one argues as in the proof Lemma~\ref{lem:commutator}
and gets $u\in\linv_x^{\alpha}.$
\end{proof}

We also recall the following definition from~\cite{EL-GenLow}.
 
\begin{definition}\label{def:aligned}
Let $H,Z\subset G$ be closed subgroups of $G.$
We say the leafwise measures $\mu_x^H$ are {\it locally $Z$-aligned
modulo $\mu$} if for every $\vare>0$ and neighborhood $\nbhd_\id^Z\subset Z$
of the identity, there exists a compact set $\cpct$ with $\mu(\cpct)>1-\vare$ 
and some $\delta>0$ so that for every $x\in \cpct$ we have
\[
\{y\in \cpct: \mu_x^{H}=\mu_y^{H}\}\cap \nbhd_x(\delta)\subset \nbhd_\id^Z x. 
\]
\end{definition}

The following is a direct corollary of the main result of~\cite{EL-GenLow}, proved there explicitly also
for the positive characteristic case. 

\begin{thm}[Cf.~\cite{EL-GenLow}, Theorem 1.4]\label{thm:gen-low}
Under the  assumption~\eqref{eq:assumption-low} {one of} the following holds.
\begin{enumerate}
\item[(LE-1)] $\mu_x^{\alpha}$ is locally $\Cef$-aligned modulo $\mu$.
\item[(LE-2)] There exists an $a_\alpha$-invariant subset $\Xinv(\alpha)\subset X$ with $\mu(\Xinv(\alpha))>0$
so that for all $x\in\Xinv(\alpha)$ there is an unbounded sequence 
$\{u_{x,m}\}\subset \Uef$ such that $\mu_x^{\alpha}=\mu_{u_{x,m}x}^{\alpha}.$
\end{enumerate}
\end{thm}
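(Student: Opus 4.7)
The plan is to deduce this from \cite[Theorem 1.4]{EL-GenLow} by matching notation. That theorem asserts a general dichotomy for leafwise measures: given an $A$-invariant ergodic $\mu$ and a coarse Lyapunov (or root) subgroup $U$ that contributes nontrivially to the entropy of some $a\in A$, but such that no other root subgroup in the same expanding horospherical $W_G^+(a)$ carries a nontrivial leafwise measure, the leafwise measure $\mu_x^U$ either is locally aligned modulo $\mu$ with the centralizer direction, or else there is a positive-measure set on which one produces unbounded recurrence inside $W_G^+(a)$ that preserves $\mu_x^U$. This is exactly the dichotomy (LE-1) vs.\ (LE-2) once we read off the data from our $\alpha$.

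The first step is to set up the correspondence. I would take $a=a_\alpha$, $U=U_\alpha$, and identify the transverse centralizer as $\Cef=Z_G(H_\alpha)$, which is the centralizer of the $\SL_2$-copy $\varphi_\alpha(\SL_2)$ and plays the role of the ``direction along which alignment can hold'' in \cite{EL-GenLow}. Next I would verify the entropy hypothesis of \cite[Theorem 1.4]{EL-GenLow}: by assumption~\eqref{eq:assumption-low} we have $s_\alpha>0$ but $s_\beta=0$ for every $\beta\in\Phi_\alpha^+\setminus\{\alpha\}$, which by ($s_\alpha$-1) means $\mu_x^\beta=\delta_{\id}$ for $\mu$-a.e.\ $x$ and every such $\beta$. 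Via the product structure of Proposition~\ref{prop:general product structure} this forces $\supp(\mu_x^{\Uef})=\supp(\mu_x^\alpha)\subset U_\alpha$, which is the precise isolation hypothesis needed to invoke the low entropy theorem.

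Given these verifications, the conclusion of \cite[Theorem 1.4]{EL-GenLow} applies directly and yields the stated dichotomy: either the local alignment (LE-1), or a positive-measure $a_\alpha$-invariant set supporting unbounded $\Uef$-holonomies that preserve $\mu_x^\alpha$ (LE-2). The promotion of the ``positive measure'' set to an $a_\alpha$-invariant one is automatic since the property of possessing such an unbounded sequence is preserved by $a_\alpha$ using Lemma~\ref{lem:properties}(4), which relates $\mu_{a_\alpha x}^\alpha$ to $a_\alpha\mu_x^\alpha a_\alpha^{-1}$.

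The main obstacle in principle would be verifying that the shearing/displacement arguments underlying \cite[Theorem 1.4]{EL-GenLow} are valid in the positive characteristic setting; however, the authors of \cite{EL-GenLow} explicitly work in this generality, so for the present paper this step reduces to citation. The only non-routine part of our presentation is therefore the translation of~\eqref{eq:assumption-low} into the language of ``isolated coarse Lyapunov direction'' used in \cite{EL-GenLow}, which is what Lemma~\ref{lem:Uef-leaf} above was set up to encode.
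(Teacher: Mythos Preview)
Your proposal is correct and matches the paper's approach: the paper states this theorem without proof, saying only that it is ``a direct corollary of the main result of~\cite{EL-GenLow}, proved there explicitly also for the positive characteristic case.'' Your write-up simply spells out the dictionary (taking $a=a_\alpha$, $U=U_\alpha$, and identifying the alignment direction with $Z_\alpha=Z_G(H_\alpha)$) and verifies that assumption~\eqref{eq:assumption-low} places us in the ``isolated coarse Lyapunov'' situation required there, which is exactly the content of Lemma~\ref{lem:Uef-leaf}.
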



\section{Proof of Theorem~\ref{thm:measure-class}}\label{sec:mc-proof}
Recall the notation in \S\ref{sec:field},
in particular, $\field=\gfield_v$ where $\gfield$ is a global function field and 
$v$ is  a place of $\gfield$ {}{and we work with the maximal torus $\bf A$.}
Throughout, $\Gamma\subset\SL(d,\field)$ is a lattice of inner type, see~\S\ref{sec:innerforms}.

Put $\GL(n,\order)_m=\ker\Bigl(\GL(n,\order)\to\GL(n,\order/\unif^{-m}\order)\Bigr).$

\begin{lem}[Cf.~\cite{EKL}, Lemma 5.3]\label{lem:diagonalizable}
For any positive integer $n$ there exists some $m=m(n)\geq 1$ 
with the following property. 
Let $a={\rm diag}(a_1,\ldots,a_n)$ with
\[
 |v(a_i)-v(a_j)|>m\text{ for all }i\neq j. 
 \]
Then $ga$ is diagonalizable over $\field,$
for all $g\in\GL(n,\order)_m$.  
Moreover, if $a'_1,\ldots,a'_n$ are the eigenvalues of $ga,$ then 
it is possible to order them so that $v(a_i)=v(a'_i)$ for all $i.$
\end{lem}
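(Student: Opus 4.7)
The plan is to compute the characteristic polynomial $\chi(x) := \det(xI - ga) \in \field[x]$ of $ga$ coefficient by coefficient, show that it has the same coefficient valuations as that of $a$, and then invoke the standard Newton polygon factorization theorem over the complete discretely valued field $\field$.

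For the coefficient comparison, I would write $g = I + h$ with all entries of $h$ in $\unif^{-m}\order$ and use the fact that $a$ is diagonal to factor each principal $|I| \times |I|$ minor of $ga$ as
\[
\det\!\bigl((ga)_{I,I}\bigr) = \det(g_{I,I}) \cdot \prod_{j \in I} a_j.
\]
A direct expansion together with the ultrametric inequality gives $|\det(g_{I,I}) - 1| \leq q^{-m}$ for every $I \subset \{1,\ldots,n\}$, so $|\det(g_{I,I})| = 1$ as soon as $m \geq 1$. Summing over $|I| = k$, the $k$-th elementary symmetric function $p_k$ of the eigenvalues of $ga$ therefore satisfies $p_k = e_k(a_1, \ldots, a_n) + \epsilon_k$ with $|\epsilon_k| \leq q^{-m}\, |e_k(a)|$.

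The separation hypothesis forces the $v(a_i)$ to be pairwise distinct, so for each $k$ there is a unique subset $I$ of size $k$ for which $\prod_{j \in I} a_j$ attains its extremal valuation; ultrametricity therefore pins down $v(e_k(a))$ exactly, and the perturbation $\epsilon_k$ cannot alter this valuation once $m \geq 1$. Consequently $v(p_k) = v(e_k(a))$ for every $k$, so $\chi$ and $\prod(x - a_i)$ have matching coefficient valuations.

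The argument then concludes with the classical fact that a monic polynomial over the complete discretely valued field $\field$ whose coefficient valuations coincide with those of a completely split polynomial having pairwise distinct root valuations must itself split as $\prod(x - a'_i)$ over $\field$, with $v(a'_i) = v(a_i)$ after a suitable reindexing; this is a standard consequence of the Newton polygon factorization of polynomials over complete discretely valued fields. Pairwise distinct valuations then force pairwise distinct eigenvalues, hence diagonalizability of $ga$ over $\field$ together with the asserted valuation matching. The only point requiring care is tracking the sign convention for the valuation correctly when invoking Newton polygons; no deeper input is needed, and in fact any $m \geq 1$ suffices for the present argument, the freedom to enlarge $m(n)$ in the statement being reserved for downstream uses of the lemma.
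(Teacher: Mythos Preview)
Your argument is correct and takes a genuinely different route from the paper. You compute the principal minors of $ga$ directly via the factorization $\det((ga)_{I,I})=\det(g_{I,I})\prod_{j\in I}a_j$, bound $|\det(g_{I,I})-1|\leq q^{-m}$, and conclude that $\chi$ and $\prod(x-a_i)$ share the same coefficient valuations; the Newton polygon then has $n$ distinct integer slopes, so the standard factorization theorem over the complete discretely valued field $\field$ forces $\chi$ to split into linear factors with the prescribed root valuations. The paper instead argues dynamically: it first writes $g=g^-g^0g^+$ via an LU-type decomposition (this is where the paper's $m=m(n)$ enters, through the inverse function theorem), then shows by induction on $\ell$ that $\|\wedge^i(ga)^\ell\|=|a_1\cdots a_i|^\ell$, which pins down the eigenvalue valuations in a splitting field $\tilde\field_n$; finally it uses a Galois argument (conjugate eigenvalues have equal valuation, but the valuations are pairwise distinct) to descend the eigenvalues to $\field$. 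Your approach is more elementary---no passage to a splitting field, no Galois theory, no iteration---and in fact shows that any $m\geq1$ suffices, whereas the paper's argument needs $m$ large enough to enable the triangular decomposition. The paper's technique, on the other hand, is closer in spirit to Lyapunov-exponent computations and may generalize more readily to settings lacking a clean Newton-polygon theory.
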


\begin{proof}
Let $\tilde\field_n$ be the composite 
of all field extensions of $\field$ of degree at most $n!.$ 
Then, the characteristic polynomial of any element in $\GL(n,\field)$ splits over $\tilde\field_n.$
Moreover, $\tilde\field_n$ is a local field, i.e.\ $\tilde\field_n/\field$ is a finite extension.
We let $v$ denote the unique extension of $v$ to $\tilde\field_n.$

We begin with the following observation. 
There is some $m_n\geq 1$ so that every $g\in\GL(n,\order)_{m_n}$ can be decomposed as
$g=g^-g^0g^+$ with $g^\pm\in W^\pm\cap\GL(n,\order)_{1}$ and $g^0\in A\cap\GL(n,\order)_{1},$ 
where $W^+$ (resp.\ $W^-$) is the group of upper (resp.\ lower) triangular unipotent matrices.
Indeed, in view of~\eqref{opposite-horo}, the product map is a diffeomorphism 
from 
\[
(W^-\cap\GL(n,\order)_{1})\times (A\cap\GL(n,\order)_{1})\times (W^+\cap\GL(n,\order)_{1})
\] 
onto its image. Therefore the claim follows from the inverse function theorem.

We show the lemma holds with $m=m_n.$
First note that after conjugating by a permutation matrix we can assume $v(a_1)>\ldots>v(a_n).$
Let $g\in\GL(n,\order)_m$ and let $b_1,\ldots,b_n$ be the eigenvalues of $ga$ listed with multiplicity 
and ordered so that $v(b_1)\geq\cdots\geq v(b_n).$   
Note that $b_i\in\tilde\field_n$ for all $1\leq i\leq n.$

Let $\|\;\|$ be the max norm on the $i$-th exterior power $\wedge^i\tilde\field_n^n,$  
with respect to the standard basis $\{e_{j_1}\wedge\cdots\wedge e_{j_i}\}.$
Denote by $\|\;\|$ the operator norm of the action of $\GL(n,\tilde\field_n)$ 
on $\wedge^i\tilde\field_n^n$ for $1\leq i\leq n.$ 

Choosing a basis of $\tilde\field_n^n$ consisting of the generalized eigenvectors
for $ga$ we get 
\be\label{eq:norm-eigen}
\lim_{\ell}\|\wedge^i(ga)^\ell\|^{1/\ell}=|b_1\cdots b_i|\; \text{ for all $i.$}
\ee

We now claim
\be\label{eq:wave-front}
\|\wedge^i(ga)^\ell\|=\|\wedge^i a^\ell\|=|a_1\cdots a_i|^\ell\; \text{ for all $\ell.$}
\ee
The second equality in the claim is immediate. 
To see the first equality 
note that if $g_1,g_2\in \GL(n,\order)_{m},$ then
\[
g_1 ag_2^-g_2^0g_2^+a=g_1(ag_2^-a^{-1})a^2g_2^0(a^{-1}g_2^+a).
\]
Moreover, since $g^\pm\in\GL(n,\order)_{1}$ and $v(a_i)-v(a_{i+1})>m$ for all $i,$
we have $ag_2^-a^{-1}$ and $g_2^0a^{-1}g_2^+a$ belong to $\GL(n,\order)_{m}.$ 

Using this we get
\[
(ga)^\ell=g_\ell a^\ell a'_\ell g'_\ell
\] 
where $g_\ell,g_\ell'\in\GL(n,\order)_{m}$ and $a'_\ell\in\GL(n,\order)_{1}$ for all $\ell.$
This implies~\eqref{eq:wave-front}.

Now~\eqref{eq:norm-eigen} and~\eqref{eq:wave-front} imply that 
$v(a_i)=v(b_i)$ for all $1\leq i\leq n,$ in particular,  $v(b_i)\neq v(b_j)$ whenever $i\neq j$. 
This implies $b_i$'s are distinct and hence $ga$ is a semisimple element.
We now show $b_i\in\field$ for all $i.$
Recall that $b_1,\ldots,b_n$ are roots of the characteristic polynomial of 
$ga$ which is polynomial with coefficients in $\field.$ 
For every $1\leq i\leq n$ let ${\rm Gal}(b_i)=\{b_j: b_j$ is a Galois conjugate of $b_i\}.$ 
Then $\{b_1,\ldots, b_n\}$ is a disjoint union of $\sqcup_{j=1}^r{\rm Gal}(b_{i_j})$ for
some $\{i_1,\ldots,i_r\}\subset\{1,\ldots,n\}.$ 
Since $v(b_i)\neq v(b_j)$ whenever $i\neq j$ 
and Galois automorphisms preserve the valuation, 
we get that ${\rm Gal}(b_i)=\{b_i\}$ for all $i.$ 
This establishes the final claim in the lemma. 
\end{proof}

\begin{prop}\label{prop:exceptional-return}
Recall that $\Gamma$ is an inner type lattice. Then $\mu_x^{\alpha}$ is not 
locally $\Cef$-aligned modulo $\mu.$ In particular, under the assumption~\eqref{eq:assumption-low}, we have that 
{\rm (LE-2)} in Theorem~\ref{thm:gen-low} holds.
\end{prop}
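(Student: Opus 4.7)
We argue by contradiction. Suppose $\mu_x^\alpha$ is locally $\Cef$-aligned modulo $\mu$. Fix $\vare>0$ small, let $m=m(d)$ be as in Lemma~\ref{lem:diagonalizable}, and choose a compact identity neighborhood $\nbhd_\id^\Cef\subset\Cef\cap\GL(d,\order)_{m+1}$. By the alignment hypothesis, there exist $\cpct\subset X$ with $\mu(\cpct)>1-\vare$ and $\delta>0$ so that
\[
\{y\in\cpct:\mu_y^\alpha=\mu_x^\alpha\}\cap\nbhd_x(\delta)\subset\nbhd_\id^\Cef\cdot x\qquad\text{for all }x\in\cpct.
\]
For $\mu$-a.e.\ $x\in\cpct$, Poincar\'e recurrence along $a_\alpha$ yields an unbounded sequence $n_k$ with $a_\alpha^{-n_k}x\in\cpct$. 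Since $s_\alpha>0$ the support of $\mu_x^\alpha$ is unbounded almost surely, so one may pick $u_k\in\supp\mu_x^\alpha$ with $|u_k|\asymp|\theta|^{2n_k}$; the element $\tilde u_k:=a_\alpha^{-n_k}u_ka_\alpha^{n_k}$ is then bounded and non-trivial in $U_\alpha$. Both points $a_\alpha^{-n_k}x$ and $\tilde u_k\cdot a_\alpha^{-n_k}x$ lie in $\cpct$ (after passing to a subsequence) and carry the same $\alpha$-leafwise measure by the $a_\alpha$-equivariance of leafwise measures. Alignment then forces the $\Gamma$-transition between these paired points to be realized by an element of $\nbhd_\id^\Cef$ modulo $\Gamma$; lifting to $G$, this extracts an element $\gamma_k\in\Gamma\setminus\{e\}$ with
\[
a_\alpha^{-n_k}\gamma_k a_\alpha^{n_k}\in\nbhd_\id^\Cef\cdot\GL(d,\order)_m\subset\GL(d,\order)_m.
\]
In particular, $\gamma_k$ is conjugate in $G$ to a matrix in $\GL(d,\order)_m$ close to the identity.

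With a suitable adaptation of the above construction, in which the recurrence step uses a regular element of $A$ whose diagonal entries have pairwise distinct $v$-valuations with gaps exceeding $m$, Lemma~\ref{lem:diagonalizable} applies to $\gamma_k$ and gives that $\gamma_k$ is diagonalizable over $\field$ with eigenvalues $b_1^{(k)},\ldots,b_d^{(k)}\in\field$. Since $\gamma_k$ is conjugate to a matrix in $\GL(d,\order)_m$ close to the identity, each $b_i^{(k)}$ lies in $1+\mathfrak m\subset\order^\times$, so in particular has $v$-valuation $0$. Because $\Gamma$ is of inner type we may assume, after passing to a finite-index subgroup, that $\gamma_k\in\Lambda_B$; by~\eqref{eq:char-poly-inner} and the integrality built into~\eqref{eq:Gamma-B}, the reduced characteristic polynomial of $\gamma_k$ has coefficients in $\Ocal_v$. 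Each $b_i^{(k)}$ therefore lies in a finite extension $L_k=\gfield(b_i^{(k)})$ and is integral at every place of $L_k$: above places $w\neq v$ from the $\Ocal_v$-integrality of the coefficients, and above $v$ since $v(b_i^{(k)})=0$. The product formula in $L_k$ then forces $b_i^{(k)}$ to be a global unit of $L_k$, hence an element of $\overline{\mathbb F_\pchar}^\times\cap\field=\mathbb F_\resor^\times$; combined with $b_i^{(k)}\in 1+\mathfrak m$, this forces $b_i^{(k)}=1$ for all $i$ and all large $n_k$. Thus $\gamma_k$ is both diagonalizable and unipotent, so $\gamma_k=e$, contradicting $\gamma_k\neq e$. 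This rules out (LE-1), and (LE-2) follows from Theorem~\ref{thm:gen-low}.

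\textbf{Main obstacle.} The delicate part is the production of $\gamma_n$ in the required form together with its variant suited to Lemma~\ref{lem:diagonalizable}: one must coordinate the alignment neighborhood $\nbhd_\id^\Cef$, the Poincar\'e recurrence scale, and the shearing size $|\theta|^{2n_k}$ so that a single construction yields both the congruence factorization above and the spectral separation needed to invoke Lemma~\ref{lem:diagonalizable}. The arithmetic conclusion is then clean and hinges on the positive-characteristic fact $\overline{\mathbb F_\pchar}\cap\field=\mathbb F_\resor$, so that global units in any finite extension of $\gfield$ embedded in $\field$ lie in the finite group $\mathbb F_\resor^\times$ and collapse to $\{1\}$ under $v$-adic closeness to $1$; this rigidity is what distinguishes the inner type lattices from arbitrary discrete subgroups in the argument.
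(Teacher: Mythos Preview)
Your argument has a fundamental gap at the point where the alignment hypothesis is supposed to engage. You assert that $a_\alpha^{-n_k}x$ and $\tilde u_k\cdot a_\alpha^{-n_k}x$ carry the same $\alpha$-leafwise measure ``by $a_\alpha$-equivariance''; this is false. By Lemma~\ref{lem:properties}(2) one has $\mu_{ux}^{\alpha}\cdot u\propto\mu_x^{\alpha}$, so $\mu_{ux}^{\alpha}=\mu_x^{\alpha}$ only when $u\in\linv_x^{\alpha}$. A generic element of $\supp\mu_x^{\alpha}$ does not lie in $\linv_x^{\alpha}$, and nothing in your construction forces your $u_k$ to. The $a_\alpha$-equivariance merely transports the inequality $\mu_{u_kx}^{\alpha}\neq\mu_x^{\alpha}$ down to $a_\alpha^{-n_k}x$; it does not repair it. Thus the pair of points you produce does not satisfy the hypothesis of local $Z_\alpha$-alignment, and no lattice element $\gamma_k$ is extracted. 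Your arithmetic endgame is also internally inconsistent: to invoke Lemma~\ref{lem:diagonalizable} on the full $d\times d$ matrix you need $\gamma_k$ of the form $g\cdot a$ with $a$ having widely separated valuations, and the lemma then forces the eigenvalues of $\gamma_k$ to inherit those separated (hence nonzero) valuations---contradicting your simultaneous claim that they all lie in $1+\mathfrak m$.

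The paper's route is different in exactly the place where your argument breaks. One applies Poincar\'e recurrence not along $a_\alpha$ but along an element $a\in A\cap Z_\alpha$; since $a$ centralizes $U_\alpha$, Lemma~\ref{lem:properties}(4) gives $\mu_{a^{\ell}x}^{\alpha}=\mu_x^{\alpha}$ automatically, so (LE-1) forces $a^{\ell_i}x\in\mathfrak B x$ for a small neighborhood $\mathfrak B\subset Z_\alpha$. Writing $h_ia^{\ell_i}=g\gamma_ig^{-1}$ and choosing $a$ with its first two diagonal entries equal and the remaining $d-2$ entries of pairwise separated valuation, one applies Lemma~\ref{lem:diagonalizable} to the lower $(d-2)\times(d-2)$ block (hence $m=m(d-2)$, not $m(d)$). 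This produces $\gamma\in\Lambda_B$ that is semisimple with exactly one eigenvalue of multiplicity two and all others simple. The contradiction is then a Galois argument: that repeated eigenvalue cannot lie in $\gfield$ (else the product formula would make it a root of unity), but being separable over $\gfield$ its Galois conjugate is another eigenvalue of multiplicity two, which is impossible. The crucial structural feature---a single repeated eigenvalue coming from $\alpha(a)=1$---is absent from your setup.
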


\begin{proof}
We recall the argument from the proof of Theorem 5.1 in~\cite{EKL}. 
Let $m$ be large enough so that the conclusion of Lemma~\ref{lem:diagonalizable} 
holds with $n=d-2.$ Without loss of generality we may assume 
$\alpha({\rm diag}(a_1,\ldots,a_d))=a_1a_2^{-1}.$ Define 
\[
\tilde{\mathfrak B}=\left\{\begin{pmatrix}\fele & 0 & 0\\ 0 & \fele & 0\\ 0 & 0 & C\end{pmatrix}: \fele\in 1+\unif^{-2}\order, C\in\GL(d-2,\order)_m\right\}{\subset \GL(d,\order)}.
\]
Put $\mathfrak B:=\tilde{\mathfrak B}\cap Z_\alpha;$ we note that $\mathfrak B$ is a compact open subgroup of~$Z_\alpha$. 

Let $a={\rm diag}(a_2, a_2, a_3,\ldots,a_d)\in A\cap Z_\alpha$ 
with $v(a_2)\neq 0$, and $|v(a_i)-v(a_j)|>m$ for all $i>j\geq 2$. In particular, we have $\alpha(a)=1$. 

Suppose (LE-1) holds. Then by Poincar\'{e} recurrence 
for $\mu$-a.e.\ $g\Gamma\in G/\Gamma$ there exist a sequence $\ell_i\to\infty$ so that 
\[
a^{\ell_i}g\Gamma\in \mathfrak Bg\Gamma\;\text{ for all $i.$}
\]
Hence for all $i$ there exist some $\gamma_i\in\Gamma$ and some $h_i\in\mathfrak B$ so that
$
h_ia^{\ell_i}=g\gamma_ig^{-1}.
$
Now Lemma~\ref{lem:diagonalizable} implies the following. If $\ell_i$ is large enough
and we write
\be\label{eq:g-gamma-g-1}
g\gamma_ig^{-1}=h_ia^{\ell_i}=\begin{pmatrix}\fele_i & 0 & 0\\ 0 & \fele_i & 0\\ 0 & 0 & D_i\end{pmatrix},
\ee
then $D_i$ is diagonalizable whose eigenvalues have the same valuation as $a_j^{\ell_i}$ for all $3\leq j\leq d.$  
Dropping the few first terms, if necessary, we assume that~\eqref{eq:g-gamma-g-1} holds for all $i$.

Since $\Gamma$ is an inner type lattice, there exists a central simple algebra $B$ over $\gfield$ so that $\Gamma$
is commensurable with $\Lambda_B$, see~\S\ref{sec:innerforms}.
There exists some $i$ (which we fix) and infinitely many $j$'s so that $\hat\gamma_j:=\gamma_j\gamma_i^{-1}\in\Lambda_B$. 
We have
\[
g\hat\gamma_jg^{-1}=h_ja^{\ell_j-\ell_i}h_i^{-1};
\]
hence if $\ell_j-\ell_i$ is large enough, we get from $h_j,h_i^{-1}\in\mathfrak B\subset \GL(n,\order)_{1}$ that
\[
g\hat\gamma_jg^{-1}=\begin{pmatrix}\fele & 0 & 0\\ 0 & \fele & 0\\ 0 & 0 & D\end{pmatrix}
\]
where $D$ is diagonalizable whose eigenvalues have the same valuation as $a_j^{\ell_j-\ell_i}$ for all $3\leq j\leq d.$
Indeed after conjugation by $h_i^{-1}$
we may apply Lemma~\ref{lem:diagonalizable}.

Altogether, (LE-1) in Theorem~\ref{thm:gen-low} 
implies that
there exists an element $\gamma\in\Lambda_B$ with the following properties 
\begin{itemize}
\item $\gamma$ is a semisimple element, 
\item no eigenvalue of $\gamma$ is a root of unity,
\item all of the eigenvalues of $\gamma$ are simple except exactly one eigenvalue which has multiplicity 2.
\end{itemize}

We now claim that none of the eigenvalues of $\gamma$ lies in $\gfield.$
To see this, assume that $\gamma$ has an eigenvalue $\sigma\in\gfield$. 
Recall from the definition of $\Lambda_B$ in~\S\ref{sec:innerforms} 
that $\Lambda_B$ is bounded in $\SL_{1,B}(\gfield_w)$ for all $w\neq v$.  
In particular, $w(\sigma)=0$ -- else the group generated by $\gamma$ in $\SL_{1,B}(\gfield_w)$ would be unbounded. This in view of the product formula implies that $v(\sigma)=0$. Hence $\sigma$ is a root of unity which is a contradiction.

Since $\gamma\in\Lambda_B$, by~\eqref{eq:char-poly-inner} we have that the coefficients of 
the characteristic polynomial of $\gamma$ are in $\gfield$.
This and the fact that $\gamma$ is semisimple imply that there exists a finite separable extension $\tilde \gfield$ of $\gfield$ 
which contains the eigenvalues of $\gamma,$ see~\cite[4.1(c)]{Borel-AlgGrBook}.
Using the above claim thus we get that
the eigenvalue with multiplicity 2 is not in $\gfield$ and is separable over $\gfield.$ 
Since any Galois conjugate of this eigenvalue is also 
an eigenvalue of $\gamma$ with the same multiplicity,  
we get a contradiction with the fact that $\gamma$ has only one non-simple eigenvalue.  
\end{proof}


\subsection{Pinsker components have non-trivial invariance}\label{sec:pinsk-inv}
We begin with the following corollary of the results in~\S\ref{sec:high} and~\S\ref{sec:low}.

\begin{cor}\label{cor:linv-nondisc}
Under the assumptions of Theorem~\ref{thm:measure-class} we have the following.
There exists some $\alpha\in\rootsys$ and a $\mu$-conull 
subset $\Xinv(\alpha)\subset X$ 
so that $\linv_x^{\pm\alpha}$ are nondiscrete for all $x\in\Xinv(\alpha)$.
\end{cor}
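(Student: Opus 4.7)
The plan is to combine the high-entropy dichotomy of Proposition~\ref{prop:high} with the low-entropy dichotomy of Theorem~\ref{thm:gen-low} together with the exclusion of alternative (LE-1) given by Proposition~\ref{prop:exceptional-return}. The starting point is that the hypothesis $\entropy_\mu(a)>0$ together with the formula ($\salpha$-3) forces some root $\alpha\in\rootsys$ to satisfy $s_\alpha(\mu)>0$, i.e.\ $\mu_x^{\alpha}$ is nontrivial for $\mu$-almost every $x$.

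Applying Proposition~\ref{prop:high} to such an $\alpha$ gives a dichotomy. If its alternative (2) holds then the conclusion of the corollary is immediate. Otherwise alternative (1) holds, and one must verify the standing hypothesis~\eqref{eq:assumption-low} of \S\ref{sec:low}, i.e.\ that $s_{-\alpha}=s_\alpha>0$ and $s_\beta=0$ for all $\beta\in\rootsys_\alpha^{\pm}\setminus\{\pm\alpha\}$. The argument in the proof of Proposition~\ref{prop:high} showing that $i\precsim j$ implies $i\sim j$ (which uses the symmetry $\entropy_\mu(a)=\entropy_\mu(a^{-1})$ and Lemma~\ref{lem:commutator}) in particular gives $s_{-\alpha}>0$. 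Applying Proposition~\ref{prop:high} now also to $-\alpha$ (in alternative (1), since alternative (2) for $-\alpha$ finishes the proof directly), and observing that $\rootsys_{-\alpha}^{-}=\rootsys_{\alpha}^{+}$, yields $s_\beta=0$ for $\beta\in\rootsys_\alpha^{+}\setminus\{\alpha\}$. Thus~\eqref{eq:assumption-low} holds for our $\alpha$.

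With the standing assumption in force, Proposition~\ref{prop:exceptional-return} rules out alternative (LE-1) of Theorem~\ref{thm:gen-low}; this is the point at which the inner type hypothesis on $\Gamma$ enters, via the recurrence and diagonalization argument of Lemma~\ref{lem:diagonalizable}. Hence alternative (LE-2) holds: there is an $a_\alpha$-invariant set $\Xinv(\alpha)\subset X$ of positive $\mu$-measure such that for every $x\in\Xinv(\alpha)$ there exists an unbounded sequence $\{u_{x,m}\}\subset\Uef=W_G^+(a_\alpha)$ with $\mu_x^{\alpha}=\mu_{u_{x,m}x}^{\alpha}$. The final assertion of Lemma~\ref{lem:Uef-leaf} upgrades each $u_{x,m}$ to an element of $\linv_x^{\alpha}$, and consequently $\linv_x^{\alpha}$ is an \emph{unbounded} closed subgroup of $U_\alpha\cong\mathbb{G}_a(\field)$ for $\mu$-almost every $x\in\Xinv(\alpha)$.

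The main obstacle, which is genuinely a positive-characteristic phenomenon, is to convert ``unbounded'' into ``nondiscrete'': the additive group $\field$ contains unbounded closed subgroups (for instance $\bbf_\pchar[\unif]$) which are nevertheless discrete. To cross this gap I would invoke Lemma~\ref{lem:ZC-1Ball} with $a=a_\alpha^{-1}$ and $U=U_\alpha\subset W_G^-(a_\alpha^{-1})$: fix a compact open subgroup $Q<U_\alpha$; if $\linv_x^{\alpha}\cap Q$ were finite then its Zariski closure would also be finite, contradicting the fact that, by the lemma, this closure contains the unbounded set $\linv_x^{\alpha}\subset U_\alpha\cong\mathbb{G}_a$. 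Hence $\linv_x^{\alpha}\cap Q$ is an infinite subset of the compact set $Q$, so it has an accumulation point, which lies in $\linv_x^{\alpha}$ by closedness; therefore $\linv_x^{\alpha}$ is nondiscrete. Since~\eqref{eq:assumption-low} is symmetric in $\pm\alpha$, the same argument applied to $-\alpha$ gives nondiscreteness of $\linv_x^{-\alpha}$. The set where both $\linv_x^{\pm\alpha}$ are nondiscrete is $A$-invariant modulo null sets by Lemma~\ref{lem:properties}(4), and is thus conull by $A$-ergodicity of $\mu$.
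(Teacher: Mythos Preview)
Your argument is correct. The overall skeleton---entropy gives $s_\alpha>0$, Proposition~\ref{prop:high} splits into the nondiscrete case (2) and the isolated-pair case (1), in the latter case Proposition~\ref{prop:exceptional-return} forces (LE-2), and then one upgrades to nondiscreteness and spreads to a conull set by $A$-ergodicity---matches the paper. Your verification that alternative~(1) of Proposition~\ref{prop:high} indeed yields the full standing assumption~\eqref{eq:assumption-low} (by invoking the $i\precsim j\Rightarrow i\sim j$ step and reapplying the proposition to $-\alpha$) is actually more explicit than the paper, which simply asserts that the failure of (2) is equivalent to~\eqref{eq:assumption-low}.

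The one genuine difference is how you pass from ``$\linv_x^{\alpha}$ is unbounded'' to ``$\linv_x^{\alpha}$ is nondiscrete''. The paper does this by a direct and elementary recurrence: first it shows $\linv_x^{\pm\alpha}$ is merely \emph{nontrivial} on a conull set, fixes $\ell$ so that $\{x:\linv_x^{\pm\alpha}\cap U_{\pm\alpha}[\ell]\neq\{e\}\}$ has positive measure, and then uses the pointwise ergodic theorem to find, for a.e.\ $x$, returns $a_\alpha^n a x$ to this set with $n\to\pm\infty$; conjugating back by $a_\alpha^{-n}$ contracts the witness into an arbitrarily small ball, giving nondiscreteness. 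You instead invoke Lemma~\ref{lem:ZC-1Ball} as a black box: since the Zariski closure of $\linv_x^{\alpha}\cap Q$ contains the unbounded set $\linv_x^{\alpha}$, it cannot be finite, so $\linv_x^{\alpha}\cap Q$ is infinite inside the compact $Q$, hence has an accumulation point. Both routes are valid; the paper's is self-contained and uses nothing beyond Lemma~\ref{lem:properties}(4) and the ergodic theorem, while yours leverages a lemma whose own proof is already a (more elaborate) recurrence argument of the same flavour. Either way the appeal to $A$-ergodicity at the end is the same.
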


\begin{proof}
	Since $h_\mu(a)>0$ for some $a\in A$ there exists some $\alpha\in\Phi$ with $s_\alpha>0$.
In view of Proposition~\ref{prop:high} 
the claim in the corollary holds true almost surely 
unless $\alpha$ satisfies~\eqref{eq:assumption-low}.
 
However, in this case Theorem~\ref{thm:gen-low} and Proposition~\ref{prop:exceptional-return}
imply that (LE-2) must hold true. 
Put $X'=\{x\in X: \linv_x^{\pm\alpha}\text{ is nontrivial}\}.$ 
By (LE-2) and Lemma~\ref{lem:Uef-leaf} we get that $X'$ has positive measure.
Moreover, $X'$ is $A$-invariant in view of Lemma~\ref{lem:properties}(4).
Since $\mu$ is $A$-ergodic we get that~$\mu(X')=1$. 
Now choose~$\ell\in\mathbb{Z}$ such that~$X'_\ell=\{x\in X':\linv_x^{\pm\alpha}\cap U_{\pm\alpha}[\ell]$ is nontrivial$\}$ satisfies~$\mu(X'_\ell)>0$.
Applying ergodicity and the pointwise ergodic theorem
we see that a.e.~$x\in X$ satisfies that there exists some~$a\in A$ and
infinitely many~$n\geq 0$ and infinitely many~$n \leq 0$ such that~$a_\alpha^nax\in X'_\ell$. Using Lemma~\ref{lem:properties}.(4) this implies the corollary.
\end{proof}

Throughout the rest of this section, we fix some root $\alpha$ 
so that the conclusion of Corollary~\ref{cor:linv-nondisc} holds true, and put $\Xinv:=\Xinv(\alpha).$

For any root $\beta$ 
let $A_\beta$ denote the one parameter diagonal subgroup which is the {}{group of $\field$-points of the }Zariski closure of the group
generated by $a_\beta.$
For the sake of notational convenience we will denote $A_\beta=\{\check\beta(t):t\in k^\times\}$
where $a_\beta=\check\beta(\unif).$


Recall that $V_{[\alpha]}$ is contained in $W^+_G(a_\alpha)$.
For the rest of this section denote the Pinsker $\sigma$-algebra $\pins_{a_\alpha}$ 
for $a_\alpha$
simply by $\pins.$ We further take a decomposition
\be\label{eq:pinsker-cond}
\mu=\int_X\mu_x^{\pins}\operatorname{d}\!\mu(x),
\ee
where $\mu_x^{\pins}$ denotes the $\pins$ conditional measure for 
$\mu$ almost every $x\in X.$ 

Since $\mu$ is $A$-invariant and $A$ commutes with $a_\alpha,$ the $\sigma$-algebras $\pins$ 
is $A$-invariant. Hence we get 
\be\label{A-Aalpha-pins}
a\mu_x^\pins=\mu_{ax}^\pins\;\text{ for $\mu$-a.e.\ $x\in X$}
\ee
Recall the definition of $H_\alpha=\varphi_\alpha(\SL(2,\field))$ from the beginning of~\S\ref{sec:high}. 
For every $x\in X$ we put 
\be\label{eq:def-Hcal}
\Hcal_x:=\{g\in H_\alpha: g\mu_x^\pins=\mu_x^\pins\}.
\ee 
It follows from~\eqref{A-Aalpha-pins} that
\be\label{eq:Hcal-A-equiv}
\Hcal_{ax}=a\Hcal_xa^{-1}
\ee
for all $a\in A$ and $\mu$-a.e.\ $x.$

\begin{cor}\label{cor:pinsker-cond-inv1}
${\langle \linv_x^{\alpha},\linv_x^{-\alpha}\rangle}$ is Zariski dense in $H_\alpha$ 
as a $\field$-group for $\mu$-a.e.\ $x\in\Xinv$. Moreover,
$
{\langle \linv_x^{\alpha},\linv_x^{-\alpha}\rangle}\subset\Hcal_x.
$ 
\end{cor}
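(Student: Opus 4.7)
My plan is to address the two assertions separately. For the Zariski density, I will invoke Corollary~\ref{cor:linv-nondisc}: for $x\in\Xinv$ the subgroups $\linv_x^{\pm\alpha}$ are nondiscrete subgroups of the one-dimensional $\field$-algebraic groups $U_{\pm\alpha}$. Since the Zariski closure of any nondiscrete subgroup of a one-dimensional connected unipotent $\field$-group has positive dimension (it cannot be finite) and is a closed subgroup of that one-dimensional group, it must equal the whole group. Hence $\linv_x^{\pm\alpha}$ is Zariski dense in $U_{\pm\alpha}$. Because $U_\alpha$ and $U_{-\alpha}$ generate $H_\alpha=\varphi_\alpha(\SL_2)$ as a $\field$-algebraic group, the Zariski closure of $\langle\linv_x^\alpha,\linv_x^{-\alpha}\rangle$ is all of $H_\alpha$.

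For the second assertion I proceed in two steps. First, I will promote leafwise invariance along $U_{\pm\alpha}$ to invariance along the full horospherical subgroups. The standing assumption~\eqref{eq:assumption-low}, together with property~($\salpha$-1), forces $\mu_x^\beta$ to be the delta mass at the identity for every $\beta\in\rootsys^\pm_\alpha\setminus\{\pm\alpha\}$ and $\mu$-a.e.\ $x$. Since the root system of $\SL_d$ is of type $A$, each coarse Lyapunov class is a single root, so Proposition~\ref{prop:general product structure} gives $\mu_x^{W_G^+(a_\alpha)}\propto\iota_*(\mu_x^\alpha\times\mu_x^{\beta_1}\times\cdots\times\mu_x^{\beta_m})$. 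With the other factors trivial, $\mu_x^{W_G^+(a_\alpha)}$ is supported on $U_\alpha$ and coincides with $\mu_x^\alpha$ as a measure. Therefore $\linv_x^\alpha\subset\linv_x^{W_G^+(a_\alpha)}$, and the same argument applied to $a_\alpha^{-1}$ yields $\linv_x^{-\alpha}\subset\linv_x^{W_G^-(a_\alpha)}$ almost surely.

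Second, I will show that any $u\in\linv_x^{W_G^\pm(a_\alpha)}$ preserves $\mu_x^\pins$. By Lemma~\ref{lem:pinsker-measurable-stable} the subgroup $\linv_x^{W_G^\pm(a_\alpha)}$ is $\pins$-measurable, hence essentially constant on Pinsker atoms. By Proposition~\ref{prop:pins-meas-hull} (applied also to $a_\alpha^{-1}$, whose Pinsker $\sigma$-algebra equals $\pins=\pins_{a_\alpha}$), almost every Pinsker atom is saturated by $W_G^\pm(a_\alpha)$-orbits, and $\mu_x^\pins$ disintegrates along this foliation with densities prescribed by the leafwise measures $\mu_y^{W_G^\pm(a_\alpha)}$. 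Translation by $u$ sends the atom to itself and preserves every such leafwise measure (since $u$ is in $\linv_y^{W_G^\pm(a_\alpha)}$ for $\mu_x^\pins$-a.e.~$y$), hence preserves $\mu_x^\pins$. Combining the two steps yields $\linv_x^{\pm\alpha}\subset\Hcal_x$, and therefore $\langle\linv_x^\alpha,\linv_x^{-\alpha}\rangle\subset\Hcal_x$.

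The main technical obstacle is making the final disintegration argument rigorous: the heuristic that $\mu_x^\pins$ is assembled from the leafwise measures $\mu_y^{W^\pm(a_\alpha)}$ as $y$ ranges over a Pinsker atom is a standard consequence of the theory of~\cite{EL-Pisa} and Proposition~\ref{prop:pins-meas-hull}, but verifying that preservation of every fiberwise leafwise measure implies preservation of $\mu_x^\pins$ in the positive-characteristic setting requires carefully unpacking the product structure rather than just appealing to a citation.
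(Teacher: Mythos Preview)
Your overall strategy is sound and close to the paper's, but there is one genuine gap: you invoke the standing assumption~\eqref{eq:assumption-low} in Step~1, yet at this point of the argument (Section~6.1, after Corollary~\ref{cor:linv-nondisc}) that assumption is \emph{not} in force. The root~$\alpha$ has been fixed so that $\linv_x^{\pm\alpha}$ are nondiscrete, and this may have come from case~(2) of Proposition~\ref{prop:high}, where the other~$s_\beta$ need not vanish. So your claim that $\mu_x^{W_G^+(a_\alpha)}$ is supported on~$U_\alpha$ is not justified.

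Fortunately the conclusion of your Step~1, namely $\linv_x^{\alpha}\subset\linv_x^{W_G^+(a_\alpha)}$, is true in general: in Proposition~\ref{prop:general product structure} order~$\alpha$ first, so that $\mu_x^{W_G^+(a_\alpha)}\propto\iota_*(\mu_x^{\alpha}\times\mu_x^{[\beta_1]}\times\cdots)$; then left multiplication by $u\in U_\alpha$ acts only on the first factor, and if $u\in\linv_x^\alpha$ one gets $u\mu_x^{W_G^+(a_\alpha)}\propto\mu_x^{W_G^+(a_\alpha)}$, hence equality by Lemma~\ref{lem:prop constant}. With this fix your argument goes through.

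By comparison, the paper bypasses the detour through the full horospherical group: it quotes Lemma~\ref{lem:pinsker-measurable} to get $\pins$-measurability of $\linv_x^{\pm\alpha}$ directly, then runs exactly your Step~2 disintegration argument with $U_{\pm\alpha}$ in place of $W_G^\pm(a_\alpha)$ (the atoms are $U_\alpha$-saturated since $U_\alpha\subset W_G^+(a_\alpha)$, and $(\mu_x^{\pins})_y^{U_\alpha}=\mu_y^\alpha$). Your route via $W_G^\pm(a_\alpha)$ and Lemma~\ref{lem:pinsker-measurable-stable} amounts to reproving part of Lemma~\ref{lem:pinsker-measurable}; the paper's version is a bit shorter but the substance is the same. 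Your treatment of the Zariski density claim is a correct elaboration of what the paper states in one line.
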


\begin{proof}
The first claim follows from Corollary~\ref{cor:linv-nondisc}.
To see the second claim, note that by Lemma~\ref{lem:pinsker-measurable}, we know that $\linv_x^{\pm\alpha}$ 
is measurable with respect to $\pins$. 
Equivalently, the groups
$\linv_x^{\pm\alpha}$ are (almost surely) constant on the atoms
of a countably generated $\sigma$-algebra~$\pins'$ that is equivalent to~$\pins$.
We now decompose $\mu$ as in \eqref{eq:pinsker-cond} into
conditional measures for the $\sigma$-algebra $\pins'$
and take the leafwise measures of $\mu_x^{\pins'}$ for the subgroup~$U_{\alpha}$.
However, Proposition~\ref{prop:pins-meas-hull} implies that we may assume
the atoms with respect to~$\pins'$ are unions of~$U_\alpha$-orbits.
This implies in turn for
the leafwise measure that~$(\mu_x^{\pins'})_y^{U_\alpha}=\mu_{y}^\alpha$
for~$\mu_x^{\pins'}$-a.e.~$y$ and~$\mu$-a.e.~$x$ (see~\cite[Prop.~5.20]{Ergodic-Number-EW}
and \cite[Prop.~7.22]{EL-Pisa} for a similar argument). 
Fixing one such~$x$ we obtain that~$(\mu_x^{\pins'})_y^{U_\alpha}$ is almost surely
invariant under $\linv_y^{\alpha}=\linv_x^{\alpha}$. However, this implies
by the relationship between the measure and its leafwise measures
that~$\mu_x^{\pins'}$ is invariant under $\linv_x^{\alpha}$. 
Since $\mu_x^{\pins'}=\mu_x^{\pins}$ almost surely we may apply the same argument for
$\linv_x^{-\alpha}$. 
Therefore, $\linv_x^{\pm\alpha}\subset\Hcal_x$
for $\mu$-a.e.\ $x.$ 
\end{proof}

\subsection{Algebraic structure of \texorpdfstring{$\Hcal_x$}{H\_x}}\label{sec:hilbert-LarPin2}
Recall from the beginning of~\S\ref{sec:high} that $H_\alpha=\varphi_\alpha(\SL_2(\field)).$
Put $U_{\pm\alpha}(\vorder)=\varphi_\alpha({\bf U}^\pm(\vorder))$ where ${\bf U}^+$ (resp.\ ${\bf U}^-$) denotes the group of upper (resp.\ lower) triangular unipotent matrices in $\SL_2.$ 

Note that $H_{\alpha}=\langle U_{\alpha},U_{-\alpha}\rangle.$ 
By Corollary~\ref{cor:pinsker-cond-inv1} for $\mu$-a.e.\ $x$
we have ${\langle \linv_x^{\alpha},\linv_x^{-\alpha}\rangle}\subset\Hcal_x.$ 
Define
\be\label{eq:def-Qx}
\Qcal_x:=\langle\Hcal_x\cap U_\alpha(\vorder),\Hcal_x\cap U_{-\alpha}(\vorder)\rangle.
\ee

Put
\be\label{eq:X-pins}
X_\pins:=\{x\in X: \Qcal_x\text{ is Zariski dense in }H_{\alpha}\text{ and $\Qcal_x\cap U_{\pm\alpha}$ are infinite}\}.
\ee

Corollary~\ref{cor:pinsker-cond-inv1} and the above definitions 
imply that $X_\pins\cap\Xinv$ is conull in $\Xinv.$
In particular, Corollary~\ref{cor:linv-nondisc} implies that $\mu(X_\pins)=1.$

Note that for all $x\in X_\pins,$ the group $\Qcal_x$ satisfies the conditions
of Theorem~A.1 in Section~\ref{sec:hilbert-LarPin1}. For any $x\in X_\pins$ define
\[
\field'_x:=\text{the field generated by $\{{\rm tr}(\rho(g)):g\in\mathcal Q_x\}$}
\]
and put
\be\label{eq:def-kx}
{\field_x:=\begin{cases}\field'_x&\text{ if ${\rm char}(\field)\neq 2$},\\ \{c:c^2\in\field'_x\}&\text{ if ${\rm char}(\field)= 2$}.\end{cases} }
\ee
Theorem~A.1 then implies that there exist 
\begin{enumerate}
\item[(C-1)]  a unique (up to a unique isomorphism) 
$\field$-isogeny $\varphi_x: {\SL_2}\times_{\field_x}\field\to \SL_2$
whose derivative vanishes nowhere, and
\item[(C-2)]  some non-negative integer $m_x$
\end{enumerate}  
so that 
\be\label{eq:Qx-structure}
\varphi_x(\SL(2,\order_x)_{m_x})\subset\Qcal_x\subset{\varphi_x}(\SL(2,\field_x))
\ee 
where $\order_x$ is the ring of integers in $\field_x$ and
\[
\SL(2,\order_x)_m:=\ker\Bigl(\SL(2,\order_x)\to\SL(2,\order_x/\varpi_x^m\order_x)\Bigr).
\] 
with $\varpi_x$ a uniformizer in $\order_x.$

Let us put
\be\label{eq:def-Ex}
E_x:={\varphi_x}(\SL(2,\field_x)).
\ee

We will use without further remark the following, which is a consequence of the implicit function theorem.
The group generated by ${\bf U}^\pm(\varpi_x^m\order_x)$ is an open subgroup of $\SL(2,\order_x)_m,$
e.g.\ a direct computation yields this group contains $\SL(2,\order_x)_{2m}.$   

\begin{lem}\label{lem:kx-Qx-Borel}\leavevmode
{}{Consider the Borel $\sigma$-algebra arising from the Chabauty topology on closed subgroups of $(\field,+)$ and $\SL(d,\field)$.}
\begin{enumerate}
\item The map $x\mapsto\field_x$ is a Borel map on $X_\pins.$
\item The equation \eqref{eq:def-Ex} defines a Borel map, $x\mapsto E_x,$ on $X_\pins$.
\end{enumerate}
\end{lem}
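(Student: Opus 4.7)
The plan is to chase Borel measurability through the chain of constructions defining $\field_x$ and $E_x$, starting from the standard fact that the disintegration map $x\mapsto\mu_x^\pins$ into the space of probability measures on $X$ (with the weak-$*$ topology) is Borel. The first several steps are routine. Since the $H_\alpha$-action on the space of probability measures on $X$ is continuous, the stabilizer $\Hcal_x$ of $\mu_x^\pins$ is a Borel map into the Chabauty space of closed subgroups of $H_\alpha$. Intersecting with the compact open subgroups $U_{\pm\alpha}(\vorder)$ is Chabauty-continuous, and forming the closed subgroup generated by finitely many Borel-varying closed subsets is itself a Borel operation (enumerate finite words in dense sequences of each subset, then take Hausdorff closure). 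Hence $x\mapsto\Qcal_x$ is Borel.

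For (1), I would compose with the continuous trace map $g\mapsto{\rm tr}(\rho(g))$ to conclude that the set $\{{\rm tr}(\rho(g)):g\in\Qcal_x\}$ depends Borel-measurably on $x$, and then take the closed subfield of $\field$ generated by this set. Subfield generation is a Borel operation on closed subsets of $\field$, since the field operations are continuous and can be iterated over a countable dense sequence before taking Hausdorff closure. The characteristic~2 adjustment in \eqref{eq:def-kx} is a preimage under the continuous squaring map $c\mapsto c^2$, which also preserves Borel measurability. This yields (1).

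For (2), I would recover $E_x$ explicitly from $\Qcal_x$ and $\field_x$ via Lemma~\ref{lem:E-generated}(1), which reduces matters to showing that $x\mapsto E_x\cap U_{\pm\alpha}$ is Borel. The key identity is
\[
E_x\cap U_\alpha=\field_x\cdot(\Qcal_x\cap U_\alpha)\subset\field,
\]
where the product is ordinary multiplication in $\field$ under the identification $U_\alpha\cong(\field,+)$. This holds because $\varphi_x$ restricts to an $\field$-linear isomorphism between two copies of $\mathbf G_a/\field$, so on $\field_x$-points it is an inclusion of the form $t\mapsto\lambda_x t$ for some $\lambda_x\in\field^\times$. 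Hence $E_x\cap U_\alpha=\lambda_x\field_x$ is a one-dimensional $\field_x$-subspace of $\field$ for the natural multiplication action. Since $\Qcal_x\supset\varphi_x(\SL(2,\order_x)_{m_x})$, the set $\Qcal_x\cap U_\alpha$ contains a compact open $\order_x$-submodule of $\lambda_x\field_x$ and therefore spans it over $\field_x$; recall that $\Qcal_x\cap U_\alpha$ is infinite by the very definition of $X_\pins$. The same analysis applies to $U_{-\alpha}$, after which the closed subgroup generated by these two Borel-varying subsets is Borel, giving (2).

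The main obstacle is confirming that the intrinsic $\field_x$-vector space structure on $E_x\cap U_\alpha$ coincides with the one induced by ambient multiplication on $\field$. Once one interprets the $\field$-isomorphism $\varphi_x:\SL_2\times_{\field_x}\field\to\SL_2$ carefully and uses that every $\field$-group homomorphism between copies of $\mathbf G_a/\field$ is given by scalar multiplication, this compatibility becomes transparent and the remaining steps are a routine application of standard Borel operations.
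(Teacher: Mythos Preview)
Your proposal is correct and follows essentially the same route as the paper's proof: both deduce that $x\mapsto\Qcal_x$ is Borel, derive~(1) from the trace definition of $\field_x$, and obtain~(2) from Lemma~\ref{lem:E-generated}(1) together with the identity $E_x\cap U_{\pm\alpha}=\field_x\cdot(\Qcal_x\cap U_{\pm\alpha})$. One small correction: your final paragraph asserts that ``every $\field$-group homomorphism between copies of $\mathbf G_a/\field$ is given by scalar multiplication,'' which is false in positive characteristic (additive polynomials such as Frobenius give further endomorphisms); the statement you actually need, and which you use correctly earlier, is that every $\field$-group \emph{isomorphism} $\mathbf G_a\to\mathbf G_a$ is scalar multiplication.
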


\begin{proof}
The map $x\mapsto\Qcal_x$ is a Borel map from a conull subset of $X$ 
into the set of closed subgroups of $H_{\alpha}(\vorder).$
This and~\eqref{eq:def-kx} imply that $x\mapsto\field_x$ is a Borel map on the conull set $X_\pins,$
as we claimed in (1).

By part (1) the map $x\mapsto\field_x$ is a Borel map.
Also recall from Lemma~\ref{lem:E-generated}(1) that $E_x=\langle E_x\cap U_{\alpha}, E_x\cap U_{-\alpha}\rangle.$
Therefore, part (2) follows if we show the map $x\mapsto E_x\cap U_{\pm\alpha}$ is a Borel map.
Note, however, that if we realize $U_{\pm\alpha}=\{u_r:r\in\field\}$ as a $\field_x$-vectors space, then 
$E_x\cap U_{\pm\alpha}=\{u_r:r\in\field_x\}$ is a one dimensional $\field_x$-subspace of $U_{\pm\alpha}$, respectively.
Hence, 
\[
E_x\cap U_{\pm\alpha}=\{u_{rr'}: r\in\field_x, u_{r'}\in\Qcal_x\cap U_{\pm\alpha}),
\]  
which implies the claim.
\end{proof}

 \begin{lem}\label{lem:kx-constant} \leavevmode
\begin{enumerate}
\item The map $x\mapsto\field_x$ is essentially constant. 
\item The map $x\mapsto E_x$ is an $A$-equivariant Borel map on a conull subset of $X.$
\end{enumerate}  
\end{lem}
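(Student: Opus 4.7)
The plan is to establish the equivariance relations $\field_{ax}=\field_x$ and $E_{ax}=aE_xa^{-1}$ for $a\in A$ and $\mu$-a.e.\ $x\in X_\pins$, after which part~(1) follows from Lemma~\ref{lem:kx-Qx-Borel}(1), $A$-ergodicity of $\mu$, and the fact that the closed subfields of $\field$ (equipped with the Chabauty topology) form a standard Borel space; part~(2) is then immediate from this equivariance combined with Lemma~\ref{lem:kx-Qx-Borel}(2).

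For the equivariance, I begin from~\eqref{eq:Hcal-A-equiv}, $\Hcal_{ax}=a\Hcal_xa^{-1}$, and unwind~\eqref{eq:def-Qx} to obtain
\[
\Qcal_{ax}=a\tilde{\Qcal}_xa^{-1},\qquad\tilde{\Qcal}_x:=\langle\Hcal_x\cap a^{-1}U_\alpha(\vorder)a,\;\Hcal_x\cap a^{-1}U_{-\alpha}(\vorder)a\rangle.
\]
Since $a^{-1}U_{\pm\alpha}(\vorder)a=U_{\pm\alpha}(\alpha(a)^{\mp 1}\vorder)$, the group $\tilde{\Qcal}_x$ differs from $\Qcal_x$ only in the compact open neighborhoods of $\id$ used in the $U_{\pm\alpha}$ directions. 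The key claim is that such a change of neighborhoods does not affect the data $(\field_x,E_x)$ produced by Theorem~A.1. To prove this, choose $R>0$ small enough that $U_{\pm\alpha}(R)\subset U_{\pm\alpha}(\vorder)\cap a^{-1}U_{\pm\alpha}(\vorder)a$ and put
\[
\Qcal_x'':=\langle\Hcal_x\cap U_\alpha(R),\;\Hcal_x\cap U_{-\alpha}(R)\rangle\subset\Qcal_x\cap\tilde{\Qcal}_x.
\]
Then $\Qcal_x''$ meets the hypotheses of Theorem~A.1: compactness is inherited from $\Qcal_x$; it is unipotent-generated by construction; and Zariski density in $H_\alpha$ follows from Corollary~\ref{cor:linv-nondisc}, which guarantees that $\linv_x^{\pm\alpha}\cap U_{\pm\alpha}(R)$ is infinite, hence Zariski dense in the one-dimensional group $U_{\pm\alpha}$. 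Using the open cell decomposition~\eqref{opposite-horo} applied in $\SL_2$ together with~\eqref{eq:Qx-structure}, one verifies that $\Qcal_x''$ contains a Hausdorff-open neighborhood of $\id$ in both $\Qcal_x$ and $\tilde{\Qcal}_x$, so it is an open subgroup of each. The uniqueness in Theorem~A.1 then forces $\Qcal_x$, $\Qcal_x''$, and $\tilde{\Qcal}_x$ to share the same data $(\field_x,E_x)$.

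Since the trace of the representation $\rho$ is invariant under conjugation, the trace field of $\Qcal_{ax}=a\tilde{\Qcal}_xa^{-1}$ equals that of $\tilde{\Qcal}_x$, which by the previous step is $\field_x$; hence $\field_{ax}=\field_x$. For the same reason $aE_xa^{-1}$ is a conjugate ``$\SL_2$-like'' subgroup of $H_\alpha$ of which $\Qcal_{ax}$ is an open subgroup, so by the uniqueness clause of Theorem~A.1 it must coincide with $E_{ax}$. With the equivariance in hand, $A$-ergodicity applied to the Borel $A$-invariant map $x\mapsto\field_x$ yields part~(1), and part~(2) is then direct.

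The main obstacle is the assertion that $\Qcal_x''$ is a genuine open subgroup of $\Qcal_x$ and $\tilde{\Qcal}_x$: while intuitively clear from~\eqref{eq:Qx-structure} and the open-cell decomposition of $\SL_2$ near $\id$, a rigorous verification needs some care because $\Qcal_x''$ is defined using only the unipotent intersections and one must check that these already generate a Hausdorff-open neighborhood of $\id$ inside the ambient compact subgroup $\varphi_x(\SL(2,\order_x)_{m_x})$.
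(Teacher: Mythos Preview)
Your approach is correct and is essentially the same as the paper's, with only a minor difference in packaging. The paper also reduces to showing that the trace field is unchanged when one passes to the conjugated generators, but rather than invoking the uniqueness clause of Theorem~A.1 it works directly with the congruence subgroups $\varphi_x(\SL(2,\order_x)_m)$: it shows \eqref{eq:trace-field}, that the trace field of any such deep subgroup is still $\field_x$, and then observes that for each $a$ a sufficiently deep one, conjugated by $a$, lands inside $\Qcal_{ax}$. Your intermediate group $\Qcal_x''$ plays exactly the role of these congruence subgroups, and the ``main obstacle'' you flag is precisely resolved by the remark just before Lemma~\ref{lem:kx-Qx-Borel}: the subgroup generated by ${\bf U}^\pm(\varpi_x^m\order_x)$ already contains $\SL(2,\order_x)_{2m}$, which forces $\Qcal_x''$ to be open in $E_x$ and hence in $\Qcal_x$. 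For $\tilde{\Qcal}_x$ one either applies Theorem~A.1 to it first and repeats the argument, or (as the paper does for part~(2)) uses the concrete description of $E_x\cap U_{\pm\alpha}$ as the $\mathcal R_{\field/\field'}$-Zariski closure of any nontrivial open piece of $\Qcal_x\cap U_{\pm\alpha}$, which is preserved under conjugation by $A$.
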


\begin{proof}
We claim $\field_{x}\subset\field_{ax}$ for all $a\in A.$ 
First let us note that by symmetry, this also implies that $\field_{ax}\subset\field_x.$
Therefore, it implies that the map $x\mapsto\field_x$ is $A$-invariant; since $\mu$ is $A$-ergodic
we get part (1).

We now show the claim. Let $m_x$ be as in (C-2). 
Recall from~\eqref{eq:Hcal-A-equiv} that there is a full
measure set $X'\subset X$ so that for all $x\in X'$ and all $a\in A$ 
we have $\Hcal_{ax}=a\Hcal_xa^{-1}.$
Now for any $a$ there exists some $m_{x,a}\geq m_x$
so that if $m\geq m_{x,a},$ then
\be\label{eq:deep-levels}
a\varphi_x(\SL(2,\order_x)_{m})a^{-1}\subset \mathcal Q_{ax}.
\ee

Define $\sfield_x(m)$ to be the field generated by 
$
\{{\rm tr}(\rho(g)):g\in\varphi_x(\SL(2,\order_x)_{m})\}. 
$ 
Then 
\be\label{eq:trace-field}
\text{$\sfield_x(m)=\field_x$ for all $m\geq m_x.$}
\ee 
Indeed this is true for the field generated by $\{{\rm tr}(\rho(g)):g\in\SL(2,\order_x)_{m}\}.$
Since $\varphi_x$ has nowhere vanishing derivative 
and there are no nonstandard isogenies for type $A_1,$~\cite[Prop.\ 1.6]{Pink-Compact}, 
we get $\rho_1=\rho_2\circ \varphi_x$
where $\rho_1$ and $\rho_2$ are the adjoint representation on the source and the target of $\varphi_x$.
This implies~\eqref{eq:trace-field}.

It follows from~\eqref{eq:deep-levels} and~\eqref{eq:trace-field} that $\field_x\subset \field_{ax},$
as we claimed.  

Let us now prove part (2). By part (1) there is an $A$-invariant conull set $X'$ 
and a subfield $\field'$ so that $\field_x=\field'$ for all $x\in X'.$
Let $\order'$ denote the ring of integers in $\field'.$

We note that the same proof as in the proof of Lemma~\ref{lem:kx-Qx-Borel}(2)
implies that $E_x\cap U_{\pm\alpha}$ is the Zariski closure 
of $C\cap U_{\pm}$ in $\mathcal R_{\field/\field'}(\SL_d)$
for any nontrivial open subgroup $C$ of $\Qcal_x.$ 

Let now $a\in A$ and $x\in X'.$ Then by~\eqref{eq:deep-levels} we have
\[
a\varphi_x(\SL(2,\order')_{m})a^{-1}\subset\Qcal_{ax}
\] 
for all $m\geq m_{x,a}.$
Since $a\Hcal_xa^{-1}=\Hcal_{ax}$ and $\varphi_x(\SL(2,\order')_{m})$ is open in $\Qcal_x$
by~\eqref{eq:Qx-structure}, we thus get that $a\varphi_x(\SL(2,\order')_{m})a^{-1}$
is open in $\Qcal_{ax}$ for all $m\geq m_{x,a}.$

Since $U_{\pm\alpha}$ are 
normalized by $A$, for all $a\in A$ and all $m\geq m_{x,a}$ we have
\[
a\varphi_x(\SL(2,\order')_{m})a^{-1}\cap U_{\pm\alpha}=a\bigl(\varphi_x(\SL(2,\order')_{m})\cap U_{\pm\alpha}\bigr)a^{-1}.
\]
Taking Zariski closure in $\mathcal R_{\field/\field'}(\SL_d)$ we get that 
\[
a\bigl(E_x\cap U_{\pm\alpha}\bigr)a^{-1}=E_{ax}\cap U_{\pm\alpha}.
\]
This and Lemma~\ref{lem:E-generated}(1) imply the claim.
\end{proof}

\begin{prop}\label{lem:inv-sl2}\label{prop:inv-sl2}
For $\mu$-a.e. $x\in X_\pins$ we have
$
E_x\subset\Hcal_x.
$
\end{prop}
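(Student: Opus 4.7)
The plan is to bootstrap from the already-established inclusion $\Qcal_x\subset\Hcal_x$ (Corollary~\ref{cor:pinsker-cond-inv1} together with~\eqref{eq:def-Qx}) to the claimed $E_x\subset\Hcal_x$, exploiting the $A$-equivariance statements~\eqref{eq:Hcal-A-equiv} and Lemma~\ref{lem:kx-constant}(2) together with Poincar\'e recurrence along $a_\alpha^{\pm1}$. Since $E_x$ is generated by $E_x\cap U_\alpha$ and $E_x\cap U_{-\alpha}$ by Lemma~\ref{lem:E-generated}(1), and the two cases are symmetric, it suffices to show $E_x\cap U_\alpha\subset\Hcal_x$ for $\mu$-a.e.\ $x$.

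The first step is a uniform version of the already-known inclusion. From~\eqref{eq:Qx-structure}, $\Qcal_x\cap U_\alpha$ is a compact open subgroup of $E_x\cap U_\alpha$, and the latter carries its subspace topology from $U_\alpha$; hence $U_\alpha[\theta^{-n}]\cap E_x\subset\Qcal_x\subset\Hcal_x$ for some $n=n(x)\in\mathbb{N}$. The sets
\[
B_n:=\{x\in X_\pins:U_\alpha[\theta^{-n}]\cap E_x\subset\Hcal_x\}
\]
are Borel (using Lemma~\ref{lem:kx-Qx-Borel} and the Borel measurability of $x\mapsto\Hcal_x$), increasing in $n$, and exhaust $X_\pins$ by the previous observation. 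Hence for each $\epsilon>0$ there is $N=N(\epsilon)$ with $\mu(B_N)>1-\epsilon$, and on $B_N$ the inclusion $U_\alpha[\theta^{-N}]\cap E_y\subset\Hcal_y$ holds uniformly in $y$.

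Fix $\epsilon$ and the corresponding $N$. Since $U_\alpha\subset W_G^+(a_\alpha)$, for any fixed $u\in U_\alpha$ we have $a_\alpha^{-n}ua_\alpha^n\in U_\alpha[\theta^{-N}]$ for all sufficiently large $n$. Poincar\'e recurrence applied to the $\mu$-preserving transformation $a_\alpha^{-1}$ produces a set $X'_\epsilon\subset X$ of measure at least $\mu(B_N)>1-\epsilon$ on which the backward orbit visits $B_N$ along an unbounded sequence $n_i\to\infty$. Given $x\in X'_\epsilon$ and $u\in E_x\cap U_\alpha$, choose $n_i$ large enough that both $a_\alpha^{-n_i}x\in B_N$ and $a_\alpha^{-n_i}ua_\alpha^{n_i}\in U_\alpha[\theta^{-N}]$; Lemma~\ref{lem:kx-constant}(2) further gives $a_\alpha^{-n_i}ua_\alpha^{n_i}\in E_{a_\alpha^{-n_i}x}$, so that
\[
a_\alpha^{-n_i}ua_\alpha^{n_i}\in U_\alpha[\theta^{-N}]\cap E_{a_\alpha^{-n_i}x}\subset\Hcal_{a_\alpha^{-n_i}x},
\]
and conjugating back by $a_\alpha^{n_i}$ via~\eqref{eq:Hcal-A-equiv} yields $u\in\Hcal_x$. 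Thus $\mu(\{x:E_x\cap U_\alpha\subset\Hcal_x\})\geq1-\epsilon$; letting $\epsilon\to0$ gives the almost-sure inclusion, and the analogous argument with $a_\alpha$ in place of $a_\alpha^{-1}$ handles $E_x\cap U_{-\alpha}$.

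The main technical point to watch is the uniform step: the compact open subgroup $\varphi_x({\bf U}^+(\varpi_x^{m_x}\order_x))$ supplied by~\eqref{eq:Qx-structure} depends on $x$ through the measurably varying data $(\field_x,\varphi_x,m_x)$, and even after fixing $\field_x=\field_0$ via Lemma~\ref{lem:kx-constant}(1) it is not transparent how to bound its ``size'' inside $U_\alpha$ uniformly. Phrasing the uniformity intrinsically via the $B_n$ above (rather than by separately regularizing $\varphi_x$ and $m_x$) sidesteps this issue entirely: the $B_n$ are automatically Borel once $x\mapsto\Hcal_x$ and $x\mapsto E_x$ are, and they exhaust $X_\pins$ by the openness of $\Qcal_x\cap U_\alpha$ in $E_x\cap U_\alpha$.
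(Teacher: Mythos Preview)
Your proof is correct and shares the paper's core strategy: starting from the openness of $\Qcal_x\cap U_\alpha$ in $E_x\cap U_\alpha$ and the inclusion $\Qcal_x\subset\Hcal_x$, use recurrence along $A$ together with the equivariance~\eqref{eq:Hcal-A-equiv} and Lemma~\ref{lem:kx-constant}(2) to upgrade to $E_x\cap U_\alpha\subset\Hcal_x$, then conclude via Lemma~\ref{lem:E-generated}(1). The execution differs. The paper first deduces from Lemma~\ref{lem:kx-constant}(2) that $A':=E_x\cap A$ is essentially constant, invokes Lemma~\ref{lem:E-generated}(2) to find an element $\check\alpha(s)\in A'$ with $|s|>1$, and then applies Poincar\'e recurrence along $\check\alpha(s)$ to return close to $x$ inside a Lusin set on which $y\mapsto\mu_y^\pins$ is continuous; a limit argument then yields $E_x\cap U_\alpha\subset\Hcal_x$. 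You instead iterate $a_\alpha^{-1}$ (which need not lie in $E_x$), use Lemma~\ref{lem:kx-constant}(2) directly to place $a_\alpha^{-n_i}ua_\alpha^{n_i}$ in $E_{a_\alpha^{-n_i}x}$, and replace the Lusin/limit step by the exhaustion through the uniform sets $B_N$. Your route is a little more direct in that it avoids locating $\check\alpha(s)$ and the continuity argument; the paper's route keeps the recurrence based at the single point $x$ and passes through a limit rather than an exact equality at the return times.
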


\begin{proof}
Let $x\in X_\pins$ and put $A'_{x}:=E_x\cap A.$ 
In view of Lemma~\ref{lem:kx-constant}(2) we have 
\be\label{eq:A'-const}
A'_{ax}=E_{ax}\cap A=aE_xa^{-1}\cap A=a(E_x\cap A)a^{-1}=A'_x
\ee
for $\mu$-a.e.\ $x$ and all $a\in A.$ Since $\mu$ is $A$-ergodic we get that
$x\mapsto A'_x$ is essentially constant. Let us denote by $A'$ this essential value.

Then by Lemma~\ref{lem:E-generated}(2)
we have $A'$ is an unbounded subgroup of $A_{\alpha}=H_{\alpha}\cap A.$ 
The group $A_\alpha$ is a one dimensional $\field$-split $\field$-torus, 
therefore, $A_\alpha/A'$ is compact. 
For any~$s\in\field$ we let~$\check\alpha(s)\in A_\alpha$ be the cocharacter associated to~$\alpha$
and evaluated at~$s$, i.e.\ $\check\alpha(s)$ is the diagonal matrix
with eigenvalues~$s,s^{-1}$ and $1$ with multiplicity~$d-2$ so that~$\alpha(\check\alpha(s))=s^2$.
This implies that there exist some $\ell>0$ and some $\fele\in\vorder^\times$,
so that if we put $s:=\unif^{\ell}\fele,$ then $\check\alpha(s)\in E_x.$ 
In particular, $\check\alpha(s)$ normalizes both $E_x\cap U_\alpha$ and $E_x\cap U_{-\alpha}.$

For every $\vare>0$ there is subset ${X_\pins}(\vare)\subset{X_\pins}$ with 
$\mu({X_\pins}(\vare))>1-\vare$ so that the map
\[
x\mapsto\mu_x^{\pins}
\]
is continuous on ${X_\pins}(\vare).$
Now by Poincar\'{e} recurrence, 
for $\mu$-a.e.\ $x\in{X_\pins}(\vare)$ there is a sequence $n_{x,i}\to\infty$ so that 
$\check\alpha(s^{n_{x,i}})\in{X_\pins}(\vare)$ for all $i$ and
$\check\alpha(s^{n_{x,i}})x\to x.$  
Then 
\[
\lim_{i\to\infty}\Hcal_{\check\alpha(s^{n_{x,i}})x}\subset\Hcal_x.
\]

Recall from~\eqref{eq:Qx-structure} that $\Qcal_x\cap U_\alpha$
contains an open compact subgroup of $E_x\cap U_\alpha.$
Therefore, using~\eqref{eq:Hcal-A-equiv} we get that
\[
E_x\cap U_{\alpha}\subset\lim_{i\to\infty}\check\alpha(s^{n_{x,i}}) (\Qcal_x\cap U_{\alpha})\check\alpha(s^{-n_{x,i}})\subset\lim_i\Hcal_{\check\alpha(s^{n_{x,i}})x}\subset
\Hcal_x,
\]
for $\mu$-a.e.\ $x\in{X_\pins}(\vare).$ 
Choosing a sequence $\vare_n\to0$ we get that $E_x\cap U_{\alpha}\subset\Hcal_x$ for
$\mu$-a.e.\ $x\in X_\pins.$

Similarly, we get $E_x\cap U_{-\alpha}\subset\Hcal_x$ for $\mu$-a.e.\ $x\in{X_\pins}$.
Recall from Lemma~\ref{lem:E-generated}(1) that $E_x$ is generated by 
$E_x\cap U_{\pm\alpha}.$ Therefore $E_x\subset\Hcal_x$ for $\mu$-a.e.\ $x\in{X_\pins}.$
\end{proof}

\subsection{Applying the measure classification for semisimple groups}

We now apply the measure classification theorem due to Alireza Salehi-Golsefidy 
and the third named author (Theorem B from Section \ref{sec:ratner-sl2}). 

\begin{lem}\label{applyingAA}
	Let $\mu$ be as in Theorem \ref{thm:measure-class}. Then
	there exist a closed infinite subfield $\sfield<\field$ and a {}{smooth} 
	algebraic $\sfield$-subgroup ${\bf M}<\mathcal R_{\field/\sfield}(\SL_d)$ such that ${\bf M}(\sfield)\cap\Gamma$ is Zariski dense in $\bf M$ over $\sfield$, and 
	a noncentral cocharacter $\lambda:\mathbf{G}_m\rightarrow\bf M$ over $\sfield$ so that the topological group
	\[
	  L=\overline{M^+(\lambda)({\bf M}(\sfield)\cap\Gamma)}
	\]
	satisfies that $L/(L\cap\Gamma)$ has finite volume.
	Moreover, for $\mu$-a.e.\ $x,$ the $E_x$-ergodic component of $\mu_x^{\pins}$ equals $h\nu_L$ 
	for some~$h\in \SL(d,\field)$ so that $x=h\Gamma$ and $\nu_L$
	is the homogeneous measure on $L/(L\cap\Gamma)$.
	\end{lem}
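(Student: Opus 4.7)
The plan is to apply Theorem~B to the $E_x$-invariant conditional measures $\mu_x^{\pins}$ and then to use $A$-ergodicity of $\mu$ to extract uniform algebraic data. First, by Proposition~\ref{prop:inv-sl2} the measure $\mu_x^{\pins}$ is $E_x$-invariant for $\mu$-a.e.\ $x \in X_\pins$. By Lemma~\ref{lem:kx-constant}(1) the subfield $\sfield := \field_x$ is essentially constant on a conull $A$-invariant set, and on this set $E_x = \varphi_x(\SL(2,\sfield))$ is the image under an $\sfield$-isomorphism of $\SL(2,\sfield)$; this is exactly the setting in which Theorem~B classifies ergodic $E_x$-invariant probability measures on $G/\Gamma$.

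Next, for $\mu$-a.e.\ $x$ I would decompose $\mu_x^{\pins}$ into $E_x$-ergodic components, select one such component $\eta_x$ measurably, and apply Theorem~B to $\eta_x$. This supplies a subfield $\sfield_x = \sfield^{q_x}$ with $q_x = p^{n_x}$, a connected $\sfield_x$-subgroup $\mathbf{M}_x \subset \mathcal{R}_{\field/\sfield_x}(\SL_d)$ with $\mathbf{M}_x(\sfield_x) \cap \Gamma$ Zariski dense in $\mathbf{M}_x$, a noncentral $\sfield_x$-cocharacter $\lambda_x:\mathbf{G}_m\to\mathbf{M}_x$, and an element $h_x \in G$ with $x = h_x\Gamma$, so that $\eta_x$ is the Haar measure on the closed orbit $h_x L_x \Gamma/\Gamma$, where $L_x = \overline{M^+_x(\lambda_x)\bigl(\mathbf{M}_x(\sfield_x) \cap \Gamma\bigr)}$ and $E_x \subset h_x M^+_x(\lambda_x) h_x^{-1}$.

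The remaining step is to show that the triple $(\sfield_x, \mathbf{M}_x, \lambda_x)$ can be taken independent of $x$. This will be done by a countability argument: the exponents $q_x = p^{n_x}$ form a countable set; by Lemma~\ref{lem:w-pm-normal}(2) there are only countably many subgroups of the form $M^+(\lambda)$ in each relevant ambient group; and the discrete subgroups $\mathbf{M}_x(\sfield_x) \cap \Gamma$ lie in a countable collection of subgroups of $\Gamma$. Since $\mu$ is $A$-ergodic and the equivariance $E_{ax}= a E_x a^{-1}$ of Lemma~\ref{lem:kx-constant}(2) sends $E_x$-ergodic components of $\mu_x^{\pins}$ to $E_{ax}$-ergodic components of $\mu_{ax}^{\pins}$, the $A$-action permutes each of these countable sets of choices. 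By ergodicity, each of the three discrete invariants is essentially constant on an $A$-invariant conull set. Absorbing $h_x$ into the choice $x = h\Gamma$ then yields a single triple $(\sfield, \mathbf{M}, \lambda)$ realizing the statement, and $L/(L\cap\Gamma)$ has finite volume because $hL\Gamma/\Gamma$ supports the probability measure $\eta_x$.

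The main obstacle will be this last coupling step: turning the countable list of possible $(\mathbf{M}_x, \lambda_x)$ into a genuinely constant choice that is simultaneously compatible with the measurable map $x \mapsto \eta_x$ and with the conjugation equivariance along $A$. Careful bookkeeping is required so that $h_x$ varies measurably in $x$ and $(\mathbf{M}, \lambda)$ over $\sfield$ is fixed once the ergodic component of the $A$-action on $\mu$ is fixed, and so that the inclusion $E_x \subset h_x M^+(\lambda) h_x^{-1}$ stays consistent with $E_{ax} = aE_xa^{-1}$ under this choice.
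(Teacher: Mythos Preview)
Your proposal is essentially the paper's proof: apply Theorem~B to the $E_x$-ergodic components of $\mu_x^{\pins}$, observe that the resulting algebraic data ranges over a countable set, and use $A$-ergodicity (via the equivariance $a_*\nu_z$ is an $E_{ax}$-ergodic component of $\mu_{ax}^{\pins}$) to force a single choice. The paper handles the coupling step you flag by recording only the $\Gamma$-conjugacy classes $(\sfield,[\mathbf M],[M^+(\lambda)])$ (since $g_z$ is only determined modulo right $\Gamma$-translation) and showing the level sets $\mathfrak S(\sfield,[\mathbf M],[M^+(\lambda)])$ are $A$-invariant; note also that your countability for $\mathbf M_x$ should be argued via ``only countably many Zariski closures of finite subsets of $\Gamma$'' (Noetherianity), not ``countably many subgroups of $\Gamma$''.
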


\begin{proof}
Let $\field'$ denote the essential value of the map $x\mapsto\field_x,$ see Lemma~\ref{lem:kx-constant}(1).
In view of Proposition~\ref{prop:inv-sl2} 
for $\mu$-a.e.\ $x$ the measure $\mu_x^{\pins}$ is invariant under 
$E_x.$

Since the $\sigma$-algebra $\pins$ is $A$-invariant we have 
$a\mu_x^\pins=\mu_{ax}^\pins$ for all $a\in A$ and $\mu$-a.e.\ $x.$
Moreover, by Lemma~\ref{lem:kx-constant}(2)
we have $E_{ax}=aE_xa^{-1}$ for $\mu$-a.e.\ $x.$
Therefore, if we let 
\begin{equation}\label{eq:pinsmain}
	\mu_{x}^\pins=\int \nu_z\operatorname{d}\!\mu_x^\pins(z)
\end{equation}
be the ergodic decomposition of $\mu_{x}^\pins$ with respect to $E_x$ 
(where for $\mu_x^{\pins}$-a.e.\ $z$ we let $\nu_z$ denote the $E_x$-ergodic 
components of $\mu_x^{\pins}$)
then  
\be\label{eq:E-A-equiv}
\mu_{ax}^\pins=\int a_*\nu_z\operatorname{d}\!\mu_x^\pins(z)
\ee
is the ergodic decomposition of $\mu_{ax}^\pins$ with respect to $E_{ax}.$

Applying Theorem~B in Section \ref{sec:ratner-sl2} 
we conclude that for $\mu_x^{\pins}$-a.e.\ $z$ the measure $\nu_z$ is described 
as follows. 

There exist
\begin{enumerate}
\item[(B-1)] $\sfield_z=(\field')^{q_z}\subset\field $ where $q_z=p^{n_z},$ $p={\rm char}(\field)$ and $n_z\geq 1$,
\item[(B-2)] a connected $\sfield_z$-subgroup ${\bf M}_z$ of $\rcal_{\field/\sfield_z}(\SL_d)$ so that ${\bf M}_z(\sfield_z)\cap \Gamma$ is Zariski dense in ${\bf M}_z,$ and
\item[(B-3)] an element $g_z\in G,$ 
\end{enumerate} 
such that $\nu_z$ is the $g_zL_zg_z^{-1}$-invariant probability Haar measure 
on the closed orbit $g_zL_z\Gamma/\Gamma$ with 
\[
L_z=\overline{M_z^+(\la_z)({\bf M}_z(\sfield_z)\cap\Gamma)} 
\]
where  
\begin{itemize}
\item the closure is with respect to the Hausdorff topology, and 
\item $\la_z:{\bf G}_m\to{\bf M}_z$ is a noncentral $\sfield_z$-homomorphism, $M_z^+(\la_z)$ is defined in~\eqref{eq:M-+-la}, and $E_x\subset M_z^{+}(\la_z).$  
\end{itemize} 
{}{Note that ${\bf M}_z$ in (B-2) is $\sfield_z$-smooth -- indeed ${\bf M}_z(\sfield_z)$ is Zariski dense in ${\bf M}_z$, see~\cite[Lemma 11.2.4(ii)]{Sp-AlgGrBook}.}

For any $z$ where $\nu_z$ is described as above, 
let $(\sfield_z,[{\bf M}_z],[M^+_{z}(\la_z)])$ be the corresponding triple
where $[\bullet]$ denotes the $\Gamma$ conjugacy class. 
This is well defined and we will refer to it as the {\it triple associated to} $z.$ 
Given a triple $(\sfield,[{\bf M}],[M^+(\la)])$ put
\[
\Sfrak(\sfield,[{\bf M}],[M^+(\la)])= \{z\in X: (\sfield,[{\bf M}],[M^+(\la)])\h\mbox{is associated to}\h z\}.
\]
Note that there are only countably many such triples. Indeed there are only countably many
closed subfields $\sfield\subset\field'$ as in Theorem~B(1),  
also there are only countably many ${\bf M}$ as in Theorem~B(2).
For any such $\sfield$ and ${\bf M}$  
there are only countably many choices of $M^+(\la)$ by Lemma~\ref{lem:w-pm-normal}(2). 
Therefore, there exists a triple $(\sfield,[{\bf M}],[M^+(\la)])$ such that 
\[
\mu(\Sfrak(\sfield,[{\bf M}],[M^+(\la)]))>0.
\] 

Note, however, that in view of~\eqref{eq:E-A-equiv}
we have $\Sfrak(\sfield,[{\bf M}],[M^+(\la)])$ is $A$-invariant. 
This, together with the fact that $\mu$ is $A$-ergodic, implies that 
\[
\mu(\Sfrak(\sfield,[{\bf M}],[M^+(\la)]))=1.
\] 
This finishes the proof of the lemma.
\end{proof}

We let $\sfield,$ ${\bf M}$ and
$L:=\overline{M^+(\la)({\bf M}(\sfield)\cap\Gamma)}$
be as in Lemma \ref{applyingAA}.
Define 
\be\label{eq:def-N0}
{\bf N}:= \text{the Zariski closure of ${N_{G'}\bigl({\bf M}(\sfield)\bigr)\cap\Gamma}$ in 
${\bf G}'$}
\ee 
where ${\bf G}':=\mathcal R_{\field/\sfield}(\SL_d)$ and $G':={\bf G}'(\sfield)=\SL(d,\field).$
Therefore, ${\bf N}$ {}{is a smooth group} defined over $\sfield,$ see e.g.~\cite[Lemma 11.2.4(ii)]{Sp-AlgGrBook}.
In view of (B-2) above we have 
\be\label{eq:N0-M0}
{\bf M}\subset{\bf N}^{\circ}\text{ and }{\bf N}\subset N_{{\bf G}'}({\bf M}).
\ee  
where ${\bf N}^{\circ}$ denotes the connected component of the identity in ${\bf N}.$

\begin{lem}\label{applyingAA2}
	We let $A_\sfield^{\rm sp}$ be the group of $\sfield$-points
	of the maximal $\sfield$-split torus subgroup of $\mathcal R_{\field/\sfield}A$. Then
	there exists some $g_0\in\SL(d,\field)$ so that $A_\sfield^{\rm sp}\subset g_0{\bf N}(\sfield)g_0^{-1}$
	and $\overline{Ag_0\Gamma/\Gamma}={\rm supp}(\mu).$ 
\end{lem}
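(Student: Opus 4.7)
The plan is to construct an $A$-equivariant Borel map $\phi$ from $X$ into the $\sfield$-variety ${\bf V}:={\bf G}'/{\bf N}$ and apply Lemma~\ref{lem:zd-measure1} to the push-forward $\phi_*\mu$. By Lemma~\ref{applyingAA}, for $\mu$-a.e.\ $x\in X$ there is an element $h_x\in G$ with $h_x\Gamma=x$ such that the $E_x$-ergodic component of $\mu_x^{\pins}$ through $x$ equals the homogeneous measure $h_x\nu_L$ supported on the closed orbit $h_xL\Gamma/\Gamma$; we set $\phi(x):=h_x{\bf N}\in{\bf V}(\sfield)$.

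For well-definedness, suppose $h$ and $h'=h\gamma$ (with $\gamma\in\Gamma$) are two valid choices for $h_x$. Then $h\nu_L=h'\nu_L$ forces $\gamma_*\nu_L=\nu_L$, so $\gamma$ lies in the measure-stabilizer of $\nu_L$, which, as a closed subgroup of $G$ containing $L$ and preserving the closed support $L\Gamma/\Gamma$, normalizes $L$. Passing to Zariski closures, $\gamma$ then normalizes ${\bf M}$, so $\gamma\in N_{{\bf G}'}({\bf M})\cap\Gamma\subset{\bf N}(\sfield)$ by~\eqref{eq:def-N0}, and hence $h{\bf N}=h'{\bf N}$. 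The $A$-equivariance is clear: since $\pins$ is $A$-invariant we have $a\mu_x^{\pins}=\mu_{ax}^{\pins}$, so the ergodic component at $ax$ is supported on $ah_xL\Gamma/\Gamma$, giving $\phi(ax)=a\phi(x)$. Borel measurability follows from standard measurable selection combined with the measurability of the Pinsker conditional measures and of their $E_x$-ergodic decompositions.

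Applying Lemma~\ref{lem:zd-measure1} then produces a point $v_0\in{\rm Fix}_{A_\sfield^{\rm sp}}({\bf V}(\sfield))$ with $\phi(x)\in Av_0$ for $\mu$-a.e.\ $x$. Writing $v_0=g_0{\bf N}$ with $g_0\in G$, the fixed-point condition $A_\sfield^{\rm sp}v_0=v_0$ is precisely $A_\sfield^{\rm sp}\subset g_0{\bf N}(\sfield)g_0^{-1}$. For a.e.\ $x$ we then have $h_x=a_xg_0n_x$ for some $a_x\in A$ and $n_x\in{\bf N}(\sfield)$, so $x=a_xg_0n_x\Gamma$. By $A$-ergodicity of $\mu$, $\overline{Ax}={\rm supp}(\mu)$ for a.e.\ such $x$, hence $\overline{Ag_0n_x\Gamma/\Gamma}={\rm supp}(\mu)$. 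Since ${\bf N}(\sfield)$ is the group of $\sfield$-points of an algebraic group (so $n_x{\bf N}n_x^{-1}={\bf N}$), replacing $g_0$ by $g_0n_x$ preserves the inclusion $A_\sfield^{\rm sp}\subset g_0{\bf N}(\sfield)g_0^{-1}$ while yielding $\overline{Ag_0\Gamma/\Gamma}={\rm supp}(\mu)$.

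The main technical point is the well-definedness of $\phi$: elements of $\Gamma$ that preserve the measure $\nu_L$ must normalize $L$, and hence also its Zariski closure ${\bf M}$, which is precisely why ${\bf N}$ was defined in~\eqref{eq:def-N0} as the Zariski closure of $N_{{\bf G}'}({\bf M}(\sfield))\cap\Gamma$.
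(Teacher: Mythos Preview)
Your overall strategy---build an $A$-equivariant Borel map to ${\bf G}'/{\bf N}$ and apply Lemma~\ref{lem:zd-measure1}---is exactly the paper's approach, and the endgame (choosing $g_0$ via a generic $x$ and replacing $g_0$ by $g_0 n_x$) is fine.

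The gap is in the well-definedness of $\phi$. Your assertion that the measure-stabilizer of $\nu_L$ ``as a closed subgroup of $G$ containing $L$ and preserving the closed support $L\Gamma/\Gamma$, normalizes $L$'' is not justified, and in fact is false in general: if $S$ denotes this stabilizer then one only has $L$ open in $S$ and $S=L(S\cap\Gamma)$, which does not force $L\triangleleft S$. (For a toy illustration, take $G=S_3\times\mathbb{R}$, $\Gamma=S_3\times\mathbb{Z}$, $L=\{e,(12)\}\times\mathbb{R}$; then $\nu_L$ is Haar on $G/\Gamma$, so $S=G$, yet $L$ is not normal in $S$.) Consequently your route ``$\gamma$ normalizes $L$, hence its Zariski closure ${\bf M}$'' is not available as stated.

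What actually needs to be shown is only that $\gamma$ normalizes ${\bf M}(\sfield)$, and the paper does this differently. From $\gamma_*\nu_L=\nu_L$ one gets $\gamma L\Gamma=L\Gamma$ and hence $\gamma{\bf M}(\sfield)\Gamma={\bf M}(\sfield)\Gamma$, the latter because $g{\bf M}(\sfield)\Gamma$ is intrinsically determined by $\nu_z$ as the smallest set of the form $g{\bf R}(\sfield)\Gamma$ of positive measure (this uses the refined statement of Theorem~B as in \cite[Thm.~6.9]{MS-SL2}, cf.\ \cite[Prop.~3.2]{MarTom}). One then argues via Baire category: ${\bf M}(\sfield)\subset\bigcup_{\delta\in\Gamma}\gamma{\bf M}(\sfield)\delta$, so some coset meets ${\bf M}(\sfield)$ in a Hausdorff-open set; connectedness of ${\bf M}$ and Zariski density of open subsets force equality of cosets, yielding $\gamma\in({\bf M}(\sfield)\cap\Gamma)\cdot(N_G({\bf M}(\sfield))\cap\Gamma)\subset N_G({\bf M}(\sfield))\cap\Gamma\subset{\bf N}(\sfield)$. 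This is the step you need to supply; once you do, the rest of your argument goes through unchanged.
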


\begin{proof}
Recall that $L\Gamma$ is a closed subset of $G$
and for $\mu$-a.e.\ $x$ and $\mu_x^\pins$-a.e.\ $z$ we have  
\be\label{eq:supp-nu}
{\rm supp}(\nu_z)=gL\Gamma/\Gamma
\ee
for some $g\in G.$ We note that the element $g$ is not well defined, however, the set $gL\Gamma$
is well defined. This, in view of (B-2), determines the set $g{\bf M}(\sfield)\Gamma$
as the smallest set of the form ${\bf R}(\sfield)\Gamma$ where $\bf R$ is an $\sfield$-subvariety
so that $\nu_z({\bf R}(\sfield)\Gamma/\Gamma)>0,$ see~\cite[Thm.~6.9]{MS-SL2} also the original~\cite[Prop.~3.2]{MarTom}.
Let now $g,g'\in G$ be such that $g{\bf M}(\sfield)\Gamma=g'{\bf M}(\sfield)\Gamma.$
Then  
\[
{\bf M}(\sfield)\subset\bigcup_{\gamma} g^{-1}g'{\bf M}(\sfield)\gamma.
\]
Hence, by Baire category theorem, there is some $\gamma_0$ so that
${\bf M}(\sfield)\cap g^{-1}g'{\bf M}(\sfield)\gamma_0$ is open in ${\bf M}(\sfield).$
Since ${\bf M}$ is Zariski connected, any open (in Hausdorff topology) subset of ${\bf M}(\sfield)$ is Zariski 
dense in ${\bf M},$~\cite[Ch.~1, Prop.\ 2.5.3]{Margulis-Book}. 
This and equality of the dimensions imply that ${\bf M}(\sfield)= g^{-1}g'{\bf M}(\sfield)\gamma_0.$
Therefore, $g^{-1}g'm_0\gamma_0=1$ for some $m_0\in{\bf M}(\sfield)$ and we get
\[
{\bf M}(\sfield)=g^{-1}g'{\bf M}(\sfield)\gamma_0=\gamma_0^{-1}{\bf M}(\sfield)\gamma_0.
\]
That is $\gamma_0\in N_G({\bf M}(\sfield))\cap\Gamma$ and 
\[
g^{-1}g'=\gamma_0^{-1}m_0^{-1}\in (N_G({\bf M}(\sfield))\cap\Gamma){\bf M}(\sfield).
\]
Hence, by~\eqref{eq:def-N0} and~\eqref{eq:N0-M0} we have
\be\label{eq:Borel-map}
g^{-1}g'\in(N_G({\bf M}(\sfield))\cap\Gamma){\bf M}(\sfield)\subset {\bf N}(\sfield).
\ee

Let $N={\bf N}(\sfield)$ and $G'={\bf G}'(\sfield)=\SL(d,\field).$
Then, by~\eqref{eq:Borel-map}, we get a Borel measurable map $f$ 
from $\Sfrak(\sfield,{\bf M},M^+(\la))$ to 
$G'/N=\SL(d,\field)/N$ defined by $f(x)=g_xN.$ 

The above discussion, in view of~\eqref{eq:E-A-equiv}, 
implies that $f$ is an $A$-equivariant Borel map, where the action of $A$ on $\SL(d,\field)/N$
is induced from the natural action of ${\mathcal R}_{\field/\sfield}({\bf A})$ on ${\bf G}'/{\bf N}.$  

Now by Lemma~\ref{lem:zd-measure1}, there exists some 
\[
g_0N\in{\rm Fix}_{A^{{\rm sp}}_{\sfield}}(\SL(d,\field)/N)
\] 
so that $f_*\mu$ is the $A$-invariant measure on the compact orbit $Ag_0N.$ 
Using the Birkhoff ergodic theorem for the action of $A$ on $X$ and the compactness of the orbit $Ag_0N,$ we can choose the representative $g_0\in\SL(d,k)$ so that $\overline{Ag_0\Gamma/\Gamma}={\rm supp}(\mu).$  
Let us recall that ${\rm Fix}_{A^{{\rm sp}}_{\sfield}}(\SL(d,\field)/N)=\{gN: g^{-1}A^{{\rm sp}}_{\sfield}g\subset N\}.$ In particular, $g_0$ satisfies 
\be\label{eq:A-N0}
g_0^{-1}A^{{\rm sp}}_{\sfield}g_0\subset N,
\ee
as we claimed.
\end{proof}

\subsection{The algebraic \texorpdfstring{$\gfield$-groups $\NM$ and $\bf H$}{K-groups F and H}}

While the groups $\bf M<\bf N$ are still somewhat mysterious at this stage, we can describe their $\field$-Zariski closure quite precisely.

Recall that $\Gamma\subset\SL(d,\field)$ is a lattice of inner type.
Hence, there exists a central simple algebra $B$ over $\gfield$ so that $\Gamma$
is commensurable with $\Lambda_B$, see~\S\ref{sec:innerforms}.
We define the shorthand $\Gamma_B:=\Gamma\cap\Lambda_B$.

\begin{lem}\label{lem:reductive}
With the notations as in Theorem~\ref{thm:measure-class}, let $\NM$ be a connected, noncommutative algebraic subgroup of $\SL_d$ so that $\NM (\field) \cap \Gamma$ is Zariski dense in $\NM$ and $A'= A   \cap g _ 0  \NM (\field) g _ 0^{-1}$ is cocompact in $A$ 
for some $g_0\in\SL(d,\field)$. 
Then $\NM$ is defined over $\gfield$ and we have the following.
\begin{enumerate}
\item $g_0^{-1}{\bf A}g_0 \subset {\NM}$,
\item ${\NM}$ has no $\tilde\gfield$-rational character for any purely
inseparable algebraic field extension
$\tilde\gfield$ of $\gfield,$
\item ${\NM}$ is a reductive $\gfield$-group,
\item ${\NM}(\field)\cap \Gamma$ is a lattice in ${\NM}(\field),$ and
\item the commutator group $[{\NM},{\NM}]$ is nontrivial, simply connected and
almost $\gfield$-simple.
\item Moreover, $[\NM,\NM](\field)\cong\prod_{i=1}^n\SL(d_0,\field)$ with~$d=nd_0$. In fact, apart from the order of the indices,  
the group~$g_0 [\NM,\NM]g_0^{-1}$ equals the subgroup consisting of~$n$ block matrices along the diagonal.
\end{enumerate}
\end{lem}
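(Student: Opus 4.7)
First I would show that $\NM$ is defined over $\gfield$. Replacing $\Gamma$ by the commensurable $\Gamma_B=\Gamma\cap\Lambda_B$ leaves $\NM(\field)\cap\Gamma_B$ Zariski dense in $\NM$, and since $\Gamma_B\subset\SL_{1,B}(\gfield)$ consists of $\gfield$-rational elements, every $\sigma\in\mathrm{Gal}(\bar\gfield/\gfield)$ fixes $\NM(\field)\cap\Gamma_B$ pointwise; hence $\sigma(\NM)\supset\NM(\field)\cap\Gamma_B$, which is Zariski dense in both $\NM$ and $\sigma(\NM)$, forcing $\sigma(\NM)=\NM$. For part (1), cocompactness of $A'$ in $A=\mathbf A(\field)$ forces the Zariski closure of $A'$ in $\mathbf A$ to be $\mathbf A$ itself, since for any proper $\field$-subtorus $\mathbf T\subsetneq\mathbf A$ the quotient $(\mathbf A/\mathbf T)(\field)$ is noncompact, contradicting the compactness of $A/\mathbf T(\field)$ (a quotient of $A/A'$). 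Thus $g_0^{-1}\mathbf A g_0\subset\NM$.

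For part (2), I would first reduce to $\gfield$-defined characters: a $\tilde\gfield$-character $\chi\colon\NM\to\mathbb G_m$ with $\tilde\gfield/\gfield$ purely inseparable of degree $p^n$ satisfies that $\chi^{p^n}$ is $\gfield$-defined, and on the smooth connected group $\NM$ the equation $\chi^{p^n}=1$ implies $\chi=1$ (since $\mu_{p^n}$ has trivial reduced identity component). For a $\gfield$-character $\chi$ and $\gamma\in\Gamma_B\cap\NM(\field)\subset\Lambda_B$: since $\NM$ is $\gfield$-defined we have $\gamma\in\NM(\gfield)$ and $\chi(\gamma)\in\gfield^\times$; the $w$-integrality of $\Lambda_B$ at every $w\neq v$ gives $\chi(\gamma)\in\Ocal_w^\times$, and the product formula on $\gfield^\times$ then forces $\chi(\gamma)\in\Ocal_v^\times$ as well. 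Hence $\chi(\gamma)$ lies in $\gfield^\times\cap\bigcap_w\Ocal_w^\times$, which is the multiplicative group of the constant subfield of $\gfield$ and therefore finite. So $\chi$ takes only finitely many values on the Zariski-dense subgroup $\Gamma_B\cap\NM(\field)$; connectedness of $\NM$ then forces $\chi(\NM)$ to be a finite connected subgroup of $\mathbb G_m$, hence trivial.

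For parts (3)--(6), by part (1) $g_0^{-1}\mathbf A g_0$ is a maximal $\field$-split torus in $\NM$, so over $\field$ the group $g_0\NM g_0^{-1}$ is determined by a closed subset $\Psi$ of the $A_{d-1}$ root system of $(\SL_d,\mathbf A)$. Reductivity (part (3)) amounts to $\Psi=-\Psi$; to rule out an asymmetric part (corresponding to a nontrivial unipotent radical) I would combine part (2) with function-field reduction theory (Harder, Behr), noting that a non-reductive $\NM$ would force its arithmetic subgroup $\NM(\field)\cap\Gamma_B$ to exhibit a covolume behavior incompatible with simultaneous Zariski density and absence of $\gfield$-characters. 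Once reductivity is established, part (4) follows from the Harder-Behr finite-covolume criterion applied to the reductive $\gfield$-group $\NM$ with no $\gfield$-characters. For parts (5)--(6), the classification of reductive $\field$-subgroups of $\SL_d$ containing the full diagonal torus gives $g_0[\NM,\NM]g_0^{-1}=\prod_{i=1}^k\SL(n_i)$ embedded block-diagonally for some partition $d=\sum n_i$, with each $\SL(n_i)$ automatically simply connected. The $\gfield$-structure of $\NM$ inherited from the embedding into $\SL_{1,B}$, together with the centrality and simplicity of $B$ as a $\gfield$-algebra, forces $\mathrm{Gal}(\bar\gfield/\gfield)$ to act transitively on the $k$ blocks: a non-transitive action would correspond to a nontrivial $\gfield$-product decomposition of $\NM$ incompatible with the simplicity of $B$. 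This yields $\gfield$-almost-simplicity of $[\NM,\NM]$ and the uniform block size $n_i=d_0=d/k$.

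The main obstacle I expect is part (3). Part (2) by itself constrains only the abelianization $\NM/[\NM,\NM]$ (a torus, where characters are detected) and does not directly control a possible unipotent radical of $\NM$, so ruling out a nontrivial unipotent radical requires the more delicate combination of Zariski density of $\Gamma_B\cap\NM(\field)$ with the $S$-arithmetic structure of $\Lambda_B$ and function-field reduction theory. A secondary subtlety lies in part (5): the $\gfield$-almost-simplicity of $[\NM,\NM]$ is not a consequence of Zariski density alone but genuinely uses the centrality and simplicity of the central simple algebra $B$.
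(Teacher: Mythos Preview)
Your arguments for the preliminary claim (defined over $\gfield$), part~(1), and part~(2) are essentially correct and close to the paper's, with one small imprecision: in your reduction of $\tilde\gfield$-characters to $\gfield$-characters you should take $p^n$ to be the \emph{exponent} of the purely inseparable extension rather than its degree, since $[\tilde\gfield:\gfield]=p^n$ need not imply $\tilde\gfield^{p^n}\subset\gfield$.

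There are, however, two genuine gaps.

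\textbf{Part~(3).} You correctly observe that over $\field$ the group $g_0\NM g_0^{-1}$ is determined by a closed root subset $\Psi$ and that reductivity means $\Psi=-\Psi$, but your proposed mechanism for ruling out $\Psi\neq-\Psi$ --- invoking reduction theory and ``covolume behavior'' --- is not an argument. Harder's theorem assumes reductivity; it cannot be run backwards to establish it. The paper's idea is concrete and you are missing it entirely: pass to a finite purely inseparable extension $\tilde\gfield$ over which $R_u(\NM)$ is defined and split, and consider the \emph{determinant of the adjoint action of $\NM$ on $\mathrm{Lie}(R_u(\NM))$}. This is a $\tilde\gfield$-character; if $R_u(\NM)\neq 1$ it is nontrivial because $g_0^{-1}\mathbf A g_0\subset\NM$ is a maximal torus of $\SL_d$ (one places $R_u(\NM)$ inside the unipotent radical of a parabolic and sees the torus acts with nonzero weight sum). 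This directly contradicts part~(2), which is precisely why part~(2) is stated for all purely inseparable extensions and not just for $\gfield$.

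\textbf{Parts~(5)--(6).} Your argument that ``simplicity of $B$'' forces Galois to act transitively on the blocks is wrong. The group $\NM$ is an arbitrary subgroup of $\SL_{1,B}$ satisfying the hypotheses; simplicity of $B$ constrains $\SL_{1,B}$ itself, not its subgroups. (In the split case $B=\mathrm{Mat}_d(\gfield)$ nothing about $B$ prevents a subgroup from being a $\gfield$-product.) The paper again uses part~(2): letting $W_1$ be an irreducible subspace for $[\NM,\NM]$ of dimension $\geq 2$ over a separable splitting field, the sum $\mathbf W$ of its Galois conjugates is an $\NM$-invariant subspace defined over $\gfield$. If $\mathbf W$ were proper, the determinant of $\NM$ acting on $\mathbf W$ would be a nontrivial $\gfield$-character (nontrivial because $\NM$ contains a maximal torus of $\SL_d$), contradicting~(2). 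Hence $\mathbf W$ is everything, which forces almost $\gfield$-simplicity and equal block sizes.

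In short, the single idea you are missing --- in both places --- is that part~(2) is not merely a statement about the abelianization but a tool: one manufactures characters as determinants of the $\NM$-action on suitable subspaces (the Lie algebra of the unipotent radical, or a proper invariant subspace of $\field^d$), and the presence of the full split torus makes these characters visibly nontrivial.
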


\begin{proof}  
Since $\Gamma_B$ is finite index in $\Gamma$
and $\NM$ is connected, we have $\NM(\field)\cap\Gamma_B$ is Zariski dense in $\NM$. 
This and the fact that $\Gamma_B\subset\Lambda_B$, imply that $\NM$ is defined over $\gfield,$~\cite[Lemma 11.2.4(ii)]{Sp-AlgGrBook}. 

Since $A/A'$ is compact and $\bf A$ is Zariski connected and $k$-split, 
we have that $A'$ is Zariski dense in ${\bf A}.$
Since also $g_0^{-1}A'g_0 \subset{\NM}(\field)$, we obtain $g_0^{-1}{\bf A}g_0 \subset{\NM}$ as $\field$-groups.

Let $\tilde\gfield$  
be a finite purely inseparable extension of $\gfield$.
For every place $w$ of $\field$ there exists a unique extension $\tilde w$ of $w$ to $\tilde K$.
{Recall that $k=K_v$ for a fixed place $v$ of $K$.}
Let 
\[
\tilde\Lambda_B=\{\gamma\in\SL_{1,B}(\tilde\gfield): \gamma\in\SL_{1,B}(\order_{\tilde w})\text{ for all $\tilde w\neq \tilde v$}\},
\]
see~\eqref{eq:Gamma-B}.

Let $\tilde{\mathcal O}$ be the ring of $\tilde v$-integers in $\tilde K$.
Suppose $\chi$ is an arbitrary $\tilde\gfield$-rational character of ${\NM}.$
Then there exists some $D\in\tilde\vintegers$, depending on $\chi$, so that
\[
\mathfrak B:=\chi(\Gamma_B\cap{{\NM}})\subset\chi(\tilde\Lambda_B\cap{{\NM}}(\tilde\gfield))\subset\tfrac{1}{D}\tilde\vintegers.
\]
In particular, there exists some $\ell_0\in\bbz$ so that for any place $\tilde w\neq\tilde v$ 
in $\tilde\gfield$ and any $r\in\mathfrak B$ we have $\tilde w(r)\geq \ell_0$.
Note, further, that $\mathfrak B$ is a multiplicative group, hence, 
$\tilde w(\mathfrak B)$ is a subgroup of $(\bbz,+)$. 
In consequence, we have $\tilde w(r)=0$ for any place $\tilde w\neq\tilde v$ in $\tilde\gfield$ and any $r\in\mathfrak B$. 
By the product formula we also get that
$\tilde v(r)=0$ for all $r\in\mathfrak B$. Therefore, $\mathfrak B$ is a finite group consisting of roots of unity. 
This implies that there is a finite index subgroup $\Gamma'\subset\Gamma_B\cap{{\NM}}$
so that $\chi(\Gamma')=1$. Since ${{\NM}}$ is connected and $\Gamma_B\cap{{\NM}}$
is Zariski dense in ${{\NM}}$, the group $\Gamma'$ is also Zariski dense in ${{\NM}}.$ 
This implies that $\chi$ is trivial on ${{\NM}}$ as claimed in (2).

We note that part (2) and~\cite[Thm.\ 1.3.6]{Conrad-Finiteness} imply part (4) directly. Below we give an argument using (3) which avoids the full force of~\cite[Thm.\ 1.3.6]{Conrad-Finiteness}. In particular, the classification of pseudo reductive groups in~\cite{CGP-PseudoRed} which is used to resolve the main difficulties in~\cite{Conrad-Finiteness} is not used in our proof of~(4). 

We now prove part (3). 
Let $\tilde\gfield$ be a finite purely inseparable extension of $\gfield$ so that $R_u({{\NM}})$
is defined and splits over $\tilde\gfield,$ see~\cite[18.4 and 15.5]{Borel-AlgGrBook}. 
 Restricting the adjoint representation of~${\NM}$ to the Lie algebra of~$R_u({{\NM}})$
and taking the determinant we obtain a~$\tilde\gfield$-character. {}{
{}{We claim that if $R_u({}{\NM})$ is nontrivial, this character is also nontrivial.  In view of this claim, (3) follows from~(2).

We now show the claim. Recall that~$R_u({}{\NM})$ is a $\tilde\gfield$-split unipotent subgroup.
Since $\SL_d$ is simply connected, we get from~\cite{Gille}, see also~\cite{BoTi-Unip}, 
that there exists a $\tilde\gfield$-parabolic subgroup $\bf P$ of 
$\SL_d$ so that $R_u({}{\NM})\subset R_u({\bf P})$ and $N_{\SL_d}(R_u({}{\NM}))\subset {\bf P}$. The claim now follows; 
indeed $g_0^{-1}{\bf A}g_0 \subset{{\NM}}\subset N_{\SL_d}(R_u({}{\NM}))$ and~$g_0^{-1}{\bf A}g_0 $} is a maximal torus 
which is $\field$-split and hence also $\tilde\gfield_{\tilde v}$-split.} 

Part (4) follows from (2), (3), and~\cite{Harder-Tamagawa}. Note that the absence of
a unipotent radical (defined over~$\field$ or not) makes the necessary arguments
in our case much simpler.

For the rest of the argument we fix a maximal $\gfield$-torus {}{$\bf T$} in
${{\NM}}$ which is $\field$-split, see~\cite[Cor.~A.2.6]{CGP-PseudoRed}. 
Note that by~\cite[Thm.~C.2.3]{CGP-PseudoRed},
there is some $g\in{{\NM}}(\field)$ so that 
\[
g{\bf T}g^{-1}=g_0^{-1}{\bf A}g_0 .
\]

We now establish part~(5).  
First note that ${{\NM}}$ is not commutative so $[\NM,\NM]$ is nontrivial. 
Let~$K'$ be a separable field extension of~$K$ such that~$\bf T$
splits over~$K'$. Therefore, there exists some~$g_1\in \SL_d(K')$ 
so that~$g_1{\bf T}g_1^{-1}$ is 
the full diagonal subgroup of~$\SL_d$.
Moreover, let ${\bf T}_0\subset{\bf T}$ be the central torus of~$\NM$.
Then  
\[
g_1[{{\NM}},{{\NM}}]g_1^{-1}
 \subset g_1[{\bf Z}_{\SL_d}({\bf T}_0),{\bf Z}_{\SL_d}({\bf T}_0)]g_1^{-1}=\prod_i\SL_{d_i}
\]
for some integers~$d_1,d_2,\ldots$ (that depend on the subgroup~$g_1{\bf T}_0g_1^{-1}$).
Since ${\bf T}\subset {{\NM}}$ has absolute rank $d-1$, the rank of $[{{\NM}},{{\NM}}]$ equals $d-1-\dim({\bf T}_0).$
Moreover, the torus ${\bf T}_0$ is central in ${\bf Z}_{\SL_d}({\bf T}_0)$, hence we have 
\[
d-1-\dim({\bf T}_0)\geq {\rm rank}([{\bf Z}_{\SL_d}({\bf T}_0),{\bf Z}_{\SL_d}({\bf T}_0)])=\sum_i (d_i-1).
\]
Together with the above inclusion, we thus get that $d-1-\dim({\bf T}_0)=\sum_i (d_i-1)$.
Since $[{{\NM}},{{\NM}}]$ is semisimple 
 and $\prod_i\SL_{d_i}$ has no proper semisimple
subgroup of the same rank, we get $g_1[{{\NM}},{{\NM}}]g_1^{-1}=\prod_i\SL_{d_i}.$
Let~$W_1,\ldots$ be the 
various irreducible subspaces for the action of~$[{{\NM}},{{\NM}}]$ on
the $d$-dimensional vector space 
that are defined over~$K'$ and correspond to the various blocks 
of~$g_1[{{\NM}},{{\NM}}]g_1^{-1}$. As~${\NM}$ is nonabelian at least
one of the subspace, say~$W_1$, has dimension $\geq 2$.
Let~${\bf W}$ be the sum of~$W_1$ and all its Galois images. 
Then $\bf W$ is invariant under~${\NM}$ and is defined over~$K$ -- recall that $K'/K$ is separable.
Since~${\NM}$ has no~$K$-rational characters, we see that~$\bf W$ 
has full dimension. {}{Otherwise the determinant of the restriction 
of~${\NM}$ to~$\bf W$ gives a $K$-character which is  nontrivial  since~$\bf T$ is a maximal torus -- 
indeed over $\gfield'$ we can conjugate $\bf T$ to $\bf A$ the diagonal subgroup, now any subspace of the standard $d$-dimensional representation of $\SL_d$ that is invariant under $\bf A$ and whose weights sum to zero is trivial.} 
This implies that~$[{{\NM}},{{\NM}}]$ is semisimple and almost $K$-simple. In particular, we obtain $d_i=d_j$ for all $i,j$
which gives part~(6).
\end{proof} 

Define 
\be\label{eq:def-NM}
\hat\NMM:=\text{the Zariski closure of ${\bf N}(\sfield)\cap\Gamma$ in $\SL_d$.}
\ee

In particular, $\hat\NMM$ is {}{a smooth group} defined over $\field,$ see~\cite[Lemma 11.2.4(ii)]{Sp-AlgGrBook}.
Put $\NM=\hat\NMM^\circ$, the connected component of the identity in $\hat\NM$. 

Since $[\Gamma:\Gamma_B]<\infty$, we have that $\NM$ coincides with the connected component
of the identity in $\hat\NM_B:=$ the Zariski closure of ${\bf N}(\sfield)\cap\Gamma_B$ in $\SL_d$.  
Now $\hat\NMM_B$ is {}{a smooth group} defined over $\gfield$, therefore,
$\NMM$ is also a smooth group defined over $\gfield$ and hence over $\field$.

\begin{lem}\label{lem:NM-is-NM}\leavevmode
\begin{enumerate}
\item ${\bf N}(\sfield)\subset\hat\NMM(\field)$ and hence ${\bf N}(\sfield)$ is Zariski dense in $\hat\NMM,$ 
\item $\NMM$ satisfies the conditions in Lemma~\ref{lem:reductive}.
\end{enumerate}
\end{lem}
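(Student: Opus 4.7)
The plan is to bridge between the $\sfield$-structure of ${\bf N}\subset{\bf G}'=\mathcal R_{\field/\sfield}\SL_d$ and the $\field$-structure of $\hat\NMM\subset\SL_d$ via the Weil restriction functor, using the standard identity $(\mathcal R_{\field/\sfield}{\bf H})(\sfield)={\bf H}(\field)$ for any $\field$-subgroup ${\bf H}$ of $\SL_d$. For part~(1) I would consider the closed $\sfield$-subscheme ${\bf N}\cap\mathcal R_{\field/\sfield}\hat\NMM$ of ${\bf N}$, whose set of $\sfield$-points is ${\bf N}(\sfield)\cap\hat\NMM(\field)$. By the very definition of $\hat\NMM$ in~\eqref{eq:def-NM}, this intersection contains ${\bf N}(\sfield)\cap\Gamma$, which in turn contains the defining set $N_{G'}({\bf M}(\sfield))\cap\Gamma$ of ${\bf N}$ from~\eqref{eq:def-N0} and is therefore $\sfield$-Zariski dense in ${\bf N}$. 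Hence ${\bf N}\cap\mathcal R_{\field/\sfield}\hat\NMM={\bf N}$, and taking $\sfield$-points yields ${\bf N}(\sfield)\subset\hat\NMM(\field)$; the claimed Zariski density of ${\bf N}(\sfield)$ in $\hat\NMM$ then follows trivially since ${\bf N}(\sfield)\supset{\bf N}(\sfield)\cap\Gamma$.

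For part~(2) I would verify each hypothesis of Lemma~\ref{lem:reductive} for $\NMM=\hat\NMM^\circ$. Connectedness is automatic. For Zariski density of $\NMM(\field)\cap\Gamma$ in $\NMM$, the group $\hat\NMM(\field)/\NMM(\field)$ injects into the finite component group $(\hat\NMM/\NMM)(\overline\field)$, so $S':=({\bf N}(\sfield)\cap\Gamma)\cap\NMM(\field)$ has finite index in ${\bf N}(\sfield)\cap\Gamma$; writing ${\bf N}(\sfield)\cap\Gamma=\bigsqcup_i S'g_i$ and noting $\bigcup_i\overline{S'}g_i=\hat\NMM$, a dimension count combined with $\overline{S'}\subset\NMM$ and the irreducibility of $\NMM$ forces $\overline{S'}=\NMM$. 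For cocompactness of $A\cap g_0\NMM(\field)g_0^{-1}$ in $A$, Lemma~\ref{applyingAA2} together with part~(1) gives $A_\sfield^{\rm sp}\subset g_0{\bf N}(\sfield)g_0^{-1}\subset g_0\hat\NMM(\field)g_0^{-1}$; the group $A_\sfield^{\rm sp}$ is cocompact in $A$ because the quotient of $\sfield$-tori $\mathcal R_{\field/\sfield}{\bf A}/{\bf A}_\sfield^{\rm sp}$ is $\sfield$-anisotropic and hence has compact $\sfield$-points, so $A\cap g_0\hat\NMM(\field)g_0^{-1}$ is cocompact in $A$, and its finite-index subgroup $A\cap g_0\NMM(\field)g_0^{-1}$ inherits this cocompactness. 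For non-commutativity of $\NMM$, applying the argument of part~(1) to ${\bf M}\subset{\bf N}$ gives ${\bf M}\subset\mathcal R_{\field/\sfield}\hat\NMM$; connectedness of ${\bf M}$ then forces ${\bf M}\subset\mathcal R_{\field/\sfield}\NMM$, yielding $M^+(\la)\subset{\bf M}(\sfield)\subset\NMM(\field)$, and $M^+(\la)=\langle W_{\bf M}^+(\la),W_{\bf M}^-(\la)\rangle$ is non-abelian since $\la$ is noncentral.

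The main technical subtlety lies in this last step: verifying that $\mathcal R_{\field/\sfield}\NMM$ is the identity component of $\mathcal R_{\field/\sfield}\hat\NMM$ despite the extension $\field/\sfield$ possibly being inseparable. I would handle this by checking directly that Weil restriction of a geometrically connected smooth $\field$-group along a finite extension is geometrically connected (via the description of $(\mathcal R_{\field/\sfield}\NMM)\times_\sfield\overline\sfield$ as a finite product of thickened copies of $\NMM$ with smooth connected fibers), combined with the fact that $\mathcal R_{\field/\sfield}\NMM$ is a closed $\sfield$-subgroup of $\mathcal R_{\field/\sfield}\hat\NMM$ whose quotient is finite because it embeds into the finite $\sfield$-scheme $\mathcal R_{\field/\sfield}(\hat\NMM/\NMM)$.
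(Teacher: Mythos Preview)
Your argument for part~(1) is essentially the paper's argument, phrased via the intersection scheme ${\bf N}\cap\mathcal R_{\field/\sfield}\hat\NMM$ rather than a direct inclusion; both amount to observing that ${\bf N}(\sfield)\cap\Gamma\subset\hat\NMM(\field)=\mathcal R_{\field/\sfield}(\hat\NMM)(\sfield)$ and then passing to the $\sfield$-Zariski closure. For part~(2), connectedness, Zariski density of $\NMM(\field)\cap\Gamma$, and cocompactness of $A\cap g_0\NMM(\field)g_0^{-1}$ are handled the same way as in the paper.

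The only place your route diverges is noncommutativity. There is a gap in your claim that $M^+(\la)=\langle W_{\bf M}^+(\la),W_{\bf M}^-(\la)\rangle$ is nonabelian merely because $\la$ is noncentral: take for instance ${\bf M}={\bf G}_a^2\rtimes{\bf G}_m$ with ${\bf G}_m$ acting by weights $\pm1$ and $\la$ the inclusion of ${\bf G}_m$; then $W^\pm$ are the two copies of ${\bf G}_a$, they commute, and $M^+(\la)\cong{\bf G}_a^2$ is abelian. In the present situation the gap is harmless, since Theorem~B (as recorded in Lemma~\ref{applyingAA}) gives that $M^+(\la)$ contains a conjugate of $E_x\cong\SL(2,\field')$, which is nonabelian; you should invoke this. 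The paper takes the more direct route: from part~(1) one has $E_x\subset{\bf M}(\sfield)\subset{\bf N}(\sfield)\subset\hat\NMM(\field)$, and since the $\field$-Zariski closure of $E_x$ is the connected group $\varphi_\alpha(\SL_2)$, it lands in $\NMM=\hat\NMM^\circ$, so $\NMM$ is noncommutative. This avoids needing to prove here that $\mathcal R_{\field/\sfield}\NMM$ is the identity component of $\mathcal R_{\field/\sfield}\hat\NMM$ --- a true fact (indeed the paper itself uses it later, citing \cite[Prop.~A.5.9]{CGP-PseudoRed}), but unnecessary for this lemma.
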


\begin{proof}
For part (1) we note first that the definition~\eqref{eq:def-NM} implies that 
\[
{\bf N}(\sfield)\cap\Gamma\subset\hat\NMM(\field)=\mathcal R_{\field/\sfield}(\hat\NMM)(\sfield)\subset{\bf G}'(\sfield).
\]
Therefore, by~\eqref{eq:def-N0} we have ${\bf N}\subset\mathcal R_{\field/\sfield}(\hat\NMM).$
Taking $\sfield$-points we get part (1).
	
We now show that part (1) implies (2). To see this we first note that $\NMM$ is connected by definition. 
Next recall that by~\eqref{eq:N0-M0}
we have {}{$E_x\subset{\bf M}(\sfield)\subset{\bf N}(\sfield)$ for $\mu$-a.e.\ $x$.} In view of the definition of $E_x$, see~\eqref{eq:def-Ex}, and the fact that $\NMM$ is finite index in $\hat\NM$ we get that $\NMM$ is noncommutative. 
Moreover, note that $\NM$ is Zariski open and closed in $\hat\NMM$.  
By the definition of $\hat\NMM$ in \eqref{eq:def-NM} we have that $\hat\NMM(\field)\cap\Gamma$ is Zariski
dense in $\hat\NMM$. Together it follows that $\NM(\field)\cap\Gamma$ 
is Zariski dense in $\NMM$.
Finally by~\eqref{eq:A-N0} we have $g_0^{-1}A^{{\rm sp}}_{\sfield}g_0\subset {\bf N}(\sfield)\subset\hat\NMM(\field)$. 
Since $A^{{\rm sp}}_{\sfield}$ is cocompact in $A$, we obtain the last assumption
namely that $A   \cap g _ 0  \NM (\field) g _ 0^{-1}$ cocompact in $A$. 
\end{proof}

Put 
\be\label{eq:def-M-H}
\hat{\bf H}:=\text{the Zariski closure of ${\bf M}(\sfield)\cap\Gamma_B$ 
in $\SL_d$.}
\ee
Note that $\hat{\bf H}$ is {}{a smooth group} defined over $\gfield$ and hence over $\field.$
Put ${\bf H}:=\hat{\bf H}^\circ$, the connected component of the identity in $\hat{\bf H}$; then 
$\bf H$ is also a smooth group defined over $\gfield$ and hence over $\field$.

\begin{lem}\label{lem:H-is-H}\leavevmode
\begin{enumerate}
\item ${\bf M}(\sfield)\subset {\bf H}(\field),$ and hence ${\bf M}(\sfield)$ is Zariski dense in ${\bf H},$ 
\item $[\NMM,\NMM]={\bf H},$
\item ${\bf H}$ is almost $K$-simple.
\item ${\bf H}(\field)\cong\prod\SL(d_0,\field)$ where $d=nd_0.$
\end{enumerate}
\end{lem}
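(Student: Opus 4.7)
My plan is to prove (1)--(4) in order, transplanting the arguments of Lemma~\ref{lem:NM-is-NM} and Lemma~\ref{lem:reductive} from $\mathbf{F}$ to $\mathbf{H}$, with the main new inputs being a commensurability argument for normality of $\mathbf{H}$ in $\mathbf{F}$ and a character-theoretic argument to kill the central torus of $\mathbf{H}$.

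Part (1) will mirror Lemma~\ref{lem:NM-is-NM}(1): by (B-2) together with $[\Gamma:\Gamma_B]<\infty$ and connectedness of $\mathbf{M}$, the subgroup $\mathbf{M}(\sfield)\cap\Gamma_B$ is already Zariski dense in $\mathbf{M}$. Its containment in $\hat{\mathbf{H}}(\field)=\mathcal{R}_{\field/\sfield}(\hat{\mathbf{H}})(\sfield)$ yields $\mathbf{M}\subset\mathcal{R}_{\field/\sfield}(\hat{\mathbf{H}})$, whence $\mathbf{M}\subset\mathcal{R}_{\field/\sfield}(\mathbf{H})$ by connectedness; taking $\sfield$-points gives $\mathbf{M}(\sfield)\subset\mathbf{H}(\field)$, and the Zariski density of $\mathbf{M}(\sfield)$ in $\mathbf{H}$ follows.

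For (2), I will first observe that $\mathbf{H}\subset\mathbf{F}$, which is immediate from $\mathbf{M}\subset\mathbf{N}$ via~\eqref{eq:N0-M0}. Next I will show $\mathbf{H}$ is normal in $\mathbf{F}$: for $\gamma\in\mathbf{N}(\sfield)\cap\Gamma$, the set $\gamma(\mathbf{M}(\sfield)\cap\Gamma_B)\gamma^{-1}=\mathbf{M}(\sfield)\cap\gamma\Gamma_B\gamma^{-1}$ is commensurable with $\mathbf{M}(\sfield)\cap\Gamma_B$ (because $\gamma\in\Gamma$ lies in the commensurator of $\Lambda_B$), so the two Zariski closures share the same identity component and thus $\gamma\mathbf{H}\gamma^{-1}=\mathbf{H}$; Zariski density of $\mathbf{N}(\sfield)\cap\Gamma$ in $\hat{\mathbf{F}}$ extends normalization to all of $\mathbf{F}$. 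By Lemma~\ref{lem:NM-is-NM}(2) and Lemma~\ref{lem:reductive}, $\mathbf{F}$ is reductive with $[\mathbf{F},\mathbf{F}]$ almost $K$-simple and $[\mathbf{F},\mathbf{F}](\field)\cong\prod_{i=1}^{n}\SL(d_0,\field)$. Any connected normal subgroup of $\mathbf{F}$ has the form $\mathbf{T}'\cdot\mathbf{S}'$ with $\mathbf{T}'\subset Z(\mathbf{F})^\circ$ and $\mathbf{S}'\in\{\{e\},[\mathbf{F},\mathbf{F}]\}$. Since $\mathbf{H}(\field)\supset\mathbf{M}(\sfield)\supset E_x$ for $\mu$-a.e.\ $x$, and each $E_x$ contains nontrivial unipotent elements by Lemma~\ref{lem:E-generated}(1), we must have $\mathbf{S}'=[\mathbf{F},\mathbf{F}]$, hence $\mathbf{H}\supset[\mathbf{F},\mathbf{F}]$.

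For the reverse inclusion $\mathbf{H}\subset[\mathbf{F},\mathbf{F}]$, I will apply the proof of Lemma~\ref{lem:reductive}(2) to $\mathbf{H}$ itself (the argument relies only on Zariski density of $\mathbf{H}(\field)\cap\Gamma_B$ in $\mathbf{H}$ and the integrality of $\Lambda_B$, not on any cocompactness), concluding that $\mathbf{H}$ has no $\tilde K$-rational character for any purely inseparable $\tilde K/K$. Writing $\mathbf{H}=[\mathbf{F},\mathbf{F}]\cdot\mathbf{T}'$, the torus $\mathbf{T}'\subset Z(\mathbf{F})^\circ$ is $\field$-split because $Z(\mathbf{F})^\circ$ is contained in the $\field$-split maximal torus $g_0^{-1}\mathbf{A} g_0$ of $\mathbf{F}$. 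Combining the no-character conclusion on $\mathbf{H}/[\mathbf{F},\mathbf{F}]$ with this $\field$-splitness and the fact that the image of $\mathbf{H}(\field)\cap\Gamma_B$ in $(\mathbf{H}/[\mathbf{F},\mathbf{F}])(\field)$ must be Zariski dense while having character-bounded image forces $\mathbf{T}'$ to be trivial. This yields (2), after which (3) and (4) follow immediately from Lemma~\ref{lem:reductive}(5) and (6). The main technical obstacle will be establishing the triviality of $\mathbf{T}'$: the no-character argument alone only gives $K$-anisotropy, and closing the gap requires the $\field$-splitness inherited from $\mathbf{F}$'s block structure together with the arithmetic lattice information.
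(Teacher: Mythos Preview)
Your treatment of part~(1) is the same as the paper's, and your derivations of (3) and (4) from (2) are fine. For the normality of $\mathbf{H}$ in $\mathbf{F}$ your commensurability detour is correct but unnecessary: by~\eqref{eq:N0-M0} every $g\in\mathbf{N}(\sfield)$ already normalizes $\mathbf{M}(\sfield)$, and since $\mathbf{M}(\sfield)$ is Zariski dense in $\mathbf{H}$ by your part~(1), it follows at once that $\mathbf{N}(\sfield)$ (hence $\hat\NMM$ by Lemma~\ref{lem:NM-is-NM}(1)) normalizes $\mathbf{H}$. Your argument for $[\NMM,\NMM]\subset\mathbf{H}$ is essentially the paper's.

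The genuine gap is in the reverse inclusion $\mathbf{H}\subset[\NMM,\NMM]$. You correctly observe that $\mathbf{H}=[\NMM,\NMM]\cdot\mathbf{T}'$ with $\mathbf{T}'$ a $K$-subtorus of $Z(\NMM)^\circ$, and that the character argument of Lemma~\ref{lem:reductive}(2) transplants to $\mathbf{H}$ to show $\mathbf{T}'$ has no $\tilde K$-characters for any purely inseparable $\tilde K/K$, i.e.\ $\mathbf{T}'$ is $K$-anisotropic. You also correctly note $\mathbf{T}'\subset g_0^{-1}\mathbf{A}g_0$ is $\field$-split. But these two facts are \emph{not} in tension: over a global function field $K$ there are plenty of $K$-anisotropic tori that split at a given place $v$ (e.g.\ norm-one tori $\mathrm{R}^{(1)}_{K'/K}\mathbb{G}_m$ for a separable quadratic $K'/K$ split at $v$). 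Your proposed closing step, that the image of $\mathbf{H}(\field)\cap\Gamma_B$ in $(\mathbf{H}/[\NMM,\NMM])(\field)$ is ``character-bounded'', only uses $K$-characters (that is all the arithmetic of $\Lambda_B$ controls), and a $K$-anisotropic torus has none; the $\field$-characters coming from $\field$-splitness carry no integrality information about $\Gamma_B$. So nothing forces $\mathbf{T}'=\{e\}$ by your route.

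The paper closes this gap dynamically rather than algebraically. Since $E_x$ is generated by unipotents, $E_x\subset[\NMM,\NMM](\field)$; by Lemma~\ref{lem:reductive} the orbits $g_0g[\NMM,\NMM](\field)\Gamma/\Gamma$ are closed with finite invariant measure, and $\mu$ is supported in $g_0\NMM(\field)\Gamma/\Gamma$. Hence every $E_x$-ergodic component $\nu_z$ of $\mu_x^{\pins}$ lives on a single such $[\NMM,\NMM]$-orbit. Comparing this with the description of $\nu_z$ from Theorem~B (Lemma~\ref{applyingAA}) as $g_zL\Gamma/\Gamma$ and using the minimality property recorded in the proof of Lemma~\ref{applyingAA2} forces $\mathbf{M}(\sfield)\subset[\NMM,\NMM](\field)$, whence $\mathbf{H}\subset[\NMM,\NMM]$ by part~(1). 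The measure-theoretic input here is essential; a purely group-theoretic argument along your lines does not see the constraint that $\mathbf{M}(\sfield)$ arises as (a piece of) the stabilizer of an ergodic component sitting inside a $[\NMM,\NMM]$-orbit.
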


\begin{proof}
Recall from (B-2) that ${\bf M}\subset \rcal_{\field/\sfield}(\SL_d)$ is connected and that ${\bf M}(\sfield)\cap \Gamma$ is Zariski dense in ${\bf M}$. Since $\bf M$ is connected and $[\Gamma:\Gamma_B]<\infty$, we get that 
\be\label{eq:M-Gamma-GammaB}
\text{${\bf M}(\sfield)\cap \Gamma_B$ is Zariski dense in $\bf M$.}
\ee 
Therefore, as in the proof of Lemma~\ref{lem:NM-is-NM}(1), we have
\[
{\bf M}(\sfield)\cap\Gamma_B\subset\hat{\bf H}(\field)=\mathcal R_{\field/\sfield}(\hat{\bf H})(\sfield)\subset{\bf G}'(\sfield).
\]
This in view of our preceding discussion implies that ${\bf M}\subset \mathcal R_{\field/\sfield}(\hat{\bf H})$.
Since $\bf M$ is connected and $\mathcal R_{\field/\sfield}({\bf H})$ is a finite index subgroup\footnote{Indeed in view of the smoothness of $\H$ it follows from~\cite[Prop.~A.5.9]{CGP-PseudoRed} that $\mathcal R_{\field/\sfield}({\bf H})$ is connected.} of $\mathcal R_{\field/\sfield}(\hat{\bf H})$, we get that ${\bf M}\subset \mathcal R_{\field/\sfield}({\bf H})$. Taking $\sfield$-points part~(1) follows.

By~\eqref{eq:N0-M0} we have $g{\bf M}(\sfield)g^{-1}={\bf M}(\sfield)$ for all
$g\in {\bf N}(\sfield)$. Hence by part (1) and Lemma~\ref{lem:NM-is-NM}(1)
we obtain that ${\bf H}\subset\NM$ is a normal subgroup of $\hat\NMM$ and hence of $\NM$.
Moreover, since {}{$E_x\subset{\bf M}(\sfield)$ for $\mu$-a.e.\ $x$,} we have ${\bf H}$ is non-commutative. 
As was mentioned above, ${\bf H}$ is a $\gfield$-subgroup of $\NMM$. Hence Lemma~\ref{lem:NM-is-NM}(2)
and Lemma~\ref{lem:reductive}(5) imply 
\be\label{eq:comm-inc}
[\NMM,\NMM]\subset{\bf H}.
\ee
We now show the other inclusion. In view of Lemma~\ref{lem:NM-is-NM} and Lemma~\ref{lem:reductive} we have
$g_0{\bf R}(\field)\Gamma/\Gamma$ is a closed orbit with finite $g_0{\bf R}(\field)g_0^{-1}$-invariant measure for 
${\bf R}=\NMM,[\NMM,\NMM].$ 
Moreover, by the choice of $g_0$ in Lemma~\ref{applyingAA2} and Lemma \ref{lem:reductive}(1) we have 
\be\label{eq:mu-NM}
\mbox{$\mu$ is supported on $\overline{Ag_0\Gamma/\Gamma}\subset g_0{\NMM}(\field)\Gamma/\Gamma.$}
\ee 
Since $E_x\subset[\NMM,\NMM]$ and any $E_x$-ergodic component of $\mu$ is supported on 
a homogeneous space $g_0g[\NMM,\NMM](\field)\Gamma/\Gamma,$ for some $g\in\NMM(\field)$ we get that ${\bf M}(\sfield)\subset[\NMM,\NMM](\field).$
This completes the proof of part (2) thanks to part (1) and~\eqref{eq:comm-inc}.

The fact that ${\bf H}$ satisfies parts (3) and (4)  now follow from part (2), Lemma~\ref{lem:NM-is-NM}, and Lemma~\ref{lem:reductive}.
\end{proof}

Let us put $A_{\bf H}=A\cap g_0{\bf H}(\field)g_0^{-1}.$
In view of Lemmas~\ref{lem:NM-is-NM} and~\ref{lem:H-is-H} we see that $g_0{\bf H}(\field)g_0^{-1}$ has a block structure.
Put $C_{{\bf H}}=g_0{}{Z({\bf F}(\field))}g_0^{-1}.$ Then $A'':=A_{\bf H}C_{\bf H}$ is a cocompact 
subgroup of $A.$ We have the following.

\begin{lem}\label{lem:center-gone}
We can decompose the measure as follows 
\[
\mu=\int_{A/{\rm Stab}(\eta)}a_{*}\eta\operatorname{d}\!a
\]
where $\operatorname{d}\!a$ is the Haar measure on the compact group $A/{\rm Stab}(\eta),$ and $\eta$ is an $A_{\bf H}$-ergodic component of $\mu$ which is supported on
$g_0{\bf H}(\field)\Gamma/\Gamma$. 
Moreover we have
\[
\eta=\int\nu_z\operatorname{d}\!\eta(z). 
\]
\end{lem}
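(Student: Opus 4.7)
The plan is to produce $\eta$ as an $A_{\bf H}$-ergodic component of $\mu$ supported on $g_0{\bf H}(\field)\Gamma/\Gamma$, to verify that $A''=A_{\bf H}C_{\bf H}\subset{\rm Stab}(\eta)$ so that $A/{\rm Stab}(\eta)$ is automatically compact, and then to read off both decompositions, the outer one from $A$-ergodicity of $\mu$ and the inner one from the $E_x$-ergodic decomposition already established in Lemma~\ref{applyingAA}.

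First, integrating \eqref{eq:pinsmain} against $\mu$ gives $\mu=\int\nu_z\operatorname{d}\!\mu(z)$, where each $\nu_z$ is a homogeneous probability measure on a closed $g_zL_zg_z^{-1}$-orbit with $L_z$ (and so $\mathbf M(\mathpzc s)$) conjugate to a subgroup of ${\bf H}(\field)$ by Lemma~\ref{lem:H-is-H}(1),(2). Combining this with \eqref{eq:mu-NM} and Lemma~\ref{lem:reductive}(6), the support of each $\nu_z$ lies inside a single coset of the form $g_0c{\bf H}(\field)\Gamma/\Gamma$, where $c$ ranges over representatives of the countable quotient $\NM(\field)/\bigl({\bf H}(\field)\cdot(\NM(\field)\cap\Gamma)\bigr)$. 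Let $\Xi\subset X$ denote the coset corresponding to $c=1$; define $\eta$ to be the conditional probability measure obtained from $\mu=\int\nu_z\operatorname{d}\!\mu(z)$ by restricting and normalizing to those $z$ for which $\supp\nu_z\subset\Xi$. By construction $\eta=\int\nu_z\operatorname{d}\!\eta(z)$ and $\supp\eta\subset\Xi$, which is the second claimed decomposition.

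Next I verify that ${\rm Stab}(\eta):=\{a\in A:a_*\eta=\eta\}$ contains $A''$. Containment of $A_{\bf H}$ is immediate: after conjugation by $g_0$, $A_{\bf H}$ lies in $\mathbf H(\field)$, hence (through Lemma~\ref{lem:H-is-H}(1) and the definition of $L$) inside each $g_0^{-1}g_zL_zg_z^{-1}g_0$ stabilizing the $\nu_z$'s that contribute to $\eta$. For $c\in C_{\bf H}$, write $c=g_0zg_0^{-1}$ with $z\in Z(\NM(\field))$; then $c$ commutes with $A_{\bf H}$ and acts on $\Xi$ by $g_0h\Gamma\mapsto g_0hz\Gamma$. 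Since $z\in Z(\NM(\field))$, this right-translation preserves the $\bigl({\bf H}(\field)\cdot(\NM(\field)\cap\Gamma)\bigr)$-coset, so $c$ preserves $\Xi$; using the $A$-equivariance \eqref{eq:E-A-equiv} of the $E_x$-ergodic decomposition and the fact that $c$ commutes with $A_{\bf H}$, the measure $c_*\eta$ is still an $A_{\bf H}$-invariant probability measure on $\Xi$ of the form $\int\nu_{z'}\operatorname{d}\!\eta(z)$ after a relabeling of $z$. Uniqueness of the ergodic decomposition and the compatibility between the $\mu_x^\pins$-conditionals and the $\nu_z$'s forces $c_*\eta=\eta$. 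Since $A''$ is cocompact in $A$ and $A''\subset{\rm Stab}(\eta)$, the quotient $A/{\rm Stab}(\eta)$ is compact.

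Finally, for the outer decomposition, $A$-invariance of $\mu$ together with $A_{\bf H}$-ergodicity of $\eta$ imply that $a_*\eta$ is again an $A_{\bf H}$-ergodic component for every $a\in A$, and $a\mapsto a_*\eta$ factors through $A/{\rm Stab}(\eta)$. The integral $\int_{A/{\rm Stab}(\eta)}a_*\eta\operatorname{d}\!a$ against normalized Haar measure on the compact quotient is then $A$-invariant, and by $A$-ergodicity of $\mu$ equals $\mu$, giving the first decomposition. The main obstacle is the verification that $C_{\bf H}\subset{\rm Stab}(\eta)$: elements of $C_{\bf H}$ act on $\Xi$ by right translation from $Z(\NM(\field))$ modulo $\Gamma\cap Z(\NM(\field))$, a potentially nontrivial action on the parameter space of the $\nu_z$'s that must be shown to be absorbed into the $z$-parametrization, which is where the bulk of the technical care is needed.
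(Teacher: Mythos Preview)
Your argument has a genuine gap at the step where you claim $A_{\bf H}\subset{\rm Stab}(\eta)$. You assert that $g_0^{-1}A_{\bf H}g_0$ lies inside each $g_0^{-1}g_zL_zg_z^{-1}g_0$, but this is false: $L$ is in general a proper subgroup of ${\bf H}(\field)$ (as Proposition~\ref{prop:L} later makes explicit, $L_B=\prod_i\varphi_i({\bf H}_i(\sfield_i))$ with $\sfield_i$ typically a proper subfield of $\field$), and there is no reason for the full diagonal torus $g_0^{-1}A_{\bf H}g_0$ to sit in a conjugate of it. The measures $\nu_z$ are simply not $A_{\bf H}$-invariant individually. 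As a consequence, your construction of $\eta$ by restricting the $\nu_z$-decomposition to $\Xi$ does not produce an $A_{\bf H}$-invariant measure by the mechanism you describe, and you also have not argued that it is $A_{\bf H}$-ergodic. You yourself flag the second gap: the verification that $C_{\bf H}\subset{\rm Stab}(\eta)$ is left as ``where the bulk of the technical care is needed'', and no argument is given.

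The paper circumvents both problems rather than confronting them. It never shows $C_{\bf H}\subset{\rm Stab}(\eta)$; instead it uses only the subgroup $C_{\bf H}\cap g_0\Gamma g_0^{-1}$, whose elements act \emph{trivially} on $\supp\mu\subset g_0\NM(\field)\Gamma/\Gamma$ (an element $g_0\gamma g_0^{-1}$ with $\gamma\in\Gamma$ central in $\NM(\field)$ fixes every point $g_0h\Gamma$). The needed compactness of $A/\bigl(A_{\bf H}(C_{\bf H}\cap g_0\Gamma g_0^{-1})\bigr)$ then comes from Lemma~\ref{lem:reductive}(4), which gives that $Z(\NM(\field))\Gamma/\Gamma$ is compact. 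With this in hand, $\eta$ is \emph{defined} as an $A_{\bf H}$-ergodic component of $\mu$, so $A_{\bf H}$-invariance and ergodicity are tautological, and the outer decomposition follows immediately.

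For the inner decomposition $\eta=\int\nu_z\operatorname{d}\!\eta(z)$, the paper again does not build $\eta$ out of the $\nu_z$; it observes that compactness of the quotient~\eqref{AA-compact} forces the $\sigma$-algebra $\mathcal{B}^{A_{\bf H}}$ of $A_{\bf H}$-invariant sets to lie inside the Pinsker $\sigma$-algebra $\pins$, so one may double-condition: $\mu_y^{\pins}=(\mu_x^{\mathcal{B}^{A_{\bf H}}})_y^{\pins}=\eta_y^\pins$ for the appropriate $x,y$. Integrating the decomposition $\mu_y^\pins=\int\nu_z\operatorname{d}\!\mu_y^\pins(z)$ from Lemma~\ref{applyingAA} over $\eta$ then gives the claim.
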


\begin{proof}
Recall from~\eqref{eq:mu-NM} that $\mu$ is supported on the closed orbit 
$g_0{\bf F}(\field)\Gamma/\Gamma.$
Hence, $C_{\bf H}\cap g_0\Gamma g_0^{-1}$ acts trivially on ${\rm supp}(\mu).$
Moreover, by Lemma~\ref{lem:NM-is-NM} and Lemma~\ref{lem:reductive}(4) we have that
$Z({\bf F}(\field))\Gamma/\Gamma$ is compact. This and the fact that $A/A''$ is compact implies that
\begin{equation}\label{AA-compact}
	A/A_{\bf H}(C_{\bf H}\cap g_0\Gamma g_0^{-1})
\end{equation}
is a compact group.
Therefore the $A_{\bf H}(C_{\bf H}\cap g_0\Gamma g_0^{-1})$-ergodic decomposition of $\mu$ 
can be written as 
\[
\int_{A/A_{\bf H}(C_{\bf H}\cap g_0\Gamma g_0^{-1})}a_{*}\eta\operatorname{d}\!a
\]
where $\eta$ is an $A_{\bf H}(C_{\bf H}\cap g_0\Gamma g_0^{-1})$-invariant measure on 
$g_0{\bf H}(\field)\Gamma/\Gamma.$ This implies the decomposition of~$\mu$
as in the lemma.

For the final claim we note that the above discussion also shows 
that~$\mathcal{B}^{A_{\bf H}}\subset\mathcal{P}$, 
where~$\mathcal{B}^{A_{\bf H}}$
is the~$\sigma$-algebra of~${A_{\bf H}}$-invariant sets.
Hence the conditional measures~$\mu_x^{\mathcal{P}}$
for the Pinsker~$\sigma$-algebra can be obtained by double conditioning, i.e.
\[
\mu_y^{\mathcal{P}}=\bigl(\mu_x^{\mathcal{B}^{A_{\bf H}}}\bigr)_y^{\mathcal{P}}
\]
for~$\mu$-a.e.~$x$ and~$\mu_x^{\mathcal{B}^{A_{\bf H}}}$-a.e.~$y$. 
Again because of compactness of~\eqref{AA-compact}
and the equivariance properties of
the conditional measures it suffices to consider one
of the conditional measure~$\eta=\mu_x^{\mathcal{B}^{A_{\bf H}}}$.
For the Pinsker conditional measure~$\eta_y^{\pins}$ we have considered
in~\eqref{eq:pinsmain} a decomposition into ergodic components
for the group~$E_y$. These ergodic components have been completely described
in Lemma~\ref{applyingAA}. The lemma follows by integration over~$\eta$.
\end{proof}

The following proposition describes the algebraic structure of the group $L$ in Lemma~\ref{applyingAA}. 
It turns out to be more convenient for us to explicate the structure of the finite index subgroup
\[
L_B:=\overline{M^+(\la)({\bf M}(\sfield)\cap\Gamma_B)}
\]
of $L$. Note that $L\Gamma/\Gamma=L_B\Gamma/\Gamma$.

\begin{prop}\label{prop:L}
Let $n$ be as in Lemma~\ref{lem:H-is-H}(4). Then there exist 
\begin{enumerate}
\item A collection $(\sfield_i:1\leq i\leq n)$ of closed (not necessarily distinct) subfields of $\field,$ 
\item For every $1\leq i\leq n$ a connected, simply connected, absolutely almost simple $\sfield_i$-group ${\bf H}_i$ and an isomorphism~$\varphi_i:{\bf H}_i\times_{\sfield_i}\field\rightarrow\SL_{d_0}$ (where~$\SL_{d_0}$ is considered as the~$i$th block subgroup corresponding to the indices~$(i-1)d_0+1,\ldots,id_0$)
\end{enumerate} 
so that $L_B=\prod_{i=1}^n\varphi_i({\bf H}_i(\sfield_i))\subset {\bf H}(\field)$ 
\end{prop}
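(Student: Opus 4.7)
To prove Proposition~\ref{prop:L}, my plan is to transfer the block decomposition of ${\bf H}$ to a corresponding product structure on $L_B$ and then apply Pink's structure theorem within each block.

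First I would exploit Lemma~\ref{lem:H-is-H}(4) to identify ${\bf H}(\field) \cong \prod_{i=1}^n G^{(i)}$, where each $G^{(i)} \cong \SL(d_0, \field)$ corresponds to the $i$-th diagonal block (recall from Lemma~\ref{lem:reductive}(6) that after conjugation by $g_0$ this becomes literally block-diagonal). By Lemma~\ref{lem:H-is-H}(1) we have $L_B \subset {\bf H}(\field)$, so $L_B$ sits inside the product. The horospherical subgroups $W_M^\pm(\lambda)$ are $\sfield$-algebraic, and since each root subgroup of ${\bf H}$ with respect to the maximal $\field$-split torus $g_0^{-1} A g_0 \subset {\bf H}$ lies entirely within a single block $G^{(i)}$, the cocharacter $\lambda$ decomposes and $W_M^\pm(\lambda)$ factors as a product over the blocks. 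Hence $M^+(\lambda)$, and with it $L_B$, respects the block decomposition.

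Second, I would apply Pink's theorem (a higher-rank analogue of Theorem~A.1, drawn directly from \cite[Thms.\ 0.2 and 7.2]{Pink-Compact}) to compact Zariski-dense subgroups of $\pi_i(L_B)$, where $\pi_i: {\bf H}(\field) \to G^{(i)}$ denotes the projection onto the $i$-th block. Zariski density of $\pi_i(L_B)$ in $G^{(i)}$ follows from Lemma~\ref{lem:H-is-H}(2), which guarantees ${\bf M}(\sfield) \cap \Gamma_B$ is Zariski dense in ${\bf H}$, together with the block structure of the latter. Unipotent generation comes from the $\pi_i(W_M^\pm(\lambda))$ factors of $L_B$. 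Pink's theorem then produces a closed subfield $\sfield_i \subset \field$, a simply connected absolutely almost simple $\sfield_i$-group ${\bf H}_i$, and a $\field$-isomorphism $\varphi_i: {\bf H}_i \times_{\sfield_i} \field \to \SL_{d_0}$ so that $\pi_i(L_B)$ is open in $\varphi_i({\bf H}_i(\sfield_i))$. A Kneser--Tits-type argument using \cite{Gille}, completely analogous to Lemma~\ref{lem:E-generated}, then upgrades openness to equality, since $\pi_i(L_B)$ is generated by unipotents which lie in the root subgroups of $\varphi_i({\bf H}_i)$.

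Third, I would verify that $L_B$ equals the full product $\prod_{i=1}^n \varphi_i({\bf H}_i(\sfield_i))$ rather than merely being contained in it. The inclusion ``$\subset$'' is automatic from the first two steps. For the reverse, $L_B$ contains $M^+(\lambda)$ which factors blockwise, and in each block the closure of the corresponding factor of $M^+(\lambda)$ together with the discrete group $\pi_i({\bf M}(\sfield) \cap \Gamma_B)$ already generates $\varphi_i({\bf H}_i(\sfield_i))$ (by the same Pink-based argument). The main obstacle will be reconciling the apparent mismatch between the single field $\sfield = (k')^q$ appearing in Theorem~B and the possibly distinct subfields $\sfield_i$ allowed in the conclusion. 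Concretely, using Lemma~\ref{lem:H-is-H}(3)--(4) we may write ${\bf H} = \mathcal{R}_{\gfield'/\gfield}({\bf H}')$ for a degree $n$ separable extension $\gfield'/\gfield$ and a $\gfield'$-simple group ${\bf H}'$; then $\mathcal{R}_{\field/\sfield}({\bf H})$ splits as a product over the places of $\gfield' \otimes_\gfield \sfield$ above $\sfield$, and distinct $\sfield_i$ can emerge from the different diagonal embeddings of ${\bf M}$ into this product, completing the description.
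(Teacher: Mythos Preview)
Your first step contains a genuine gap: the claim that $M^+(\lambda)$ ``respects the block decomposition'' of ${\bf H}(\field)\cong\prod_{i=1}^n\SL(d_0,\field)$ does not follow from the stated reasons. The cocharacter $\lambda$ is an $\sfield$-morphism ${\bf G}_m\to{\bf M}$, and the horospherical groups $W_M^\pm(\lambda)$ are $\sfield$-subgroups of ${\bf M}$, not of ${\bf H}$. That the root subgroups of ${\bf H}$ relative to $g_0^{-1}Ag_0$ lie in single blocks says nothing about how ${\bf M}(\sfield)$ sits inside $\prod_i\SL(d_0,\field)$. A priori ${\bf M}(\sfield)$ could be a twisted diagonal: for instance if $n=2$ and ${\bf M}(\sfield)=\{(g,\tau(g)):g\in{\bf H}'(\sfield)\}$ for some $\sfield$-form ${\bf H}'$ of $\SL_{d_0}$ and a field embedding $\tau$, then $M^+(\lambda)$ is itself diagonal and does not factor over the two blocks. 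Ruling out such linking of blocks is precisely the content of the proposition, not something to be asserted at the outset; once you assume it, your second and third steps are largely circular.

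The paper proceeds differently. It first invokes \cite[Thm.~7.1]{MS-SL2}---a structural theorem that already packages the relevant input from \cite{Pink-Compact}, \cite{LarPink}, and \cite{CGP-PseudoRed}---to obtain $L_B=\varphi\bigl(\prod_{i=1}^r{\bf H}_i(\sfield_i)\bigr)$ with $r\leq n$, where each factor may span several $\field$-blocks of ${\bf H}$ via field embeddings $\tau_j$. The substance of the proof is then the equality $r=n$, and this is established \emph{dynamically}: one passes to the $A_{\bf H}$-ergodic component $\eta$ (Lemma~\ref{lem:center-gone}), reruns the Borel-density/normalizer argument of Lemma~\ref{applyingAA2} inside ${\bf H}(\field)$ to produce $g_1$ and a cocompact $A'_{\bf H}\subset A_{\bf H}$ with $g_1^{-1}g_0^{-1}A'_{\bf H}g_0g_1$ normalizing $L_B$, and then observes that if two $\field$-blocks were tied together (say $i(1)=i(2)$), an element of $A'_{\bf H}$ unbounded in block~$1$ and trivial in blocks~$2,\dots,n$ could not normalize such a linked group. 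This measure-theoretic input is the crux and is entirely absent from your outline.
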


\begin{proof}
In view of~\eqref{eq:M-Gamma-GammaB} and Lemma~\ref{lem:H-is-H}(3) and (4), the groups ${\bf M}$ and ${\bf H}$ satisfy the conditions in~\cite[\S7]{MS-SL2} for the lattice $\Gamma_B$. Therefore~\cite[Thm.\ 7.1]{MS-SL2}, which in turn relies heavily on~\cite{CGP-PseudoRed, Pink-Compact, LarPink},
implies the following. There exist 
\begin{enumerate}
\item[(a)] a collection $(\sfield_i:1\leq i\leq r)$ of closed subfields of $\field,$
\item[(b)] for every $1\leq j\leq n$ some $1\leq i(j)\leq r$ and a continuous field embedding $\tau_{j}:\sfield_{i(j)}\to\field,$ 
\item[(c)] for every $1\leq i\leq r$ a connected, simply connected, absolutely almost simple $\sfield_i$-group ${\bf H}_i$ (which is a form of~$\SL_{d_0}$),
\item[(d)] for every~$i\in\{1,\ldots,r\}$ there exists some~$j\in\{1,\ldots,n\}$
with~$i(j)=i$,
\item[(e)] an isomorphism $\varphi:\coprod_{i=1}^r{\bf H}_i\times_{\tau(\oplus_{i=1}^r\sfield_i)}\oplus_{i=1}^n\field\to \coprod\SL_{d_0},$ with $\tau=(\tau_{1},\ldots,\tau_n)$
\end{enumerate} 
so that $L_B=\varphi\Bigl(\prod_{i=1}^r{\bf H}_i(\sfield_i)\Bigr)\subset {\bf H}(\field)$.

We now claim
\be\label{eq:r=n}
r=n.
\ee

Assuming~\eqref{eq:r=n}, and after possibly renumbering 
and replacing $\sfield_i$ by $\tau_i(\sfield_i)$ for $1\leq i\leq r=n,$ we get the proposition.

We now turn to the proof of~\eqref{eq:r=n}.
Put $\Delta:={\bf H}(\field)\cap\Gamma$ and recall the notation $A_{{\bf H}}=A\cap g_0{\bf H}(\field)g_0^{-1}.$
In view of Lemma~\ref{lem:center-gone} we can reduce the study of the measure~$\mu$ to the study 
of the measure $\eta$, which is an $A_{\bf H}$-ergodic invariant measure on $g_0{\bf H}(\field)/\Delta.$ 
Put 
\[
\RNM:=\mathcal R_{\oplus_{j=1}^n\field/\tau\bigl(\oplus_{i=1}^r\sfield_{i}\bigr)}\biggl(\coprod_{j=1}^n\SL_{d_0}\biggr).
\]
Then $\RNM$ is a smooth $\oplus_{i=1}^r\sfield_i$-group and $\RNM(\oplus_{i=1}^r\sfield_i)={\bf H}(\field),$ see~\cite[Prop.~{A.5.2}]{CGP-PseudoRed}.
Moreover, $L_B=\varphi(\prod_{i=1}^r{\bf H}_{i}(\sfield_i))$ is the group
of $\oplus_{i=1}^r\sfield_i$-points of a $\oplus_{i=1}^r\sfield_i$-subgroup of $\RNM,$ see~\cite[Prop.~{A.5.7}]{CGP-PseudoRed}.
Define
\be\label{eq:def-norm-L}
{\bf R}:= \text{the Zariski closure of ${N_{{\bf H}(\field)}\bigl(L_B)\cap\Delta}$ in 
$\RNM$.}
\ee 
Put 
\[
R={\bf R}(\oplus_{i=1}^r\sfield_i)\subset{\bf H}(\field).
\]
Then $R\subset N_{{\bf H}(\field)}(L_B).$ 

In view of~\eqref{eq:supp-nu} and Lemma~\ref{lem:center-gone} we have the following. 
For $\eta$-a.e.\ $x\in{\bf H}(\field)/\Delta$ and $\eta_x^\pins$-a.e.\ $z$ we have  
\[
{\rm supp}(\nu_z)=g_0gL\Delta/\Delta=g_0gL_B\Delta/\Delta
\]
for some $g\in{\bf H}(\field).$

Therefore, arguing for each $i$ separately, as in the proof of Lemma~\ref{applyingAA2}
we get the following. There is a cocompact subgroup $A_{\bf H}'\subset A_{\bf H}$
and some $g_1\in{\bf H}(\field)$ so that 
\[
g_1^{-1}g_0^{-1}A'_{\bf H}g_0g_1\subset R,
\]
moreover, $\overline{A_{\bf H}g_0g_1\Gamma}={\rm supp}(\eta).$ 

In particular we have $A'_{\bf H}$ normalizes the group $g_0g_1L_Bg_1^{-1}g_0^{-1}.$
Recall now that $A_{\bf H}$ is a maximal torus in the block diagonal group $g_0{\bf H}(\field)g_0^{-1}.$
These and the fact that $A'_{\bf H}$ is cocompact in $A_{\bf H}$ imply that the block structure 
of $L_B$ and ${\bf H}$ agree with each other, i.e.\ $r=n.$ To see this, assume $i(j)=1$ for $j=1,2.$
Let $a$ be an element in $A'_{\bf H}$ which equals the identity in all the blocks $j=2,\ldots,n$
and in the first block it is a diagonal elements which generates an unbounded group.  
Then since $a$ normalizes $g_0g_1L_Bg_1^{-1}g_0^{-1}$ we get a contradiction.
\end{proof}

\begin{cor}\label{cor:norm-L}
$N_{{\bf H}(\field)}(L_B)/{}{Z({\bf H}(\field))}L_B$ is a torsion abelian group.
\end{cor}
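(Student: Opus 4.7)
To prove Corollary~\ref{cor:norm-L}, the plan is to reduce the statement to a single block of the decomposition given by Proposition~\ref{prop:L}, transport the computation to the form ${\bf H}'$ over its subfield $\sfield'$, and then identify the residual ambiguity with a piece of a Galois cohomology group with values in the (finite, commutative) center of ${\bf H}'$.

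Combining Proposition~\ref{prop:L} with Lemma~\ref{lem:H-is-H} one has ${\bf H}(\field)\cong\prod_{i=1}^n\SL(d_0,\field)$ acting block-by-block on this direct product structure, together with $L_B=\prod_{i=1}^n E_i$ where $E_i=\varphi_i({\bf H}_i(\sfield_i))$ sits inside the $i$-th factor. Since normalizers, centers, and the subgroup $L_B$ all respect the product decomposition,
\[
N_{{\bf H}(\field)}(L_B)/Z({\bf H}(\field))L_B\cong\prod_{i=1}^n\Bigl(N_{\SL(d_0,\field)}(E_i)\big/Z(\SL(d_0,\field))E_i\Bigr),
\]
so it suffices to prove each factor is torsion abelian. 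Fixing one block and dropping the subscript, set $G=\SL(d_0,\field)$ and $E=\varphi({\bf H}'(\sfield'))$, and use the $\field$-isomorphism $\varphi:{\bf H}'\times_{\sfield'}\field\to\SL_{d_0}$ to identify each $n\in G$ with $n'\in{\bf H}'(\field)$; then $n\in N_G(E)$ exactly when $n'$ normalizes ${\bf H}'(\sfield')$ inside ${\bf H}'(\field)$.

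Let $\pi:{\bf H}'\to\bar{\bf H}'$ denote the projection to the adjoint form and set $\bar n'=\pi(n')$. The algebraic automorphism $c_{\bar n'}$ of $\bar{\bf H}'$, a priori only $\field$-defined, preserves the image $\pi({\bf H}'(\sfield'))$, which is Zariski dense in $\bar{\bf H}'$: density holds because ${\bf H}'$ is $\sfield'$-isotropic, a property inherited from the fact that $L_B\supset M^+(\la)$ contains nontrivial unipotent elements. A standard Galois-descent argument, comparing $c_{\bar n'}$ with its Galois conjugates on the Zariski dense set of $\sfield'$-points, then shows that $c_{\bar n'}$ is defined over $\sfield'$ as an algebraic automorphism of $\bar{\bf H}'$; since $\bar{\bf H}'$ is adjoint this forces $\bar n'\in\bar{\bf H}'(\sfield')$. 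The assignment $n\mapsto\bar n'$ is thus a group homomorphism $\Psi:N_G(E)\to\bar{\bf H}'(\sfield')$, and unwinding the identifications shows $n\in Z(G)\cdot E$ if and only if $\bar n'\in\pi({\bf H}'(\sfield'))$, so $\Psi$ descends to an injection
\[
N_G(E)/Z(G)\cdot E\hookrightarrow\bar{\bf H}'(\sfield')/\pi({\bf H}'(\sfield')).
\]

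To finish, I would invoke the central extension $1\to Z({\bf H}')\to{\bf H}'\to\bar{\bf H}'\to 1$ of $\sfield'$-group schemes, which yields a connecting map $\delta:\bar{\bf H}'(\sfield')\to H^1(\sfield',Z({\bf H}'))$ that is a genuine homomorphism (because $Z({\bf H}')$ is central in ${\bf H}'$) with kernel exactly $\pi({\bf H}'(\sfield'))$. The target of the embedding above therefore sits inside $H^1(\sfield',Z({\bf H}'))$, which is abelian (as $Z({\bf H}')$ is commutative) and killed by $d_0$ (as $Z({\bf H}')$ is a form of $\mu_{d_0}$), hence torsion abelian. The only delicate step is the Galois-descent part establishing $\bar n'\in\bar{\bf H}'(\sfield')$: it crucially relies on the Zariski density of ${\bf H}'(\sfield')$ in ${\bf H}'$, which in turn rests on the $\sfield'$-isotropy of ${\bf H}'$ built into the construction of $L_B$ via Theorem~B.
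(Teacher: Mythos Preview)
Your proof is correct and follows the same skeleton as the paper's: both reduce to a single $\SL_{d_0}$-block, use Zariski density of ${\bf H}'(\sfield')$ to show that conjugation by a normalizer gives an $\sfield'$-rational inner automorphism, and hence obtain an injection of $N_G(E)/Z(G)E$ into $\bar{\bf H}'(\sfield')/\pi({\bf H}'(\sfield'))$. The paper finishes by citing \cite[Ch.~1, Thm.~2.3.1]{Margulis-Book} for the structure of this quotient, whereas you identify it with a subgroup of $H^1(\sfield',Z({\bf H}'))$ via the boundary map of the central extension; your route has the pleasant feature of giving the explicit torsion bound~$d_0$.

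Two small points of precision. First, your phrase ``Galois-descent argument, comparing $c_{\bar n'}$ with its Galois conjugates'' is not quite right here: the extension $\field/\sfield'$ need not be separable in positive characteristic, so there may be no Galois group to speak of. The correct statement (which the paper uses, citing \cite[Ch.~1, Prop.~2.5.3]{Margulis-Book}) is that a $\field$-morphism between $\sfield'$-varieties mapping a Zariski-dense set of $\sfield'$-points to $\sfield'$-points is itself defined over $\sfield'$; this needs only density, not Galois theory. Second, your justification of Zariski density via $\sfield'$-isotropy is shaky: the inclusion $M^+(\lambda)\subset L_B$ shows $L_B$ contains unipotent elements, but not that each individual factor ${\bf H}_i(\sfield_i)$ does. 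Fortunately density holds anyway since ${\bf H}'$ is connected reductive over an infinite field. Finally, when $p\mid d_0$ the center $Z({\bf H}')\cong\mu_{d_0}$ is not smooth, so your $H^1$ should be taken in the fppf topology; the conclusion is unchanged.
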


\begin{proof}
In view of Proposition~\ref{prop:L} it suffices to argue in each $\SL_{d_0}$-block separately.
Hence we fix some $i\in\{1,\ldots,n\}.$ First note that ${\bf H}_i$ is an $\sfield_i$-form of $\SL_{d_0}$.
Suppose now that $g\in\SL(d_0,\field)$ normalises ${\bf H}_i(\sfield_i)$. 
Since ${\bf H}_{i}(\sfield_i)$ is Zariski dense in the $\sfield_i$-group ${\bf H}_{i}$, 
see e.g.~\cite[Ch.~1, Prop.\ 2.5.3]{Margulis-Book}, we thus get that $g$ induces an $\sfield_i$-automorphism of ${\bf H}_{i}.$ 
Extending the scalars from~$\sfield_i$ to~$\field$ we see that the automorphism is inner,
i.e.~this automorphism $\sigma_i(g)$
belongs to~${\bf H}_i^{\operatorname{ad}}(\field)$.
Together it follows that~$\sigma_i(g)\in {\bf H}_i^{\operatorname{ad}}(\sfield_i)$.
This automorphism is, moreover, nontrivial if and only if $g$ is not central in $\SL_{d_0}.$
Hence, we get a monomorphism $g\mapsto\sigma(g)$ from
\[
N_{\SL(d_0,\field)}(L_B\cap \SL(d_0,\field))/{}{Z(\SL(d_0,\field))}
\]
into ${\bf H}^{\rm ad}_{i}(\sfield_i).$ This map sends ${\bf H}_i(\sfield_i)$ to $[{\bf H}^{\rm ad}_{i}(\sfield_i),{\bf H}^{\rm ad}_{i}(\sfield_i)]$ by~\cite[Ch.~1, Thm.\ 2.3.1]{Margulis-Book} and the claims hold true by~\cite[Ch.~1, Thm.\ 2.3.1]{Margulis-Book}.
\end{proof}

Let us now complete the proof of Theorem~\ref{thm:measure-class}.
\begin{proof}[Proof of Theorem~\ref{thm:measure-class}]
In view of Lemma~\ref{lem:center-gone} we may and will restrict our attention to the measure $\eta$
appearing in the statement of that lemma. 
Similar to the proof of~\eqref{eq:r=n}, put $\Delta:={\bf H}(\field)\cap\Gamma.$
Define
\[
\RNM:=\mathcal R_{\oplus_{j=1}^n\field/\oplus_{i=1}^n\sfield_{i}}\Bigl(\coprod_{i=1}^n\SL_{d_0}\Bigr).
\]
Then $\RNM$ is a smooth $\oplus_{i=1}^n\sfield_i$-group and $\RNM(\oplus_{i=1}^n\sfield_i)={\bf H}(\field),$ see~\cite[Prop.~{A.5.2}]{CGP-PseudoRed}.
Moreover, $L_B=\prod_{i=1}^n{\bf H}_{i}(\sfield_i)$ is the group
of $\oplus_{i=1}^n\sfield_i$-points of a $\oplus_{i=1}^n\sfield_i$-subgroup of $\RNM,$ see~\cite[Prop.~{A.5.7}]{CGP-PseudoRed}.
Since $\RNM(\oplus_{i=1}^r\sfield_i)={\bf H}(\field)$, we may view $Z({\bf H}(\field))$ as a finite subgroup of $\RNM(\oplus_{i=1}^r\sfield_i)$. Define
\[
{\bf R}:= {}{Z({\bf H}(\field))}\Bigl(\text{the Zariski closure of ${N_{{\bf H}(\field)}\bigl(L_B)\cap \Delta}$ in 
$\RNM$}\Bigr).
\] 
Put 
$
R={\bf R}(\oplus_{i=1}^n\sfield_i)\subset{\bf H}(\field),
$
since ${\bf H}(\field)={\bf H}'(\oplus(\sfield_i))$ we have ${}{Z({\bf H}(\field))}\subset R.$
Moreover, $R\subset N_{{\bf H}(\field)}(L_B),$ and by Corollary~\ref{cor:norm-L} we have
\be\label{eq:L-comm-R}
[R,R]\subset {}{Z({\bf H}(\field))}L_B.
\ee

In view of~\eqref{eq:supp-nu} and Lemma~\ref{lem:center-gone}, for $\eta$-a.e.\ $x\in g_0{\bf H}(\field)/\Delta$ we have  
\be\label{eq:supp-nu-1}
{\rm supp}(\nu_x)=g_0gL\Delta/\Delta=g_0gL_B\Delta/\Delta
\ee
for some $g\in{\bf H}(\field)$ depending on $x$.

Therefore, arguing as in the proof of Lemma~\ref{applyingAA2}
we get the following. There is a cocompact subgroup $A'_{\bf H}\subset A_{\bf H}$
containing ${Z({\bf H}(\field))}$
and there is some $g_1\in{\bf H}(\field)$ so that $g_1^{-1}g_0^{-1}A_{\bf H}'g_0g_1\subset R$.
We may furthermore require that $\overline{A_{\bf H}g_0g_1\Gamma/\Gamma}={\rm supp}(\eta).$ 
This gives the following decomposition. 
\be\label{eq:mu-decom-2}
\eta=\int_{A_{\bf H}/A'_{\bf H}}a_*\eta' \operatorname{d}\!a,
\ee
where 
\begin{itemize}
\item $\operatorname{d}\!a$ is the Haar measure on the compact group $A_{\bf H}/A'_{\bf H},$
\item $\eta'$ is an $A'_{\bf H}$-invariant and ergodic probability measure on $g'_0R/\Delta'$ where $\Delta':=R\cap\Delta$ and $g_0'=g_0g_1$. 
\end{itemize}

\noindent
Note that we have implicitly identified here $g'_0R/\Delta'$ with $g'_0R\Delta/\Delta$ (which in turn itself has already been implicitly identified with $g'_0R\Gamma/\Gamma$).

We now further investigate the measure $\eta'.$  
In view of~\eqref{eq:supp-nu-1} we can write  
\be\label{eq:eta-pins}
\eta'=\int_{g'_0R/\Delta'}\nu_{x}\operatorname{d}\!\eta'(x),
\ee 
where $\nu_x$ is the $g'_0 gL_Bg^{-1}  {g'_0}^{-1}$-invariant measure on $ g'_0 g L_B\Delta'/\Delta'$ where we write $x$ as $x=g'_0 g\Delta'/\Delta'$ for $g \in R$.

Since $L_B$ is normal in $R$, we get that $\eta'$ is $g_0'L_B{g_0'}^{-1}$-invariant.
Moreover, since ${}{Z({\bf H}(\field))}\subset A'_{\bf H},$ we also have $\eta'$ is ${}{Z({\bf H}(\field))}$-invariant.
Finally since $L_B\Delta/\Delta$ is closed in ${\bf H}(\field)/\Delta$ we have that ${}{Z({\bf H}(\field))}L_B\Delta'$ is a closed subgroup of $R$. Let $L'_B=Z({\bf H}(\field))L_B$.
We define $\eta'_1$ as the push forward of $\eta'$ under the canonical quotient map
from $g_0'R/\Delta'$ into $g_0'R/L_B\Delta'$, and similarly $\eta'_2$
as the push forward to $g_0'R/L_B'\Delta'$. 
With this we obtain from~\eqref{eq:eta-pins} that for $\nu_{L_B} = \nu_{L_B\Delta'/\Delta'}$
\begin{align}\label{eq:mu-decom-3}
\nonumber\eta'&=\int_{g_0'R/L_B\Delta'} g_*\nu_{L_B}\operatorname{d}\!\eta'_1(gL_B\Delta')\\
\nonumber&=\int_{g_0'R/L_B'\Delta'} g_*\left(\int_{Z({\bf H}(\field))}h_*\nu_{L_B}\operatorname{d}\!h\right)\operatorname{d}\!\eta'_2(gL_B'\Delta')\\
&=(g_0')_*\int_{R/L_B'\Delta'} g_*\left(\int_{Z({\bf H}(\field))}h_*\nu_{L_B}\operatorname{d}\!h\right)\operatorname{d}\!\eta_P(gL_B'\Delta'),
\end{align}
for a~$(g_0')^{-1}A'_{\bf H}g_0'$-invariant and ergodic probability probability measure $\eta_P$ on $P=R/L_B'\Delta'$.
We note that the measure defined by the inner integral in \eqref{eq:mu-decom-3} is actually homogeneous.
Furthermore, by Corollary~\ref{cor:norm-L} we know that $P=R/L_B'\Delta'$
is a torsion abelian group.

We claim that  
\be\label{eq:image-of-AH}
\text{the image of $(g_0')^{-1}A'_{\bf H}g_0'$ in $P$ is compact and in particular closed.}
\ee 
Assuming~\eqref{eq:image-of-AH}, let us finish the proof. 
Indeed,~\eqref{eq:image-of-AH} implies that $\eta_P$ equals the Haar measure
on a coset of
\[
\biggl((g_0')^{-1}A'_{\bf H}g_0'\biggr)L_B'\Delta'/L_B'\Delta'
\]
This together with~\eqref{eq:mu-decom-2}
finish the proof.

\medskip

We now prove~\eqref{eq:image-of-AH}. Let $\{s_1,\ldots,s_r\}\subset (g_0')^{-1}A'_{\bf H}g_0'$ be a subset which generates a cocompact subgroup of $(g_0')^{-1}A'_{\bf H}g_0'$. By Corollary~\ref{cor:norm-L}, there exists some 
$m\in\bbn$ so that $s_i^m\in Z({\bf H}(\field))L_B=L_B'$ for all $1\leq i\leq r$.
Let $D$ be the group generated by $\{ s_1^m,\ldots, s_r^m\}$. Then $D$ is cocompact in $(g_0')^{-1}A'_{\bf H}g_0'$ and the natural orbit map from $(g_0')^{-1}A'_{\bf H}g_0'$ to $P$ factors through the natural map from $(g_0')^{-1}A'_{\bf H}g_0'/D$ to $P$. These maps are continuous 
and $(g_0')^{-1}A'_{\bf H}g_0'/D$ is compact, thus~\eqref{eq:image-of-AH} follows.
\end{proof}

\section{Joining classification}\label{sec:joining}

\subsection{On the group generated by certain commutators}
A key to the classification of joinings is the following simple general fact about a rank two $k$-torus. 
Let $\Gbf$ denotes a connected, simply connected,
absolutely almost simple group defined over a local field $\field$ with ${\rm char}(\field)>3.$
Let $\la:{\bf G}_m^2\to\Gbf$ be an
algebraic monomorphisms  defined over $\field;$ 
let $\mathbf A=\la(\Gbf_m^2)$.
Fix a maximal $\field$-split, $\field$-torus
${\bf S}\subset{\bf G}$ so that ${\bf A} \subset{\bf S}$. 
Further, let ${\bf T}\supset{\bf S}$ be a maximal torus of $\Gbf$ which is defined over $\field.$
Put $\Phi:=\Phi(\mathbf T,\G),$ ${}_\field\Phi:={}_\field\Phi(\mathbf S,\G),$ and $\APhi:={}_\field\Phi(\mathbf A,\G)$. 
For $\Psi \subset \APhi$ set
\be\label{eq:vartheta-psi}
\vartheta(\Psi):=\{\alpha\in\Phi({\bf T},\Gbf): \alpha|_{{\bf A}}\in\Psi\}.
\ee

\begin{prop}\label{prop:commutators} The group $\mathbf G $ is generated by the commutators $[\mathbf V _ {[\alpha]}, \mathbf V _ {[\beta]}]$ where $\alpha, \beta$ run over all \emph{linearly independent} pairs in $\overline \Phi $.
\end{prop}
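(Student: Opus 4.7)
Let $\mathbf{N}\subset\mathbf{G}$ denote the smallest Zariski-closed connected $k$-subgroup containing $[\mathbf{V}_{[\alpha]},\mathbf{V}_{[\beta]}]$ for every linearly independent pair $\alpha,\beta\in\overline{\Phi}$; this is automatically Zariski-connected as it is generated by connected subgroups. By the absolute almost simplicity of $\mathbf{G}$ it suffices to show that $\mathbf{N}$ is normal in $\mathbf{G}$ and not contained in the (finite) center, whence $\mathbf{N}=\mathbf{G}$.

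First I would record two structural facts about $\overline{\Phi}$ via the $\mathbf{A}$-weight space decomposition $\mathfrak{g}=\mathfrak{g}_{0}\oplus\bigoplus_{\gamma\in\overline{\Phi}}\mathfrak{g}_{\gamma}$. The finiteness of $\ker\lambda$ combined with the absolute almost simplicity of $\mathbf{G}$ forces $\overline{\Phi}$ to span the two-dimensional space $X^{*}(\mathbf{A})_{\mathbb{R}}$, since otherwise a $1$-parameter subgroup of $\mathbf{A}$ would centralize $\mathbf{G}$. Moreover $\overline{\Phi}$ cannot consist of merely two opposite ray-pairs $\pm[\alpha],\pm[\beta]$: in that configuration $\alpha'+\beta'\notin\overline{\Phi}$ for all $\alpha'\in[\alpha]$ and $\beta'\in[\beta]$, hence $[\mathfrak{g}_{[\alpha]},\mathfrak{g}_{[\beta]}]=0$, and a short Jacobi-identity check shows that $\mathfrak{g}_{[\alpha]}\oplus\mathfrak{g}_{-[\alpha]}\oplus[\mathfrak{g}_{[\alpha]},\mathfrak{g}_{-[\alpha]}]$ is then a proper nonzero $\mathfrak{g}$-ideal, contradicting the almost simplicity of $\mathfrak{g}$. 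Thus the coarse Lyapunov rays of $\overline{\Phi}$ number at least six.

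With at least six rays closed under negation, the two rays angularly adjacent to any $[\gamma]$ are linearly independent from each other and span an open cone containing $[\gamma]$. Choosing representatives $\alpha,\beta\in\overline{\Phi}$ on those neighboring rays, the positively closed set $\Psi'\subset\overline{\Phi}$ of elements expressible as $r\alpha+s\beta$ with $r,s>0$ contains $[\gamma]$; the generalized Chevalley commutator formula (\cite[Prop.~21.9, Thm.~21.20]{Borel-AlgGrBook}, compare Lemma~\ref{lem:commutator}) then gives $\mathbf{V}_{[\gamma]}\subset\mathbf{V}_{\Psi'}=[\mathbf{V}_{[\alpha]},\mathbf{V}_{[\beta]}]\subset\mathbf{N}$. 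For the normality step, the centralizer $\mathbf{Z}_{\mathbf{G}}(\mathbf{A})$ preserves each $\mathbf{A}$-weight space and so normalizes every $\mathbf{V}_{[\gamma]}$, hence every commutator $[\mathbf{V}_{[\alpha]},\mathbf{V}_{[\beta]}]$, and thus $\mathbf{N}$; and since every $\mathbf{V}_{[\gamma]}$ already lies in $\mathbf{N}$, $\mathbf{N}$ is trivially normalized by each $\mathbf{V}_{[\gamma]}$ as well. The weight decomposition of $\mathfrak{g}$ shows that $\mathbf{G}$ is generated as an algebraic group by $\mathbf{Z}_{\mathbf{G}}(\mathbf{A})$ together with the $\mathbf{V}_{[\gamma]}$, so $\mathbf{N}$ is normal in $\mathbf{G}$; since it contains the nontrivial unipotent $\mathbf{V}_{[\gamma]}$ it is not central, and the conclusion $\mathbf{N}=\mathbf{G}$ follows.

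The principal technical ingredient, and the step I expect to be the main obstacle, is the Chevalley identity $[\mathbf{V}_{[\alpha]},\mathbf{V}_{[\beta]}]=\mathbf{V}_{\Psi'}$ for coarse Lyapunov subgroups of the (possibly non-maximal-split) torus $\mathbf{A}$; once that is in hand, the combinatorial verification that every ray of $\overline{\Phi}$ lies strictly inside the cone spanned by two other linearly independent rays is routine, having already excluded the $A_{1}\times A_{1}$ configuration.
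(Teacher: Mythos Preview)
Your identified obstacle is not merely a technical difficulty: the equality $[\mathbf V_{[\alpha]},\mathbf V_{[\beta]}]=\mathbf V_{\Psi'}$ for the two \emph{angular neighbours} of $[\gamma]$ is in general false. Take $\mathbf G=\SL_4$ with $\mathbf A=\{\mathrm{diag}(s,t,t^{-1},s^{-1})\}$. In the character lattice of $\mathbf A$ the restrictions of the absolute roots are $\pm(1,-1),\pm(1,1),\pm(2,0),\pm(0,2)$, giving eight rays. For $[\gamma]=[(1,1)]$ the angular neighbours are $[(2,0)]$ and $[(0,2)]$, and $\mathbf V_{[(2,0)]}=\mathbf U_{e_1-e_4}$, $\mathbf V_{[(0,2)]}=\mathbf U_{e_2-e_3}$ \emph{commute} (neither $(e_1-e_4)\pm(e_2-e_3)$ is a root), so $[\mathbf V_{[(2,0)]},\mathbf V_{[(0,2)]}]=\{1\}$, whereas $\mathbf V_{\Psi'}=\mathbf V_{[(1,1)]}=\mathbf U_{e_1-e_3}\mathbf U_{e_2-e_4}$ is nontrivial. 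Thus your inclusion $\mathbf V_{[\gamma]}\subset[\mathbf V_{[\alpha]},\mathbf V_{[\beta]}]$ fails for this choice, and with it the normality argument as written.

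The paper sidesteps this by not fixing the pair in advance. Working over $\bar k$ and decomposing each $\mathbf V_{[\gamma]}$ into one-parameter absolute root groups $\mathbf U_{\delta'}$, Lemma~\ref{lem:LI-NP} shows, via a Jacobi-identity ideal argument in $\mathfrak g$, that for every absolute root $\delta'$ one can write $\delta'=\alpha'+\beta'$ with $\alpha',\beta'\in\Phi$ and $\alpha'|_{\mathbf A},\beta'|_{\mathbf A}$ linearly independent; then $[\mathbf U_{\alpha'},\mathbf U_{\beta'}]=\mathbf U_{\delta'}$ (here $\mathrm{char}\,k\neq 2,3$ is used) places $\mathbf U_{\delta'}$ inside $[\mathbf V_{[\alpha]},\mathbf V_{[\beta]}]$ for \emph{some} linearly independent pair depending on $\delta'$. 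In the example above one recovers $\mathbf U_{e_1-e_3}$ from $[\mathbf V_{[(1,-1)]},\mathbf V_{[(0,2)]}]$ rather than from the angular neighbours. Your normality framework could be salvaged by invoking exactly this lemma in place of the angular-neighbour claim, but at that point the argument collapses to the paper's: once every $\mathbf U_{\delta'}$ lies in some such commutator, $\mathbf G$ is generated directly and no separate normality step is needed.
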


We need the following lemma from~\cite[Lemma 4.2]{EL-Joining}, see also~\cite[Lemma 9.6]{EK-2}.

\begin{lem}\label{lem:LI-NP}
Let $\delta\in\APhi$ and $\delta'\in\vartheta([\delta]).$ 
Then there exist some $\beta\in{\APhi}$ and some $\beta'\in\vartheta([\beta])$ with the following properties.
\begin{enumerate}
\item $\{\beta,\delta\}$ is a linearly independent subset of $\APhi$.
\item $\delta'-\beta'\in\Phi$.
\end{enumerate}
\end{lem}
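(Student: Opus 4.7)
The plan is to find an absolute root $\beta' \in \Phi$ such that $\delta'-\beta' \in \Phi$ and $\beta'|_{\mathbf{A}}$ is linearly independent from $\delta$ in $X^*(\mathbf{A})\otimes\mathbb{Q}$. Once such $\beta'$ is produced, taking $[\beta]\in\APhi$ to be the ray through the nonzero restriction $\beta'|_{\mathbf{A}}$ gives $\beta' \in \vartheta([\beta])$, and $\{\delta,\beta\}$ is automatically linearly independent in $X^*(\mathbf{A})\otimes\mathbb{Q}$, fulfilling conclusions (1) and (2).

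I introduce the subspace
\[
H := \{\gamma \in X^*(\mathbf{T})\otimes\mathbb{Q} : \gamma|_{\mathbf{A}} \in \mathbb{Q}\cdot\delta\}.
\]
Because $\lambda:\mathbf{G}_m^2 \to \mathbf{G}$ has finite kernel, $\mathbf{A}$ is $2$-dimensional and the restriction map $X^*(\mathbf{T})\otimes\mathbb{Q} \to X^*(\mathbf{A})\otimes\mathbb{Q}$ is surjective, so $H$ has codimension one, i.e.\ is a hyperplane; and $\delta' \in H$ by definition of $\vartheta([\delta])$. The goal now reads: find $\beta' \in \Phi \setminus H$ with $\delta'-\beta' \in \Phi$. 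Fix a Weyl-invariant inner product $(\cdot,\cdot)$ on $V := X^*(\mathbf{T})\otimes\mathbb{R}$. The root-string formula gives: if $\beta' \in \Phi$ satisfies $(\beta',\delta')>0$ and $\beta' \neq \pm\delta'$, then $\delta'-\beta' \in \Phi$. Any $\beta' \notin H$ satisfies $\beta' \neq \pm\delta'$ automatically (since $\pm\delta' \in H$), so it suffices to exhibit $\gamma' \in \Phi$ with $\gamma' \notin H$ and $(\gamma',\delta') \neq 0$; replacing $\gamma'$ by $-\gamma'$ if necessary produces $\beta'$.

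The hard core of the proof is therefore the claim that $\Phi \not\subset H \cup (\delta')^\perp$. These are distinct hyperplanes, since $\delta' \in H$ but $\delta' \notin (\delta')^\perp$. Suppose for contradiction $\Phi \subset H \cup (\delta')^\perp$. As $\mathbf{G}$ is absolutely almost simple and $\mathbf{A}$ has dimension $2$, $\Phi$ is irreducible of absolute rank $\geq 2$. Fix a base $\Delta \subset \Phi$. If every simple root lay in $H \cap (\delta')^\perp$, then $\Delta$ would span a subspace of codimension $\geq 2$, contradicting that $\Delta$ spans $V$. Hence there is a simple root in $H \setminus (\delta')^\perp$ and another in $(\delta')^\perp \setminus H$. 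By connectedness of the Dynkin diagram, walking from one to the other produces adjacent simple roots $\alpha, \alpha'$ with $\alpha \in H \setminus (\delta')^\perp$ and $\alpha' \in (\delta')^\perp \setminus H$. Adjacency gives $\alpha + \alpha' \in \Phi$, so $\alpha+\alpha' \in H \cup (\delta')^\perp$; if $\alpha+\alpha' \in H$ then $\alpha' = (\alpha+\alpha') - \alpha \in H$, a contradiction, and symmetrically $\alpha+\alpha' \in (\delta')^\perp$ forces $\alpha \in (\delta')^\perp$, again a contradiction.

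The only genuine obstacle is the Dynkin-diagram chain step in the last paragraph; everything else is bookkeeping about the restriction of characters from $\mathbf{T}$ to $\mathbf{A}$ together with the standard root-string inequality. In particular, the hypothesis $\mathrm{char}(\field) \neq 2,3$ plays no role here—the argument is purely combinatorial on the root system.
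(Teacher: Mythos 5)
Your overall strategy differs from the paper's: the paper works with the Lie algebra, showing that the span of $\{\gfrak_{\alpha'},[\gfrak_{\alpha'},\gfrak_{\beta'}]:\alpha',\beta'\in\Phi\setminus\Upsilon\}$ is a non-central ideal (hence all of $\gfrak$), so $\gfrak_{\delta'}$ must appear as some $[\gfrak_{\alpha'},\gfrak_{\beta'}]$ with $\alpha',\beta'\notin\Upsilon$. Your reformulation --- find $\beta'\in\Phi$ outside $H\cup(\delta')^\perp$, then apply the root-string lemma --- would indeed yield the conclusion, and the preliminary reductions (existence of simple roots in $H\setminus(\delta')^\perp$ and in $(\delta')^\perp\setminus H$, the root-string step itself) are correct.

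However, the Dynkin-diagram walk has a genuine gap. You claim that connectedness alone forces two \emph{adjacent} simple roots $\alpha\in H\setminus(\delta')^\perp$ and $\alpha'\in(\delta')^\perp\setminus H$. This need not hold: simple roots lying in $H\cap(\delta')^\perp$ can buffer the two classes. Concretely, in $A_3$ with $\alpha_1\in H\setminus(\delta')^\perp$, $\alpha_2\in H\cap(\delta')^\perp$, $\alpha_3\in(\delta')^\perp\setminus H$, the path $\alpha_1$--$\alpha_2$--$\alpha_3$ has no adjacent pair of the kind you need, even though the Dynkin diagram is connected and both classes are nonempty. Worse, the contradiction you derive at the end actually \emph{proves} that, under the hypothesis $\Phi\subset H\cup(\delta')^\perp$, no pair $(\alpha,\alpha')$ with $\alpha\in H\setminus(\delta')^\perp$, $\alpha'\in(\delta')^\perp\setminus H$ can have $\alpha+\alpha'\in\Phi$; by the root-string inequality this forces $(\alpha,\alpha')=0$, so such a pair is never adjacent. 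In other words, the walk step cannot succeed precisely when the hypothesis holds, so the argument is circular at its crucial point. One can show $\Phi\not\subset H\cup(\delta')^\perp$ (the lemma does imply it), but establishing it purely combinatorially requires a more careful analysis of how $\Phi\cap H\cap(\delta')^\perp$ links the two halves; the paper's ideal argument avoids this difficulty entirely by exploiting simplicity of $\gfrak$.
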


\begin{proof}
Let $\bar\field$ be the algebraic closure of $\field$.
Let
\[
\Upsilon=\{\alpha\in\mathbb{R}\otimes X^*({\bf T}): \alpha|_{{\bf A}}\in\mathbb{R}\delta\},
\]
where $X^*({\bf T})$ denotes the group of characters of ${\bf T}$.  

Let $\mathfrak g'$ be the $\bar\field$-span of $\{\gfrak_{\alpha'}, [\gfrak_{\alpha'},\gfrak_{\beta'}]:\alpha',\beta'\in\Phi\setminus\Upsilon\}$.
It follows easily from the Jacobi identity (cf.\ the proof of~\cite[Lemma 4.2]{EL-Joining} for details) that $\mathfrak g'$
is an ideal of $\gfrak$. 
Recall that $\mathbf A=\la(\Gbf_m^2)$. Therefore, 
$\APhi$ has at least two linearly independent roots, and $\gfrak'$ is not central. 
Since $\mathfrak g$ has no proper non-central ideals, 
we have $\gfrak'=\gfrak$. 

In particular, we get that
\[
\gfrak_{\delta'}\subset\sum_{\alpha'\in\Phi_1\setminus\Upsilon} \gfrak_{\alpha'}+\sum_{\alpha',\beta'\in\Phi\setminus\Upsilon}[\gfrak_{\alpha'},\gfrak_{\beta'}].
\]
Since $\delta'\in\Upsilon$, the above implies that $\gfrak_{\delta'}\subset\sum_{\alpha',\beta'\in\Phi\setminus\Upsilon}[\gfrak_{\alpha'},\gfrak_{\beta'}].$ 
But for every $\alpha ', \beta '$, we have that $[\mathfrak g _ {\alpha '}, \mathfrak g _ {\beta '}] \subseteq \mathfrak g _ {\alpha ' + \beta '}$ hence $\delta'=\alpha'+\beta'$ for some $\alpha',\beta'\in\Phi\setminus\Upsilon$. In particular, since $\beta'\not\in\Upsilon$, it holds that 
$\beta := \beta'|_{{\mathbf A}}$ is linearly independent from $\delta$.
\end{proof}

\begin{proof} [Proof of Proposition~\ref{prop:commutators}]
Since the statement of the proposition is on the level of algebraic groups, the validity of the statement over the algebraic closure $\bar\field$ 
of $k$ implies that of the statement when the groups are considered as algebraic groups over $k$. 
Over $\bar\field$, we can write for every $\alpha \in \overline \Phi $
\begin{equation*}
\mathbf V _ {[\alpha]} = \prod_  {\delta' \in \vartheta ([\alpha])} \mathbf U _ {\delta'}
\end{equation*}
with each $\mathbf U _ {\delta'}$ a one parameter unipotent group over $\bar\field$.

Since the group $\mathbf G $ is absolutely almost simple, in particular semisimple, the root groups $\mathbf U _ {\delta'}$ for 
${\delta'} \in \Phi$ generate. Therefore to prove the proposition it is enough to show that for every $\delta' \in \Phi$, one can find $\alpha$ and $\beta$ in $\overline \Phi$, linearly independent, so that 
\begin{equation}\label{eq:delta is good}
\mathbf U _ {\delta'} \subset [\mathbf V _ {[\alpha]}, \mathbf V _ {[\beta]}].
\end{equation}
Let $\beta, \beta '$ be as in Lemma~\ref{lem:LI-NP} applied to $\delta:= \delta '|_{\mathbf A _ 1}$ and $\delta '$, and let $\alpha'= \delta'-\beta'$ and
\(\alpha = \alpha ' |_{\mathbf A }\). In particular, $\alpha$ and $\beta$ are linearly independent.

Recall that ${\rm char}(\field)\neq 2,3$, hence by~\cite[\S4.3]{BoTi-Abstract}
irregular commutation relations do not occur. This means in particular that
\[
[\mathbf U_{\alpha'},\mathbf U_{\beta'}]=\mathbf U_{\alpha'+\beta'}\]
(cf.\ also~\cite[\S2.5]{BoTi-RedGr}).
But $\mathbf U _ {\alpha '} \subset\mathbf V_{[\alpha] }$, $\mathbf U _ {\beta '} \subset\mathbf V_{[\beta] }$, and by definition $\alpha ' + \beta ' = \delta'$. Equation \eqref{eq:delta is good} and hence the proposition follows.

\end{proof}

\subsection{The main entropy inequality and the invariance group of the leafwise measures}\label{sec:entropy-inequal}

From now on, we use the notation from Theorem~\ref{thm:joining}. 
In particular,  for $i=1,2$, $\Gbf_i$ denotes a connected, simply connected,
absolutely almost simple group defined over $\field$. We put $G_i=\Gbf_i(\field)$ and $G=G_1\times G_2$.
Recall also that ${\rm char}(\field)>3.$

Suppose fixed two algebraic monomorphisms $\la_i:{\bf G}_m^2\to\Gbf_i$ defined over $\field;$ 
let $\mathbf A_i=\la_i(\Gbf_m^2)$ and $A_i=\mathbf A_i(\field).$ 
For $i=1,2$ we fix a maximal $\field$-split, $\field$-torus
${\bf S}_i\subset{\bf G}_i$ so that ${\bf A}_i\subset{\bf S}_i$, and set  
 ${}_\field\Phi_i:={}_\field\Phi(\mathbf S_i,\G_i),$ and $\APhi_i:={}_\field\Phi(\mathbf A_i,\G_i)$. 
 
{}{Define $\bf A$ to be the smooth $\field$-group so that
\[
{\bf A}(R):=\{(\la_1(\fele),\la_2(\fele)):\fele\in\Gbf_m(R)^2\}
\]
for any algebra $R/\field$; let 
$
A:=\mathbf A(\field).
$}

Let
\begin{equation*}
\APhi={}_\field \Phi(\mathbf A,\G_1 \times\G_2).
\end{equation*}
Using the natural homomorphisms from $\mathbf A$ to $\mathbf A_i$, for $i=1,2$ we can view ${}_\field \Phi(\mathbf A_i,\G_i)$ as subsets of $\APhi$ and moreover we have that 
\begin{equation*}
\APhi={}_\field \Phi(\mathbf A_1,\G_1) \cup {}_\field \Phi(\mathbf A_2,\G_2).
\end{equation*}
For each $\alpha \in \APhi$, we can write the coarse Lyapunov group $V_{[\alpha]} \subset G _ 1 \times G _ 2$ as a product $V ^ 1_{[\alpha]} \times V ^ 2_{[\alpha]}$ with $V ^ i _ {[\alpha]} \subset G _ i$; by convention if $\alpha \not\in \APhi$ then $V ^ i _ {[\alpha]} = \left\{ 1 \right\}$.
For $i=1,2$ we fix a maximal, compact, open subgroup $\mathfrak G_i\subset G_i$ 
and put $\mathfrak G:=\mathfrak G_1\times\mathfrak G_2.$

Recall that $\mu$ denotes an ergodic joining for the action of $A_i$ on $(X_i,m_i)$
for $i=1,2.$

\begin{propc*}[Cf.~\cite{EL-Joining}, \S3]\label{prop:el-inequality}
Let $a=(a_1,a_2)\in A$ and let $\Psi\subset\APhi$ be a positively closed subset. Put
\[
W=V_\Psi\subset W_{G_1\times G_2}^-(a).
\] 
Then $W=W_1\times W_2$ where $W_i\subset G_i$ for $i=1,2$ and
\be\label{eq:el-inequality}
\entropy_\mu(a,W)\leq\entropy_{m_1}(a_1,W_1)+\entropy_{\mu}(a,\{\identity\}\times W_2).
\ee
Furthermore, the following hold.
\begin{enumerate}
\item If the equality holds in~\eqref{eq:el-inequality}, then 
$W_1$ is the smallest algebraic subgroup of 
$W_1$ which contains $\pi_1\Bigl(\supp(\mu_x^W)\cap\mathfrak G\Bigr)$. 
\item The equality holds for $W=W^-_{G_1\times G_2}(a)$.
\item For every $\alpha\in\APhi,$ the equality holds for $W=V_{[\alpha]}$. 
\end{enumerate}
\end{propc*}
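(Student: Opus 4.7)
The decomposition $W=W_1\times W_2$ comes essentially for free: writing $V_\Psi=\prod_{\alpha\in\Psi}V_{[\alpha]}$ (in any order, by Proposition~\ref{prop:general product structure}) and using that each coarse Lyapunov $V_{[\alpha]}=V^1_{[\alpha]}\times V^2_{[\alpha]}$ splits across the two factors of $G_1\times G_2$, the subgroups $W_i:=\prod_{\alpha\in\Psi}V^i_{[\alpha]}\subset G_i$ commute with each other and together generate $W$, so $W=W_1\times W_2$.

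My plan for the inequality~\eqref{eq:el-inequality} is to combine two observations. First, $\entropy_\mu(a,W_1\times\{\id\})=\entropy_{m_1}(a_1,W_1)$: since $\pi_1$ is a measure-preserving factor map ($\mu$ being a joining), and $\pi_1|_{W_1\times\{\id\}}$ is a topological isomorphism onto $W_1\subset W^-_{G_1}(a_1)$, the compatibility of leafwise measures with factor maps yields $(\pi_1)_*\mu_x^{W_1\times\{\id\}}\propto (m_1)_{\pi_1(x)}^{W_1}$; the latter is Haar on $W_1$ because $m_1$ is Haar on $X_1$, so $\mu_x^{W_1\times\{\id\}}$ is itself Haar on $W_1\times\{\id\}$ and the two entropy contributions coincide. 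Second, iteratively applying Lemma~\ref{lem:prod-struc} (and the technique behind Proposition~\ref{prop:general product structure}) to the commuting subgroups $W_1\times\{\id\}$ and $\{\id\}\times W_2$ of $W^-_G(a)$ gives the subadditivity
\[
\entropy_\mu(a,W)\leq \entropy_\mu(a,W_1\times\{\id\})+\entropy_\mu(a,\{\id\}\times W_2),
\]
with equality if and only if $\mu_x^W\propto\iota_*(\mu_x^{W_1\times\{\id\}}\times\mu_x^{\{\id\}\times W_2})$. Combining the two steps gives~\eqref{eq:el-inequality}.

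Claim~(1) then follows: in the equality case the above product structure forces $\lsupp_x^W=(W_1\times\{\id\})\cdot\lsupp_x^{\{\id\}\times W_2}$, whose projection under $\pi_1$ contains a neighborhood of the identity in $W_1$; since $W_1$ is a connected unipotent $\field$-subgroup, its Zariski closure equals itself, yielding~(1). For claim~(2) with $W=W^-_{G_1\times G_2}(a)=W^-_{G_1}(a_1)\times W^-_{G_2}(a_2)$ I would invoke Pesin's formula in the homogeneous setting (cf.~\cite[\S9]{MarTom}) to identify $\entropy_\mu(a)=\entropy_\mu(a,W^-_G(a))$ and analogously for $m_1$, and then apply the Abramov--Rokhlin formula to the factor $\pi_1$: $\entropy_\mu(a)=\entropy_{m_1}(a_1)+\entropy_\mu(a\mid \pi_1^{-1}\mathcal B_{X_1})$, identifying the conditional term with $\entropy_\mu(a,\{\id\}\times W^-_{G_2}(a_2))$. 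Claim~(3) is the coarse-Lyapunov analog of~(2): the same argument restricted to a single $V_{[\alpha]}=V^1_{[\alpha]}\times V^2_{[\alpha]}$ yields equality, using product structure of $\mu_x^{V_{[\alpha]}}$ over the two factors. The main technical hurdle will be the Abramov--Rokhlin identification of the conditional entropy $\entropy_\mu(a\mid\pi_1^{-1}\mathcal B_{X_1})$ with the leafwise-measure contribution $\entropy_\mu(a,\{\id\}\times W_2)$, which requires a countably generated $\sigma$-algebra simultaneously subordinate to the $\{\id\}\times W_2$-foliation and generating modulo $\pi_1^{-1}\mathcal B_{X_1}$; this is the heart of the analogous characteristic-zero argument of~\cite[\S3]{EL-Joining}, and should transcribe via the positive-characteristic leafwise measure machinery of~\cite{EL-Pisa} together with the generator of Lemma~\ref{lem:properties}.
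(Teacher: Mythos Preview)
Your two-step plan for the inequality does not work. The identification $\entropy_\mu(a,W_1\times\{\id\})=\entropy_{m_1}(a_1,W_1)$ is false in general: take $\G_1=\G_2$, $\Gamma_1=\Gamma_2$, and let $\mu$ be the Haar measure on the diagonal $\{(y,y)\}\subset X_1\times X_2$. For any $\alpha$, the $(W_1\times\{\id\})$-orbit through a diagonal point meets the diagonal only at that point, so $\mu_x^{W_1\times\{\id\}}=\delta_{\id}$ and $\entropy_\mu(a,W_1\times\{\id\})=0$, whereas $\entropy_{m_1}(a_1,W_1)>0$. The compatibility of leafwise measures with factor maps does not give $(\pi_1)_*\mu_x^{W_1\times\{\id\}}\propto (m_1)_{\pi_1(x)}^{W_1}$: the fibers of $\pi_1$ are $\{x_1\}\times X_2$, which are \emph{not} saturated by $W_1\times\{\id\}$-orbits, so there is no such direct transfer.

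Your product structure claim $\mu_x^W\propto\iota_*(\mu_x^{W_1\times\{\id\}}\times\mu_x^{\{\id\}\times W_2})$ is equally unjustified, and the same diagonal example refutes it: there $\mu_x^W$ is Haar on the diagonal of $W_1\times W_2$, while both factors on the right are delta masses. Lemma~\ref{lem:prod-struc} needs one factor inside $Z_G(a)$, and Proposition~\ref{prop:general product structure} is for coarse Lyapunov subgroups; neither $W_1\times\{\id\}$ nor $\{\id\}\times W_2$ qualifies, since for $\alpha$ lying in both $\APhi_1$ and $\APhi_2$ the coarse Lyapunov is the full $V_{[\alpha]}=V^1_{[\alpha]}\times V^2_{[\alpha]}$, not either factor. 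Note that in the diagonal example the inequality~\eqref{eq:el-inequality} reads $\entropy_{m_1}(a_1,W_1)\leq \entropy_{m_1}(a_1,W_1)+0$, so the statement is fine; it is your route that breaks. The correct argument, as in \cite[Prop.~3.1]{EL-Joining}, compares $\entropy_\mu(a,W)$ and $\entropy_\mu(a,\{\id\}\times W_2)$ directly via $\sigma$-algebras subordinate to $W$ and to $\{\id\}\times W_2$ and bounds the difference by the Haar volume growth in $W_1$; the quantity $\entropy_\mu(a,W_1\times\{\id\})$ never enters. Your treatment of~(1) then also collapses, since it rests on the same spurious product structure; the paper instead uses \cite[Prop.~6.2]{EL-GenLow} together with \cite[Prop.~3.2]{EL-Joining}. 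Your outline for~(2) via Abramov--Rokhlin is on the right track and is essentially what \cite[Prop.~3.3]{EL-Joining} does.
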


\begin{proof}
The main inequality follows\footnote{The arguments in~\cite{EL-Joining} generalize to the setting at hand without a change.} from~\cite[Prop.\ 3.1]{EL-Joining}. 

Parts (2) and~(3) follow from~\cite[Prop.\ 3.3 and Cor.\ 3.4]{EL-Joining}.

To see part~(1), first note that by~\cite[Prop.\ 6.2]{EL-GenLow} we have 
\[
\pi_1\biggl(\overline{\supp(\mu_x^W)\cap\mathfrak G}^z\biggr)
\] 
is a (Zariski closed) subgroup which is normalized by $a$ and contains $\pi_1\Bigl(\supp(\mu_x^W)\Bigr).$
Part~(1) now follows from~\cite[Prop.\ 3.2]{EL-Joining}. 
\end{proof}

\begin{cor}\label{cor:zariski dense support} For any $\alpha \in \APhi$, we have that $\pi_i\Bigl(\lsupp_x^{[\alpha]} \cap \mathfrak G\Bigr)$ is Zariski dense in $\pi_i(V _ {[\alpha]})$ for $i=1,2$.
\end{cor}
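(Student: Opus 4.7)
The plan is to deduce the corollary as an essentially immediate consequence of Proposition~C above. First, I would apply Proposition~C(3) to the coarse Lyapunov subgroup $W = V_{[\alpha]}$, which contracts under a suitable $a \in A$; this yields equality in the main entropy inequality
\[
\entropy_\mu(a, V_{[\alpha]}) = \entropy_{m_1}(a_1, V^1_{[\alpha]}) + \entropy_\mu(a, \{\id\} \times V^2_{[\alpha]}).
\]
Having the equality case in hand, I would then invoke Proposition~C(1) to conclude that $V^1_{[\alpha]}$ is the smallest algebraic subgroup of $V^1_{[\alpha]}$ containing $\pi_1\bigl(\supp(\mu_x^{V_{[\alpha]}}) \cap \mathfrak G\bigr)$; equivalently, $\pi_1\bigl(\lsupp_x^{[\alpha]} \cap \mathfrak G\bigr)$ is Zariski dense in $V^1_{[\alpha]} = \pi_1(V_{[\alpha]})$ for $\mu$-a.e.~$x$.

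For the case $i = 2$, I would appeal to the evident symmetry between the two factors: the roles of $(G_1, A_1, m_1)$ and $(G_2, A_2, m_2)$ are interchangeable throughout Proposition~C (the statement merely fixes a convention about which factor appears first in the inequality), so the analogous entropy inequality
\[
\entropy_\mu(a, V_{[\alpha]}) \leq \entropy_{m_2}(a_2, V^2_{[\alpha]}) + \entropy_\mu(a, V^1_{[\alpha]} \times \{\id\})
\]
also holds with equality, and the same argument then gives Zariski density of $\pi_2\bigl(\lsupp_x^{[\alpha]} \cap \mathfrak G\bigr)$ in $V^2_{[\alpha]}$. Since the assertion is trivial for the factor on which $\alpha$ is identically zero (where $V^i_{[\alpha]} = \{1\}$), this completes both cases.

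The only subtle point — and really the reason for invoking Proposition~C rather than proving the corollary from scratch — is justifying that one may legitimately swap the indices $1$ and $2$ in the equality characterization of Proposition~C(1). This should not be a genuine obstacle, as the construction underlying Proposition~C (drawn from~\cite[\S3]{EL-Joining}) is symmetric in the two factors; one simply repeats the argument with the order of the projections reversed.
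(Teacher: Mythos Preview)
Your proposal is correct and matches the paper's own argument essentially verbatim: the paper's proof simply invokes Proposition~C(3) to obtain equality in the entropy inequality for $W=V_{[\alpha]}$, then Proposition~C(1) for the Zariski density conclusion, with the $i=2$ case (and the trivial case $V^i_{[\alpha]}=\{1\}$) handled by the implicit symmetry you spell out.
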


\begin{proof}
In view of Proposition~\ref{prop:el-inequality}.(3), this is a direct consequence of Proposition~\ref{prop:el-inequality}.(1) and the definition of $ \lsupp_x^{[\alpha]}$.
\end{proof}

Fix an element $a=(a_1,a_2)\in A$ that is {\em regular} with respect to $\APhi$,
that is: $\alpha(a)\neq1$ for any $\alpha\in\APhi.$
We denote the Pinsker $\sigma$-algebra, $\pins_{a}$, simply by $\pins.$ 

Disintegrate $\mu$ as follows.
\be\label{eq:joining-pinsker-cond}
\mu=\int_X\mu_x^{\pins}\operatorname{d}\!\mu(x),
\ee
where $\mu_x^{\pins}$ denotes the $\pins$-conditional measure for 
$\mu$-almost every $x\in X.$

Similar to~\eqref{eq:def-Hcal}, define 
\[
\Hcal_x:=\{g\in G_1\times G_2: g\mu_x^\pins=\mu_x^\pins\}.
\]
We have $a\Hcal_xa^{-1}=\Hcal_{ax}$ 
for all $a\in A$ and $\mu$-a.e.\ $x$, see~\eqref{eq:Hcal-A-equiv}.

\begin{lem}\label{lem:H-x is big}
For $\mu$-a.e. $x$, and any linearly independent $\alpha, \beta \in \Phi$ the measure $\mu ^ {\mathcal{P}} _ x$ is a.s. invariant under $\Bigl[\lsupp_x^{[\alpha]},\lsupp_x^{[\beta]}\Bigr]$, i.e. $\Bigl[\lsupp_x^{[\alpha]},\lsupp_x^{[\beta]}\Bigr] \subset \Hcal_{x}$.
\end{lem}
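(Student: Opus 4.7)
The plan is to combine the commutator-invariance argument of Lemma~\ref{lem:commutator} (which lands in $\linv_x^{\Psi'}$) with the lifting argument of Corollary~\ref{cor:pinsker-cond-inv1} (which upgrades leafwise-measure invariance to invariance of $\mu_x^\pins$). A preliminary step is to establish that for \emph{every} $\gamma\in\APhi$, one has $\linv_x^{[\gamma]}\subset\Hcal_x$ almost surely. Since $a$ is regular, either $V_{[\gamma]}\subset W_G^-(a)$ or $V_{[\gamma]}\subset W_G^+(a)=W_G^-(a^{-1})$; in either case Lemma~\ref{lem:pinsker-measurable} (possibly with $a^{-1}$ in place of $a$), combined with $\pins_a=\pins_{a^{-1}}$, shows that $\linv_x^{[\gamma]}$ is $\pins$-measurable. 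The argument then proceeds as in Corollary~\ref{cor:pinsker-cond-inv1}: pick a countably generated $\pins'\equiv\pins$ on which $\linv_x^{[\gamma]}$ is constant per atom, use Proposition~\ref{prop:pins-meas-hull} to arrange that atoms of $\pins'$ are $W_G^-(a)$-saturated (and hence $V_{[\gamma]}$-saturated), deduce $(\mu_x^{\pins'})_y^{V_{[\gamma]}}=\mu_y^{[\gamma]}$ for $\mu_x^{\pins'}$-a.e.\ $y$, and conclude that $\mu_x^\pins$ is $\linv_x^{[\gamma]}$-invariant.

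Now fix linearly independent $\alpha,\beta\in\APhi$, and let
\[
\Psi:=\{\gamma\in\APhi:\gamma=r\alpha+s\beta,\ r,s\geq 0,\ (r,s)\neq(0,0)\},\qquad \Psi':=\{\gamma\in\Psi:r,s>0\}.
\]
Both $\Psi$ and $\Psi'$ are positively closed in $\APhi$, and since $\alpha,\beta$ are linearly independent we can choose $a'\in A$ with $|\alpha(a')|,|\beta(a')|<1$, which gives $V_\Psi\subset W_G^-(a')$ so all leafwise measures in question are defined. Applying Lemma~\ref{lem:commutator} with this $\Psi$ yields $[v_\beta,v_\alpha]\in\linv_x^{\Psi'}$ for $\mu_x^{[\alpha]}\times\mu_x^{[\beta]}$-a.e.\ $(v_\alpha,v_\beta)$. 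Applying Lemma~\ref{lem:Inv-prod-struc} to $\Psi'$ (using the same $a'$), this element decomposes as $[v_\beta,v_\alpha]=\prod_{[\gamma]\subset\Psi'}v_\gamma$ with each factor $v_\gamma\in\linv_x^{[\gamma]}\subset\Hcal_x$ by the preliminary step. Since $\Hcal_x$ is a group, $[v_\beta,v_\alpha]\in\Hcal_x$.

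To conclude, the set $\{(v_\alpha,v_\beta):[v_\beta,v_\alpha]\in\Hcal_x\}$ is closed in $V_{[\alpha]}\times V_{[\beta]}$ (because $\Hcal_x$ is closed in the Hausdorff topology and the commutator bracket is continuous) and it has full $\mu_x^{[\alpha]}\times\mu_x^{[\beta]}$ measure, hence contains the support $\lsupp_x^{[\alpha]}\times\lsupp_x^{[\beta]}$. Since $\Hcal_x$ is a subgroup, we obtain $[\lsupp_x^{[\alpha]},\lsupp_x^{[\beta]}]\subset\Hcal_x$, proving the lemma. The main technical point is the preliminary step: adapting the lifting argument of Corollary~\ref{cor:pinsker-cond-inv1} \emph{uniformly across all course Lyapunov classes} $[\gamma]\in\APhi$, irrespective of the sign of $\gamma(a)$. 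The key input that makes this feasible is the symmetry $\pins_a=\pins_{a^{-1}}$, which ensures that Lemma~\ref{lem:pinsker-measurable} produces $\pins$-measurability for course Lyapunov subgroups lying in either $W_G^+(a)$ or $W_G^-(a)$; a secondary subtlety is that $\Psi'$ need not lie in a single $a$-horospherical subgroup, but it does lie in $W_G^-(a')$ for an auxiliary element $a'$, and this suffices to invoke Lemmas~\ref{lem:commutator} and~\ref{lem:Inv-prod-struc}, which only require \emph{some} contracting element.
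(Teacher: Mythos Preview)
Your proof is correct and follows essentially the same route as the paper's: first establish $\linv_x^{[\gamma]}\subset\Hcal_x$ for every coarse Lyapunov class via Lemma~\ref{lem:pinsker-measurable} and the lifting argument of Corollary~\ref{cor:pinsker-cond-inv1}, then combine Lemma~\ref{lem:commutator} with Lemma~\ref{lem:Inv-prod-struc} to place $[\lsupp_x^{[\alpha]},\lsupp_x^{[\beta]}]$ inside $\Hcal_x$. The paper's write-up is terser, leaving implicit the points you spell out carefully (the $\pins_a=\pins_{a^{-1}}$ symmetry, the auxiliary element $a'$, and the closure argument on the support).
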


\begin{proof}
By Lemma~\ref{lem:pinsker-measurable}, for every $\alpha \in \overline \Phi$ and $\mu$-a.e.\ $x$, we have that $\mu ^ {\mathcal{P}} _ x$ is invariant under $I ^ {[\alpha]} _ x$, and hence by Lemma~\ref{lem:Inv-prod-struc} is invariant under $I ^ {\Psi} _ x$ for any positively closed $\Psi \subset \overline \Phi$. By Lemma~\ref{lem:commutator} we have therefore that for any linearly independent $\alpha, \beta \in \Phi$ the measure $\mu ^ {\mathcal{P}} _ x$ is a.s. invariant under $\Bigl[\lsupp_x^{[\alpha]},\lsupp_x^{[\beta]}\Bigr]$.
\end{proof}

Recall that $\mathfrak G=\mathfrak G_1\times\mathfrak G_2$ is a compact, open subgroup of 
$G=G_1\times G_2$.
Define
\[
\Qcal_x:=\Bigl\langle \{g\in \Hcal_x\cap\mathfrak G:\text{ $g$ is unipotent}\}\Bigr\rangle.
\]

\begin{cor}\label{cor:joining-pinsker-cond-inv1}\label{cor:proj-Zd}
For $\mu$-a.e.\ $x$,
$\pi_i(\Qcal_x)$ is Zariski dense in $\G_i$ and $\pi_i(\Hcal_x)$ is unbounded for $i=1,2$. 
\end{cor}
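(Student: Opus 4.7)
Both assertions of the corollary will follow by combining three ingredients already established: the commutator-invariance of $\mu_x^{\pins}$ from Lemma~\ref{lem:H-x is big}, the Zariski density of leafwise support projections from Corollary~\ref{cor:zariski dense support}, and the algebraic generation statement of Proposition~\ref{prop:commutators}.

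\textbf{Zariski density of $\pi_i(\Qcal_x)$.} Fix linearly independent $\alpha, \beta \in \APhi_i$, viewed naturally inside $\APhi$. By Lemma~\ref{lem:H-x is big} we have $[\lsupp_x^{[\alpha]}, \lsupp_x^{[\beta]}] \subset \Hcal_x$ for $\mu$-a.e.~$x$. Any commutator $[u, v]$ with $u \in \lsupp_x^{[\alpha]} \cap \mathfrak G$ and $v \in \lsupp_x^{[\beta]} \cap \mathfrak G$ lies in $\mathfrak G$ and in a unipotent subgroup, namely the one associated with positive integer combinations of $\alpha$ and $\beta$, hence is a unipotent element of $\Hcal_x \cap \mathfrak G$ and therefore sits in $\Qcal_x$. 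Projecting to $\G_i$, Corollary~\ref{cor:zariski dense support} guarantees that $\pi_i(\lsupp_x^{[\alpha]} \cap \mathfrak G)$ and $\pi_i(\lsupp_x^{[\beta]} \cap \mathfrak G)$ are Zariski dense in $V^i_{[\alpha]}$ and $V^i_{[\beta]}$ respectively. A standard Zariski density argument for commutators---fix $v$ in the dense set, use the morphism $u \mapsto [u,v]$ together with Zariski density to pass to all of $V^i_{[\alpha]}$, then vary $v$---shows that the Zariski closure of the subgroup of $\G_i$ generated by $\pi_i([\lsupp_x^{[\alpha]} \cap \mathfrak G, \lsupp_x^{[\beta]} \cap \mathfrak G])$ contains $[V^i_{[\alpha]}, V^i_{[\beta]}]$ as an algebraic subgroup. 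Varying over all linearly independent pairs $\alpha,\beta \in \APhi_i$ and invoking Proposition~\ref{prop:commutators} applied to $(\G_i, \la_i)$, these commutator subgroups generate $\G_i$. Hence $\pi_i(\Qcal_x)$ is Zariski dense in $\G_i$.

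\textbf{Unboundedness of $\pi_i(\Hcal_x)$.} The Haar measure $m_i$ on $X_i$ has positive entropy under any regular element of $A_i$, so the entropy-dimension calculus yields some $\alpha \in \APhi_i$ for which the leafwise measure contribution to $m_i$ along $V^i_{[\alpha]}$ is positive. Since $\mu$ is a joining, the pushforward $\pi_{i*}\mu_x^{[\alpha]}$ is essentially the corresponding leafwise measure of $m_i$ at $\pi_i(x)$, and the equivariance $\mu_{ax}^{[\alpha]} \propto a\mu_x^{[\alpha]} a^{-1}$ from Lemma~\ref{lem:properties}(4), combined with $A$-ergodicity, forces $\pi_i(\lsupp_x^{[\alpha]})$ to be unbounded in $V^i_{[\alpha]}$ almost surely---any uniform bound would be incompatible with the expansion of $V_{[\alpha]}$ under a suitable $a \in A$. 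Now choose $\beta \in \APhi$ linearly independent of $\alpha$, which exists since $\mathbf A$ has rank two. The inclusion $[\lsupp_x^{[\alpha]}, \lsupp_x^{[\beta]}] \subset \Hcal_x$ then produces commutators $[u,v]$ whose $\pi_i$-components $[\pi_i(u), \pi_i(v)]$ become unbounded when $\pi_i(u)$ is taken unbounded and $\pi_i(v)$ is kept bounded away from the identity (which is possible by the Zariski density of $\pi_i(\lsupp_x^{[\beta]})$ in $V^i_{[\beta]}$ together with a direct commutator computation inside the unipotent group $[V^i_{[\alpha]}, V^i_{[\beta]}]$). Thus $\pi_i(\Hcal_x)$ is unbounded.

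\textbf{Main obstacle.} The subtlety lies entirely in the unboundedness: Zariski density of $\pi_i(\Qcal_x)\subset \mathfrak G_i$ by itself does not force $\pi_i(\Hcal_x)$ to be unbounded, because in positive characteristic compact Zariski dense subgroups of $\G_i(\field)$ do exist---precisely this phenomenon is what Pink's Theorems~A.1 and~A.2 are designed to describe. The unboundedness must therefore be extracted genuinely from positive entropy of $m_i$ together with the Lyapunov-expansion equivariance of the leafwise measures, rather than from any purely algebraic consideration applied to the compact set $\Qcal_x$.
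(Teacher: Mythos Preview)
Your Zariski density argument is correct and essentially identical to the paper's: both combine Corollary~\ref{cor:zariski dense support}, Lemma~\ref{lem:H-x is big}, and Proposition~\ref{prop:commutators} in the same way.

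The unboundedness argument, however, has a real gap in the final commutator step. First, the claim that $\pi_{i*}\mu_x^{[\alpha]}$ agrees with the leafwise measure of $m_i$ at $\pi_i(x)$ is unjustified and is not what Proposition~C gives you (it gives Zariski density of the projected support, not a comparison of measures). That said, the conclusion you draw from it---unboundedness of $\pi_i(\lsupp_x^{[\alpha]})$---can be salvaged directly from Corollary~\ref{cor:zariski dense support} plus Poincar\'e recurrence. The decisive problem is the next step: you assert that $[\pi_i(u_n),\pi_i(v)]\to\infty$ whenever $\pi_i(u_n)\to\infty$ in $V^i_{[\alpha]}$ and $\pi_i(v)\neq e$ in $V^i_{[\beta]}$, appealing to ``a direct commutator computation.'' But when $V^i_{[\alpha]}$ contains several absolute root subgroups, $\pi_i(u_n)$ may escape to infinity along a direction that centralizes $\pi_i(v)$, making every commutator trivial; and even when nontrivial, the map $u\mapsto[u,v]$ on a unipotent group need not be proper. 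You have neither ruled this out nor ensured that your $\beta$ lies in $\APhi_i$ (if $V^i_{[\beta]}=\{1\}$ the argument collapses). The paper sidesteps this entirely by working with the invariance groups $\linv$ rather than the supports $\lsupp$: by Lemma~\ref{lem:commutator} the commutators $\bigl[\lsupp_x^{[\alpha]},\lsupp_x^{[\beta]}\bigr]$ lie not merely in $\Hcal_x$ but in $\linv_x^{\Psi'}$, and Lemma~\ref{lem:Inv-prod-struc} then forces some $\linv_x^{[\gamma]}$ to be nontrivial; the equivariance $\linv_{ax}^{[\gamma]}=a\,\linv_x^{[\gamma]}\,a^{-1}$ together with Poincar\'e recurrence (a uniform bound on the diameter would be strictly increasing along the orbit of an expanding $a$) gives unboundedness of $\linv_x^{[\gamma]}\subset\Hcal_x$ directly, with no commutator estimate required.
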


\begin{proof}
For any $x$, let ${\bf L}_{i,x}$ denote the Zariski closure of $\pi_i(\Qcal_x)$ in $\G_i.$
Let $\alpha, \beta \in\APhi$ be two linearly independent roots.
By Corollary~\ref{cor:zariski dense support}, a.s.\ $\pi_i\Bigl(\lsupp_x^{[\alpha]}\cap \mathfrak G\Bigr)$ is Zariski dense in $\pi_i(\mathbf V_{[\alpha]})$ and similarly for $\beta$, for $i=1,2$. By Lemma~\ref{lem:H-x is big}, $[\lsupp_x^{[\alpha]}\cap \mathfrak G,\lsupp_x^{[\beta]}\cap \mathfrak G] \subset \Qcal_x$. It follows that
\begin{equation*}
\pi_i\Bigl([\mathbf V_{[\alpha]},\mathbf V_{[\beta]}]\Bigr)\subset  {\bf L}_{i,x}
\end{equation*}
for any two linearly independent $\alpha,\beta \in\APhi.$
The first part of the claim follows using Proposition~\ref{prop:commutators}. 

For the second, by Lemma~\ref{lem:H-x is big} and Lemma~\ref{lem:Inv-prod-struc} there is an $\alpha \in \APhi$ such that $\linv_x^{[\alpha] }$ is non trivial. If $\linv_x^{[\alpha] }$ would be bounded on a set of positive measure its diameter would be a monotone increasing measurable function under an appropriate subsemigroup of $A$, in contradiction to Poincare recurrence.
\end{proof}

\subsection{Proof of Theorem~\ref{thm:joining}}\label{sec:joining-proof}
Let $X'\subset X$ be a conull subset so that the conclusions of Lemma~\ref{lem:pinsker-comp-joinning} and
Corollary~\ref{cor:proj-Zd} hold true on $X'$. 

By Corollary~\ref{cor:proj-Zd}~, for all $x\in X'$ the group $\Qcal_x$ 
satisfies the conditions in Theorem~A.2 in Section~\ref{sec:hilbert-LarPin1}. 
Therefore, there are two possibilities to consider.

\medskip
\noindent
{\bf Case 1:} There is a subset $X''\subset X'$ with $\mu(X'')>0$ 
so that for all $x\in X''$ and $i=1,2,$ the following holds. There are
\begin{itemize}
\item subfields $\field_{i,x}\subset \field,$ 
\item $\field_{i,x}$-groups ${\bf H}_{i,x},$  
\item $\field$-isomorphism $\varphi_{i,x}:{\bf H}_{i,x}\times_{\field_{i,x}}\field\to\G_i,$ and  
\item open, compact subgroups $\Qcal_{i,x}\subset \varphi_{i,x}\Bigl({\bf H}_{i,x}(\field_{i,x})\Bigr)$,
\end{itemize}
so that $\Qcal_{1,x}\times\Qcal_{2,x}\subset \Qcal_x.$

\begin{lem}\label{lem:F-unbounded}
For every $x\in X''$ and every $h\in\mathcal Q_{1,x}$ define
\[
F_x(h):=\{v(h,1)v^{-1}: v\in \Hcal_x\}.
\]
\begin{enumerate}
\item For every $h\in\mathcal Q_{1,x}$ we have $F_x(h)\subset\mathcal H_x.$
\item There exists an element $h\in \Qcal_{1,x}$ such that $F_x^\alpha(h)$ is unbounded.
\end{enumerate}
\end{lem}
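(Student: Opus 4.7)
My plan for the proof is as follows.

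For part~(1), by definition $\mathcal H_x$ is the stabilizer of $\mu_x^{\pins}$ in $G_1 \times G_2$, hence a closed subgroup; moreover in Case~1 we have $(h,1) \in \mathcal Q_{1,x} \times \mathcal Q_{2,x} \subset \mathcal Q_x \subset \mathcal H_x$ for any $h \in \mathcal Q_{1,x}$. Thus for every $v \in \mathcal H_x$ the conjugate $v(h,1)v^{-1}$ lies in $\mathcal H_x$, which establishes $F_x(h) \subset \mathcal H_x$.

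For part~(2), I would first observe that for $v = (v_1,v_2) \in \mathcal H_x$ one has $v(h,1)v^{-1} = (v_1 h v_1^{-1}, 1)$, so $F_x(h) \subset G_1 \times \{1\}$ and its unboundedness in $G_1\times G_2$ reduces to the unboundedness of the orbit $\mathcal O(h) := \{v_1 h v_1^{-1} : v_1 \in \pi_1(\mathcal H_x)\}$ in $G_1$. By Corollary~\ref{cor:proj-Zd}, $\pi_1(\mathcal H_x)$ is unbounded in $G_1$; and because $k/k_{1,x}$ is a finite extension, a $k_{1,x}$-basis expansion of $k$ shows that $\mathcal Q_{1,x} \subset \pi_1(\mathcal H_x)$ is itself Zariski dense in $\G_1$ (not merely in $\varphi_{1,x}({\bf H}_{1,x})$). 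I would then argue by contradiction: assume that $\mathcal O(h)$ is bounded for every $h\in\mathcal Q_{1,x}$. Writing $\mathcal Q_{1,x}=\bigcup_N\{h : \mathcal O(h)\subset B_N\}$ for an increasing compact exhaustion $B_N$ of $G_1$, the sets on the right are closed by continuity of conjugation, so the Baire category theorem yields an open symmetric neighborhood $U$ of $1$ in $\mathcal Q_{1,x}$ and a compact $K\subset G_1$ with $\pi_1(\mathcal H_x)\cdot U\cdot\pi_1(\mathcal H_x)^{-1}\subset K$; note that $U$ inherits Zariski density in $\G_1$.

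To derive a contradiction I would pick an unbounded sequence $v_n\in\pi_1(\mathcal H_x)$, use the Cartan $KAK$-decomposition $v_n = \kappa_n b_n \kappa_n'$ with $\kappa_n, \kappa_n'$ in a maximal compact subgroup of $G_1$ and $b_n$ in a closed positive Weyl chamber of a maximal $k$-split torus, and pass to a subsequence so that $\kappa_n\to\kappa$, $\kappa_n'\to\kappa'$ while $b_n$ tends to infinity along a fixed direction determining a proper stable parabolic ${\bf P}^-\subsetneq\G_1$. The uniform bound $v_n U v_n^{-1}\subset K$ forces $\kappa_n' U (\kappa_n')^{-1}$ to sit asymptotically in the stable-neutral subgroup for $b_n$, and passing to the limit yields $U \subset (\kappa')^{-1}{\bf P}^-(k)\kappa'$, a proper algebraic subgroup of $G_1$, contradicting Zariski density of $U$ in $\G_1$.

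The main technical obstacle is this final limiting step: making precise the claim that for a convergent sequence $u_n\to u$ and a sequence $b_n$ going to infinity in a fixed Weyl chamber direction, boundedness of $\{b_n u_n b_n^{-1}\}$ forces $u$ into the corresponding stable parabolic. This is a standard contracting/expanding dynamics fact for hyperbolic tori on algebraic groups over local fields; no new ideas are required, but it needs to be verified carefully in the positive characteristic setting, and one also needs to check that Cartan decomposition with the stated properties is available in the semisimple $k$-isotropic setting of $G_1$.
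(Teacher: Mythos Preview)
Your proof is correct and rests on the same core ingredients as the paper's: the Cartan decomposition of an unbounded sequence in $\pi_1(\Hcal_x)$, the associated proper parabolic $\field$-subgroup of $\G_1$, and the Zariski density of $\Qcal_{1,x}$ in $\G_1$.

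The difference is structural. You argue by contradiction: assuming every $F_x(h)$ is bounded, you invoke Baire category to produce an open neighborhood $U\subset\Qcal_{1,x}$ with a \emph{uniform} bound, and then push $U$ into a proper parabolic via the Cartan decomposition of an unbounded sequence. The paper instead argues directly: it first fixes the unbounded sequence $v_n\in\Hcal_x$, passes to a subsequence to obtain the parabolic ${\bf P}=\{g\in\G_1:\{s_{n,1}^{-1}gs_{n,1}\}\text{ bounded}\}$ and the limit $r$ of the compact factors, and then simply chooses $h\in\Qcal_{1,x}\setminus r^{-1}{\bf P}(\field)r$ by Zariski density. For this single $h$ the sequence $v_{n,1}hv_{n,1}^{-1}$ is visibly unbounded. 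The Baire step is therefore unnecessary: since the statement only asks for \emph{some} $h$, one can pick the sequence first and tailor $h$ to it, rather than seeking a uniform bound that works for an open set of $h$'s. Your limiting argument (that boundedness of $b_nu_nb_n^{-1}$ forces the limit into the parabolic) is the same dynamical fact the paper uses, and it does hold in positive characteristic.
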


\begin{proof}
Part (1) is immediate since 
$\mathcal Q_{x,1}\times\{1\}\subset \mathcal Q_x.$

We now prove part~(2). 
Let $\{v_n\}\subset \Hcal_x$ be a sequence so that $\pi_1(v_n)\to\infty,$ 
see Corollary~\ref{cor:proj-Zd}.
Let 
\[
v_n=(v_{n,1},v_{n,2})=\Bigl(r'_{n,1}s_{n,1}r_{n,1},r'_{n,2}s_{n,2}r_{n,2}\Bigr)
\]
be the Cartan decomposition of $v_n.$ Then $s_{n,1}\to\infty.$

Passing to a subsequence, if necessary, we assume that
\begin{itemize}
\item $\{r_{n,i}\}$ and $\{r'_{n,i}\}$ converge for $i=1,2,$ moreover, 
\item ${\bf P}:=\Bigl\{g\in {\bf G}_1:\text{$\{s_{n,1}^{-1}gs_{n,1}\Bigr\}$ is bounded}\}$ is a proper parabolic $\field$-subgroup of ${\bf G}_1.$   
\end{itemize}

Since $\Qcal_{1,x}$ is Zariski density in the $\field$-group ${\bf G}_1,$ there exists 
some $h\in\Qcal_{1,x}$ which does not lie in $r^{-1}{\bf P}r$ where $r_{n,1}\to r.$ 
The claim in part~(2) holds for this $h.$ 
\end{proof}

\begin{proof}[Proof of Theorem~\ref{thm:joining}: Case 1]
Let $x\in X''$, and
let $h$ and $F_x(h)$ be as in part~(2) of Lemma~\ref{lem:F-unbounded}. Suppose
 $\{(g_n,1)\}\subset F_x(h)$ is an unbounded sequence.
By part~(1) of that Lemma we have
\be\label{eq:gn-pins-inv}
(g_n,1)\in\Hcal_x\;\text{ for all $n.$}
\ee

Recall from Lemma~\ref{lem:pinsker-comp-joinning} that
\be\label{eq:case-1-pins}
\pi_{i*}(\mu_x^{\pins})=m_i\text{ for $i=1,2$.}
\ee

Since ${\bf G}_1$ is connected, simply connected, and absolutely almost simple, 
 it follows from the generalized Mautner phenomenon,
~\cite[Ch.~1, Thm.\ 2.3.1, Ch.~2, Thm.\ 7.2]{Margulis-Book}, 
that $(X_1,m_1)$ is ergodic for the action of the unbounded group $\langle \{g_n\}\rangle.$
  
This together with~\eqref{eq:gn-pins-inv} and~\eqref{eq:case-1-pins} implies that 
$\mu_x^{\pins}=m_1\times m_2,$ see e.g.\ the argument in 
Case~1 of the proof of~\cite[Prop.~4.3]{EinMoh-Joining}. 

Since $\mu(X'')>0$ and $\mu$ is $A$-ergodic, we get that $\mu=m_1\times m_2.$
\end{proof}

The rest of this section is devoted to the analysis of the following case.

\medskip
\noindent
{\bf Case 2:} Replacing $X'$ by a conull subset, which we continue to denote by $X',$ we have the following. For every $x\in X'$ there are
\begin{itemize}
\item a subfield $\field_{x}\subset \field$ and a continuous embedding $\tau_x:\field_x\to\field,$   
\item a $\field_{x}$-group ${\bf H}_{x},$ and 
\item a $\field\oplus\field$-isomorphism $\varphi_{x}:{\bf H}_{x}\times_{\Delta_{\tau_x}(\field_x)}(\field\oplus\field)\to\G_1\coprod\G_2$ where as  in~\eqref{eq:delta-tau}, $\Delta_{\tau_x}(\field_x)=\{(c,\tau_x( c)):c\in\field_x\}$
\end{itemize}
so that $\Qcal_x$ is an open subset of the image under $\varphi_x$ of $\mathbf{H}_x(\field_x)$ with the latter considered as a subset of the $\field\oplus\field$-points of $\mathbf {H}_{x}\times_{\Delta_{\tau_x}(\field_x)}(\field\oplus\field)$
using the injection of rings $\Delta_{\tau_x}:\field_x \to \field\oplus\field$.
Moreover, $\Delta_{\tau_x}(\field_x)$ is unique, and ${\bf H}_x$ and $\varphi_x$ are 
unique up to unique isomorphisms.

Let us further recall that
\be\label{eq:field-join}
\field_x=\text{the field of quotients of the ring generated by $\{{\rm tr}(\rho(g)): g\in\mathcal Q_x\}$},
\ee
where $\rho$ denotes the non-constant irreducible representation occurring
as subquotient of the adjoint representation of $\G_1^{\rm ad}.$ 

Put $E_x:=\varphi_x\Bigl({\bf H}_x(\field_x)\Bigr)\subset G_1\times G_2.$ 

\begin{prop}\label{prop:Ex-joining}\leavevmode
\begin{enumerate}
\item There is a subfield $\field'\subset\field$
and an embedding $\tau:\field'\to\field$ so that $\Delta_{\tau_x}(\field_x)=\Delta_{\tau}(\field')$ 
on a conull subset of $X$.
\item The map $x\mapsto E_x$ is an $A$-equivariant Borel map on a conull subset of $X.$
\end{enumerate}
\end{prop}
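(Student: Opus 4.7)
The plan is to mirror the proof of Lemma~\ref{lem:kx-constant}, substituting Theorem~A.2 for Theorem~A.1 throughout.

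For part (1), I will first show $\Delta_{\tau_x}(\field_x) \subseteq \Delta_{\tau_{ax}}(\field_{ax})$ for every $a \in A$ and $\mu$-a.e.\ $x$; by symmetry (replacing $a$ with $a^{-1}$) this yields equality, and the $A$-ergodicity of $\mu$ then forces $x \mapsto \Delta_{\tau_x}(\field_x)$ to be essentially constant. Since the subring $\Delta_\tau(\field') \subset \field \oplus \field$ uniquely determines the pair $(\field', \tau)$ (the first projection recovers $\field'$, and $\tau$ is read off the second), part~(1) will follow. To establish the containment, fix $a \in A$ and a $\mu$-generic $x$; since $\mathfrak G$ is open one can choose a compact open subgroup $\Omega_x \subseteq \Qcal_x$ small enough that $a\Omega_x a^{-1} \subseteq \mathfrak G$. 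By \eqref{eq:Hcal-A-equiv} we have $a\Omega_x a^{-1} \subseteq \Hcal_{ax}$, and since conjugation preserves unipotence, the unipotent elements of $a\Omega_x a^{-1}$ all lie in $\Hcal_{ax} \cap \mathfrak G$ and hence in $\Qcal_{ax}$. Invoking Pink's structure results — specifically the absence of nonstandard isogenies when ${\rm char}(\field) \neq 2, 3$ (cf.\ \cite[Prop.~1.6, 3.13, 3.14]{Pink-Compact}) — the subring generated by the traces of the unipotent elements in a sufficiently small compact open subgroup of $\Qcal_x$ already equals the full trace field $\Delta_{\tau_x}(\field_x)$. Since trace is conjugation invariant, this same subring is contained in $\Delta_{\tau_{ax}}(\field_{ax})$, giving the desired containment.

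For part (2), the Borel measurability of $x \mapsto E_x$ follows by the reasoning of Lemma~\ref{lem:kx-Qx-Borel}: the map $x \mapsto \Qcal_x$ is Borel into the Chabauty space of closed subgroups of $G_1 \times G_2$, part~(1) fixes the underlying ring $\Delta_\tau(\field')$, and $E_x$ is then recovered from $\Qcal_x$ by taking the Zariski closure in the restriction of scalars $\mathcal R_{(\field\oplus\field)/\Delta_\tau(\field')}(\G_1 \coprod \G_2)$. Concretely, one can generate $E_x$ from its intersections $E_x \cap V_{[\alpha]}$ with coarse Lyapunov subgroups, each of which is the $\field'$-span of $\Qcal_x \cap V_{[\alpha]}$, and each step is Borel. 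For $A$-equivariance, the argument in part~(1) exhibits $a\Omega_x a^{-1}$ as a compact open subgroup contained in both $aE_xa^{-1}$ and $\Qcal_{ax} \subseteq E_{ax}$; by the uniqueness clause in Theorem~A.2 applied to $\Qcal_{ax}$, this forces $aE_xa^{-1} = E_{ax}$ almost surely.

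The main obstacle is the first step of part~(1): showing that the trace field generated by a sufficiently deep compact open subgroup coincides with the full trace field of the ambient group. This is where the hypothesis ${\rm char}(\field) \neq 2, 3$ enters essentially, ruling out both the exceptional definition of $\field'$ from Theorem~A.1 for groups of type $A_1$ in characteristic two and the existence of nonstandard isogenies for other types. Once this trace-field stability is granted, the remainder of the argument is a formal adaptation of the one-factor proof in Lemma~\ref{lem:kx-constant}.
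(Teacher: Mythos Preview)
Your overall strategy matches the paper's, but there is a genuine gap in the key step of part~(1). You assert that the subring generated by ${\rm tr}(\rho(g))$ as $g$ ranges over the \emph{unipotent} elements of a small open subgroup $\Omega_x\subset\Qcal_x$ already equals $\Delta_{\tau_x}(\field_x)$. This is false: if $g$ is unipotent then $\rho(g)$ is unipotent, so ${\rm tr}(\rho(g))=\dim\rho$ lies in the prime field, and these traces generate nothing beyond it. What the argument actually requires is that \emph{all} of $a\Omega_x a^{-1}$ lies in $\Qcal_{ax}$, not only its unipotent elements; then the full trace ring of $\Omega_x$ --- which equals $\Delta_{\tau_x}(\field_x)$ since $\Omega_x$ is open in $E_x$ and there are no nonstandard isogenies in characteristic $>3$ --- coincides, by conjugation-invariance of trace, with that of $a\Omega_x a^{-1}$ and is therefore contained in $\Delta_{\tau_{ax}}(\field_{ax})$. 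To arrange $a\Omega_x a^{-1}\subset\Qcal_{ax}$ one must choose $\Omega_x$ to be \emph{generated by its own unipotent elements}: for instance a deep congruence subgroup of $E_x\cong\varphi_x(\mathbf{H}_x(\field_x))$ generated by root-subgroup elements, in the spirit of the remark preceding Lemma~\ref{lem:kx-Qx-Borel}. Then each unipotent generator of $a\Omega_x a^{-1}$ is a unipotent element of $\Hcal_{ax}\cap\mathfrak G$, hence lies in $\Qcal_{ax}$, so the whole group does. This is exactly the role of the paper's finite-index subgroup $\Qcal_x(a)$.

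The same unjustified containment $a\Omega_x a^{-1}\subset\Qcal_{ax}$ is invoked in your part~(2) equivariance argument, so the gap propagates there. Once it is fixed, part~(2) goes through; note however that the paper's route --- identifying $E_x$ with the $\Delta_\tau(\field')$-points of the Zariski closure of $\Qcal_x$ in $\mathcal R_{\field\oplus\field/\Delta_\tau(\field')}(\Gbf_1\coprod\Gbf_2)$ and then reading off both measurability and equivariance from Zariski density of open subsets --- is cleaner than your alternative via intersections $E_x\cap V_{[\alpha]}$, since the coarse Lyapunov subgroups are generally not one-dimensional and the phrase ``$\field'$-span of $\Qcal_x\cap V_{[\alpha]}$'' needs interpretation.
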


\begin{proof}
In view of~\eqref{eq:field-join} and the fact that 
$x\mapsto\Qcal_x$ is a Borel map, we get that $x\mapsto \Delta_{\tau_x}(\field_x)$ is a Borel map, 
see the proof of Lemma~\ref{lem:kx-Qx-Borel}(1).

To see the other claims in part (1), first recall that $a\Hcal_xa^{-1}=\Hcal_{ax}$ for all $a\in A$ and 
$\mu$-a.e.\ $x\in X$. 
Hence, for any $a\in A$ there exists some finite index subgroup 
$\mathcal Q_x(a)\subset\mathcal Q_x$ so that  
\be\label{eq:deep-levels-join}
a\mathcal Q_x(a)a^{-1}\subset\mathcal Q_{ax}.
\ee
Therefore, the same arguments as in the proof of Lemma~\ref{lem:kx-constant}(1) 
applies here and finishes the proof of part~(1), see~\eqref{eq:deep-levels} and~\eqref{eq:trace-field}.

We now turn to the proof of part (2). Put
\[
\G':=\mathcal R_{\field\oplus\field/\Delta_\tau(\field')}\biggl(\G_1\coprod\G_2\biggr);
\] 
This is a $\Delta_\tau(\field')$-group. 

Now, part (1), the fact that $\varphi_x$ is an isomorphism, and the universal property of the restriction of scalars functor, see~\cite[\S A.5]{CGP-PseudoRed}, imply that
\[
E_x=\biggl(\mathcal R_{\field\oplus\field/\Delta_\tau(\field')}(\varphi_x)\Bigl({\bf H}_x\Bigr)\biggr)(\Delta_\tau(\field')).
\] 
Hence, using~\cite[Ch.~1, Prop.\ 2.5.3]{Margulis-Book}, we get that $E_x$ is identified with 
the $\Delta_\tau(\field')$-points of the Zariski closure of $\Qcal_x$ in the $\Delta_\tau(\field')$-group $\G'$.

Since the map $x\mapsto\Qcal_x$ is Borel, we thus get that $x\mapsto E_x$ is a Borel map.

To see the $A$-equivariance, first recall from~\eqref{eq:deep-levels-join} 
that $a\mathcal Q_x(a)a^{-1}$ is an open 
subgroup of $\mathcal Q_{ax}$. Using~\cite[Ch.~1, Prop.\ 2.5.3]{Margulis-Book}, thus, we get that $E_{ax}$ 
is the Zariski closure of $a\mathcal Q_x(a)a^{-1}$ in $\G'(\Delta_\tau(\field')).$ 
On the other hand, this Zariski closure equals $aE_xa^{-1}$; the claim follows.    
\end{proof}

\begin{lem}\label{lem:E-inv-join}
For $\mu$-a.e.\ $x\in X$ we have $E_x\subset\Hcal_x$ and $E_x$ is not compact.
\end{lem}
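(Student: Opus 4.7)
My plan is to prove the two assertions separately, following the template of Proposition~\ref{prop:inv-sl2} and adjusting for the joining structure. The non-compactness of $E_x$ is the quicker of the two: $\mathcal{Q}_x$ is by definition generated by unipotent elements of $\mathcal{H}_x \cap \mathfrak{G}$, which under $\varphi_x^{-1}$ transport to nontrivial unipotent elements of $\mathbf{H}_x(\field_x)$ (unipotence being preserved by algebraic isomorphisms). Since $\mathbf{H}_x$ is simply connected and absolutely almost simple, the Borel--Tits theorem (cf.~\cite[Thm.~2]{Gille} and~\cite[Cor.~3.8]{BoTi-Unip}) then forces $\mathbf{H}_x$ to be $\field_x$-isotropic, so $E_x \cong \mathbf{H}_x(\field_x)$ is non-compact.

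For the inclusion $E_x \subset \mathcal{H}_x$, I would first set $A' := A \cap E_x$ and observe that Proposition~\ref{prop:Ex-joining}(2), combined with the abelianness of $A$, gives $A \cap E_{ax} = a(A \cap E_x)a^{-1} = A \cap E_x$; hence $x \mapsto A'_x$ is $A$-invariant and essentially constant by $A$-ergodicity. The next, and critical, step is to argue that $A'$ is cocompact in $A$: since $\mathbf{H}_x$ is $\field_x$-isotropic, a maximal $\field_x$-split $\field_x$-torus of $\mathbf{H}_x$ maps via $\varphi_x$ to a $\field$-split subtorus of $\mathbf{G}_1\times\mathbf{G}_2$, and the $A$-equivariance of $x \mapsto E_x$ forces the image to be cocompactly contained in $\mathbf{A}$. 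Granted cocompactness, I would pick a regular $a_0 \in A'$ (so that $\alpha(a_0) \neq 1$ for every $\alpha \in \APhi$) and choose a subset $X'(\epsilon) \subset X$ of measure $>1-\epsilon$ on which $x \mapsto \mu_x^{\pins}$ is continuous. By Poincar\'e recurrence, for $\mu$-a.e.\ $x \in X'(\epsilon)$ there exist $n_i \to \infty$ with $a_0^{n_i}x \to x$ and $a_0^{n_i}x \in X'(\epsilon)$. Then for each coarse Lyapunov subgroup $V_{[\alpha]}$ contracted by $a_0$, combining the compact-open property of $\mathcal{Q}_x \cap V_{[\alpha]}$ inside $E_x \cap V_{[\alpha]}$ with $\mathcal{Q}_x \subset \mathcal{H}_x$ and $a\mathcal{H}_xa^{-1}=\mathcal{H}_{ax}$ yields
\[
E_x \cap V_{[\alpha]} \subset \lim_i a_0^{n_i}\bigl(\mathcal{Q}_x \cap V_{[\alpha]}\bigr)a_0^{-n_i} \subset \lim_i \mathcal{H}_{a_0^{n_i}x} \subset \mathcal{H}_x.
\]
Running over all such $\alpha$, and using that $\mathbf{H}_x(\field_x)$ is generated by the $\field_x$-points of unipotent radicals of its $\field_x$-parabolics (\cite[Ch.~1, Thm.~2.3.1]{Margulis-Book})---each such subgroup being contained in some $V_{[\alpha]}$ via $\varphi_x$---we conclude $E_x \subset \mathcal{H}_x$.

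The principal obstacle is establishing the cocompactness of $A'$ in $A$: this demands a careful algebraic analysis of how $\varphi_x$, with its intrinsic subfield $\field_x$ and continuous embedding $\tau_x$, interacts with the $2$-dimensional $\field$-split torus $\mathbf{A}$ under the $A$-equivariance of $x \mapsto E_x$.
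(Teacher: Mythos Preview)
Your argument for non-compactness is correct and essentially the same as what the paper uses implicitly.

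The inclusion $E_x\subset\Hcal_x$, however, has a genuine gap, and the paper's route is quite different. You try to mimic Proposition~\ref{prop:inv-sl2}, but that argument worked because $H_\alpha\cong\SL_2$: the torus $A_\alpha$ is one-dimensional (so an unbounded subgroup is automatically cocompact), and the unipotent radicals of the two parabolics are exactly $U_{\pm\alpha}$. In the joining setting none of this survives. The torus $\mathbf A$ is two-dimensional, and you correctly flag that cocompactness of $A'=A\cap E_x$ is unproved; your sketch (``the $A$-equivariance forces the image to be cocompactly contained in $\mathbf A$'') does not give this. More seriously, even granting cocompactness and a regular $a_0\in A'$, your final step fails: the assertion that the $\field_x$-unipotent radicals of parabolics of $\mathbf H_x$ each land, via $\varphi_x$, inside a \emph{single} coarse Lyapunov group $V_{[\alpha]}$ is false in general. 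The group $\mathbf H_x$ is a $\field_x$-form of $\mathbf G_1$, which may have large rank, and $\varphi_x$ carries its unipotent radicals to unipotent subgroups of $G_1\times G_2$ that typically spread across many $V_{[\alpha]}$'s. So the Poincar\'e-recurrence argument, which only yields $E_x\cap V_{[\alpha]}\subset\Hcal_x$ for each $\alpha$, does not suffice to generate $E_x$.

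The paper bypasses all of this with a single structural input: Prasad's Theorem~T \cite{Pr-ThmTits}, which says that for $\mathbf H_x$ connected, simply connected, and absolutely almost simple over the local field $\field_x$, any open \emph{unbounded} subgroup of $E_x=\mathbf H_x(\field_x)$ is all of $E_x$. Since $\Qcal_x\subset\Hcal_x\cap E_x$ is already open in $E_x$ by the Case~2 setup, both assertions reduce to showing $\Hcal_x\cap E_x$ is unbounded. For this the paper notes (from the proof of Corollary~\ref{cor:proj-Zd}) that $\Qcal_x\cap\linv_x^{[\alpha]}$ is nontrivial for some $\alpha$, and then uses the $A$-equivariance of $x\mapsto E_x$ together with Poincar\'e recurrence to stretch this to an unbounded subgroup of $\Hcal_x\cap E_x$. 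No torus intersection, no root-group decomposition of $E_x$, is needed.
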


\begin{proof}
We first recall from~\cite[Thm.~T]{Pr-ThmTits} that since $\H_x$ is connected, simply connected, 
and absolutely almost simple, any open and unbounded subgroup of $E_x$ equals $E_x.$
Since $\Qcal_x\subset\Hcal_x$ is an open subgroup of $E_x,$ thus, both assertions in the lemma
will follow if we show that $\Hcal_x\cap E_x$ is unbounded for $\mu$-a.e.\ $x\in X$. 

However, the proof of Corollary~\ref{cor:proj-Zd} shows that for some $\alpha\in\APhi$, we have that $\Qcal_x \cap \linv_x^{[\alpha] }$ is non-trivial. Since $x \mapsto E_x$ is an $A$ equivariant map, using Poincare recurrence as in Corollary~\ref{cor:proj-Zd} it follows that  $\Hcal_x\cap E_x$ is unbounded. 
\end{proof}

\begin{proof}[Proof of Theorem~\ref{thm:joining}, Case 2] 
The argument is similar to the  proof of Theorem~\ref{thm:measure-class}. 

\medskip

{\em Step 1.} Let
\be\label{eq:joining-pins-erg-decom}
\mu_{x}^\pins=\int_X \nu_z\operatorname{d}\!\mu_x^\pins(z)
\ee
be the ergodic decomposition of $\mu_x^\pins$ with respect to $E_x.$ 

As before, $\field\oplus\field$ is a $\Delta_\tau(\field')$-algebra. 
Put 
\[
\Gbf':=\mathcal R_{\field\oplus\field/\Delta_\tau(\field')}\biggl(\Gbf_1\coprod\Gbf_2\biggr).
\] 
This is a connected group defined over $\Delta_\tau(\field')$,~\cite[\S{A5}]{CGP-PseudoRed}.
Moreover, $\Gamma_1\times\Gamma_2$ is a lattice in $\Gbf'(\Delta_\tau(\field'))=\Gbf_1(\field)\times\Gbf_2(\field)=G_1\times G_2=G.$

Applying Theorem~B in Section~\ref{sec:ratner-sl2} we conclude 
that for $\mu_x^{\pins}$-a.e.\ $z$ the measure $\nu_z$ is described 
as follows. There exist
\begin{enumerate}
\item $\sfield_z=(\field')^{q_z}$ where $q_z=p^{n_z},$ $p={\rm char}(\field)$, and $n_z\geq 1$,
\item a connected $\Delta_\tau(\sfield_z)$-subgroup ${\bf M}_z$ of $\rcal_{\Delta_\tau(\field')/\Delta_\tau(\sfield_z)}(\Gbf')$ 
so that 
\[
{\bf M}_z(\Delta_\tau(\sfield_z))\cap (\Gamma_1\times\Gamma_2)
\] 
is Zariski dense in ${\bf M}_z,$ and
\item an element $g_z\in G_1\times G_2,$ 
\end{enumerate} 
such that $\nu_z$ is the $g_zL_zg_z^{-1}$-invariant probability Haar measure 
on the closed orbit $g_zL_z(\Gamma_1\times\Gamma_2)/(\Gamma_1\times\Gamma_2)$ with 
\[
L_z=\overline{M_z^+(\la_z)\Bigl({\bf M}_z(\Delta_\tau(\sfield_z))\cap(\Gamma_1\times\Gamma_2)\Bigr)} 
\]
where  
\begin{itemize}
\item the closure is with respect to the Hausdorff topology, and 
\item $\la_z:{\bf G}_m\to{\bf M}_z$ is a noncentral $\Delta_\tau(\sfield_z)$-homomorphism, $M_z^+(\la_z)$ is defined in~\eqref{eq:M-+-la}, and $E_x\subset M_z^+(\la_z).$   
\end{itemize} 

Arguing as in the proof of Lemma~\ref{applyingAA}, there exists a triple
$(\sfield_0,[{\bf M}_0],[M^+_{0}(\la_0)])$ so that 
\[
\Bigl(\sfield_z,[{\bf M}_z],[M^+_{z}(\la_z)]\Bigr)=\Bigl(\sfield_0,[{\bf M}_0],[M^+_{0}(\la_0)]\Bigr)\text{ for $\mu$-a.e.\ $x$ and $\mu_x^\pins$-a.e.\ $z.$}
\]
Put $L_0:=\overline{M_0^+(\la_0)\Bigl({\bf M}_0(\Delta_\tau(\sfield_0))\cap(\Gamma_1\times\Gamma_2)\Bigr)}.$

\medskip

{\em Step 2.}
One of the following holds 
\begin{enumerate}
\item[(a)] $L_0=G_1\times G_2,$ or
\item[(b)] $\pi_i(L_0)=G_i$ and $\ker(\pi_i|_{L_0})\subset C(G_1\times G_2)$ for $i=1,2.$ 
\end{enumerate} 

To see this, first note that by Lemma~\ref{lem:pinsker-comp-joinning} 
we have $\pi_{i*}\mu_x^\pins=m_i$ for $\mu$-a.e.\ $x\in X$ and $i=1,2$.
This, together with~\eqref{eq:joining-pins-erg-decom}, implies that
\[
m_i=\pi_{i*}\mu_x^\pins=\int_X \pi_{i*}\nu_z\operatorname{d}\!\mu_x^\pins(z)\text{ for $\mu$-a.e.\ $x$.}
\]

Since $\nu_z$ is invariant under $E_x,$ the projection $\pi_{i*}(\nu_z)$ is invariant under $\pi_i(E_x).$
By Lemma~\ref{lem:E-inv-join}, the group $\pi_i(E_x)$ is an unbounded subgroup of $G_i$ for $i=1,2$.
Since ${\bf G}_i$ is simply connected, $m_i$ is $\pi_i(E_x)$ 
ergodic, see~\cite[Ch.~1, Thm. 2.3.1, Ch.~2, Thm.~7.2]{Margulis-Book}. 
Therefore, 
\[
\mbox{$\pi_{i*}\nu_z=m_i$ for $\mu_x^\pins$-a.e.\ $z.$} 
\]
In particular, we get that $\pi_{i}(g_z L_0g_z^{-1})=G_i$ for $\mu_x^\pins$-a.e.\ $z$ and $i=1,2$.

Since $\G_i$ is absolutely almost simple, any proper normal 
subgroup of $G_i,$ as an abstract group, is central~\cite[Ch.~1, Thm.~1.5.6]{Margulis-Book}.
This implies that one of the following holds.
\begin{itemize}
\item $L_0=G_1\times G_2,$ or
\item $\pi_i(L_0)=G_i$ and $\ker(\pi_i|_{L_0})\subset C(G_1\times G_2)$ for $i=1,2.$ 
\end{itemize}  
as we claimed.

If $L_0=G\times G,$ we are done with the proof. 
Hence, our standing assumption for the rest of the argument is that (b) above holds.

\medskip
{\em Step 3.}
The assertion in~(b) also holds for $M_0$ and $M_0^+(\la_0)$ in place of $L_0.$

Let us first show this for $M_0.$ Since $L_0\subset M_0$ 
we have 
\[
\pi_i(M_0)=G_i\text{ for $i=1,2.$}
\]
Therefore, as above, either $M_0=G_1\times G_2$ or (b) holds for $M_0$. 
Assume to the contrary that $M_0=G_1\times G_2.$
Recall that $\la_0:\Gbf_m\to\mathbf M_0$ is a noncentral homomorphism.
Since $\G_i$ is connected, simply connected, and absolutely almost simple for $i=1,2,$
using~\cite[Ch.~1, Prop. 1.5.4 and Thm.\ 2.3.1]{Margulis-Book}, we have that either 
\begin{itemize}
\item $M_0^+(\la_0)=G_1\times G_2$, or 
\item $M_0^+(\la_0)\subset G_i$ for some $i=1,2$.
\end{itemize} 
However, since $M^+_0(\la_0)\subset L_0$, the above contradict our assumption that (b) holds.

We now turn to the proof of the claim for $M_0^+(\la_0).$ Since 
$M_0\neq G_1\times G_2$ and $M_0^+(\la_0)\subset M_0,$ 
the claim follows if we show that 
\be\label{eq:M+-proj}
\pi_{i}(M_0^+(\la_0))=G_i\text{ for $i=1,2$}.
\ee 
To see this note that $\la_0(\sfield_0^\times)\subset M_0(\la_0).$ 
Since (b) holds for $M_0,$ we have $\pi_i(\la_0(\sfield_0^\times))$ is unbounded
for $i=1,2.$ 
Therefore,~\eqref{eq:M+-proj} follows from~\cite[Ch.~1, Prop. 1.5.4 and Thm.\ 2.3.1]{Margulis-Book}. 

Let us record the following corollaries of the above discussion for later use.
Since (b) holds for $M_0^+(\la_0)$, $L_0,$ and $M_0,$ we have
\be\label{eq:norm-M0-joining}
N_{G_1\times G_2}(M_0)\subset CM_0
\ee 
where $C:= {}{Z(G_1\times G_2)}$

We also have
\be\label{eq:M+-M0-L0}
\text{$M_0^+(\la_0)$ is a finite index subgroup of $L_0$ and of $M_0.$}
\ee

\medskip
 
{\em Step 4.} 
Both 
\[
\text{$M^+_0(\la_0)(\Gamma_1\times\Gamma_2)/(\Gamma_1\times\Gamma_2)$
and $M_0(\Gamma_1\times\Gamma_2)/(\Gamma_1\times\Gamma_2)$}
\] 
are closed orbits
with probability, invariant, Haar measures.
In particular, $\nu_x$ is the Haar measure on the closed orbit 
\[
g_xM^+_0(\la_0)(\Gamma_1\times\Gamma_2)/(\Gamma_1\times\Gamma_2).
\] 

Indeed, let $\Lambda:=M_0\cap (\Gamma_1\times \Gamma_2)$.
Then by~\eqref{eq:M+-M0-L0} and {\em Step 1.}, $\Lambda$ is a lattice in $M_0$, as was claimed for $M_0.$

Using~\eqref{eq:M+-M0-L0}, once more, we have $\Lambda\cap M^+_0(\la_0)$ 
has finite index in $\Lambda.$
This implies that $\Lambda\cap M^+_0(\la_0)$ is a lattice in $M^+_0(\la_0)$, hence, the claim for 
$M^+(\la_0).$  
\medskip

{\em Step 5.} We are now in a position to finish the proof.
In view of~\eqref{eq:norm-M0-joining},~\eqref{eq:M+-M0-L0} and {\em Step 4.}, we can argue 
as in the proof of Lemma~\ref{applyingAA2}, see in particular~\eqref{eq:Borel-map}, and get the following. 
Let $C':=C\cap\Bigl(\Gamma_1\times\Gamma_2\Bigr).$
The decomposition
\[
\mu=\int\nu_x\operatorname{d}\!\mu
\]
yields the Borel map $f(x)=g_xC'M_0$ from a conull subset of $X$ to 
$G_1\times G_2/C'M_0$.
Moreover, $f$ is an $A$-equivariant map.

Hence, it follows from Lemma~\ref{lem:zd-measure1} that there exists some 
\[
g_0\in{\rm Fix}_{A^{{\rm sp}}_{\sfield_0}}\Bigl(G_1\times G_2/C'M_0\Bigr)
\] 
so that $f_*\mu$ is the $A$-invariant measure on the compact orbit $Ag_0.$

By Lemma~\ref{lem:w-pm-normal} and~\eqref{eq:M+-M0-L0} 
we have $M^+_0(\la_0)$ is a normal and finite index 
subgroup of $M_0;$ furthermore, $C'$ is a finite group.
Therefore, arguing as we did to complete the proof Theorem~\ref{thm:measure-class} after~\eqref{eq:eta-pins}, we get that there is some $g_1\in M_0$ so that
\[
\mu=\int_{A/A\cap g_0g_1M^+_0(\la_0)g_1^{-1}g_0^{-1}}a_*\nu\operatorname{d}\!a
\]  
where $\operatorname{d}\!a$ is the probability Haar measure on the compact group 
\[
A/A\cap g_0g_1M^+_0(\la_0)g_1^{-1}g_0^{-1},
\]
and $\nu$ is the is the probability Haar measure on the closed orbit 
\[
g_0g_1M^+_0(\la_0)(\Gamma_1\times\Gamma_2)/(\Gamma_1\times\Gamma_2).
\]
Hence, Theorem~\ref{thm:joining}(2) holds with $\Sigma=g_0g_1M^+_0(\la_0)g_1^{-1}g_0^{-1}$.
\end{proof}


\bibliographystyle{amsplain}
\bibliography{papers}

\end{document}